\newcommand{\hhookrightarrow}{\lhook\mkern-3mu\relbar\mkern-12mu\hookrightarrow}
\newtheorem{theorem}{Theorem}[section]
\theoremstyle{definition} 
\newtheorem{definition}[theorem]{Definition} 
\theoremstyle{definition} 
\newtheorem{lemma}[theorem]{Lemma} 
\theoremstyle{definition} 
\newtheorem{remark}[theorem]{Remark}
\theoremstyle{definition} 
\newtheorem{corollary}[theorem]{Corollary} 
\theoremstyle{definition}
\theoremstyle{definition} 
\newtheorem{assumption}[theorem]{Assumption} 
\theoremstyle{definition} 
\title{Optimal control of reaction-diffusion systems with hysteresis}
\author{Christian Münch\footnote{Department of Mathematics - M6, Technical University of Munich, Boltzmannstr. 3, 85747 Garching, Germany. christian.muench@ma.tum.de}}
\begin{document}
	\maketitle

%
%
	
	\pagenumbering{arabic}  
	
\begin{abstract}
	This paper is concerned with the optimal control of hysteresis-reaction-diffusion systems.
	We study a control problem with two sorts of controls, namely distributed control functions, or controls which act on a part of the boundary of the domain.
	The state equation is given by a reaction-diffusion system with the additional challenge that the reaction term includes a scalar stop operator.
	We choose a variational inequality to represent the hysteresis. 
	In this paper, we prove first order necessary optimality conditions. In particular, under certain regularity assumptions, we derive results about the continuity properties of the adjoint system. For the case of distributed controls, we improve the optimality conditions and show uniqueness of the adjoint variables. We employ the optimality system to prove higher regularity of the optimal solutions of our problem. Finally, we derive regularity properties of the value function of a perturbed control problem when the set of controls is restricted.
	The specific feature of rate-independent hysteresis 
	in the state equation leads to difficulties concerning the analysis of the solution operator.
	Non-locality in time of the Hadamard derivative of the control-to-state operator complicates the derivation of an adjoint system.
\end{abstract}

\begin{center}
	Keywords: Optimal control, reaction-diffusion, semilinear parabolic evolution problem, hysteresis operator, stop operator, global existence, solution operator, Hadamard differentiability, optimality conditions, adjoint system.
	
	MSC subject class: 49J20, 47J40, 35K51
\end{center}

\section{Introduction}

In this paper, we derive an adjoint system for the optimal control problem
\begin{equation}
\min_{u \in U_i} J(y,u) := \frac{1}{2}  \|y-y_d\|_{U_1}^2+ \frac{\kappa}{2}\| u \|_{U_i}^2 \label{opt_control_ control_problem}
\end{equation} 
subject to 
\begin{alignat}{2}
\dot{y}(t) +(A_p y)(t) &= f(y(t),z(t))  + (B_i u)(t)&&\  \text{in } \mathbb{W}_{\Gamma_D}^{-1,p}(\Omega) \text{ for }t\in (0,T), \label{state_equ_y}\\
y(0)&= 0 &&\ \text{in } \mathbb{W}_{\Gamma_D}^{-1,p}(\Omega),\notag\\
z&=\mathcal{W}[Sy],\label{state_equ_z}
\end{alignat} 
where $\mathbb{W}_{\Gamma_D}^{-1,p}(\Omega)$ is a product of dual spaces, see e.g. \cite[(16)-(18)]{muench} for the existence theory of problem \eqref{opt_control_ control_problem}-\eqref{state_equ_z} and related references therein.
We consider either spatially
distributed controls in the space
$
U_1:= \mathrm{L}^2 \left((0,T);[\mathrm{L}^2(\Omega)]^m\right)
$, or controls which act on given Neumann boundary parts $\Gamma_{N_j}$, $j\in\{1,\ldots m\}$, of the state space, i.e. controls in
$
U_2:= \mathrm{L}^2 \left((0,T); \prod_{j=1}^m \mathrm{L}^2(\Gamma_{N_j},\mathcal{H}_{d-1})\right).
$
The operators
$B_1 : [\mathrm{L}^2(\Omega)]^m \rightarrow \mathbb{W}_{\Gamma_D}^{-1,p}(\Omega)$ and 
$B_2 : \prod_{j=1}^m \mathrm{L}^2(\Gamma_{N_j},\mathcal{H}_{d-1}) \rightarrow \mathbb{W}_{\Gamma_D}^{-1,p}(\Omega)$ are continuous and $A_p$ is an unbounded diffusion operator on the space $\mathbb{W}_{\Gamma_D}^{-1,p}(\Omega)$. With $i\in\{1,2\}$, we identify $B_i$ with the corresponding continuous operators from $U_i$ into $\mathrm{L}^2 \left((0,T);\mathbb{W}_{\Gamma_D}^{-1,p}(\Omega)\right)$ which act pointwise in time, i.e. we write $(B_i u)(t) = B_i(u(t))$ for all $t\in (0,T)$. In the same way we identify $(A_p y)(t)$ with $A_p(y(t))$ for functions $y:(0,T)\rightarrow \mathbb{W}_{\Gamma_D}^{-1,p}(\Omega)$.
Moreover, $S$ projects $y$ to a scalar valued function. In particular,
$\mathcal{W}$ is a scalar stop operator and it is well-known (see e.g. \cite{visintin2013differential}, \cite{brokate2013optimal}) that
$\mathcal{W}$ is represented by the solution operator $z=\mathcal{W}[v]$ of the variational inequality
\begin{alignat}{2}
(\dot{z}(t)-\dot{v}(t))(z(t)-\xi) \leq 0& &&\ \text{for } \xi\in [a,b] \text{ and a.e. } t\in (0,T),\label{stop1}\\
z(t)\in [a,b]& &&\ \text{for } t\in[0,T],\ z(0)=z_0.\label{stop2}
\end{alignat} 
For $i\in\{1,2\}$, we denote by $G$ the operator, which maps $B_iu$ to the unique solution $y$ of \eqref{state_equ_y}-\eqref{state_equ_z}, see \cite[Theorem 3.1]{muench}.
Note that $y=G(B_iu)$ is a function of time with values in a product of dual spaces.


%


Optimal control of (systems of) partial differential equations has extensively been analyzed in the literature before. 

In particular, optimal control problems with state equations of \emph{semilinear parabolic} type are part of the well-known monograph \cite{troeltzsch2010optimal} and the early work \cite{Bonnans85}. Further studies in this direction are the subject of \cite{Raymond98} and \cite{Casas97}. We also refer to \cite{rehberg_optimal} for a control problem with parabolic state equation and rough boundary conditions like in our setting.

Early studies in the field of optimal control of \emph{reaction-diffusion systems} and in particular in the direction of parameter sensitivity analysis have been performed in \cite{griesse2003diss} and were further established in \cite{griesse2006parametric} and several more papers. Optimality conditions for a similar problem were also derived in \cite{barthel2010optimal}.

The \emph{non-linearities} in all the works mentioned so far are mostly \emph{smooth} enough to obtain a (twice) continuously differentiable control-to-state operator, so that first and many times also second order optimality conditions could be derived.

In the literature, there are only few results available concerning optimal control of \emph{infinite-dimensional rate-independent processes}.
For a class of energetically driven processes, existence of optimal controls for problems of this type has first been
studied in \cite{rindler2008optimal} and \cite{rindler2009approximation}. 
Subsequently, the results were applied to (thermal) control problems in the field of shape memory
materials in \cite{eleuteri2013thermal} and \cite{eleuteri2014}. No optimality conditions are given in these works.
Optimal control of a problem of static plasticity in the infinite-dimensional setting is the subject of \cite{herzog2012c} and \cite{herzog2013b}. The results were used in \cite{herzog2014} to numerically solve a quasi-static control problem by time-discretization.
Optimality conditions for time-continuous, infinite-dimensional, rate-independent control problems of quasi-static plasticity type could be derived in \cite{wachsmuth2012optimal}, \cite{wachsmuth2015optimal}, \cite{wachsmuth2016optimal} by means of time-discretization.
Another time-continuous, infinite-dimensional optimal control problem of a rate-independent system, which is represented by its energetic formulation, is addressed in \cite{stefanelli2016optimal}. With help of viscous regularization, a necessary optimality condition is derived.

To our knowledge, the first results for optimal control of \emph{hysteresis} have been achieved in \cite{brokate1987optimale,brokate1988optimal,brokate91optimal}.
Necessary optimality conditions for the optimal control of an ODE-system with hysteresis were established.
An adjoint system was derived by a time discretization approach.
Optimal control of sweeping processes has been studied in \cite{castaing2014some}, \cite{colombo2012optimal} and \cite{colombo2016optimal}.
In \cite{brokate2013optimal}, first order optimality conditions for a control problem of an ODE-system with hysteresis of (vectorial) stop type were derived. The stop operator is represented in form of a variational inequality.
The main challenge with the stop operator (as with all hysteresis operators) is the fact that hysteresis acts non-local in time so that the state $y(t)$ at each time $t\in (0,T]$ depends on the whole background $(0,t)$. Moreover, the stop operator is not differentiable in the classical sense and so the control-to-state can not be expected to be so either.
Regularization techniques were used in order to derive an optimality system.
Several of the ideas of this approach are useful also for us.
To handle a reaction-diffusion system requires additional work though. Firstly, the state vector $y:[0,T] \rightarrow \mathbb{W}_{\Gamma_D}^{-1,p}(\Omega)$ in \eqref{state_equ_y} is a function with values in an infinite-dimensional space and secondly, the non-linearity $f$ in our case is not necessarily continuously differentiable but only locally Lipschitz continuous and directionally differentiable. Therefore, techniques as in \cite{meyeroptimal} are required. Particularly, since the domain $\Omega$ has a rough boundary, we have to consider a product of dual spaces for the domain of the diffusion operator $A_p$.

The existing literature provides only few rigorous results in the field of control of \emph{hysteresis-reaction-diffusion systems}, especially when it comes to optimal control of such systems.
In \cite{cavaterra2002automatic}, automatic control problems governed by reaction-diffusion systems with feedback control of relay switch and Preisach type have been studied. Global existence and uniqueness of solutions were proven.
Closed-loop control of a reaction-diffusion system coupled with ordinary differential inclusions has been considered in \cite{dudziuk2011closed}. A feedback law for the case with a finite number of control devices was derived.

Necessary conditions for the optimal control of (general) \emph{non-smooth semilinear parabolic equations} have been established in \cite{meyeroptimal}. In particular, the non-linearity is merely locally Lipschitz continuous and directionally differentiable so that the control-to-state operator is not differentiable in the classical sense. Regularization techniques have been used to derive an adjoint system. No hysteresis is considered in this paper.
Nevertheless, a modification of the approach in \cite{meyeroptimal} is applicable for the problem at hand.
In particular, we include ideas from \cite{brokate2013optimal} and adapt the proof to apply to non-localities in time such as hysteresis.
We refer to the references in \cite{meyeroptimal} for a good overview of further contributions dealing with optimal control of non-smooth parabolic equations.

In this paper, we are interested in the optimal control of \emph{non-smooth reaction-diffusion systems with hysteresis}. In particular, a scalar stop operator enters the non-linearity $f$.
The function $f$ is assumed to be locally Lipschitz continuous and directionally differentiable. Additionally, the domain $\Omega$ satisfies minimal smoothness assumptions.

The outline of the paper is as follows:

In Section~2, we introduce the framework for the rest of the work and collect results from the literature.
Subsection~2.3 contains the main assumption and notation.

Our first main interest is to derive an adjoint system and first order necessary optimality conditions for problem \eqref{opt_control_ control_problem}-\eqref{state_equ_z}.

In Section~3, we introduce a family of regularized control problems with $\varepsilon$-dependent state equations and derive adjoint systems as well as optimality conditions for those. 
In particular, we regularize $f$ and the stop operator $\mathcal{W}$ in dependence of the parameter $\varepsilon>0$ and replace the original control problem by a regularized one. 
The corresponding control-to-state operator $u\mapsto G_\varepsilon(B_iu)$, $i\in\{1,2\}$, and the regularization $y\mapsto Z_\varepsilon(Sy)$ of $\mathcal{W}[S\cdot]$ are G\^{a}teaux-differentiable and we obtain optimal solutions $\overline{u}_\varepsilon$, $\overline{y}_\varepsilon = G_\varepsilon(B_i\overline{u}_\varepsilon)$ and $\overline{z}_\varepsilon = Z_\varepsilon(S\overline{y}_\varepsilon)$ of the regularized problems.
We investigate in the limit $\varepsilon\rightarrow 0$ and use standard arguments to derive
a solution $(\overline{u},\overline{y},\overline{z})$ of the original problem.
It still remains difficult to derive adjoint systems $(p_\varepsilon,q_\varepsilon)$ already for the regularized problems.
The main result of Section~3 is Theorem~\ref{Thm:opt_syst_reg} which contains the evolution equations of $p_\varepsilon$ and $q_\varepsilon$ and the adjoint equation which provides a relation between $(p_\varepsilon,q_\varepsilon)$ and $\overline{u}_\varepsilon$ and $\overline{u}$.

In Section~4, we perform the key step towards an optimality system of \eqref{opt_control_ control_problem}-\eqref{state_equ_z} by driving the regularization parameter to zero. We exploit the adjoint systems $(p_\varepsilon,q_\varepsilon)$ to derive necessary optimality conditions for problem \eqref{opt_control_ control_problem}-\eqref{state_equ_z}.
While the evolution equation for $p$
follows rather straight forward, the adjoint variable $q$
which belongs to $\overline{z}$ has lower regularity, similar as in optimal control problems with implicit state constraints of the form of variational inequalities.
The function $q$ is contained in the space $\mathrm{BV}(0,T)$ of functions with bounded total variation in $[0,T]$, and instead of a time derivative we obtain a measure $dq\in \mathrm{C}([0,T])^*$.
In order to complete our knowledge about the optimality system, we investigate in studying $q$ and $dq$. Indeed, we reveal a lot of the properties of $q$ and the corresponding measure $dq$. There remains an abstract measure $d\mu\in \mathrm{C}([0,T])^*$ on which $dq$ depends and which we cannot fully characterize. Moreover, $d\mu$ appears in the optimality conditions for problem \eqref{opt_control_ control_problem}-\eqref{state_equ_z}. Still, we are able to prove that $d\mu$ has its support only in a part of $[0,T]$. 
With an additional regularity assumption on $S\overline{y}$, we can characterize the measure $d\mu$ also in most of the parts where it does not vanish. 
The first main results of Section~4 are Theorem~\ref{Thm:opt_control_boundary_opt_cond_limit_adjoint} and Corollary~\ref{Cor:opt_cond_general}, which contain the existence of an adjoint system and optimality conditions for problem \eqref{opt_control_ control_problem}-\eqref{state_equ_z} for $i\in \{1,2\}$.
After having established the optimality system for the general problem \eqref{opt_control_ control_problem}-\eqref{state_equ_z}, $i\in\{1,2\}$, we continue to improve the optimality conditions for the particular case of distributed control functions, i.e. for $i=1$, see Corollary~\ref{Cor:opt_control_distrib_opt_cond} below.
Moreover, in Corollary~\ref{Cor:Uniqueness_dist_contr}. we show uniqueness of $p$, $q$ and $d\mu$ for $i=1$. In the we make explicit use of the surjectivity of $B_1$ which implies that the operator $B_1^*$ in the adjoint equation is one-to-one. 
These together are the second main result of Section~4.

In Section~5,
we prove higher regularity of the optimal control $\overline{u}$ and the optimal state $\overline{y}$ by means of the adjoint equation and the continuity properties of the adjoint variables, see Theorem~\ref{Thm:HigherRegularity} below. An example for a case in which Theorem~\ref{Thm:HigherRegularity} can be applied is given in Remark~\ref{Rem:Example_higher_regularity}.

Finally, in Section~6 we study a perturbed problem similar to \eqref{opt_control_ control_problem}-\eqref{state_equ_z}. In particular, in Theorem~\ref{Thm:ValueFunction} we prove regularity results for the corresponding value function.

All our results are applicable for more general spaces of control functions $U=\mathrm{L}^2\bigl((0,T);\tilde{U}\bigr)$, as long as there exists a continuous operator $B:\tilde{U} \rightarrow \mathbb{W}_{\Gamma_D}^{-1,p}(\Omega)$. Also $J(y,u)$ can be exchanged by a general differentiable functional $J(y,u,z)$ if the corresponding reduced cost function remains coercive in $u\in U$.
Moreover, $A_p$ can be replaced by a semi-linear parabolic operator which satisfies maximal parabolic regularity on the space $\mathbb{W}_{\Gamma_D}^{-1,p}(\Omega)$.
We focus on the two particular control problems for $U_1$ and $U_2$ and on the operator $A_p$ in order to give an illustration.

\underline{Notation:}\\
We write $\mathcal{L}(X,Y)$ for the space of linear operators between spaces $X$ and $Y$ and $\mathcal{L}(X)$ for the space of linear operators on $X$.
We also abbreviate the duality in $X$ by
$
\langle x,y \rangle_{X^*,X} =
\langle x,y \rangle_{X}.
$
$c>0$ denotes a generic constant which is adapted in the course of the paper.
In Banach space valued evolution equations like \eqref{state_equ_y} we sometimes omit the range space if the latter is clear from the context, i.e. we only write "for $t\in (0,T)$".

\section{Preliminaries and assumptions}
We introduce the setting for the rest of the work, collect results from the literature and state the main assumption. 

\subsection{Sobolev spaces including homogeneous Dirichlet boundary conditions}
\begin{definition}
	\cite[Definition 2.1][$I$-sets]{muench}
	For $0<I\leq d$ and a closed set $M\subset \mathbb{R}^d$ let $\rho$ denote the restriction of the $I$-dimensional Hausdorff measure $\mathcal{H}_I$ to $M$. Then we call $M$ an $I$-set if there are constants $c_1,c_2>0$ such that
	\begin{equation*}
	c_1 r^I \leq \rho\left( B_{\mathbb{R}^d}(x,r)\cap M \right) \leq c_2 r^I
	\end{equation*}
	for all $x$ in $M$ and $r\in ]0,1[$.
\end{definition}

\begin{assumption}[Domain]\label{Ass:domain}
	\cite[Assumption 2.3 and Assumption 4.11]{rehbergsystems} or \cite[Assumption 2.2 and Assumption 2.6]{muench}
	For some given $d\geq 2$, the domain $\Omega\subset\mathbb{R}^d$ is bounded and $\overline{\Omega}$ is a $d$-set.
	For $j\in \{1,\ldots,m\}$ the Neumann boundary part $\Gamma_{N_j}\subset \partial\Omega$ is open and $\Gamma_{D_j}=\partial\Omega\backslash\Gamma_{N_j}$ is a $(d-1)$-set.
	For any $x\in \overline{\Gamma_{N_j}}$ there is an open neighborhood $U_x$ of $x$ and a bi-Lipschitz mapping $\phi_x$ from $U_x$ onto a cube in $\mathbb{R}^d$ such that $\phi_x(\Omega\cap U_x)$ equals the lower half of the cube and such that $\partial\Omega\cap U_x$ is mapped onto the top surface of the lower half cube.
\end{assumption}

We only consider real valued functions.
For each component $j\in \{1,\ldots, m\}$ of the space of vector valued functions, see Definition~\ref{Def:Sobolev_space}, we decompose the boundary $\partial\Omega$ into the corresponding Dirichlet part $\Gamma_{D_j}$ and the Neumann boundary $\Gamma_{N_j}:=\partial\Omega\backslash \Gamma_{D_j}$, see Assumption~\ref{Ass:domain}.
The cases $\Gamma_{D_j}=\emptyset$ and $\Gamma_{D_j}=\partial\Omega$ are not excluded.

We define Sobolev spaces which include the Dirichlet boundary conditions for our state equation.
\begin{definition}[Sobolev spaces]\label{Def:Sobolev_space}
	\cite[Definition 2.4]{rehbergsystems} or \cite[Definition 2.4]{muench}
	For $\Omega$ from Assumption~\ref{Ass:domain} and $p\in [1,\infty)$ we denote by 
	$\mathrm{W}^{1,p}(\Omega)$ 
	the usual Sobolev space on $\Omega$.
	If $M$ is a closed subset of $\overline{\Omega}$ we define \begin{equation*}
	\mathrm{W}_\mathrm{M}^{1,p}(\Omega) := \overline{\lbrace \psi|_\Omega:\, \psi\in\mathrm{C}_0^\infty(\mathbb{R}^d),\, \mathrm{supp}(\psi)\cap \mathrm{M}=\emptyset \rbrace},
	\end{equation*} 
	where the closure is taken in the space $\mathrm{W}^{1,p}(\Omega)$.
		In the case $p\in(1,\infty)$ we denote by $p'$ the H{\"o}lder conjugate of $p$.
	Moreover, we write
	\begin{equation*}
	\mathrm{W}_\mathrm{M}^{-1,p}(\Omega) := \left[\mathrm{W}_\mathrm{M}^{1,p'}(\Omega)\right]^*
	\end{equation*}
	for the dual space $\mathrm{W}_\mathrm{M}^{1,p'}(\Omega)$.
	In the vectorial setting we introduce the product space
	\begin{equation*}
	\mathbb{W}_{\Gamma_D}^{1,p}(\Omega):= \prod\limits_{j=1}^m \mathrm{W}_{\Gamma_{D_j}}^{1,p}(\Omega)
	\end{equation*}
	and for $p\in(1,\infty)$ we denote by $\mathbb{W}_{\Gamma_D}^{-1,p}(\Omega)$ the (componentwise) dual space of   
	$
	\mathbb{W}_{\Gamma_D}^{1,p'}(\Omega)
	$.
\end{definition}
%

\subsection{Operators and their properties}\label{sec:elliptic-regularity-for-systems-and-fractional-power-spaces}
In this section we precisely define the operators $A_p$ in equation~\eqref{state_equ_y}, see Definition~\ref{Def:vector_Sobolev_space}. We apply results from the literature to assure that $A_p$ satisfies the properties which we need for the analysis of \eqref{state_equ_y}-\eqref{state_equ_z} for particular values of $p$ to be chosen, see \cite[Section 6]{rehbergsystems} or \cite[Subsection 2.2]{muench}.
\begin{definition}[Diffusion operator]\label{Def:vector_Sobolev_space}
%
	For $p\in(1,\infty)$ we define the continuous operators 
	\begin{equation*}
	\begin{aligned}
	\mathcal{L}_p:\mathbb{W}_{\Gamma_D}^{1,p}(\Omega)=\prod\limits_{j=1}^m \mathrm{W}_{\Gamma_{D_j}}^{1,p}(\Omega)\rightarrow \mathrm{L}^p(\Omega, \mathbb{R}^{md}),&& \mathcal{L}_p(u):= \mathrm{vec}(\nabla u)=(\nabla u_1,\ldots, \nabla u_m)^\intercal
	\end{aligned}
	\end{equation*}
	and
\begin{equation*}
	\begin{aligned}
	I_p:\mathbb{W}_{\Gamma_D}^{1,p}(\Omega)\rightarrow \mathbb{W}_{\Gamma_D}^{-1,p}(\Omega),
	&& \langle I_p u,v \rangle_{\mathbb{W}_{\Gamma_D}^{1,p'}(\Omega)}:=\int_\Omega u\cdot v\, dx
	&& \forall v\in \mathbb{W}_{\Gamma_D}^{1,p'}(\Omega).
	\end{aligned}
\end{equation*}
	With given diffusion coefficients $d_1,\ldots, d_m > 0$ we define the corresponding diffusion matrix in $\mathbb{R}^{md\times md}$ by
	$D=\mathrm{diag}(d_1,\ldots, d_1,\ldots, d_m,\ldots, d_m)$.

	For $p\in(1,\infty)$ we set
	\begin{equation*}
	\begin{aligned}
	\mathcal{A}_p: \mathbb{W}_{\Gamma_D}^{1,p}(\Omega) \rightarrow \mathbb{W}_{\Gamma_D}^{-1,p}(\Omega),
	&& \mathcal{A}_p:= \mathcal{L}_{p'}^* D \mathcal{L}_p
	\end{aligned}
	\end{equation*}
	and define the unbounded operator
	
	\begin{equation*}
	\begin{aligned}
	A_p: \mathrm{dom}(A_p) = \mathrm{ran}\left(I_p\right)\subset\mathbb{W}_{\Gamma_D}^{-1,p}(\Omega) \rightarrow \mathbb{W}_{\Gamma_D}^{-1,p}(\Omega),
	&& A_p:=\mathcal{A}_p I_p^{-1}.
	\end{aligned}
	\end{equation*}
	The set $\mathrm{ran}\left(I_p\right)$ stands for the range of $I_p$. The domain $\mathrm{dom}(A_p)$ is equipped with the graph norm.
\end{definition}

%

We introduce the notion of maximal parabolic regularity as in \cite[Definition 2.12]{muench}.
\begin{definition}[Maximal parabolic regularity]\label{Def:maximal parabolic regularity}
	For $p,q\in (1,\infty)$ and $(t_0,T)\subset \mathbb{R}$ we say that $A_p$ satisfies maximal parabolic $\mathrm{L}^{q}((t_0,T);\mathbb{W}_{\Gamma_D}^{-1,p}(\Omega))$-regularity if for all
	$g\in \mathrm{L}^q\left((t_0,T);\mathbb{W}_{\Gamma_D}^{-1,p}(\Omega)\right)$ there is a unique solution $y\in\mathrm{W}^{1,q}((t_0,T);\mathbb{W}_{\Gamma_D}^{-1,p}(\Omega))\cap \mathrm{L}^{q}((t_0,T);\mathrm{dom}(A_p))$ of the equation
	\begin{equation*}
	\dot{y}+A_p y = g,\ y(t_0)=0.
	\end{equation*}
	The time derivative is taken in the sense of distributions \cite[Definition 11.2]{squareroot_problem}.

	For $t\in [0,T]$ we abbreviate

	$
	Y_q:= \mathrm{W}^{1,q}((0,T);\mathbb{W}_{\Gamma_D}^{-1,p}(\Omega))\cap \mathrm{L}^{q}((0,T);\mathrm{dom}(A_p))$,
	$
	Y_{q,t}:= \{y\in Y_q:\ y(t)=0\}
	$
	and 
	
	$
	Y^*_{q,t}:= \{ y\in \mathrm{W}^{1,q}(0,T;[\mathrm{dom}(A_p)]^*)\cap \mathrm{L}^{q}((0,T); \mathbb{W}_{\Gamma_D}^{1,p'}(\Omega)):\ y(t)=0\}
	$.
\end{definition}

As in \cite[Remark 2.13]{muench} note the following:
\begin{remark}[Properties of $A_p$]\label{Rem:maximal parabolic regularity}
	\leavevmode
	\begin{enumerate}
		\item
		If Definition~\ref{Def:maximal parabolic regularity} applies for $A_p$ with some $p\in(1,\infty)$ then the property of maximal parabolic regularity is independent of
		$q\in (1,\infty)$ and of the interval $(t_0,T)$, so  we just say that $A_p$ satisfies maximal parabolic regularity on $\mathbb{W}_{\Gamma_D}^{-1,p}(\Omega)$ in this case. 
		\item
		If $A_p$ satisfies maximal parabolic regularity on $\mathbb{W}_{\Gamma_D}^{-1,p}(\Omega)$ for some $p\in(1,\infty)$ then
		$(\frac{d}{dt}+A_p)^{-1}$
		is bounded from $\mathrm{L}^q((0,T);\mathbb{W}_{\Gamma_D}^{-1,p}(\Omega))$ to $Y_{q,0}$ for any $q\in (1,\infty)$.
		\item In the setting of Assumption~\ref{Ass:domain} there is an open interval $\mathrm{J}$ containing $2$ such that for $p\in \mathrm{J}$ the operator $\mathcal{A}_p + I_p$ is a topological isomorphism and such that $-A_p$ generates an analytic semigroup of operators on $\mathbb{W}_{\Gamma_D}^{-1,p}(\Omega)$
		 \cite[Theorem 2.10]{muench} or \cite[Theorem 5.6 and Theorem 5.12]{rehbergsystems}.
		\item
		If $p\in \mathrm{J}$ and if $\theta \geq 0$ is given then for $A_p + 1:=A_p + \mathrm{Id}$ the fractional power spaces $X^\theta:= \mathrm{dom}([A_p + 1]^\theta)\subset \mathbb{W}_{\Gamma_D}^{-1,p}(\Omega)$ and the unbounded operators $[A_p + 1]^\theta$ in the sense of \cite[Chapter 1]{henry} are well-defined with $X^0=\mathbb{W}_{\Gamma_D}^{-1,p}(\Omega)$.
		$X^\theta$ is equipped with the norm
		$\|x\|_{X^\theta}= \|(A_p+1)^\theta x\|_{\mathbb{W}_{\Gamma_D}^{-1,p}(\Omega)}$ \cite[cf.][Remark 2.11]{muench}.
		Note that we can identify $X^1$ with the space $\mathrm{dom}(A_p)$ endowed with the graph norm. 
		\item If $p\in \mathrm{J}\cap[2,\infty)$, then $A_p$ satisfies maximal parabolic Sobolev regularity on $\mathbb{W}_{\Gamma_D}^{-1,p}(\Omega)$ and we have the topological equivalences 	
		$
		[\mathbb{W}_{\Gamma_D}^{-1,p}(\Omega), \mathbb{W}_{\Gamma_D}^{1,p}(\Omega)]_{\theta} \simeq [\mathbb{W}_{\Gamma_D}^{-1,p}(\Omega), \mathrm{dom}(A_p)]_{\theta} \simeq X^\theta
		$
		for all $\theta \in (0,1)$ \cite[Theorem 11.6.1]{carracedo}.
		By $[\cdot, \cdot]_{\theta}$ we mean complex interpolation.
	\end{enumerate}

\end{remark}

We will make use of the following embeddings:
\begin{remark}[Embeddings]\label{Rem:embeddings}
	\cite[Remark 2.14]{muench}
	With $q\in (1,\infty)$ one has
	\begin{align*}
	Y_{q} &\hhookrightarrow \mathrm{C}^\beta((0,T); (\mathbb{W}_{\mathrm{\Gamma_D}}^{-1,p}(\Omega), \mathrm{dom}(A_p))_{\eta,1})
	\hookrightarrow \mathrm{C}^{\beta}((0,T); [\mathbb{W}_{\mathrm{\Gamma_D}}^{-1,p}(\Omega), \mathrm{dom}(A_p)]_{\theta})
	\text{ and }\\
	Y_q &\hhookrightarrow \mathrm{C}([0,T]; (\mathbb{W}_{\mathrm{\Gamma_D}}^{-1,p}(\Omega), \mathrm{dom}(A_p))_{\eta,q})
	\hookrightarrow \mathrm{C}([0,T]; [\mathbb{W}_{\mathrm{\Gamma_D}}^{-1,p}(\Omega), \mathrm{dom}(A_p)]_{\theta})
	\end{align*}
	for every $0<\theta < \eta < 1-1/q$ and $0\leq\beta < 1-1/q -\eta$. $(\cdot,\cdot)_{\eta,1}$ or $(\cdot,\cdot)_{\eta,q}$ respectively means real interpolation. The first embeddings are compact because $\mathrm{dom}(A_p)$ is compactly embedded into $\mathbb{W}_{\mathrm{\Gamma_D}}^{-1,p}(\Omega)$.
	
\end{remark}
With $p\in \mathrm{J}$, the following estimate for the fractional powers of $A_p+1$ and the analytic semigroup $\exp(-A_p t)$ is crucial:
\begin{remark}
	\cite[Remark 2.15]{muench}
	Let $p\in \mathrm{J}$ with $\mathrm{J}$ from Remark~\ref{Rem:maximal parabolic regularity}. For $t>0$ 
%
	and arbitrary $\gamma\in (0,1)$ and $\theta\geq 0$
	there exists some 
	$C_\theta\in (0,\infty)$ such that
	\begin{equation}\label{frac_pow_estimate1}
	\Vert (A_p+1)^{\theta}\exp(-A_p t) \Vert_{\mathcal{L}\bigl(\mathbb{W}_{\Gamma_D}^{-1,p}(\Omega)\bigr)}\leq C_\theta t^{-\theta}\exp((1-\gamma) t).
	\end{equation}
\end{remark}

The stop operator has the following regularity properties.
\begin{lemma}[Stop operator]\label{Lem:hyst_props}
	With $T>0$ the stop operator $\mathcal{W}$, which is represented by \eqref{stop1}-\eqref{stop2}, is Lipschitz continuous as a mapping on $\mathrm{C}[0,T]$ and
	\begin{equation}
	\begin{aligned}
	|\mathcal{W}[v_1](t)- \mathcal{W}[v_2](t)| \leq 2\sup\limits_{0\leq \tau \leq t} \vert v_1(\tau) -v_2(\tau) \vert
	&& \text{and}
	&&|\mathcal{W}[v](t)| \leq 2\sup\limits_{0\leq \tau \leq t} \vert v(\tau) \vert + |z_0|
	\end{aligned}\label{hyst_growth}
	\end{equation}
	for all $v,v_1,v_2 \in \mathrm{C}[0,T]$ and $t\in [0,T]$. Note that we have to add $|z_0|$ in \eqref{hyst_growth} because, by \eqref{stop2}, $\mathcal{W}[v](0)=z_0$ for any $v \in \mathrm{C}[0,T]$.
	For $q\in [1,\infty)$ it is also bounded and weakly continuous on $\mathrm{W}^{1,q}(0,T)$.	
	$\mathcal{W}:\mathrm{C}[0,T]\rightarrow \mathrm{L}^q(0,T)$ is Hadamard directionally differentiable, see Definition~\ref{Def:Hadam_differentiability} below. The same regularity properties hold for the operator $\mathcal{P} = \mathrm{Id} - \mathcal{W}$. $\mathcal{P}$ is a scalar play operator. More precisely, for $r=\frac{b-a}{2}$ let $\mathcal{P}_r:\mathrm{C}[0,T]\times\mathbb{R} \rightarrow \mathrm{C}[0,T]$ denote a symmetrical scalar play operator (as in \cite{Brokate_weak_diff}). 
	Consider the affine linear transformation
	$
	\mathcal{T}:[-r,r]\rightarrow [a,b],\ \mathcal{T}:x\mapsto x - \frac{b+a}{2}.
	$
	Then for $v\in \mathrm{C}[0,T]$ there holds
	\begin{equation*}
	\mathcal{P}[v] = \mathcal{P}_r[\mathcal{T}(v), v(0)-z_0] \in \mathrm{C}[0,T].
	\end{equation*}
\end{lemma}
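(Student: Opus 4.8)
The statement of Lemma~\ref{Lem:hyst_props} collects several classical facts about the stop and play operators, so the plan is to assemble them from the variational-inequality characterization \eqref{stop1}--\eqref{stop2} together with the known play--stop duality. First I would establish the Lipschitz estimate for $\mathcal{W}$ on $\mathrm{C}[0,T]$. The standard argument is to take two inputs $v_1,v_2\in\mathrm{C}[0,T]$ with outputs $z_1=\mathcal{W}[v_1]$, $z_2=\mathcal{W}[v_2]$, test the variational inequality for $z_1$ with $\xi=z_2(t)$ and the one for $z_2$ with $\xi=z_1(t)$ (this is admissible since both stay in $[a,b]$), add the two inequalities, and obtain
\begin{equation*}
\tfrac{1}{2}\tfrac{d}{dt}\,|z_1(t)-z_2(t)|^2 \leq (\dot v_1(t)-\dot v_2(t))(z_1(t)-z_2(t)).
\end{equation*}
Integrating, using $z_1(0)=z_2(0)=z_0$, integration by parts, and the elementary bound $|z_1(\tau)-z_2(\tau)|\le b-a$ on the boundary terms, yields \eqref{hyst_growth} first for Lipschitz (or $\mathrm{W}^{1,1}$) inputs, and then for general $v_1,v_2\in\mathrm{C}[0,T]$ by density and the continuity of $\mathcal{W}$. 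The growth bound $|\mathcal{W}[v](t)|\le 2\sup_{0\le\tau\le t}|v(\tau)|+|z_0|$ follows by comparing $v$ with the constant input $0$ (whose stop output is the constant $z_0$ clipped to $[a,b]$, hence bounded by $|z_0|$) and using the first estimate; the remark that $|z_0|$ is needed because of the initial condition is immediate from \eqref{stop2}.

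Next I would treat the $\mathrm{W}^{1,q}(0,T)$-regularity. Boundedness on $\mathrm{W}^{1,q}$ is standard: testing the variational inequality appropriately gives $\|\dot z\|_{\mathrm{L}^q}\le\|\dot v\|_{\mathrm{L}^q}$ together with the pointwise bound $z(t)\in[a,b]$, so $\mathcal{W}$ maps bounded sets of $\mathrm{W}^{1,q}(0,T)$ into bounded sets. Weak continuity then follows by a compactness argument: if $v_n\rightharpoonup v$ in $\mathrm{W}^{1,q}(0,T)$, then $v_n\to v$ uniformly on $[0,T]$ by compact embedding, so $\mathcal{W}[v_n]\to\mathcal{W}[v]$ in $\mathrm{C}[0,T]$ by the already-proven Lipschitz continuity, while $\{\dot{\mathcal{W}}[v_n]\}$ is bounded in $\mathrm{L}^q$, hence (along a subsequence, but by uniqueness of the limit along the whole sequence) $\mathcal{W}[v_n]\rightharpoonup\mathcal{W}[v]$ in $\mathrm{W}^{1,q}(0,T)$. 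The Hadamard directional differentiability of $\mathcal{W}:\mathrm{C}[0,T]\to\mathrm{L}^q(0,T)$ is the one nontrivial input and I would cite it from the literature on weak differentiability of hysteresis operators (e.g. \cite{Brokate_weak_diff}, \cite{brokate2013optimal}), referring to Definition~\ref{Def:Hadam_differentiability} for the precise notion.

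Finally, the statement about $\mathcal{P}=\mathrm{Id}-\mathcal{W}$: since the identity on $\mathrm{C}[0,T]$ (and on $\mathrm{W}^{1,q}$) is Lipschitz, bounded, weakly continuous, and trivially Hadamard differentiable, $\mathcal{P}$ inherits all the listed regularity properties from $\mathcal{W}$ by the corresponding algebraic rules (sum/difference of Hadamard differentiable maps is Hadamard differentiable, etc.). That $\mathcal{P}$ is a scalar play operator, and in particular the identity $\mathcal{P}[v]=\mathcal{P}_r[\mathcal{T}(v),v(0)-z_0]$ with $r=\frac{b-a}{2}$, is the classical play--stop decomposition: one checks directly that if $z$ solves \eqref{stop1}--\eqref{stop2} then $w:=v-z$ satisfies the defining variational inequality of the symmetric play operator of threshold $r$ with input $\mathcal{T}(v)$ and initial memory $v(0)-z_0$, using that $\mathcal{T}$ shifts the interval $[a,b]$ onto $[-r,r]$; this is a straightforward substitution in \eqref{stop1}--\eqref{stop2}, so I would only indicate it. The main obstacle, such as it is, is simply bookkeeping: making sure the test-function substitutions in the variational inequality are justified for merely continuous inputs (handled by the density argument above) and quoting the Hadamard differentiability result in a form compatible with the $\mathrm{L}^q$-target, which for $q<\infty$ follows from the $\mathrm{C}[0,T]\to\mathrm{W}^{1,q}$ boundedness combined with the pointwise differentiability structure of the stop operator's memory.
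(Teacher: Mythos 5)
Your proposal is substantively correct, but it takes a different route from the paper in the trivial sense that the paper offers no proof at all for this lemma: it simply cites \cite[Subsections 2.4 and 4.2]{muench}, \cite[Part 1, Chapter III]{visintin2013differential} and \cite{Brokate_weak_diff} for every assertion, including the Lipschitz/growth estimates \eqref{hyst_growth}, the $\mathrm{W}^{1,q}$ boundedness and weak continuity, the Hadamard differentiability, and the play--stop identity. What your reconstruction buys is a self-contained account of the classical facts; what the paper's citation buys is brevity and, for the one genuinely nontrivial ingredient (Hadamard directional differentiability of $\mathcal{W}$ into $\mathrm{L}^q$), an unavoidable reference, which you also correctly delegate to the literature.

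One soft spot in your sketch deserves attention. The symmetrized testing of \eqref{stop1} does give
$\tfrac12\tfrac{d}{dt}|z_1-z_2|^2\le(\dot v_1-\dot v_2)(z_1-z_2)$, but integrating and then integrating by parts leaves you with the term $\int_0^t (v_1-v_2)(\dot z_1-\dot z_2)\,ds$, whose size is governed by the total variation of $z_1-z_2$ on $[0,t]$, not by the pointwise bound $|z_1-z_2|\le b-a$; as written this does not yield the constant $2$ in \eqref{hyst_growth}. The standard way to get exactly $2$ is the one you already set up at the end of your proposal: prove (by a first-crossing/contradiction argument) that the play $w_i=v_i-z_i$ satisfies $|w_1(t)-w_2(t)|\le\sup_{0\le\tau\le t}|v_1(\tau)-v_2(\tau)|$ when $z_1(0)=z_2(0)=z_0$, and then $|z_1(t)-z_2(t)|\le|v_1(t)-v_2(t)|+|w_1(t)-w_2(t)|\le 2\sup_{0\le\tau\le t}|v_1(\tau)-v_2(\tau)|$; the growth bound follows by comparison with the zero input exactly as you say. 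With that repair (and the minor caveat that for $q=1$ weak continuity on $\mathrm{W}^{1,1}$ needs Dunford--Pettis plus Arzel\`a--Ascoli rather than a reflexive compact embedding), your argument is complete and consistent with the sources the paper relies on.
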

\begin{proof}
	Follows from \cite[Subsection 2.4 and Subsection 4.2]{muench}, see also \cite[Part 1, Chapter III]{visintin2013differential} and \cite{Brokate_weak_diff}.
\end{proof}

\subsection{Assumptions and notation}
Our main assumption is the following:
\begin{assumption}[Main assumption]\label{Ass:general_ass_and_short_notation_1}	
	\cite[Assumptions 2.16, 4.6 and 5.1]{muench}
	We always suppose that Assumption~\ref{Ass:domain} holds.
	Moreover we assume:
	\begin{itemize}
		\item[(A1)] Dimension and Sobolev exponent: $d\geq 2$ and with $\mathrm{J}$ from Remark~\ref{Rem:maximal parabolic regularity} there holds $p\in \mathrm{J}\cap [2,\infty)$ and $2\geq p\left(1-\frac{1}{d}\right)$.	
		
		\item[(A2)] Nonlinearity locally Lipschitz + Hadamard: We will need a fractional power space $X^\alpha=\mathrm{dom}([A_p+1]^\alpha)$ with exponent strictly smaller than one half. This fact is highlighted by a new parameter $\alpha$ which we use instead of $\theta\in [0,\infty)$. For some $\alpha\in (0,\frac{1}{2})$ suppose that the function 
		$f:X^\alpha\times \mathbb{R}\rightarrow \mathbb{W}_{\Gamma_D}^{-1,p}(\Omega)$ is locally Lipschitz continuous with respect to the $X^\alpha$-norm.
		This means that given any $y_0\in X^\alpha$ there is a constant $L(y_0)$ and a neighbourhood 
		$
		V(y_0)=\left\lbrace y\in X^\alpha: \Vert y-y_0 \Vert_{X^\alpha}\leq \delta \in (0,\infty) \right\rbrace
		$ of $y_0$ such that
		\begin{equation*}
		\Vert f(y_1,x_1) - f(y_2,x_2) \Vert_{\mathbb{W}_{\Gamma_D}^{-1,p}(\Omega)} \leq L(y_0) \left( \Vert y_1-y_2 \Vert_{\alpha} + \vert x_1-x_2 \vert \right)
		\end{equation*}
		for every $y_1,y_2 \in V(y_0)$ and all $x_1,x_2\in \mathbb{R}$.
		$f$ is assumed to be directionally differentiable and therefore Hadamard directionally differentiable, see Definition~\ref{Def:Hadam_differentiability} below. Furthermore, the linear growth condition
		\begin{equation*}
		\Vert f(y,x) \Vert_{\mathbb{W}_{\Gamma_D}^{-1,p}(\Omega)} \leq M \left( 1+ \Vert y \Vert_{\alpha} + \vert x \vert \right)
		\end{equation*}
		holds for some $M>0$.
		
		\item[(A3)] Scalar projection: For some $w\in\mathbb{W}_{\Gamma_D}^{1,p'}(\Omega)\backslash\{0\}$ the operator $S\in [\mathbb{W}_{\Gamma_D}^{-1,p}(\Omega)]^*$ in equation~\eqref{state_equ_z} is given by
		$
		S y = \langle y,w \rangle_{\mathbb{W}_{\Gamma_D}^{1,p'}(\Omega)}\ \forall y\in\mathbb{W}_{\Gamma_D}^{-1,p}(\Omega).
		$
		We assume that $w$ is even contained in the space $\mathrm{dom}([(1+A_p)^{1-\alpha}]^*)$. Note that $S$ belongs to $[X^\theta]^*$ for all $\theta\geq 0$ because of the embedding 
		$X^\theta\hookrightarrow \mathbb{W}_{\Gamma_D}^{-1,p}(\Omega)$.
		
		\item[(A4)] Desired state: The desired state $y_d$ in \eqref{opt_control_ control_problem} is in $\mathrm{L}^2 \left((0,T);[\mathrm{L}^2(\Omega)]^m\right)$.
	\end{itemize}
\end{assumption}

We introduce some more notation for the rest of the work:
\begin{itemize}
	\item[(N1)]
	For the particular $p$ from (A1) in Assumption~\ref{Ass:general_ass_and_short_notation_1} we set
	$X:=\mathbb{W}_{\Gamma_D}^{-1,p}(\Omega)$
	with $\mathbb{W}_{\Gamma_D}^{-1,p}(\Omega)$ from Definition~\ref{Def:Sobolev_space}.
	We sometimes identify elements $v\in X^*$ with their Riesz representation in $\mathbb{W}_{\Gamma_D}^{1,p'}(\Omega)$, i.e.
	$
	\langle v,y \rangle_{X}= \langle y,v \rangle_{\mathbb{W}_{\Gamma_D}^{1,p'}(\Omega)},\ \forall y\in X.
	$
	\item[(N2)] The operators $A_p$ and the spaces $X^\theta = \mathrm{dom}([A_p+1]^\theta)$ are defined as in Definition~\ref{Def:vector_Sobolev_space} and Remark~\ref{Rem:maximal parabolic regularity}.
	\item[(N3)] 
			The spaces $Y_q$, $Y_{q,t}$ and $Y^*_{q,t}$ are defined as in Definition~\ref{Def:maximal parabolic regularity}.
	\item[(N4)] $\mathcal{W}$ is the scalar stop operator from Lemma~\ref{Lem:hyst_props}.
	\item[(N5)] $B_1$ is defined by
	$
	B_1:[\mathrm{L}^2(\Omega)]^m\rightarrow X,\ \langle B_1 u,v \rangle_{\mathbb{W}_{\Gamma_D}^{1,p'}(\Omega)}:=\int_\Omega u\cdot v\, dx\ v\in\mathbb{W}_{\Gamma_D}^{1,p'}(\Omega).
	$
	
	Since $2\geq p\left(1-\frac{1}{d}\right)$, the embeddings 
	$\mathrm{L}^2(\Gamma_{N_j},\mathcal{H}_{d-1})\hookrightarrow \mathrm{W}_{\mathrm{\Gamma_D}_j}^{-1,p}(\Omega)$ are continuous for $j\in \{1,\ldots,m\}$ \cite[Remark 5.11]{rehbergsystems}.
	Therefore also the operator
	
	$B_2:\prod_{j=1}^m \mathrm{L}^2(\Gamma_{N_j},\mathcal{H}_{d-1})\rightarrow X,\ \langle B_2y,v \rangle_{\mathbb{W}^{1,p'}(\Omega)}:=\sum_{j=1}^m \int_{\Gamma_{N_j}}y_jv_j\, d\mathcal{H}_{d-1}\ \forall v\in\mathbb{W}_{\Gamma_D}^{1,p'}(\Omega)$
	is continuous.
	
	\item[(N6)] We write $J_T=(0,T)$,	$
	U_1= \mathrm{L}^2 \left(J_T;[\mathrm{L}^2(\Omega)]^m\right)$
	and
	$
	U_2= \mathrm{L}^2 \left(J_T; \prod_{j=1}^m \mathrm{L}^2(\Gamma_{N_j},\mathcal{H}_{d-1})\right).
	$
	
\end{itemize}

%

\subsection{Solution operator and optimal control}\label{Subsec:Sol_op_optimal_control}
As in \cite[Equation (1)]{muench} we denote $F[y](t):=f(y(t),\mathcal{W}[S y](t))$ and introduce the more general abstract evolution equation 
\begin{equation}
\begin{aligned}
\dot{y}(t) + (A_p y)(t) &= (F[y])(t) + u(t)&&  \text{in }X && \text{for } t>0,\\
y(0)&=0\in X.
\end{aligned}\label{state_eqation_abstract}\
\end{equation}


%
%
Note that $F[y]$ is non-local in time.
In order to obtain some kind of differentiability of the reduced cost function, the solution operator of the state equation has to be differentiable in a sense which allows for the chain rule.
We can not expect a Fr{\'e}chet derivative because of the non-smooth hysteresis operator, see \cite{Brokate_weak_diff}.
But the chain rule can also be applied within the weaker concept of Hadamard directional differentiability.
\begin{definition}\label{Def:Hadam_differentiability}[Hadamard directional differentiability]
	Let $X,Y$ be normed vector spaces and let $U\subset X$ be open. If $g:U\rightarrow Y$ is directionally differentiable at $x\in U$ and if in addition for all functions $r:[0,\lambda_0)\rightarrow X$ with $\lim\limits_{\lambda \rightarrow 0} \frac{r(\lambda)}{\lambda}=0$ it holds
	$
	g'[x;h]= \lim\limits_{\lambda \downarrow 0} \frac{g(x+\lambda h + r(\lambda)) - g(x)}{\lambda}
	$
	for all directions $h\in X$, we call $g'[x;h]$ the Hadamard directional derivative of $g$ at $x$ in the direction $h$. 
	Note that $g(x+\lambda h + r(\lambda))$ is only well defined if $\lambda$ is already small enough so that $x+\lambda h + r(\lambda) \in U$. The chain rule applies for Hadamard directionally differentiable functions \cite[Lemma 4.3]{muench}.
\end{definition}
Hadamard directional differentiability of the solution operator $G$ is shown in \cite{muench}.
%
By \cite[Theorem 3.1 and Theorem 4.7]{muench} we have:
\begin{theorem}[Solution operator for the state equation]\label{Thm:state_equ_sol_op}
	Let Assumption \ref{Ass:general_ass_and_short_notation_1} hold.	
	Then for the fixed value $\alpha\in \left(0,\frac{1}{2}\right)$ and for all $u\in \mathrm{L}^q(J_T;X)$ with $q\in (\frac{1}{1-\alpha},\infty]$ problem \eqref{state_eqation_abstract} has a unique mild solution
	$y=y(u)=:y^u$ in $\mathrm{C}(\overline{J_T};X^\alpha)$.
	In particular, this means that $(F[y]) + u$ is contained in $\mathrm{L}^1(J_T;X)$ and that $y$ solves the integral equation
	\begin{equation*}
	y(t) = \int_{0}^{t} \exp(-A_p(t-s))[(F[y])(s) + u(s)] \, ds,\ t\in J_T.
	\end{equation*}
	The solution mapping 
	$G:u \mapsto y(u),\ \mathrm{L}^q(J_T;X) \rightarrow \mathrm{C}(\overline{J_T};X^\alpha)$
	is locally Lipschitz continuous.
	$G$ is linearly bounded with values in $\mathrm{C}(\overline{J_T};X^\alpha)$.
	All statements remain valid if $\mathrm{C}(\overline{J_T};X^\alpha)$ is replaced by 
	$Y_{s,0}$ 
	where $s=q$ if $q < \infty$ and with $s\in (1,\infty)$ arbitrary if $q = \infty$.
	$G$ is Hadamard directionally differentiable as a mapping into $\mathrm{C}(\overline{J_T};X^\alpha)$ as well as into $Y_{q,0}$ for any $q\in (\frac{1}{1-\alpha},\infty)$.
	Its derivative $y^{u,h}:=G'[u;h]$ at $u\in \mathrm{L}^q(J_T;X)$ in direction $h\in \mathrm{L}^q(J_T;X)$ is given by the unique solution $\zeta$ of
	\begin{equation}
	\dot{\zeta}(t) + (A_p \zeta)(t)= F'[y;\zeta](t) + h(t)\ \text{for } t\in J_T,\
	\zeta(0)=0\label{eq:Thm:state_equ_sol_op}
	\end{equation}
	where $F'[y;\zeta](t)=f'[(y(t),\mathcal{W}[Sy](t));(y(t),\mathcal{W}'[Sy;S\zeta](t))]$ and $y=G(u)$.
	The mapping $h\mapsto G'[u;h]$ is Lipschitz continuous from $\mathrm{L}^q(J_T;X)$ to $\mathrm{C}(\overline{J_T};X^\alpha)$ and to $Y_{q,0}$ with a modulus $C=C(G(u),T)>0$.
	
\end{theorem}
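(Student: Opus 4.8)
The plan is to treat the statement in three stages: (i) well-posedness of the mild solution in $\mathrm{C}(\overline{J_T};X^\alpha)$, together with uniqueness, local Lipschitz dependence and linear boundedness; (ii) the upgrade of regularity to the maximal-parabolic-regularity space $Y_{s,0}$; and (iii) the Hadamard directional differentiability of $G$ and the identification of $G'[u;h]$ with the solution $\zeta$ of \eqref{eq:Thm:state_equ_sol_op}. Throughout, the only structural facts used are the analyticity of $-A_p$ with the fractional-power decay \eqref{frac_pow_estimate1}, the maximal parabolic regularity of $A_p$ (Remark~\ref{Rem:maximal parabolic regularity}) together with the embeddings of Remark~\ref{Rem:embeddings}, the growth and local Lipschitz bounds on $f$ from (A2), and the Lipschitz/growth/Hadamard properties of the stop operator from Lemma~\ref{Lem:hyst_props} combined with $S\in[X^\alpha]^*$ from (A3).

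\emph{Stage (i).} I would run a Banach fixed-point argument for the map $(\Phi_u y)(t)=\int_0^t\exp(-A_p(t-s))\bigl[(F[y])(s)+u(s)\bigr]\,ds$ on a ball of $\mathrm{C}([0,\tau];X^\alpha)$. Applying $(A_p+1)^\alpha$ under the integral and using \eqref{frac_pow_estimate1} produces a kernel $(t-s)^{-\alpha}$; since $q>\frac1{1-\alpha}$ one has $\alpha q'<1$, so this kernel is integrable against $\mathrm{L}^q$-data in time, while $F[y]$ is bounded in $X$ on $[0,\tau]$ because of the linear growth of $f$ in (A2) and the sublinear bound $|\mathcal{W}[Sy](s)|\le 2\sup_{[0,s]}|Sy|+|z_0|$ from \eqref{hyst_growth}. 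Local Lipschitz continuity of $f$ together with $|\mathcal{W}[Sy_1](s)-\mathcal{W}[Sy_2](s)|\le 2\sup_{[0,s]}|S(y_1-y_2)|$ gives the contraction for $\tau$ small; the time-nonlocality of $F$ is harmless here since $\mathcal{W}[Sy](s)$ only sees $Sy|_{[0,s]}\subset \mathrm{C}[0,\tau]$. To go global, the linear growth conditions yield the a priori estimate
\[
\|y(t)\|_{X^\alpha}\le c+c\int_0^t(t-s)^{-\alpha}\Bigl(1+\sup_{[0,s]}\|y\|_{X^\alpha}+\|u(s)\|_X\Bigr)\,ds,
\]
and a singular Gronwall (Henry) inequality bounds $y$ on the whole maximal interval, forcing $\tau=T$; with $c$ of the form $c(1+\|u\|_{\mathrm{L}^q(J_T;X)})$ this is exactly the claimed linear boundedness. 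Applying the same estimate to the difference of two solutions with data $u_1,u_2$ gives uniqueness and the local Lipschitz dependence of $G$ on $u$.

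\emph{Stage (ii).} Once $y=G(u)\in\mathrm{C}(\overline{J_T};X^\alpha)$ is in hand, the growth bound in (A2) and \eqref{hyst_growth} show that $F[y]+u\in\mathrm{L}^s(J_T;X)$ with $s=q$ if $q<\infty$ and with arbitrary finite $s$ if $q=\infty$. Maximal parabolic $\mathrm{L}^s$-regularity of $A_p$ on $X$ then places $y$ in $Y_{s,0}$ and identifies the mild and strong solution concepts; the Lipschitz and linear-boundedness statements in the $Y_{s,0}$-norm follow from the boundedness of $(\frac{d}{dt}+A_p)^{-1}$ on these spaces (Remark~\ref{Rem:maximal parabolic regularity}) and the embeddings of Remark~\ref{Rem:embeddings}.

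\emph{Stage (iii).} First, \eqref{eq:Thm:state_equ_sol_op} is a linear but time-nonlocal evolution equation, and its unique solvability in $\mathrm{C}(\overline{J_T};X^\alpha)\cap Y_{q,0}$ follows from the same fixed-point/singular-Gronwall scheme as in Stage (i), using that $v\mapsto f'[(y(t),\mathcal{W}[Sy](t));(\cdot,v)]$ and $v\mapsto\mathcal{W}'[Sy;Sv]$ are positively homogeneous and Lipschitz in their last argument; applying that scheme to two directions $h_1,h_2$ also yields the Lipschitz bound for $h\mapsto G'[u;h]$ with modulus depending only on $G(u)$ and $T$. For the differentiability itself, fix $h$ and a remainder $r(\lambda)$ with $r(\lambda)/\lambda\to 0$ in $\mathrm{L}^q(J_T;X)$, put $y_\lambda:=G(u+\lambda h+r(\lambda))$ and $d_\lambda:=\lambda^{-1}(y_\lambda-y)$. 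The local Lipschitz continuity of $G$ keeps $d_\lambda$ bounded in $Y_q$, so by the compact embedding in Remark~\ref{Rem:embeddings} a subsequence converges in $\mathrm{C}(\overline{J_T};X^\alpha)$ to some $d$, and I would then show that $d$ solves \eqref{eq:Thm:state_equ_sol_op}. The crux is to pass to the limit in the difference quotient $\lambda^{-1}(F[y_\lambda]-F[y])$: writing $Sy_\lambda=Sy+\lambda Sd+\lambda S(d_\lambda-d)$ with $\lambda S(d_\lambda-d)=o(\lambda)$ in $\mathrm{C}[0,T]$, the \emph{Hadamard} differentiability of $\mathcal{W}:\mathrm{C}[0,T]\to\mathrm{L}^q(0,T)$ — and not merely directional differentiability, precisely because of the $o(\lambda)$ remainder sitting inside $\mathcal{W}$ — gives $\lambda^{-1}(\mathcal{W}[Sy_\lambda]-\mathcal{W}[Sy])\to\mathcal{W}'[Sy;Sd]$ in $\mathrm{L}^q(0,T)$; then $y_\lambda(t)=y(t)+\lambda d(t)+o(\lambda)$ and $\mathcal{W}[Sy_\lambda](t)=\mathcal{W}[Sy](t)+\lambda\mathcal{W}'[Sy;Sd](t)+o(\lambda)$ (along a further a.e.-convergent subsequence) let the Hadamard differentiability of $f$ identify the pointwise limit of the difference quotient with $F'[y;d](t)$; since that quotient is bounded by $c\sup_{[0,t]}\|d_\lambda\|_{X^\alpha}$, dominated convergence upgrades this to convergence in $\mathrm{L}^q(J_T;X)$. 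Continuity of the solution map of the linear equation then gives $d=\zeta$ by the uniqueness established above, and since every subsequence has a further subsequence with this limit, the whole family converges, $d_\lambda\to\zeta$ in $\mathrm{C}(\overline{J_T};X^\alpha)$; a final maximal-regularity estimate upgrades the convergence to $Y_{q,0}$. The chain rule for Hadamard directionally differentiable maps (Definition~\ref{Def:Hadam_differentiability}) is what keeps the composite $u\mapsto\mathcal{W}[S\,G(u)]$ manageable throughout, and the time-nonlocality of $\mathcal{W}$, handled via this Hadamard–compactness device, is the main obstacle.
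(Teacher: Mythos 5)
Your proposal is correct and follows essentially the same route as the proof the paper relies on: the paper itself only cites \cite[Theorems 3.1 and 4.7]{muench}, but the techniques visible in its own analogous arguments for the regularized problem (the mild-solution fixed point with the $(t-s)^{-\alpha}$ kernel from \eqref{frac_pow_estimate1} and the condition $\alpha q'<1$, singular Gronwall, maximal parabolic regularity for the $Y_{s,0}$ upgrade, and the Hadamard-differentiability-plus-dominated-convergence treatment of the Nemytskii and stop operators sketched in the proof of Lemma~\ref{Lem:derivative_reg_function}) are exactly the ones you deploy. Your handling of the $o(\lambda)$ remainder inside $\mathcal{W}$ via Hadamard (rather than merely directional) differentiability and the compactness/subsequence device is precisely the point that makes the argument go through.
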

\begin{proof}
	See \cite[Theorem 3.1 and Theorem 4.7]{muench}.
\end{proof}

%
%


Existence of an optimal control for problem \eqref{opt_control_ control_problem}-\eqref{state_equ_z} is shown in \cite[Theorem 5.4]{muench}:
%
%
%

\begin{theorem}[Existence of optimal control]\label{Thm:opt_control_existence}
	Let Assumption~\ref{Ass:general_ass_and_short_notation_1} hold.
	Then for $i\in \{1,2\}$, there exists an optimal control $\overline{u}\in U_i$ for the optimal control problem \eqref{opt_control_ control_problem}-\eqref{state_equ_z}. This means that $\overline{u}$, together with the optimal state $\overline{y}=G(\overline{u})$, which solves \eqref{state_equ_y}, are a solution of the minimization problem \eqref{opt_control_ control_problem}. The solution of \eqref{state_equ_z} is given by $\overline{z}=\mathcal{W}[S\overline{y}]$.
\end{theorem}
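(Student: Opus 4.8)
The plan is a textbook application of the direct method of the calculus of variations, the only delicate point being the passage to the limit in the non-local nonlinearity. Writing $j(u):=J(G(B_iu),u)$ for the reduced cost on $U_i$, I would first note that $j\geq 0$, so that $m:=\inf_{u\in U_i}j(u)$ is finite, and pick a minimizing sequence $(u_n)\subset U_i$ with $j(u_n)\to m$. The Tikhonov term $\tfrac{\kappa}{2}\|u\|_{U_i}^2$ makes $j$ coercive on the Hilbert space $U_i$, hence $(u_n)$ is bounded and, along a subsequence (not relabelled), $u_n\rightharpoonup\overline{u}$ weakly in $U_i$ for some $\overline{u}\in U_i$; since the admissible set is all of $U_i$, $\overline{u}$ is admissible.

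Next I would set up compactness of the states. Since $\alpha<\tfrac12$, the exponent $q=2$ is admissible in Theorem~\ref{Thm:state_equ_sol_op} (because $2>\tfrac{1}{1-\alpha}$), so $y_n:=G(B_iu_n)\in Y_{2,0}$; by the linear boundedness of $G$ and the continuity of $B_i\colon U_i\to\mathrm{L}^2(J_T;X)$ (valid for $i=1$ and $i=2$), the sequence $(y_n)$ is bounded in $Y_{2,0}$, hence also in $\mathrm{C}(\overline{J_T};X^\alpha)$. As $Y_{2,0}$ is a closed subspace of the reflexive space $Y_2$, a further subsequence satisfies $y_n\rightharpoonup\overline{y}$ weakly in $Y_{2,0}$, and the compact embedding $Y_2\hookrightarrow\hookrightarrow\mathrm{C}([0,T];X^\theta)$ of Remark~\ref{Rem:embeddings} — available for every $\theta<\tfrac12$, in particular for some $\theta\in[\alpha,\tfrac12)$ — upgrades this, along a subsequence, to strong convergence $y_n\to\overline{y}$ in $\mathrm{C}(\overline{J_T};X^\alpha)$.

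Then I would pass to the limit in the state equation, which is the hard part since $F[y_n](t)=f(y_n(t),\mathcal{W}[Sy_n](t))$ depends non-locally in time on $y_n$. From $y_n\to\overline{y}$ in $\mathrm{C}(\overline{J_T};X^\alpha)$ and $S\in[X^\alpha]^*$ one obtains $Sy_n\to S\overline{y}$ in $\mathrm{C}[0,T]$; Lipschitz continuity of the stop operator on $\mathrm{C}[0,T]$ (Lemma~\ref{Lem:hyst_props}) then yields $\mathcal{W}[Sy_n]\to\mathcal{W}[S\overline{y}]$ in $\mathrm{C}[0,T]$; and since $\{\overline{y}(t):t\in[0,T]\}$ is compact in $X^\alpha$ and the $y_n(t)$ eventually lie in a fixed neighbourhood of it, the local Lipschitz continuity and linear growth of $f$ (assumption (A2)) give $F[y_n]\to F[\overline{y}]$ strongly in $\mathrm{L}^2(J_T;X)$ with $F[\overline{y}](t)=f(\overline{y}(t),\mathcal{W}[S\overline{y}](t))$. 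On the other hand $B_iu_n\rightharpoonup B_i\overline{u}$ weakly in $\mathrm{L}^2(J_T;X)$, so $F[y_n]+B_iu_n\rightharpoonup F[\overline{y}]+B_i\overline{u}$ weakly there. Applying the bounded linear, hence weakly continuous, operator $(\tfrac{d}{dt}+A_p)^{-1}\colon\mathrm{L}^2(J_T;X)\to Y_{2,0}$ (Remark~\ref{Rem:maximal parabolic regularity}) and using $y_n=(\tfrac{d}{dt}+A_p)^{-1}(F[y_n]+B_iu_n)$, I conclude $\overline{y}=(\tfrac{d}{dt}+A_p)^{-1}(F[\overline{y}]+B_i\overline{u})$, i.e.\ by the uniqueness part of Theorem~\ref{Thm:state_equ_sol_op}, $\overline{y}=G(B_i\overline{u})$, so $\overline{y}$ solves \eqref{state_equ_y} and $\overline{z}:=\mathcal{W}[S\overline{y}]$ solves \eqref{state_equ_z}.

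Finally, since $Y_{2,0}\hookrightarrow\mathrm{L}^2(J_T;\mathrm{dom}(A_p))\hookrightarrow U_1$ continuously (using $p\geq 2$), we get $y_n\rightharpoonup\overline{y}$ weakly in $U_1$; both summands of $J$ are continuous and convex, hence sequentially weakly lower semicontinuous, so $j(\overline{u})=J(\overline{y},\overline{u})\leq\liminf_n J(y_n,u_n)=m$, which shows that $\overline{u}$ is an optimal control with optimal state $\overline{y}=G(B_i\overline{u})$ and $\overline{z}=\mathcal{W}[S\overline{y}]$. The single non-routine step is the strong convergence of $F[y_n]$; it rests on matching the compactness furnished by the maximal-parabolic-regularity embedding of Remark~\ref{Rem:embeddings} with the Lipschitz estimate \eqref{hyst_growth} for the hysteresis operator on $\mathrm{C}[0,T]$, everything else being the standard argument.
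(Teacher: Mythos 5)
Your proof is correct and follows exactly the route the paper relies on: the paper itself gives no argument here but simply cites \cite[Theorem 5.4]{muench}, and the direct method you carry out (coercivity of the Tikhonov term, boundedness of the states in $Y_{2,0}$ via the linear boundedness of $G$, compactness of $Y_{2,0}\hookrightarrow \mathrm{C}(\overline{J_T};X^\alpha)$, passage to the limit in $F[y_n]$ through the uniform Lipschitz estimate for $\mathcal{W}$, and weak lower semicontinuity of $J$) is precisely the weak-continuity-of-$G$ plus weak-l.s.c. argument that the paper reuses in Lemmas~\ref{Lem:continuity_G_epsilon}--\ref{Lem:convergence_reg_to_nonreg} and Theorem~\ref{Thm:conv_minimizers_reg_to_min_original} for the regularized problems. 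No gaps.
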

\begin{proof}
	See \cite[Theorem 5.4]{muench}.
\end{proof}

\section{Regularized control problem}
	In order to derive an adjoint system for problem \eqref{opt_control_ control_problem}-\eqref{state_equ_z} we introduce a sequence of control problems with regularized $\varepsilon$-dependent state equations, for which we can derive adjoint systems. 
	To this aim we regularize the variational inequality which defines $\mathcal{W}$ and the non-linearity $f$, which yields a regularization of the solution operator of \eqref{state_eqation_abstract}.
	The regularization of $\mathcal{W}$ follows the techniques in \cite[Section 3]{brokate2013optimal} and the approach for the regularization of semilinear parabolic equations relies on \cite[Section 4]{meyeroptimal}.
	
	In the end of Subsection~\ref{Subsec:Reg_state_equ_and_estimates}, we estimate the norms of the solutions of the regularized state equations against the forcing term $u$, independently of $\varepsilon$. 
	
	The dynamics of the regularized state equations in dependence of $\varepsilon$ is analyzed in Subsection~\ref{Subsec:dynamics_reg_prob}: The estimates from Subsection~\ref{Subsec:Reg_state_equ_and_estimates} together with a weak compactness argument imply weak compactness of the regularized solution operators for fixed $\varepsilon > 0$. This yields weakly converging subsequences $y_{\varepsilon_k}$ and $z_{\varepsilon_k}$ for any weakly converging sequence $u_{\varepsilon}$, $\varepsilon \rightarrow 0$.
	
	In Subsection~\ref{Subsec:reg_contr_problem}, we apply the convergence result from Subsection~\ref{Subsec:dynamics_reg_prob} to deduce convergence of the solutions of regularized control problems, which are introduced in Subsection~\ref{Subsec:reg_contr_problem}, to an optimal solution of problem \eqref{opt_control_ control_problem}-\eqref{state_equ_z} as $\varepsilon\rightarrow 0$, see Theorem~\ref{Thm:conv_minimizers_reg_to_min_original}. 
	
	The adjoint equations for the solutions of the regularized control problems with $\varepsilon>0$ fixed are derived in Subsection~\ref{Subsec:Adj_reg_problem}, see Theorem~\ref{Thm:opt_syst_reg} below.
	
	In Subsection~\ref{Subsec:estimates_adjoints}, we derive uniform-in-$\varepsilon$ bounds for the norms of the adjoint variables $p_\varepsilon,q_\varepsilon$ from Theorem~\ref{Thm:opt_syst_reg}.
	
	The norm bounds on $p_\varepsilon,q_\varepsilon$ from Subsection~\ref{Subsec:estimates_adjoints} give rise to weakly converging subsequences $p_{\varepsilon_k}$ and $q_{\varepsilon_k}$.
	Taking the limit $k\rightarrow \infty$ then yields an adjoint system for \eqref{opt_control_ control_problem}-\eqref{state_equ_z}. This step is carried out in Section~\ref{Sec:Limit}.
	
	We begin with several assumptions on the functions which will enter the regularized problems.
	\begin{assumption}[Regularization]\label{Ass:regularized_problem}
		For $\varepsilon_*>0$ and $\varepsilon \in (0,\varepsilon_*]$ we assume that:
		\begin{itemize}
			\item[$(A1)_\varepsilon$] $f_\varepsilon: X^\alpha\times \mathbb{R} \rightarrow $ is Gâteaux differentiable.
			\item[$(A2)_\varepsilon$] $\sup_{(y,z)\in X^\alpha\times \mathbb{R}} \|f_\varepsilon(y,z)-f(y,z)\|_X \rightarrow 0$ as $\varepsilon \rightarrow 0$.
			\item[$(A3)_\varepsilon$] $f_\varepsilon$ is locally Lipschitz continuous with respect to the $X^\alpha$-norm and all the neighbourhoods and Lipschitz constants are equal to the ones of $f$ in (A2) in Assumption~\ref{Ass:general_ass_and_short_notation_1}, independently of $\varepsilon$.
			The growth condition
			$
			\Vert f_\varepsilon(y,x) \Vert_{X} \leq M \left( 1+ \Vert y \Vert_{X^\alpha} + \vert x \vert \right)
			$
			holds for all $y\in X^\alpha$ and $x\in \mathbb{R}$, with $M$ from (A2) in Assumption~\ref{Ass:general_ass_and_short_notation_1}.
			\item[$(A4)_\varepsilon$] Following the ideas of \cite{brokate2013optimal}, we introduce a convex function $\Psi: \mathbb{R} \rightarrow \mathbb{R}$ with $\Psi(x)\equiv 0$ for $x\in [a,b]$ and $\Psi(x)>0$ for $x\in \mathbb{R}\backslash [a,b]$. We assume that $\Psi$ is twice continuously differentiable and $\Psi'(x) \leq m_1 |x-a|$ for some $m_1>0$ and all $x\in\mathbb{R}$. Moreover, $\Psi''(x) \leq m_2$ for some $m_2>0$ and all $x\in \mathbb{R}$ and $\Psi''$ is assumed to be locally Lipschitz continuous.
		\end{itemize} 
	\end{assumption}
	
	\begin{remark}\label{Rem:construction_Psi}
		A function $\Psi$ as in Assumption \ref{Ass:regularized_problem} can be contructed as a piecewise defined mapping
		$
		\Psi = \chi_{(-\infty, a_1]} \Psi_{-2} + \chi_{(a_1, a]} \Psi_{-1} + \chi_{(b, b_1]} \Psi_{1} + \chi_{(b_1, \infty)} \Psi_{2},
		$
		where $a_1<a<b<b_1$. $\chi$ denotes the characteristic function.
		$\Psi_{-2}$ and $\Psi_{2}$ are affine linear and $\Psi_{-1}$ and $\Psi_{1}$ are polynomials of order four with roots in $a$ and $b$ which are at the same time saddle points and with turning points in $a_1$ and $b_1$.\\
		For example we can choose $b_1:=b+2$, 
		$
			\Psi_1(x):=(x-b)^3(4+b-x)
		$
		and
		$
		\Psi_2(x):=16(x -1-b)
		$
		and define $\Psi_{-1},\Psi_{-2}$ in a similar way, cf. Figure~\ref{fig:graph_psi}.
		Local Lipschitz continuity of $\Psi''$ also in the points where the functions $\Psi_{-2},\ldots,\Psi_{2}$ are glued together is not hard to see. It follows that $\Psi''$ is Lipschitz continuous.
		
	\end{remark}
	
	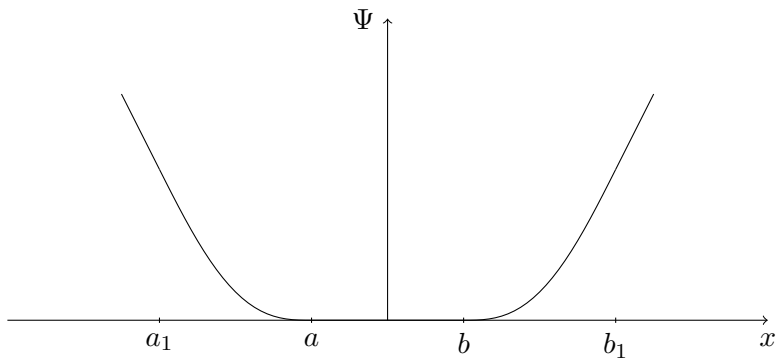
\begin{figure}
		\centering
		\begin{tikzpicture}[] 
		\draw[->] (-5,0) -- (5,0) coordinate (x axis);
		\draw (-1pt,4 ) node[anchor=east,fill=white] {$\Psi$};
		\draw (5 cm,-1pt) node[anchor=north] {$x$};
		\draw[->] (0,0) -- (0,4) coordinate (y axis);
		\draw (-3 cm,1pt) -- (-3 cm,-1pt) node[anchor=north] {$a_1$};
		\draw (-1 cm,1pt) -- (-1 cm,-1pt) node[anchor=north] {$a$};
		\draw (1 cm,1pt) -- (1 cm,-1pt) node[anchor=north] {$b$};
		\draw (3 cm,1pt) -- (3 cm,-1pt) node[anchor=north] {$b_1$};
		\draw [black, domain=1:3] plot (\x, {1/8*(\x-1)^3*(4+1-\x)});
		\draw [black, domain=3:3.5] plot (\x, {1/8*16*(\x-1-1)});
		\draw [black] (-1,0) -- (1,0);
		\begin{scope}[yscale=1,xscale=-1]
		\draw [black, domain=1:3] plot (\x, {1/8*(\x-1)^3*(4+1-\x)});
		\draw [black, domain=3:3.5] plot (\x, {1/8*16*(\x-1-1)});
		\end{scope}
		\end{tikzpicture}
		\caption{Graph of $\Psi$}
		\label{fig:graph_psi}
	\end{figure}
	
	We introduce the following regularized state equations for $i\in\{1,2\}$ and $\varepsilon>0$:
	\begin{alignat}{3}
	\dot{y}(t) +(A_p y)(t) &= f_\varepsilon(y(t),z(t))  + (B_i u)(t)\  \text{in } X&&\ \text{for }t\in J_T, \label{state_equ_regular_y}&&\
	y(0)= 0 \text{ in } X,\\
	\dot{z}(t)-S\dot{y}(t) &= -\frac{1}{\varepsilon}\Psi'(z(t)) &&\ \text{for } t\in J_T\label{state_equ_regular_z},&&\
	z(0)=z_0.
	\end{alignat}

	\subsection{Regularization of \lowercase{\eqref{state_eqation_abstract} and uniform-in-$\varepsilon$ estimates}}\label{Subsec:Reg_state_equ_and_estimates}
	In this subsection, we introduce a regularization of \eqref{state_eqation_abstract}, similar to the regularized state equations \eqref{state_equ_regular_y}-\eqref{state_equ_regular_z} but for source terms $u\in \mathrm{L}^q(J_T;X)$. We show well-posedness and estimate the norms of the solutions in $u$, independently of $\varepsilon$. The ideas for many of the steps in this subsection go back to \cite[Subsection 3.1]{brokate2013optimal}.
	\begin{definition}[Regularized stop]\label{Def:Solution_op_regularized_hyst}
	For $\varepsilon\in (0,\varepsilon_*]$ we denote by $Z_\varepsilon:v\mapsto Z_\varepsilon(v)$ the solution operator of
	\begin{equation*}
	\dot{z}(t)-\dot{v}(t) = -\frac{1}{\varepsilon}\Psi'(z(t)) \ \text{for } t\in J_T,\
	z(0)=z_0,
	\end{equation*}
	or of the corresponding integral equation. The input $v$ is a function defined on $J_T$.
	\end{definition}
	
	\begin{remark}\label{Rem:Solution_op_regularized_hyst}
		By standard techniques it follows that $Z_\varepsilon$ is continuously differentiable on $\mathrm{C}(\overline{J_T})$.	
	Its derivative at $v$ in direction $h$ is given by the unique solution $Z'_\varepsilon[v;h]=z$ of the integral equation
	$
	z(t) = h(t) - \int_{0}^{t} \frac{1}{\varepsilon} \Psi''(Z_\varepsilon(v)(s))z(s) ds.
	$
	$Z_\varepsilon$ is bounded on $\mathrm{W}^{1,q}(J_T)$ for all $q\in (1,\infty)$.
	\end{remark}	
	Similar to the definition of $F$ in Subsection~\ref{Subsec:Sol_op_optimal_control} we denote $(F_\varepsilon(y))(t):=f_\varepsilon(y(t),Z_\varepsilon(S y)(t))$.
	Consider the abstract evolution equation 
	\begin{equation}
	\begin{aligned}
	\dot{y}(t) + (A_p y)(t) &= (F_\varepsilon(y))(t) + u(t)&& \text{in } X && \text{for } t>0,\label{state_equ_regular_abstract}\\
	y(0)&=0\in X.
	\end{aligned}
	\end{equation}
	
	\begin{corollary}[Existence of regularized problem]\label{Cor:state_equ_reg_wellposed}
		Let Assumption~\ref{Ass:general_ass_and_short_notation_1} and Assumption~\ref{Ass:regularized_problem} hold and let $\varepsilon\in (0,\varepsilon_*]$ be arbitrary.		
		Furthermore, assume $q\in (\frac{1}{1-\alpha},\infty]$ and set $s=q$ if $q<\infty$ or $s\in (1,\infty)$ arbitrary if $q=\infty$.
%
		Then for all $u\in \mathrm{L}^q(J_T;X)$ problem \eqref{state_equ_regular_abstract} has a unique solution $y_\varepsilon(u)$ in $Y_{s,0}$.
		The solution mapping 
		$
		G_\varepsilon:u \mapsto y_\varepsilon(u)=:y_\varepsilon^u
		$
		is locally Lipschitz continuous from $\mathrm{L}^q(J_T;X)$ to $\mathrm{C}(\overline{J_T};X^\alpha)$ and to $Y_{s,0}$. We denote $z_\varepsilon^u:=z_\varepsilon(u):=Z_\varepsilon(Sy_\varepsilon^{u})$.
	\end{corollary}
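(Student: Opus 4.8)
The plan is to deduce Corollary~\ref{Cor:state_equ_reg_wellposed} as a direct transcription of Theorem~\ref{Thm:state_equ_sol_op} (equivalently of the argument in \cite{muench}), with the nonlinearity $f$ replaced by $f_\varepsilon$ and the stop operator $\mathcal{W}$ replaced by the regularized operator $Z_\varepsilon$ of Definition~\ref{Def:Solution_op_regularized_hyst}. Since $\varepsilon\in(0,\varepsilon_*]$ is fixed, it is harmless that the constants below depend on $\varepsilon$; the only genuine work is to check that $f_\varepsilon$ and $Z_\varepsilon$ possess exactly the structural properties of $f$ and $\mathcal{W}$ that the proof of Theorem~\ref{Thm:state_equ_sol_op} uses. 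Once this is settled, the fixed point, regularity and Lipschitz statements carry over line by line.

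For $f_\varepsilon$ this is immediate: by $(A3)_\varepsilon$ in Assumption~\ref{Ass:regularized_problem} it is locally Lipschitz continuous in the $X^\alpha$-norm with precisely the neighbourhoods and constants of $f$ from (A2) in Assumption~\ref{Ass:general_ass_and_short_notation_1}, it obeys the same linear growth bound $\|f_\varepsilon(y,x)\|_X\le M(1+\|y\|_{X^\alpha}+|x|)$, and by $(A1)_\varepsilon$ it is Gâteaux, hence directionally, differentiable. For $Z_\varepsilon$ I would argue as follows. From Remark~\ref{Rem:Solution_op_regularized_hyst}, $Z_\varepsilon$ is continuously differentiable on $\mathrm{C}(\overline{J_T})$ and bounded on $\mathrm{W}^{1,q}(J_T)$ for $q\in(1,\infty)$, with $Z'_\varepsilon[v;h]=z$ solving the integral equation stated there; since $|\Psi''|\le m_2$ by $(A4)_\varepsilon$, Gronwall applied on $[0,t]$ gives $|Z'_\varepsilon[v;h](t)|\le e^{m_2 t/\varepsilon}\sup_{0\le\tau\le t}|h(\tau)|$ uniformly in $v$, hence $Z_\varepsilon$ is globally Lipschitz on $\mathrm{C}[0,T]$, and being causal as an ODE solution operator it satisfies the accumulative estimate $|Z_\varepsilon(v_1)(t)-Z_\varepsilon(v_2)(t)|\le e^{m_2 t/\varepsilon}\sup_{0\le\tau\le t}|v_1(\tau)-v_2(\tau)|$, the analogue of \eqref{hyst_growth}. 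Integrating $\dot z=\dot v-\tfrac1\varepsilon\Psi'(z)$, using that $\Psi$ is convex with minimum set $[a,b]$ so that $-\Psi'(z)$ points towards $[a,b]$, together with $\Psi'(x)\le m_1|x-a|$, yields a linear growth bound $|Z_\varepsilon(v)(t)|\le c(1+\sup_{0\le\tau\le t}|v(\tau)|)$. Finally, by (A3) one has $S\in[X^\alpha]^*$, and elements of $Y_{s,0}$ (resp. of $\mathrm{C}(\overline{J_T};X^\alpha)$) have continuous $X^\alpha$-valued traces, so $v=Sy\in\mathrm{C}[0,T]$ and $F_\varepsilon(y)(t)=f_\varepsilon(y(t),Z_\varepsilon(Sy)(t))$ is a causal map that is locally Lipschitz from $\mathrm{C}(\overline{J_T};X^\alpha)$ into $\mathrm{L}^q(J_T;X)$ and linearly bounded there.

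With these facts I would then repeat the contraction argument of Theorem~\ref{Thm:state_equ_sol_op}: on $\mathrm{C}([0,T_1];X^\alpha)$ consider the Duhamel map $y\mapsto\bigl(t\mapsto\int_0^t\exp(-A_p(t-s))[F_\varepsilon(y)(s)+u(s)]\,ds\bigr)$ and use \eqref{frac_pow_estimate1} for $(A_p+1)^\alpha\exp(-A_pt)$, whose singularity $t^{-\alpha}$ is integrable because $\alpha<\tfrac12$ (and, against the $u$-term, $\mathrm{L}^{q'}$-integrable since $q>\tfrac1{1-\alpha}$, i.e.\ $\alpha q'<1$), together with the local Lipschitz bounds on $F_\varepsilon$; for $T_1$ small this is a contraction, producing a unique local mild solution in $\mathrm{C}(\overline{J_T};X^\alpha)$, and the linear growth of $f_\varepsilon$ and $Z_\varepsilon$ furnishes an a priori bound precluding finite-time blow-up, so the solution is global and unique on $\overline{J_T}$. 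Then $F_\varepsilon(y)+u\in\mathrm{L}^q(J_T;X)$, whence by maximal parabolic regularity, cf.\ Remark~\ref{Rem:maximal parabolic regularity}, $y=\bigl(\tfrac{d}{dt}+A_p\bigr)^{-1}(F_\varepsilon(y)+u)\in Y_{s,0}$ with $s=q$ if $q<\infty$ and $s\in(1,\infty)$ arbitrary if $q=\infty$. Local Lipschitz continuity of $G_\varepsilon$ into $\mathrm{C}(\overline{J_T};X^\alpha)$ and into $Y_{s,0}$ follows by subtracting the Duhamel identities for $u_1,u_2$, applying \eqref{frac_pow_estimate1}, the local Lipschitz estimates and Gronwall on bounded sets, and invoking maximal parabolic regularity once more; and $z_\varepsilon^u:=Z_\varepsilon(Sy_\varepsilon^u)$ is well defined by the above.

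I expect no real difficulty here — it is essentially bookkeeping built on Theorem~\ref{Thm:state_equ_sol_op}. The one point deserving genuine (though elementary) care is the global-in-time Lipschitz and growth control of $Z_\varepsilon$ on $\mathrm{C}[0,T]$ with finite, $\varepsilon$-dependent constants, and verifying that this $\varepsilon$-dependence only shrinks the contraction radius $T_1$ without ever making it vanish, so that the patching to a global solution still goes through.
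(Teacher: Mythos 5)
Your proposal is correct and follows essentially the same route as the paper: the paper's proof consists precisely of the observation that $Z_\varepsilon$ (and $f_\varepsilon$) satisfy the structural properties of $\mathcal{W}$ (and $f$) used in Theorem~\ref{Thm:state_equ_sol_op}, so that the existence, regularity and local Lipschitz statements carry over verbatim. You simply spell out the verification of those properties (causal Lipschitz and growth bounds for $Z_\varepsilon$ with $\varepsilon$-dependent constants, the inherited Lipschitz/growth bounds for $f_\varepsilon$) and the resulting Duhamel fixed-point and maximal-regularity bookkeeping, which the paper leaves implicit.
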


\begin{proof}

Unique solvability of \eqref{state_equ_regular_abstract} and local Lipschitz continuity of the solution mapping follow because $Z_\varepsilon$ satisfies the properties of $\mathcal{W}$ in Theorem~\ref{Thm:state_equ_sol_op}.
\end{proof}

In the next step we estimate the norms of the solutions of \eqref{state_equ_regular_abstract} independently of $\varepsilon$ by the norm of the source function $u\in \mathrm{L}^q(J_T;X)$ . This yields analogous estimates also for the solutions of \eqref{state_equ_regular_y}-\eqref{state_equ_regular_z} if $u$ is replaced by $B_iu$.

\begin{lemma}[Uniform bounds]\label{Lem:estimates_regularized_solution}
	Adopt the assumptions and the notation from Lemma~\ref{Cor:state_equ_reg_wellposed}.
	There exists a constant $c>0$ which is independent of $\varepsilon$ and $u$ such that the following holds true. For all $q\in (\frac{1}{1-\alpha}, \infty]$ and $\varepsilon\in (0,\varepsilon_*]$ we have
	\begin{equation}
	\begin{aligned}
	\| y_\varepsilon^{u} \|_{Y_{s,0}} \leq c(1+\|u \|_{\mathrm{L}^q(J_T;X)})
	&&
	\text{and}
	&&
	\|z_\varepsilon^u\|_{\mathrm{C}(\overline{J_T})} \leq c(1+ \|u \|_{\mathrm{L}^q(J_T;X)})\label{estim_reg_problem_y_z}
	\end{aligned}
	\end{equation}
    with $s=q$ if $q<\infty$ and for all $s\in (1,\infty)$ if $q=\infty$.
	Moreover, there holds
	\begin{equation}
	0\leq\int_{0}^{T} \left|\dot{z}_\varepsilon^u(s)\right|^2 ds + \sup_{t\in \overline{J_T}} \frac{1}{\varepsilon} \Psi(z_\varepsilon^u(t)) \leq c(1+ \|u \|_{\mathrm{L}^2(J_T;X)})^2.\label{estim_reg_problem_dot(z)_1/epsilonPsi}
	\end{equation}
\end{lemma}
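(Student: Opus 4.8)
\emph{Proof idea.} The plan is to treat the two displays separately, obtaining \eqref{estim_reg_problem_y_z} essentially from the existence theory with a careful accounting of constants, and \eqref{estim_reg_problem_dot(z)_1/epsilonPsi} from an energy test of the regularized stop equation. For \eqref{estim_reg_problem_y_z} I would observe that $G_\varepsilon$ admits exactly the fixed-point/subinterval analysis used for the linear boundedness of $G$ in \cite[Theorem 3.1 and Theorem 4.7]{muench}, and then track the $\varepsilon$-dependence. By $(A3)_\varepsilon$ the growth constant $M$ together with all local Lipschitz constants and neighbourhoods of $f_\varepsilon$ coincide with those of $f$, independently of $\varepsilon$; and $Z_\varepsilon$ obeys the growth bound $|Z_\varepsilon(v)(t)|\le c\bigl(1+\sup_{0\le\tau\le t}|v(\tau)|\bigr)$ as well as the Lipschitz estimate \eqref{hyst_growth} of $\mathcal W$ with $\varepsilon$-independent constants (from the contraction construction of $Z_\varepsilon$ recalled in Remark~\ref{Rem:Solution_op_regularized_hyst}, together with a comparison argument at an extremum of $z$; cf.~\cite[Subsection 3.1]{brokate2013optimal}). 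Feeding the growth condition for $F_\varepsilon$ into the maximal-parabolic-regularity estimate on subintervals $(0,t)$, using the embedding $Y_q\hhookrightarrow\mathrm{C}(\overline{J_T};X^\alpha)$ of Remark~\ref{Rem:embeddings}, and closing the resulting inequality by a Gronwall/partition bootstrap, I would obtain the first bound in \eqref{estim_reg_problem_y_z} with $c$ independent of $\varepsilon$ and $u$. The second bound then drops out by applying the $\varepsilon$-uniform growth bound of $Z_\varepsilon$ to $v=Sy_\varepsilon^u$, using $S\in[X^\alpha]^*$ (by (A3) of Assumption~\ref{Ass:general_ass_and_short_notation_1}) and $\|y_\varepsilon^u\|_{\mathrm{C}(\overline{J_T};X^\alpha)}\le c\|y_\varepsilon^u\|_{Y_{s,0}}$.

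For \eqref{estim_reg_problem_dot(z)_1/epsilonPsi} I may assume $u\in\mathrm{L}^2(J_T;X)$, since otherwise the right-hand side is $+\infty$. As $\alpha<\tfrac12$ we have $2\in(\tfrac1{1-\alpha},\infty]$, so $y_\varepsilon^u\in Y_{2,0}$ (Corollary~\ref{Cor:state_equ_reg_wellposed} with $q=2$) and hence $\dot y_\varepsilon^u\in\mathrm{L}^2(J_T;X)$. Writing $v:=Sy_\varepsilon^u$, the function $z:=z_\varepsilon^u=Z_\varepsilon(v)$ solves $\dot z-\dot v=-\tfrac1\varepsilon\Psi'(z)$ with $z(0)=z_0$; here $\dot v=S\dot y_\varepsilon^u\in\mathrm{L}^2(J_T)$ while $\Psi'(z(\cdot))\in\mathrm{L}^\infty(J_T)$ since $z$ and $\Psi'$ are continuous, so $\dot z\in\mathrm{L}^2(J_T)$ and testing with $\dot z$ is legitimate. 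Multiplying the equation by $\dot z$, using $\Psi'(z)\dot z=\tfrac{d}{dt}\Psi(z)$ and $\Psi(z_0)=0$ (which holds because $z_0\in[a,b]$ and $\Psi\equiv0$ there by $(A4)_\varepsilon$), and integrating over $(0,t)$, I arrive at
\begin{equation*}
\int_0^t|\dot z(s)|^2\,ds+\frac1\varepsilon\Psi(z(t))=\int_0^t (S\dot y_\varepsilon^u)(s)\,\dot z(s)\,ds,\qquad t\in[0,T].
\end{equation*}
From here the rest is routine: Young's inequality (with $S$ bounded on $X$) and absorption of $\tfrac12\int_0^t|\dot z|^2$ give $\tfrac12\int_0^t|\dot z|^2+\tfrac1\varepsilon\Psi(z(t))\le\tfrac12\|S\|^2\|\dot y_\varepsilon^u\|_{\mathrm{L}^2(J_T;X)}^2$; taking $t=T$ in the first term, the supremum over $t$ in the second, using $\Psi\ge0$, and invoking the first part of the lemma (with $q=2$) to bound $\|\dot y_\varepsilon^u\|_{\mathrm{L}^2(J_T;X)}\le\|y_\varepsilon^u\|_{Y_{2,0}}\le c(1+\|u\|_{\mathrm{L}^2(J_T;X)})$ yields \eqref{estim_reg_problem_dot(z)_1/epsilonPsi}; nonnegativity of the left-hand side is clear. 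As a by-product $|z_\varepsilon^u(t)|\le|z_0|+T^{1/2}\|\dot z_\varepsilon^u\|_{\mathrm{L}^2(J_T)}$, which re-proves the second bound in \eqref{estim_reg_problem_y_z} whenever $u\in\mathrm{L}^2(J_T;X)$.

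The main obstacle I anticipate is the \emph{$\varepsilon$-uniformity} in the first bound of \eqref{estim_reg_problem_y_z}: because $f_\varepsilon$ is evaluated at the non-local-in-time argument $z=Z_\varepsilon(Sy)$, the Gronwall/partition bootstrap has to be driven by growth and Lipschitz data of $f_\varepsilon$ \emph{and} of $Z_\varepsilon$ that do not depend on $\varepsilon$ --- precisely what $(A3)_\varepsilon$ secures for $f_\varepsilon$, and what the contraction construction of $Z_\varepsilon$ (plus the comparison argument keeping $z$ within $c(1+\sup|v|)$ of the range of $v$) secures for $Z_\varepsilon$. Everything else transfers verbatim from \cite{muench}, and the energy estimate \eqref{estim_reg_problem_dot(z)_1/epsilonPsi} needs no extra care in $\varepsilon$, its only $\varepsilon$-dependent term $\tfrac1\varepsilon\Psi(z)$ entering with a favourable sign.
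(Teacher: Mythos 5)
Your proposal is correct, and the second half (the energy estimate for \eqref{estim_reg_problem_dot(z)_1/epsilonPsi}) coincides with the paper's proof: test with $\dot z_\varepsilon^u$, use $\Psi'(z)\dot z=\tfrac{d}{dt}\Psi(z)$ and $\Psi(z_0)=0$, absorb by Young, and feed in the already-established bound on $\|S\dot y_\varepsilon^u\|_{\mathrm{L}^2(J_T)}$. Where you genuinely deviate is in how the coupled Gronwall for \eqref{estim_reg_problem_y_z} is closed. The paper controls $|z_\varepsilon^u(t)|$ by $|z_0|+\int_0^t|S\dot y_\varepsilon^u|\,ds$ (obtained from $\tfrac{d}{ds}|Z_\varepsilon(v)-z_0|$ together with $\Psi'(x)(x-z_0)\ge0$), and then must bound $|S\dot y_\varepsilon^u(t)|$ pointwise through $|SA_p y|\le c_1\|y\|_{X^\alpha}$ --- this is exactly where the extra hypothesis $w\in\mathrm{dom}([(A_p+1)^{1-\alpha}]^*)$ from (A3) is consumed. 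You instead invoke an $\varepsilon$-uniform sup-type growth bound $|Z_\varepsilon(v)(t)|\le c\bigl(1+\sup_{0\le\tau\le t}|v(\tau)|\bigr)$, which only requires $S\in X^*$ and makes the Gronwall close in $\sup_{\tau\le t}\|y_\varepsilon^u(\tau)\|_{X^\alpha}$ alone; that is a somewhat more elementary route. It does hinge on actually proving that growth bound uniformly in $\varepsilon$, which you only gesture at. The correct one-line argument is a comparison at the \emph{last boundary-crossing time} rather than at an extremum: if $z(t_1)>b$, let $t_0$ be the last time before $t_1$ with $z(t_0)=b$; on $(t_0,t_1)$ one has $\Psi'(z)\ge0$ by convexity, hence $\dot z\le\dot v$ and $z(t_1)\le b+v(t_1)-v(t_0)\le b+2\sup_{[0,t_1]}|v|$, with the symmetric bound below $a$. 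With that supplied, your argument is complete; what the paper's route buys in exchange for the stronger assumption on $w$ is that the pointwise bound on $|S\dot y_\varepsilon^u|$ is reused verbatim to get the $\mathrm{L}^2$ control needed in the energy step, whereas you obtain that control from the $Y_{2,0}$ bound via $S\in X^*$, which is equally valid.
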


\begin{proof}
Note first that for $v\in \mathrm{W}^{1,s}(J_T)$ and for $t\in J_T$ we have 
\begin{equation*}
	|Z_\varepsilon(v)(t)-z_0| - |Z_\varepsilon(v)(0)-z_0| = \int_{0}^{t} \frac{d}{ds}|Z_\varepsilon(v)-z_0| ds = \int_{0}^{t} \frac{\frac{d}{ds}(Z_\varepsilon(v)) (Z_\varepsilon(v)-z_0)}{|Z_\varepsilon(v)-z_0|} ds.
\end{equation*}
Note that $\Psi'(x)(x-z_0) \geq 0$ for all $x\in\mathbb{R}$ because $\Psi$ is convex and since $\Psi'(z_0)=0$.
We insert $Z_\varepsilon(v)(0)=z_0$ and $\frac{d}{ds}(Z_\varepsilon(v))=\dot{v} -\frac{1}{\varepsilon}\Psi'(Z_\varepsilon(v))$ according to Definition~\ref{Def:Solution_op_regularized_hyst}. 
The triangle inequality and rearranging yields
\begin{equation*}
	0\leq |Z_\varepsilon(v)(t)| + \frac{1}{\varepsilon}\int_{0}^{t} \frac{\Psi'(Z_\varepsilon(v)) (Z_\varepsilon(v)-z_0)}{|Z_\varepsilon(v)-z_0|} ds \leq |z_0| + \int_{0}^{t} | \dot{v}(s)| ds.
\end{equation*}
Hence, with $z_\varepsilon^u=Z_\varepsilon(Sy_\varepsilon^{u})$ and $v=Sy_\varepsilon^{u}$ there follows
\begin{equation}
	0\leq|z_\varepsilon^u(t)| \leq |z_0| + \int_{0}^{t} |S\dot{y}_\varepsilon^{u}(s)| ds.\label{Est:z_eps_in_Sy_eps}
\end{equation}
Because the Riesz representation $w$ of $S$ is contained in $ \mathrm{dom}([(A_p+1)^{1-\alpha}]^*)$ by (A3) in Assumption~\ref{Ass:general_ass_and_short_notation_1}, we can estimate for all $y\in \mathrm{dom}(A_p)$:
\begin{equation*}
	\begin{split}
	| S A_p y | =& | S (A_p+1) y - Sy| 
	\leq | \langle w, (A_p +1)y \rangle_{X} | + \|S\|_{[X^\alpha]^*}\|y\|_{X^\alpha}\\
	=& |\langle w , (A_p+1)^{1-\alpha}(A_p+1)^\alpha y\rangle_{X}| + \|S\|_{[X^\alpha]^*}\|y\|_{X^\alpha}\\
	=& |\langle[(A_p+1)^{1-\alpha}]^*w, (A_p+1)^\alpha y \rangle_{X} + \|S\|_{[X^\alpha]^*}\|y\|_{X^\alpha}\\
	\leq& (\|[(A_p+1)^{1-\alpha}]^*w\|_{X^*} + \|S\|_{[X^\alpha]^*}) \|y\|_{X^\alpha} =: c_1 \|y\|_{X^\alpha}.
	\end{split}
\end{equation*}
For $y=Sy_\varepsilon^{u}(t)$, this together with \eqref{state_equ_regular_abstract} and the triangle inequality implies that for a.e. $t\in J_T$
\begin{equation*}
	|S\dot{y}_\varepsilon^{u}(t)| \leq c_1 \|y_\varepsilon^{u}(t)\|_{X^\alpha} + |S f_\varepsilon(y_\varepsilon^{u}(t),z_\varepsilon^u(t))| + |Su(t)|.
\end{equation*}
Consequently, by the linear growth condition on $f_\varepsilon$ in $(A3)_\varepsilon$ of Assumption~\ref{Ass:regularized_problem} we further estimate \eqref{Est:z_eps_in_Sy_eps} by
\begin{equation*}
	\begin{split}
	|z_\varepsilon^u(t)| &\leq |z_0| + \int_{0}^{t} c_1 \|y_\varepsilon^{u}(s)\|_{X^\alpha} + |S f_\varepsilon(y_\varepsilon^{u}(s),z_\varepsilon^u(s))| + |Su(s)| ds\\
	&\leq |z_0| + \int_{0}^{t} (M\|S\|_{X^*}+c_1)\left[\| y_\varepsilon^{u}(s)\|_{X^\alpha} + |z_\varepsilon^u(s)| + 1\right] + \|S\|_{X^*}\|u(s)\|_X  ds.
	\end{split}
\end{equation*}
Remember that $y_\varepsilon^{u}(0)=0$ for any $\varepsilon\in(0,\varepsilon_*]$.
Since $y_\varepsilon^{u}$ is the mild solution of \eqref{state_equ_regular_abstract}, we can use \eqref{frac_pow_estimate1} for arbitrary $\gamma\in(0,1)$ and again the linear growth condition on $f_\varepsilon$ to obtain 
\begin{equation*}
	\begin{split}
	\| y_\varepsilon^{u}(t) \|_{X^\alpha} &= \left\| \int_{0}^{t} e^{-A_p(t-s)}[f_\varepsilon(y_\varepsilon^{u}(s),z_\varepsilon^u(s)) + u(s) ] ds \right\|_{X^\alpha}\\
	&\leq C_\alpha e^{(1-\gamma) T}\int_{0}^{t} (t-s)^{-\alpha}[M(\| y_\varepsilon^{u}(s)\|_{X^\alpha} + |z_\varepsilon^u(s)| + 1) + \|u(s)\|_X ] ds.
	\end{split}
\end{equation*}
Note that 
$
\left(\int_{0}^{t} (t-s)^{-\alpha q'} \,ds\right)^{1/q'} = \left(\frac{t^{1-\alpha q'}}{1-\alpha q'}\right)^{1/q'} = \frac{t^{1/q'-\alpha}}{(1-\alpha q')^{1/q'-\alpha}}
$ since $q<\frac{1}{1-\alpha}\Leftrightarrow \frac{1}{q'}-\alpha >0$.
We sum up the estimates for $|z_\varepsilon^u(t)|$ and $\| y_\varepsilon^{u}(t) \|_{X^\alpha}$ and apply Gronwall's Lemma to arrive at
\begin{equation*}
	\begin{aligned}
	\| y_\varepsilon^{u} \|_{\mathrm{C}(\overline{J_T};X^\alpha)} \leq c_3(1+\|u \|_{\mathrm{L}^q(J_T;X)})
	&&\text{and}&&
	\|z_\varepsilon^u\|_{\mathrm{C}(\overline{J_T})} \leq c_3(1+ \|u \|_{\mathrm{L}^q(J_T;X)})
	\end{aligned}
\end{equation*}
for all  $q\in (\frac{1}{1-\alpha}, \infty]$ and a constant $c_3>0$ which depends on $T$, $q'$ and $\alpha$ but not on $\varepsilon$ and $u$.
By maximal parabolic regularity of $A_p$, see Remark~\ref{Rem:maximal parabolic regularity}, one obtains 
\begin{equation*}
	\| y_\varepsilon^{u} \|_{Y_{s,0}}  \leq c_4(1+ \|u \|_{\mathrm{L}^q(J_T;X)})
\end{equation*}
for $s=q$ if $q\in (\frac{1}{1-\alpha},\infty)$ and for all $s\in (1,\infty)$ if $q=\infty$, again for some $c_4>0$ which is independent of $\varepsilon$ and $u$. This shows \eqref{estim_reg_problem_y_z}.
We are left to prove \eqref{estim_reg_problem_dot(z)_1/epsilonPsi}.
Note that $2>\frac{1}{1-\alpha}$ by (A2) in Assumption~\ref{Ass:general_ass_and_short_notation_1}.
Because $S\in X^*$, \eqref{estim_reg_problem_y_z} yields 
$
\| S\dot{y}_\varepsilon^{u} \|_{\mathrm{L}^2(J_T)}\leq c_5(1+ \|u \|_{\mathrm{L}^2(J_T;X)})
$
for $c_5=c_4\|S\|_{X^*}$.
We test $\dot{z}_\varepsilon^u$ in  Definition~\ref{Def:Solution_op_regularized_hyst} by $\dot{z}_\varepsilon^u$, integrate from zero to $t$ and use Young's inequality to compute for $t\in \overline{J_T}$:
\begin{equation*}
	\begin{split}
	&\int_{0}^{t} \left|\dot{z}_\varepsilon^u(s)\right|^2 ds =  \int_{0}^{t} S\dot{y}_\varepsilon^{u}(s)\dot{z}_\varepsilon^u(s)ds -\frac{1}{\varepsilon} \int_{0}^{t} \Psi'(z_\varepsilon^u(s))\dot{z}_\varepsilon^u(s) ds\\
	&\leq \frac{1}{2}\int_{0}^{t} \left|\dot{z}_\varepsilon^u(s)\right|^2 ds + \frac{c_5^2}{2}(1+ \|u \|_{\mathrm{L}^2(J_T;X)})^2 
	- \frac{1}{\varepsilon} [\Psi(z_\varepsilon^u(t)) - \Psi(z_\varepsilon^u(0))].
	\end{split}
\end{equation*} 
Since $\Psi(z_\varepsilon^u(0))=0$ and because $\Psi\geq 0$ it follows
\begin{equation*}
	0\leq\int_{0}^{T} \left|\dot{z}_\varepsilon^u(s)\right|^2 ds + 2\sup_{t\in \overline{J_T}} \frac{1}{\varepsilon} \Psi(z_\varepsilon^u(t)) \leq c_5^2(1+ \|u \|_{\mathrm{L}^2(J_T;X)})^2.
\end{equation*}
\end{proof}
The estimates which we derived in this subsection are crucial for Subsection~\ref{Subsec:dynamics_reg_prob}.

\subsection{Dynamics of the regularized states}\label{Subsec:dynamics_reg_prob}
This subsection contains ideas from \cite[Section 4]{meyeroptimal} and \cite[Section 3.1]{brokate2013optimal}.
We prove weak continuity of the solution operator of \eqref{state_equ_regular_abstract} for fixed $\varepsilon\in(0,\varepsilon_*]$.
This yields weakly converging subsequences $y_{\varepsilon_k}$ and $z_{\varepsilon_k}$ for any weakly converging sequence $u_{\varepsilon}$, $\varepsilon \rightarrow 0$.
All results then also hold for the regularized state equations \eqref{state_equ_regular_y}-\eqref{state_equ_regular_z}.

Using this, we are able to prove convergence of the solutions of the regularized control problems, as defined in Subsection~\ref{Subsec:reg_contr_problem}, to an optimal solution of problem \eqref{opt_control_ control_problem}-\eqref{state_equ_z} with $\varepsilon\rightarrow 0$.

The following lemma is proved as \cite[Lemma 5.3]{muench}.
\begin{lemma}\label{Lem:continuity_G_epsilon}
Let Assumption~\ref{Ass:general_ass_and_short_notation_1} and Assumption~\ref{Ass:regularized_problem} hold and consider the notation from Lemma~\ref{Cor:state_equ_reg_wellposed}.		
Suppose that $u_n \rightharpoonup u$ in $\mathrm{L}^2(J_T;X)$ with $n\rightarrow \infty$ for some sequence $\{u_n\}\subset \mathrm{L}^2(J_T;X)$.
For $\varepsilon\in (0,\varepsilon_*]$ fixed consider the solutions $y_\varepsilon^{u_n}$ and $y_\varepsilon^u$ of \eqref{state_equ_regular_abstract}, together with $z_\varepsilon^{u_n}$ and $z_\varepsilon^{u}$. Then $y_\varepsilon^{u_n}\rightarrow y_\varepsilon^u$ with $n\rightarrow \infty$ weakly in $Y_{2,0}$ and strongly in $\mathrm{C}(\overline{J_T};X^\alpha)$ and $z_\varepsilon^{u_n}\rightarrow z_\varepsilon^u$ with $n\rightarrow \infty$ weakly in $\mathrm{H}^1(J_T)$ and strongly in $\mathrm{C}(\overline{J_T})$. 
If the convergence of $\{u_n\}$ is strong then the convergence of $\{y_\varepsilon^{u_n}\}$ in $Y_{2,0}$
is also strong.
The same holds if $\mathrm{L}^2(J_T;X)$ is replaced by $U_i$ for $i\in\{1,2\}$ and if $u_n$ and $u$ are replaced by $B_iu_n$ and $B_iu$. In this case, $(y_\varepsilon^{B_iu_n},z_\varepsilon^{B_iu_n})$ and $(y_\varepsilon^{B_iu},z_\varepsilon^{B_iu})$ are the solutions of \eqref{state_equ_regular_y}-\eqref{state_equ_regular_z}.
\end{lemma}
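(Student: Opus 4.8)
The plan is to combine the uniform-in-$\varepsilon$ a priori bounds of Lemma~\ref{Lem:estimates_regularized_solution}, the compact embeddings of Remark~\ref{Rem:embeddings}, and the uniqueness statement of Corollary~\ref{Cor:state_equ_reg_wellposed}. Since $u_n\rightharpoonup u$ in $\mathrm{L}^2(J_T;X)$, the sequence $\{u_n\}$ is bounded there, and because $2>\frac{1}{1-\alpha}$ by (A2) in Assumption~\ref{Ass:general_ass_and_short_notation_1}, Lemma~\ref{Lem:estimates_regularized_solution} with $q=2$ (and the fixed $\varepsilon$) shows that $\{y_\varepsilon^{u_n}\}$ is bounded in $Y_{2,0}$, that $\{z_\varepsilon^{u_n}\}$ is bounded in $\mathrm{C}(\overline{J_T})$, and, via \eqref{estim_reg_problem_dot(z)_1/epsilonPsi}, that $\{\dot z_\varepsilon^{u_n}\}$ is bounded in $\mathrm{L}^2(J_T)$, hence $\{z_\varepsilon^{u_n}\}$ is bounded in $\mathrm{H}^1(J_T)$. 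In particular all the pairs $(y_\varepsilon^{u_n}(t),z_\varepsilon^{u_n}(t))$ remain in one fixed bounded subset of $X^\alpha\times\mathbb{R}$, on which, by $(A3)_\varepsilon$ of Assumption~\ref{Ass:regularized_problem}, $f_\varepsilon$ is Lipschitz with a constant independent of $n$ and $\varepsilon$.

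Passing to a subsequence (not relabelled), I would obtain $y_\varepsilon^{u_n}\rightharpoonup\tilde y$ in $Y_{2,0}$ and $z_\varepsilon^{u_n}\rightharpoonup\tilde z$ in $\mathrm{H}^1(J_T)$. The compact embedding $Y_{2,0}\hookrightarrow\hookrightarrow\mathrm{C}(\overline{J_T};X^\alpha)$ — valid since $\alpha<\tfrac12=1-\tfrac12$ and $[X,\mathrm{dom}(A_p)]_\alpha\simeq X^\alpha$ by Remark~\ref{Rem:maximal parabolic regularity} — together with the compact embedding $\mathrm{H}^1(J_T)\hookrightarrow\hookrightarrow\mathrm{C}(\overline{J_T})$ upgrade these to strong convergences $y_\varepsilon^{u_n}\to\tilde y$ in $\mathrm{C}(\overline{J_T};X^\alpha)$ and $z_\varepsilon^{u_n}\to\tilde z$ in $\mathrm{C}(\overline{J_T})$. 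Consequently $f_\varepsilon(y_\varepsilon^{u_n},z_\varepsilon^{u_n})\to f_\varepsilon(\tilde y,\tilde z)$ in $\mathrm{C}(\overline{J_T};X)$ by the uniform Lipschitz bound, $\Psi'(z_\varepsilon^{u_n})\to\Psi'(\tilde z)$ in $\mathrm{C}(\overline{J_T})$, while $\dot y_\varepsilon^{u_n}\rightharpoonup\dot{\tilde y}$ and $A_p y_\varepsilon^{u_n}\rightharpoonup A_p\tilde y$ in $\mathrm{L}^2(J_T;X)$, and $S\dot y_\varepsilon^{u_n}\rightharpoonup S\dot{\tilde y}$, $\dot z_\varepsilon^{u_n}\rightharpoonup\dot{\tilde z}$ in $\mathrm{L}^2(J_T)$.

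Next I would pass to the limit. In \eqref{state_equ_regular_abstract} this gives, using that $Y_{2,0}$ is weakly closed, that $\tilde y\in Y_{2,0}$ solves $\dot{\tilde y}+A_p\tilde y=f_\varepsilon(\tilde y,\tilde z)+u$ with $\tilde y(0)=0$; in the integral identity $z_\varepsilon^{u_n}(t)=z_0+\int_0^t\bigl(S\dot y_\varepsilon^{u_n}(s)-\tfrac1\varepsilon\Psi'(z_\varepsilon^{u_n}(s))\bigr)\,ds$ attached to $z_\varepsilon^{u_n}=Z_\varepsilon(Sy_\varepsilon^{u_n})$ (Definition~\ref{Def:Solution_op_regularized_hyst}), using $z_\varepsilon^{u_n}(0)=z_0$, it gives $\tilde z=Z_\varepsilon(S\tilde y)$. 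Hence $(\tilde y,\tilde z)$ is a solution of \eqref{state_equ_regular_abstract} with source $u$ in the sense of Corollary~\ref{Cor:state_equ_reg_wellposed}, so by uniqueness there $\tilde y=y_\varepsilon^u$ and $\tilde z=z_\varepsilon^u$. Since the limit does not depend on the extracted subsequence, the standard subsequence-of-subsequence argument yields convergence of the full sequences, which is the first pair of assertions.

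Finally, for the last statement, assume $u_n\to u$ strongly. Writing the difference equation for $w_n:=y_\varepsilon^{u_n}-y_\varepsilon^u$ with $w_n(0)=0$ and applying maximal parabolic regularity of $A_p$ (Remark~\ref{Rem:maximal parabolic regularity}) gives
\begin{equation*}
\|w_n\|_{Y_{2,0}}\le c\Bigl(\|f_\varepsilon(y_\varepsilon^{u_n},z_\varepsilon^{u_n})-f_\varepsilon(y_\varepsilon^{u},z_\varepsilon^{u})\|_{\mathrm{L}^2(J_T;X)}+\|u_n-u\|_{\mathrm{L}^2(J_T;X)}\Bigr),
\end{equation*}
and the right-hand side tends to $0$: the first term because the uniform Lipschitz bound turns it into a multiple of $\|y_\varepsilon^{u_n}-y_\varepsilon^u\|_{\mathrm{C}(\overline{J_T};X^\alpha)}+\|z_\varepsilon^{u_n}-z_\varepsilon^u\|_{\mathrm{C}(\overline{J_T})}$, which vanishes by the convergences just obtained, and the second by hypothesis. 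The assertions with $U_i$ and $B_iu_n$ follow verbatim, since $B_i\in\mathcal{L}(U_i,X)$ sends weakly (resp.\ strongly) convergent sequences in $U_i$ to weakly (resp.\ strongly) convergent sequences in $\mathrm{L}^2(J_T;X)$. I expect the only delicate point to be the identification of the limit through the nonlinearity $f_\varepsilon$ and the coupling $z_\varepsilon=Z_\varepsilon(Sy_\varepsilon)$; this is handled cleanly precisely because the a priori bounds of Lemma~\ref{Lem:estimates_regularized_solution} confine $(y_\varepsilon^{u_n},z_\varepsilon^{u_n})$ to one fixed bounded set and because the embeddings $Y_{2,0}\hookrightarrow\mathrm{C}(\overline{J_T};X^\alpha)$ and $\mathrm{H}^1(J_T)\hookrightarrow\mathrm{C}(\overline{J_T})$ are compact.
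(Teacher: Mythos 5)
Your proof is correct and follows essentially the same route as the paper, which simply defers to the analogous \cite[Lemma 5.3]{muench} and whose sibling result (Lemma~\ref{Lem:convergence_reg_to_nonreg}) is proved in exactly this way: uniform bounds from Lemma~\ref{Lem:estimates_regularized_solution}, weak compactness plus the compact embeddings $Y_{2,0}\hookrightarrow \mathrm{C}(\overline{J_T};X^\alpha)$ and $\mathrm{H}^1(J_T)\hookrightarrow \mathrm{C}(\overline{J_T})$, identification of the limit through $f_\varepsilon$ and $Z_\varepsilon$ via local Lipschitz continuity and uniqueness, and maximal parabolic regularity for the strong-convergence statement. The only cosmetic caveat is that $(A3)_\varepsilon$ gives a \emph{local} Lipschitz property, so the "uniform Lipschitz constant on a fixed bounded set" should be phrased as using the constant $L(\tilde y)$ valid on a $\mathrm{C}(\overline{J_T};X^\alpha)$-neighbourhood of the limit, into which $y_\varepsilon^{u_n}$ eventually falls by the strong convergence already established.
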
	
Furthermore, we have the following convergence result:
\begin{lemma}\label{Lem:convergence_reg_to_nonreg}
	Let Assumption~\ref{Ass:general_ass_and_short_notation_1} and Assumption~\ref{Ass:regularized_problem} hold and consider the notation from Lemma~\ref{Cor:state_equ_reg_wellposed}.		
	Suppose that $u_\varepsilon \rightharpoonup u$ in $\mathrm{L}^2(J_T;X)$ as $\varepsilon\rightarrow 0$.
	Consider the solutions $y_\varepsilon^{u_\varepsilon}$ and $y_\varepsilon^u$ of \eqref{state_equ_regular_abstract}, together with $z_\varepsilon^{u_\varepsilon}$ and $z_\varepsilon^{u}$.
	Then $y_\varepsilon^{u_\varepsilon}\rightarrow y^u$ with $\varepsilon\rightarrow 0$ weakly in $Y_{2,0}$ and strongly in $\mathrm{C}(\overline{J_T};X^\alpha)$ and $z_\varepsilon^{u_\varepsilon}\rightarrow \mathcal{W}[Sy^u]$ with $\varepsilon\rightarrow 0$ weakly in $\mathrm{H}^1(J_T)$ and strongly in $\mathrm{C}(\overline{J_T})$. 
	If the convergence of $\{u_\varepsilon\}$ is strong then also the convergence of $\{y_\varepsilon^{u_\varepsilon}\}$ in $Y_{2,0}$ is strong.
	The same holds if $\mathrm{L}^2(J_T;X)$ is replaced by $U_i$ for $i\in\{1,2\}$ and if $u_\varepsilon$ and $u$ are replaced by $B_iu_\varepsilon$ and $B_iu$.
	In this case, $(y_\varepsilon^{B_iu_\varepsilon},z_\varepsilon^{B_iu_\varepsilon})$ and $(y_\varepsilon^{B_iu},z_\varepsilon^{B_iu})$ are the solutions of \eqref{state_equ_regular_y}-\eqref{state_equ_regular_z}.
\end{lemma}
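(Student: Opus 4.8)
The plan is to pass to the limit $\varepsilon\to0$ in the regularized system, combining the uniform-in-$\varepsilon$ a~priori bounds of Lemma~\ref{Lem:estimates_regularized_solution} with a compactness argument and the uniqueness statement of Theorem~\ref{Thm:state_equ_sol_op}. Since $B_i:U_i\to X$ is bounded and linear, hence both weakly and strongly continuous, it is enough to argue for the abstract equation \eqref{state_equ_regular_abstract}: weak (resp. strong) convergence of $u_\varepsilon$ in $U_i$ transfers to $B_iu_\varepsilon$ in $\mathrm{L}^2(J_T;X)$, and \eqref{state_equ_regular_y}--\eqref{state_equ_regular_z} (resp. \eqref{state_equ_y}--\eqref{state_equ_z}) is just \eqref{state_equ_regular_abstract} (resp. \eqref{state_eqation_abstract}) with source $B_iu$. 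From $u_\varepsilon\rightharpoonup u$ I get $\sup_\varepsilon\|u_\varepsilon\|_{\mathrm{L}^2(J_T;X)}<\infty$, so Lemma~\ref{Lem:estimates_regularized_solution} yields that $\{y_\varepsilon^{u_\varepsilon}\}$ is bounded in $Y_{2,0}$, $\{z_\varepsilon^{u_\varepsilon}\}$ is bounded in $\mathrm{H}^1(J_T)$, and $\sup_t\tfrac1\varepsilon\Psi(z_\varepsilon^{u_\varepsilon}(t))$ is bounded, uniformly in $\varepsilon$. Along a subsequence (not relabelled) I then have $y_\varepsilon^{u_\varepsilon}\rightharpoonup\tilde y$ in $Y_{2,0}$ and, by the compact embedding $Y_2\hhookrightarrow\mathrm{C}(\overline{J_T};X^\alpha)$ of Remark~\ref{Rem:embeddings} (available since $\alpha<\tfrac12$), also $y_\varepsilon^{u_\varepsilon}\to\tilde y$ strongly in $\mathrm{C}(\overline{J_T};X^\alpha)$; likewise $z_\varepsilon^{u_\varepsilon}\rightharpoonup\tilde z$ in $\mathrm{H}^1(J_T)$ and, since $\mathrm{H}^1(J_T)$ embeds compactly into $\mathrm{C}(\overline{J_T})$, strongly in $\mathrm{C}(\overline{J_T})$.

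The key step is to identify $\tilde z=\mathcal{W}[S\tilde y]$. The uniform bound gives $\Psi(z_\varepsilon^{u_\varepsilon}(t))\le c\varepsilon$, so $\Psi(\tilde z(t))=0$ and hence $\tilde z(t)\in[a,b]$ for every $t$, and $\tilde z(0)=\lim z_\varepsilon^{u_\varepsilon}(0)=z_0$; this is \eqref{stop2}. For \eqref{stop1}, convexity of $\Psi$ together with $\Psi|_{[a,b]}\equiv0$ gives $\Psi'(z)(z-\xi)\ge\Psi(z)-\Psi(\xi)=\Psi(z)\ge0$ for all $z\in\mathbb{R}$ and $\xi\in[a,b]$. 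Multiplying \eqref{state_equ_regular_z} by $z_\varepsilon^{u_\varepsilon}(t)-\varphi(t)$ for an arbitrary measurable $\varphi:J_T\to[a,b]$ and integrating therefore gives $\int_0^T(\dot z_\varepsilon^{u_\varepsilon}-S\dot y_\varepsilon^{u_\varepsilon})(z_\varepsilon^{u_\varepsilon}-\varphi)\,dt=-\tfrac1\varepsilon\int_0^T\Psi'(z_\varepsilon^{u_\varepsilon})(z_\varepsilon^{u_\varepsilon}-\varphi)\,dt\le0$. Using $\dot z_\varepsilon^{u_\varepsilon}\rightharpoonup\dot{\tilde z}$ and $S\dot y_\varepsilon^{u_\varepsilon}\rightharpoonup S\dot{\tilde y}$ weakly in $\mathrm{L}^2(J_T)$ against $z_\varepsilon^{u_\varepsilon}-\varphi\to\tilde z-\varphi$ strongly in $\mathrm{L}^2(J_T)$, the limit of the left-hand side exists and equals $\int_0^T(\dot{\tilde z}-S\dot{\tilde y})(\tilde z-\varphi)\,dt$, which is therefore $\le0$ for all such $\varphi$. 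Localising (take $\varphi\equiv\xi$ on an arbitrary measurable set and $\varphi=\tilde z$ elsewhere, $\xi$ ranging over a countable dense subset of $[a,b]$) produces the pointwise inequality $(\dot{\tilde z}(t)-S\dot{\tilde y}(t))(\tilde z(t)-\xi)\le0$ for a.e. $t$ and all $\xi\in[a,b]$. Since $S\tilde y\in\mathrm{C}[0,T]$, this is exactly the variational inequality defining $\mathcal{W}$, so $\tilde z=\mathcal{W}[S\tilde y]$.

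Next I identify $\tilde y$. Writing \eqref{state_equ_regular_abstract} as $\dot y_\varepsilon^{u_\varepsilon}+A_py_\varepsilon^{u_\varepsilon}=f_\varepsilon(y_\varepsilon^{u_\varepsilon},z_\varepsilon^{u_\varepsilon})+u_\varepsilon$, the left-hand side converges weakly to $\dot{\tilde y}+A_p\tilde y$ in $\mathrm{L}^2(J_T;X)$, and $u_\varepsilon\rightharpoonup u$ by hypothesis. For the nonlinear term, $(A2)_\varepsilon$ gives $\sup_t\|f_\varepsilon(y_\varepsilon^{u_\varepsilon}(t),z_\varepsilon^{u_\varepsilon}(t))-f(y_\varepsilon^{u_\varepsilon}(t),z_\varepsilon^{u_\varepsilon}(t))\|_X\to0$, while the local Lipschitz continuity of $f$ — with a uniform constant on a tube around the compact set $\tilde y(\overline{J_T})\subset X^\alpha$, covered by finitely many of the neighbourhoods from (A2) — combined with the strong convergences $y_\varepsilon^{u_\varepsilon}\to\tilde y$ in $\mathrm{C}(\overline{J_T};X^\alpha)$ and $z_\varepsilon^{u_\varepsilon}\to\tilde z$ in $\mathrm{C}(\overline{J_T})$ gives $f(y_\varepsilon^{u_\varepsilon},z_\varepsilon^{u_\varepsilon})\to f(\tilde y,\tilde z)$ in $\mathrm{C}(\overline{J_T};X)$. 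Hence $f_\varepsilon(y_\varepsilon^{u_\varepsilon},z_\varepsilon^{u_\varepsilon})\to f(\tilde y,\tilde z)$ strongly in $\mathrm{L}^2(J_T;X)$, and in the limit $\tilde y\in Y_{2,0}$ solves $\dot{\tilde y}+A_p\tilde y=f(\tilde y,\mathcal{W}[S\tilde y])+u=F[\tilde y]+u$ with $\tilde y(0)=0$, i.e. \eqref{state_eqation_abstract} with source $u\in\mathrm{L}^2(J_T;X)$ (admissible since $2>\tfrac1{1-\alpha}$). By uniqueness in Theorem~\ref{Thm:state_equ_sol_op}, $\tilde y=y^u=G(u)$ and $\tilde z=\mathcal{W}[Sy^u]$. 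As the limit does not depend on the chosen subsequence, the full sequences converge in the asserted senses.

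It remains to treat strong convergence. If $u_\varepsilon\to u$ strongly in $\mathrm{L}^2(J_T;X)$, then $w_\varepsilon:=y_\varepsilon^{u_\varepsilon}-y^u$ solves $\dot w_\varepsilon+A_pw_\varepsilon=\bigl[f_\varepsilon(y_\varepsilon^{u_\varepsilon},z_\varepsilon^{u_\varepsilon})-f(y^u,\mathcal{W}[Sy^u])\bigr]+(u_\varepsilon-u)$ with $w_\varepsilon(0)=0$, and the right-hand side tends to $0$ strongly in $\mathrm{L}^2(J_T;X)$ by the strong $\mathrm{C}(\overline{J_T};X)$-convergence of the nonlinear term established above together with $u_\varepsilon\to u$. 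Boundedness of $(\tfrac{d}{dt}+A_p)^{-1}$ from $\mathrm{L}^2(J_T;X)$ to $Y_{2,0}$ (Remark~\ref{Rem:maximal parabolic regularity}) then forces $\|w_\varepsilon\|_{Y_{2,0}}\to0$. The $U_i$/$B_iu_\varepsilon$ versions follow verbatim from the abstract statement as explained above. I expect the main obstacle to be the limit passage in the regularized variational inequality for $z$ in the second paragraph: handling the $\tfrac1\varepsilon\Psi'$ term via convexity of $\Psi$ and the uniform $\tfrac1\varepsilon\Psi$-bound, and pairing the weakly convergent time derivatives with the strongly convergent states correctly.
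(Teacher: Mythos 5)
Your proposal is correct and follows essentially the same route as the paper: uniform bounds from Lemma~\ref{Lem:estimates_regularized_solution}, extraction of weakly/strongly convergent subsequences, identification of $\tilde z=\mathcal{W}[S\tilde y]$ by passing to the limit in the regularized variational inequality using convexity of $\Psi$ and the $\tfrac1\varepsilon\Psi$-bound, identification of $\tilde y$ via $(A2)_\varepsilon$/$(A3)_\varepsilon$ and uniqueness from Theorem~\ref{Thm:state_equ_sol_op}, and a subsequence argument for the whole sequence. The only (welcome) differences are cosmetic: you localise the limit inequality with measurable test functions where the paper tests with constant $\xi$ and leaves the localisation implicit, and you write out the strong-convergence step via maximal parabolic regularity where the paper cites \cite[Lemma 5.3]{muench}.
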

\begin{proof}
	The proof combines the proofs of \cite[Lemma 3.2]{brokate2013optimal} and \cite[Lemma 5.3]{muench}.
	By Lemma~\ref{Lem:estimates_regularized_solution} we obtain a bound for $y_\varepsilon^{u_\varepsilon}$ in $Y_{2,0}$ and for $z_\varepsilon^{u_\varepsilon}$ in $\mathrm{H}^1(J_T)$ which is independent of $\varepsilon\in (0,\varepsilon_*]$. Hence, there exists a subsequence $\{\varepsilon_k\}$ of the sequence $\{\varepsilon\}$ and functions $\tilde{y}\in Y_{2,0}$ and $\tilde{z}\in\mathrm{H}^1(J_T)$ to which $y_{\varepsilon_k}(u_{\varepsilon_k})$ and $z_{\varepsilon_k}(u_{\varepsilon_k})$ converge weakly in $Y_{2,0}$ and $\mathrm{H}^1(J_T)$ and strongly in $\mathrm{C}(\overline{J_T};X^\alpha)$ and $\mathrm{C}(\overline{J_T})$ with $k\rightarrow \infty$.
	We abbreviate $y_{\varepsilon_k}:=y_{\varepsilon_k}(u_{\varepsilon_k})$ and $z_{\varepsilon_k}:=z_{\varepsilon_k}(u_{\varepsilon_k})$.
	\eqref{estim_reg_problem_dot(z)_1/epsilonPsi} implies that $\Psi(z_{\varepsilon_k}(t)) \rightarrow 0$ with $k\rightarrow \infty$ for $t\in \overline{J_T}$. By $(A4)_\varepsilon$ in Assumption~\ref{Ass:regularized_problem} this yields $\tilde{z}(t)\in [a,b]$ for $t\in \overline{J_T}$. For any $x\in \mathbb{R}$ and $\xi\in [a,b]$ there holds $\Psi'(x)(x - \xi) \geq 0$ because $\Psi$ is convex and since $\Psi'(\xi)=0$ for $\xi\in [a,b]$. For any $\xi \in [a,b]$ we therefore have
	\begin{equation*}
		\int_{0}^{T} (\dot{z}_{\varepsilon_k}(t) - S\dot{y}_{\varepsilon_k}(t))(z_{\varepsilon_k}(t) - \xi )  \, dt
		=\int_{0}^{T} -\frac{1}{\varepsilon} \Psi'(z_{\varepsilon_k}(u_{\varepsilon_k}(t))(z_{\varepsilon_k}(t) - \xi)\, dt \leq 0.
	\end{equation*}
	Taking the limit $k\rightarrow \infty$ yields $\tilde{z}=\mathcal{W}[S\tilde{y}]$ since $\tilde{z}$ solves \eqref{stop1}-\eqref{stop2} with $v=S\tilde{y}$.
	Weak continuity of $\frac{d}{dt}$ and $A_p$ implies
	$
	\frac{d}{dt}y_{\varepsilon_k} + A_p y_{\varepsilon_k} \rightharpoonup \frac{d}{dt}\tilde{y} + A_p \tilde{y}
	$ in $\mathrm{L}^2(J_T;X)$ with $k\rightarrow \infty$.
	For $\varepsilon_k$ small enough we estimate with $(A3)_\varepsilon$ in Assumption~\ref{Ass:regularized_problem}:
	\begin{equation*}
	\begin{split}
	&\|F_{\varepsilon_k}[y_{\varepsilon_k}] - F[\tilde{y}] \|_{\mathrm{C}(\overline{J_T};X)}
	=\|f_{\varepsilon_k}(y_{\varepsilon_k}(\cdot),z_{\varepsilon_k}(\cdot)) - f(\tilde{y}(\cdot),\tilde{z}(\cdot))\|_{\mathrm{C}(\overline{J_T};X)}\\
	&\leq \|f_{\varepsilon_k}(y_{\varepsilon_k}(\cdot),z_{\varepsilon_k}(\cdot)) - f_{\varepsilon_k}(\tilde{y}(\cdot),\tilde{z}(\cdot))\|_{\mathrm{C}(\overline{J_T};X)} + \|f_{\varepsilon_k}(\tilde{y}(\cdot),\tilde{z}(\cdot))-f(\tilde{y}(\cdot),\tilde{z}(\cdot))\|_{\mathrm{C}(\overline{J_T};X)}\\
	&\leq L(\tilde{y})(\|y_{\varepsilon_k}-\tilde{y}\|_{\mathrm{C}(\overline{J_T};X^\alpha)}+ \|z_{\varepsilon_k}-\tilde{z}\|_{\mathrm{C}(\overline{J_T})}) + \|f_{\varepsilon_k}(\tilde{y}(\cdot),\tilde{z}(\cdot))-f(\tilde{y}(\cdot),\tilde{z}(\cdot))\|_{\mathrm{C}(\overline{J_T};X)}.
	\end{split}
	\end{equation*}
	Because the right side converges to zero, we conclude that $F_{\varepsilon_k}[y_{\varepsilon_k}]$ converges to $F[\tilde{y}]$ in $\mathrm{C}(\overline{J_T};X)$ with $k\rightarrow \infty$.
	This together with $\tilde{z}=\mathcal{W}[S\tilde{y}]$ yields $\tilde{y}=G(u)$. Uniqueness of the limit implies convergence of the whole sequence.
	The statement about strong convergence follows essentially the same way as in \cite[Lemma 5.3]{muench}.
\end{proof}

\subsection{The regularized optimal control problem}\label{Subsec:reg_contr_problem}
In this subsection, we introduce regularized optimal control problems. It still requires work to get adjoint systems for those problems. 
Nevertheless, we can exploit linearity of the derivatives of the solution operators of \eqref{state_equ_regular_y}-\eqref{state_equ_regular_z} to derive adjoint systems by a direct approach. This will be done in Subsection~\ref{Subsec:Adj_reg_problem} below. 

We follow the ideas in \cite[Section 3.2]{brokate2013optimal} and \cite[Section 4]{meyeroptimal} in this subsection.
For $i\in\{1,2\}$ consider an optimal control $\overline{u}\in U_i$ of problem \eqref{opt_control_ control_problem}-\eqref{state_equ_z} together with the state $\overline{y}=G(B_i\overline{u})$ and $\overline{z}=\mathcal{W}[S\overline{y}]$. Existence of $\overline{u}$ follows from Theorem~\ref{Thm:opt_control_existence}.
For $\varepsilon \in (0,\varepsilon_*]$ we introduce the regularized optimal control problem
\begin{equation}
\min_{u \in U_i}J_{\mathrm{reg}}(y,u;\overline{u}) = \min_{u \in U_i}J(y,u) + \frac{1}{2}\|u - \overline{u}\|_{U_i}^2 \label{opt_control_problem_regular}
\end{equation} 
subject to \eqref{state_equ_regular_y}-\eqref{state_equ_regular_z}.
\begin{theorem}[Convergence of optimal solutions]\label{Thm:conv_minimizers_reg_to_min_original}
	Let Assumption~\ref{Ass:general_ass_and_short_notation_1} and Assumption~\ref{Ass:regularized_problem} hold. For $i\in\{1,2\}$ suppose that $\overline{u}\in U_i$ is an optimal control for problem \eqref{opt_control_ control_problem}-\eqref{state_equ_z}.
	Then for all $\varepsilon \in (0,\varepsilon_*]$ problem \eqref{state_equ_regular_y},\eqref{state_equ_regular_z},\eqref{opt_control_problem_regular} has an optimal control $\overline{u}_\varepsilon\in U_i$.
	This means that $\overline{u}_\varepsilon$, together with $\overline{y}_\varepsilon=G_\varepsilon(B_i\overline{u}_\varepsilon)$ and $\overline{z}_\varepsilon=Z_\varepsilon(S\overline{y}_\varepsilon)$ (see Definition~\ref{Def:Solution_op_regularized_hyst}), are a solution of the minimization problem \eqref{opt_control_problem_regular}. Furthermore, $\overline{u}_\varepsilon \rightarrow \overline{u}$ in $U_i$, $\overline{y}_\varepsilon \rightarrow \overline{y}=G(B_i\overline{u})$ in $Y_{2,0}$ and in $\mathrm{C}(\overline{J_T};X^\alpha)$ and $\overline{z}_\varepsilon \rightarrow \overline{z}=\mathcal{W}[S\overline{y}]$ weakly in $\mathrm{H}^1(J_T)$ and strongly in $\mathrm{C}(\overline{J_T})$ with $\varepsilon\rightarrow 0$.
\end{theorem}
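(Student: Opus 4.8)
The plan is to run the classical direct-method / $\Gamma$-convergence argument for regularized optimal control problems, leaning entirely on the analytic facts already established: the weak/strong continuity of $G_\varepsilon$ in Lemma~\ref{Lem:continuity_G_epsilon}, the $\varepsilon$-uniform bounds in Lemma~\ref{Lem:estimates_regularized_solution}, and the regularization limit $G_\varepsilon\to G$, $Z_\varepsilon(S\cdot)\to\mathcal{W}[S\cdot]$ in Lemma~\ref{Lem:convergence_reg_to_nonreg}. \textbf{Existence of $\overline{u}_\varepsilon$.} Fix $\varepsilon\in(0,\varepsilon_*]$. By Corollary~\ref{Cor:state_equ_reg_wellposed} the reduced functional $u\mapsto J_{\mathrm{reg}}(G_\varepsilon(B_iu),u;\overline{u})$ is well defined on $U_i$, bounded below, and coercive because of the terms $\tfrac{\kappa}{2}\|u\|_{U_i}^2+\tfrac12\|u-\overline{u}\|_{U_i}^2$; along a minimizing sequence one extracts a weakly convergent subsequence in $U_i$, passes to the limit in the state using the weak continuity of $G_\varepsilon$ into $Y_{2,0}$ together with strong convergence in $\mathrm{C}(\overline{J_T};X^\alpha)$ from Lemma~\ref{Lem:continuity_G_epsilon}, and uses weak lower semicontinuity of the quadratic $u$-terms and convergence of the tracking term (via $X^\alpha\hookrightarrow[\mathrm{L}^2(\Omega)]^m$) to obtain a minimizer $\overline{u}_\varepsilon$, with $\overline{y}_\varepsilon=G_\varepsilon(B_i\overline{u}_\varepsilon)$ and $\overline{z}_\varepsilon=Z_\varepsilon(S\overline{y}_\varepsilon)$.

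\textbf{Uniform bounds and a subsequence.} Testing the minimality of $\overline{u}_\varepsilon$ in \eqref{opt_control_problem_regular} against the competitor $\overline{u}$ gives
\[
J_{\mathrm{reg}}(\overline{y}_\varepsilon,\overline{u}_\varepsilon;\overline{u})\le J_{\mathrm{reg}}\bigl(G_\varepsilon(B_i\overline{u}),\overline{u};\overline{u}\bigr)=J\bigl(G_\varepsilon(B_i\overline{u}),\overline{u}\bigr),
\]
and the right-hand side tends to $J(\overline{y},\overline{u})$ as $\varepsilon\to0$ by Lemma~\ref{Lem:convergence_reg_to_nonreg} applied to the constant sequence $\overline{u}$ (strong convergence $G_\varepsilon(B_i\overline{u})\to\overline{y}$) and continuity of $J$; in particular it is bounded. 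Coercivity then bounds $\|\overline{u}_\varepsilon\|_{U_i}$ uniformly in $\varepsilon$, and Lemma~\ref{Lem:estimates_regularized_solution} bounds $\overline{y}_\varepsilon$ in $Y_{2,0}$ and $\overline{z}_\varepsilon$ in $\mathrm{H}^1(J_T)$. Pick $\varepsilon_k\to0$ with $\overline{u}_{\varepsilon_k}\rightharpoonup\tilde{u}$ in $U_i$; Lemma~\ref{Lem:convergence_reg_to_nonreg} yields $\overline{y}_{\varepsilon_k}\to\tilde{y}:=G(B_i\tilde{u})$ weakly in $Y_{2,0}$ and strongly in $\mathrm{C}(\overline{J_T};X^\alpha)$, and $\overline{z}_{\varepsilon_k}\to\tilde{z}:=\mathcal{W}[S\tilde{y}]$ weakly in $\mathrm{H}^1(J_T)$ and strongly in $\mathrm{C}(\overline{J_T})$, so $(\tilde{u},\tilde{y},\tilde{z})$ is admissible for \eqref{opt_control_ control_problem}--\eqref{state_equ_z}.

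\textbf{Identification of the limit and strong convergence.} Along this convergence $J_{\mathrm{reg}}(\cdot,\cdot;\overline{u})$ is lower semicontinuous (the tracking term converges strongly in $U_1$, the $u$-terms are weakly lsc), so combining with the competitor estimate of the previous step,
\[
J(\tilde{y},\tilde{u})+\tfrac12\|\tilde{u}-\overline{u}\|_{U_i}^2\le\liminf_k J_{\mathrm{reg}}(\overline{y}_{\varepsilon_k},\overline{u}_{\varepsilon_k};\overline{u})\le J(\overline{y},\overline{u}).
\]
Since $\overline{u}$ is optimal for the original problem and $(\tilde{u},\tilde{y},\tilde{z})$ is admissible, $J(\overline{y},\overline{u})\le J(\tilde{y},\tilde{u})$, which forces $\|\tilde{u}-\overline{u}\|_{U_i}=0$, i.e. $\tilde{u}=\overline{u}$, $\tilde{y}=\overline{y}$, $\tilde{z}=\overline{z}$, and $J(\overline{y}_{\varepsilon_k},\overline{u}_{\varepsilon_k})\to J(\overline{y},\overline{u})$. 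For strong convergence, combine $\liminf_k J(\overline{y}_{\varepsilon_k},\overline{u}_{\varepsilon_k})\ge J(\overline{y},\overline{u})$ (weak lsc) with $J_{\mathrm{reg}}(\overline{y}_{\varepsilon_k},\overline{u}_{\varepsilon_k};\overline{u})\le J(G_{\varepsilon_k}(B_i\overline{u}),\overline{u})\to J(\overline{y},\overline{u})$; subtracting shows $\tfrac12\limsup_k\|\overline{u}_{\varepsilon_k}-\overline{u}\|_{U_i}^2\le0$, hence $\overline{u}_{\varepsilon_k}\to\overline{u}$ strongly, and the strong-convergence part of Lemma~\ref{Lem:convergence_reg_to_nonreg} upgrades $\overline{y}_{\varepsilon_k}\to\overline{y}$ to strong in $Y_{2,0}$ and in $\mathrm{C}(\overline{J_T};X^\alpha)$, with $\overline{z}_{\varepsilon_k}\to\overline{z}$ weakly in $\mathrm{H}^1(J_T)$ and strongly in $\mathrm{C}(\overline{J_T})$. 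As the limit does not depend on the chosen subsequence, the usual subsequence-of-every-subsequence argument gives convergence of the full family as $\varepsilon\to0$.

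The main obstacle is not any individual estimate; all of the hard analysis (well-posedness, $\varepsilon$-uniform bounds, and the weak continuity / regularization limit of the control-to-state operators) is packaged in Corollary~\ref{Cor:state_equ_reg_wellposed} and Lemmas~\ref{Lem:continuity_G_epsilon}--\ref{Lem:convergence_reg_to_nonreg}. The delicate point is purely the bookkeeping of the variational inequalities in the last step: producing the limit control \emph{equal} to the prescribed $\overline{u}$ — which is exactly what the Tikhonov term $\tfrac12\|u-\overline{u}\|_{U_i}^2$ is designed to enforce — and squeezing strong convergence of $\overline{u}_\varepsilon$ out of matching $\liminf$/$\limsup$ bounds on the cost, taking care that the tracking term passes to the limit by strong convergence in $U_1$ while only the control-dependent terms require weak lower semicontinuity.
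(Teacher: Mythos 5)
Your proposal is correct and follows essentially the same route as the paper's proof: existence by the direct method using Lemma~\ref{Lem:continuity_G_epsilon}, the competitor estimate $J_{\mathrm{reg}}(\overline{y}_\varepsilon,\overline{u}_\varepsilon;\overline{u})\leq J(G_\varepsilon(B_i\overline{u}),\overline{u})$ combined with coercivity to bound $\overline{u}_\varepsilon$, subsequence extraction and identification of the limit via Lemma~\ref{Lem:convergence_reg_to_nonreg} and the Tikhonov term, and the matching $\liminf$/$\limsup$ squeeze to upgrade to strong convergence and to the whole family. No gaps.
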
	

\begin{proof}
	First of all note that the embedding $Y_{0,2}\hookrightarrow U_1$ is continuous, because $\mathrm{dom}(A_p)\simeq \mathbb{W}_{\Gamma_D}^{1,p}(\Omega)\hookrightarrow \mathrm{L}^2(\Omega)$.
	Note also that $\overline{u}$ exists by Theorem~\ref{Thm:opt_control_existence}. Existence of optimal controls $\overline{u}_\varepsilon$ for \eqref{state_equ_regular_y},\eqref{state_equ_regular_z},\eqref{opt_control_problem_regular} follows essentially the same way as for problem \eqref{opt_control_ control_problem}-\eqref{state_equ_z} by using Lemma~\ref{Lem:continuity_G_epsilon}, see also Theorem~\ref{Thm:opt_control_existence}. 
	For all $\varepsilon \in (0,\varepsilon_*]$, we deduce from optimality of $(\overline{y}_\varepsilon,\overline{z}_\varepsilon,\overline{u}_\varepsilon)$ for problem \eqref{state_equ_regular_y},\eqref{state_equ_regular_z},\eqref{opt_control_problem_regular} and of $(\overline{y},\overline{z},\overline{u})$ for problem \eqref{opt_control_ control_problem}-\eqref{state_equ_z} that
	\begin{equation}
	J(G_\varepsilon(B_i\overline{u}),\overline{u})= J_{\mathrm{reg}}(G_\varepsilon(B_i\overline{u}),\overline{u};\overline{u})\geq J_{\mathrm{reg}}(\overline{y}_\varepsilon,\overline{u}_\varepsilon;\overline{u}) = J(\overline{y}_\varepsilon,\overline{u}_\varepsilon) + \frac{1}{2} \| \overline{u}_\varepsilon - \overline{u} \|^2_{U_i} \geq J(\overline{y},\overline{u}).\label{eq:proof:conv_controls}
	\end{equation}
	Moreover, by \eqref{estim_reg_problem_y_z} in Lemma~\ref{Lem:estimates_regularized_solution}, $G_\varepsilon(B_i\overline{u}) \in Y_{2,0}$ is uniformly bounded for $\varepsilon \in (0,\varepsilon_*]$ so that $J(G_\varepsilon(B_i\overline{u}),\overline{u}) \leq c$ for some constant $c>0$.
	Hence,
	$c>J_{\mathrm{reg}}(\overline{y}_\varepsilon,\overline{u}_\varepsilon;\overline{u}) = \frac{1}{2}  \|\overline{y}_\varepsilon-y_d\|_{U_1}^2+ \frac{\kappa}{2}\| \overline{u}_\varepsilon \|_{U_i}^2 + \frac{1}{2}\|\overline{u}_\varepsilon - \overline{u}\|_{U_i}^2$ and the norms of $\overline{u}_\varepsilon$ in $U_i$ are bounded from above 
	independently of $\varepsilon \in (0,\varepsilon_*]$.
	Consequently, we can extract a subsequence $\{u_{\varepsilon_k}\}$ which converges weakly in $U_i$ to some $\tilde{u}$ with $k\rightarrow\infty$.
	By Lemma~\ref{Lem:convergence_reg_to_nonreg}, $y_{\varepsilon_k}\rightarrow G(\tilde{u})$ with $k\rightarrow\infty$ weakly in $Y_{0,2}$ and then also in $U_1$.
	Also by Lemma~\ref{Lem:convergence_reg_to_nonreg}, $G_\varepsilon(B_i\overline{u})\rightarrow \overline{y}$ with $\varepsilon\rightarrow 0$ strongly in $Y_{2,0}$ and then in $U_1$. $J_{\mathrm{reg}}$ is weakly lower semi-continuous.	
	Hence, with \eqref{eq:proof:conv_controls} we obtain
	\begin{equation*}
	\begin{split}
	J(\overline{y},\overline{u}) &=\lim\limits_{k\rightarrow\infty}J(G_{\varepsilon_k}(B_i\overline{u}),\overline{u}) 
	\geq \liminf_{k\rightarrow\infty} J_{\mathrm{reg}}(\overline{y}_{\varepsilon_k},\overline{u}_{\varepsilon_k};\overline{u})
	\geq J(\tilde{y},\tilde{u}) + \frac{1}{2} \| \tilde{u} - \overline{u} \|^2_{U_i} \geq J(\overline{y},\overline{u}).
	\end{split}
	\end{equation*}
	But this implies $\tilde{u}=\overline{u}$ and that the convergence of $\{u_{\varepsilon_k}\}$ in $U_i$ is strong.
	Since the limit is uniquely determined by $\overline{u}$, the whole sequence $\{u_{\varepsilon}\}$ converges to $\overline{u}$ in $U_i$ with $\varepsilon\rightarrow 0$.
	All results then follow by applying the statement about strong convergence in Lemma~\ref{Lem:convergence_reg_to_nonreg}. 
\end{proof}

	\subsection{Gâteaux differentiability of the solution operator of the regularized state equation}
	In this subsection, we show that $G_\varepsilon$ is Gâteaux differentiable for all $\varepsilon\in (0,\varepsilon_*]$.
	\begin{lemma}[Gâteaux differentiability of $G_\varepsilon$]\label{Lem:Gateau_reg_state_equ}
		Let Assumption~\ref{Ass:general_ass_and_short_notation_1} and Assumption~\ref{Ass:regularized_problem} hold and take the notation from Lemma~\ref{Cor:state_equ_reg_wellposed}.
		Then for any $\varepsilon\in (0,\varepsilon_*]$ and $q\in (\frac{1}{1-\alpha},\infty)$ the solution operator $G_\varepsilon:\mathrm{L}^q(J_T;X)\rightarrow Y_{q,0}$ of problem \eqref{state_equ_regular_abstract} is Gâteaux differentiable.
		The derivative $G_\varepsilon'[u;h]$ at $u\in \mathrm{L}^q(J_T;X)$ in direction $h\in \mathrm{L}^q(J_T;X)$ is given by $y^{u,h}_\varepsilon$, where $y^{u,h}_\varepsilon$ together with $z=z^{u,h}_\varepsilon=Z'_\varepsilon[Sy_\varepsilon^u;Sy^{u,h}_\varepsilon]\in \mathrm{W}^{1,q}(J_T)$ are the unique solution of
		\begin{alignat}{3}
				\dot{y}(t) + (A_p y)(t)&= \frac{\partial}{\partial y}f_{\varepsilon}(y_\varepsilon^u(t),z_\varepsilon^u(t))y(t) + \frac{\partial}{\partial z}f_{\varepsilon}(y_\varepsilon^u(t),z_\varepsilon^u(t))z(t) + h(t)&&\ \text{in }X&&\ \text{for }t\in J_T, \label{state_equ_regular_derivative_y}  \\
				y(0)&=0 &&\ \text{in }X,\notag\\
				\dot{z}(t)-S\dot{y}(t) &= -\frac{1}{\varepsilon}\Psi''(z_\varepsilon^u(t))z(t)&&\ &&\ \text{for }t\in J_T,\label{state_equ_regular_derivative_z}\\
				z(0)&=0.\notag
		\end{alignat}
%
%
		For $i\in\{1,2\}$ and $u,h\in U_i$ the derivative of the solution mapping $u\mapsto G_\varepsilon(B_iu)$ at $u$ in direction $h$ is given by $y^{B_iu,B_ih}_\varepsilon$, i.e. by the unique solution  of \eqref{state_equ_regular_derivative_y} with $h$ replaced by $B_ih$ and $z=z^{B_iu,B_ih}_\varepsilon=Z'_\varepsilon[Sy_\varepsilon^{B_iu};Sy^{B_iu,B_ih}_\varepsilon]$.
	\end{lemma}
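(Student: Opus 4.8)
The plan is to first show that the linearized system \eqref{state_equ_regular_derivative_y}--\eqref{state_equ_regular_derivative_z} is uniquely solvable in $Y_{q,0}\times\mathrm{W}^{1,q}(J_T)$, then to identify its solution as the directional derivative of $G_\varepsilon$ by a weak compactness argument along difference quotients, and finally to read off linearity and boundedness of the derivative in the direction.

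First I would eliminate $z$ from \eqref{state_equ_regular_derivative_z}: integrating and using $y(0)=0$ gives $z(t)=Sy(t)-\int_0^t\tfrac1\varepsilon\Psi''(z_\varepsilon^u(s))z(s)\,ds$, which by Remark~\ref{Rem:Solution_op_regularized_hyst} means $z=Z'_\varepsilon[Sy_\varepsilon^u;Sy]$, a bounded linear function of $Sy$ on $\mathrm{C}(\overline{J_T})$ (and on $\mathrm{W}^{1,q}(J_T)$). Substituting into \eqref{state_equ_regular_derivative_y} turns the system into a single linear parabolic equation $\dot y+A_py=\mathcal K_\varepsilon^u y+h$, $y(0)=0$, where $\mathcal K_\varepsilon^u$ is bounded from $\mathrm{C}(\overline{J_T};X^\alpha)$ to $\mathrm{L}^\infty(J_T;X)$: the operator norms of $\tfrac{\partial}{\partial y}f_\varepsilon(y_\varepsilon^u(t),z_\varepsilon^u(t))$ and $\tfrac{\partial}{\partial z}f_\varepsilon(y_\varepsilon^u(t),z_\varepsilon^u(t))$ are bounded uniformly in $t$ by the ($\varepsilon$-uniform) local Lipschitz constants from $(A3)_\varepsilon$ of Assumption~\ref{Ass:regularized_problem} evaluated along the compact trajectory $\{(y_\varepsilon^u(t),z_\varepsilon^u(t))\}$, and $Z'_\varepsilon[Sy_\varepsilon^u;\cdot]$ is bounded by Remark~\ref{Rem:Solution_op_regularized_hyst}. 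Unique solvability in $Y_{q,0}$ and the estimate $\|y_\varepsilon^{u,h}\|_{Y_{q,0}}\le C\|h\|_{\mathrm{L}^q(J_T;X)}$ then follow from the mild formulation, the fractional-power bound \eqref{frac_pow_estimate1}, Gronwall's lemma and maximal parabolic regularity of $A_p$ (Remark~\ref{Rem:maximal parabolic regularity}), exactly as in the well-posedness proof for \eqref{state_equ_regular_abstract} in Corollary~\ref{Cor:state_equ_reg_wellposed}; this defines $\zeta:=y_\varepsilon^{u,h}$ and $z_\varepsilon^{u,h}:=Z'_\varepsilon[Sy_\varepsilon^u;S\zeta]$.

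Next I would look at difference quotients. Fix $u,h$, abbreviate $y:=y_\varepsilon^u$, and for small $\lambda>0$ set $y_\lambda:=G_\varepsilon(u+\lambda h)$, $z_\lambda:=Z_\varepsilon(Sy_\lambda)$, $\eta_\lambda:=\lambda^{-1}(y_\lambda-y)$ and $w_\lambda:=\lambda^{-1}(z_\lambda-z_\varepsilon^u)$. Local Lipschitz continuity of $G_\varepsilon$ (Corollary~\ref{Cor:state_equ_reg_wellposed}) bounds $\|\eta_\lambda\|_{Y_{q,0}}$ uniformly in small $\lambda$; since $q>\tfrac1{1-\alpha}$ the embedding $Y_{q,0}\hhookrightarrow\mathrm{C}(\overline{J_T};X^\alpha)$ is compact (Remark~\ref{Rem:embeddings} together with $X^\alpha\simeq[X,\mathrm{dom}(A_p)]_\alpha$) and $Y_{q,0}$ is reflexive, so along a subsequence $\lambda_k\downarrow0$ we get $\eta_{\lambda_k}\rightharpoonup\bar\eta$ in $Y_{q,0}$ and $\eta_{\lambda_k}\to\bar\eta$ in $\mathrm{C}(\overline{J_T};X^\alpha)$. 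Then $Sy_{\lambda_k}\to Sy$ in $\mathrm{C}(\overline{J_T})$ and, writing $w_{\lambda_k}=\int_0^1 Z'_\varepsilon[Sy+t\lambda_k S\eta_{\lambda_k};S\eta_{\lambda_k}]\,dt$ and using continuity of $Z'_\varepsilon$ (Remark~\ref{Rem:Solution_op_regularized_hyst}), $w_{\lambda_k}\to Z'_\varepsilon[Sy_\varepsilon^u;S\bar\eta]$ in $\mathrm{C}(\overline{J_T})$. Now $\eta_{\lambda_k}$ solves $\dot\eta_{\lambda_k}+A_p\eta_{\lambda_k}=\lambda_k^{-1}\bigl(f_\varepsilon(y+\lambda_k\eta_{\lambda_k},z_\varepsilon^u+\lambda_k w_{\lambda_k})-f_\varepsilon(y,z_\varepsilon^u)\bigr)+h$ with zero initial value; replacing the moving directions $(\eta_{\lambda_k}(t),w_{\lambda_k}(t))$ by their limits costs only $L\bigl(\|\eta_{\lambda_k}(t)-\bar\eta(t)\|_{X^\alpha}+|w_{\lambda_k}(t)-Z'_\varepsilon[Sy_\varepsilon^u;S\bar\eta](t)|\bigr)\to0$ by the local Lipschitz estimate, and then Gâteaux differentiability of $f_\varepsilon$ for the now fixed directions shows the source converges pointwise in $t$ to $\tfrac{\partial}{\partial y}f_\varepsilon(y(t),z_\varepsilon^u(t))\bar\eta(t)+\tfrac{\partial}{\partial z}f_\varepsilon(y(t),z_\varepsilon^u(t))Z'_\varepsilon[Sy_\varepsilon^u;S\bar\eta](t)+h(t)$; it is dominated in $X$ by the Lipschitz constant, so dominated convergence upgrades this to convergence in $\mathrm{L}^q(J_T;X)$, whence by maximal parabolic regularity $\eta_{\lambda_k}\to\bar\eta$ strongly in $Y_{q,0}$ and $\bar\eta$ solves \eqref{state_equ_regular_derivative_y}--\eqref{state_equ_regular_derivative_z}. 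By the uniqueness from the first step $\bar\eta=\zeta$ is independent of the subsequence, so $\eta_\lambda\to y_\varepsilon^{u,h}$ in $Y_{q,0}$ as $\lambda\downarrow0$; thus $G_\varepsilon'[u;h]=y_\varepsilon^{u,h}$. Linearity of \eqref{state_equ_regular_derivative_y}--\eqref{state_equ_regular_derivative_z} in $h$ and the estimate from the first step show that $h\mapsto y_\varepsilon^{u,h}$ is a bounded linear operator, so $G_\varepsilon$ is Gâteaux differentiable at $u$ with this derivative. For $u\mapsto G_\varepsilon(B_iu)$ one combines this with the bounded linear map $B_i:U_i\to\mathrm{L}^2(J_T;X)$ (note $2>\tfrac1{1-\alpha}$ by (A1) of Assumption~\ref{Ass:general_ass_and_short_notation_1}) and the chain rule, obtaining the derivative $h\mapsto y_\varepsilon^{B_iu,B_ih}$.

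The hard part will be the passage to the limit in the difference quotient of $f_\varepsilon$: the direction $(\eta_{\lambda_k},w_{\lambda_k})$ varies with $k$ while $f_\varepsilon$ is only assumed Gâteaux (not Fréchet) differentiable, so one must combine strong convergence of $\eta_{\lambda_k}$ in $\mathrm{C}(\overline{J_T};X^\alpha)$ — available through the compact embedding $Y_{q,0}\hhookrightarrow\mathrm{C}(\overline{J_T};X^\alpha)$ — with the $\varepsilon$-uniform Lipschitz bound, which is exactly the content of Hadamard directional differentiability of $f_\varepsilon$ (Definition~\ref{Def:Hadam_differentiability}). Bootstrapping the weak $Y_{q,0}$-limit to a strong one is then automatic once the source term converges in $\mathrm{L}^q(J_T;X)$.
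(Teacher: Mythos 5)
Your argument is correct, but it takes a genuinely different route from the paper. The paper's proof is essentially a two-line reduction: it observes that $Z_\varepsilon$ enjoys all the properties of $\mathcal{W}$ used in Theorem~\ref{Thm:state_equ_sol_op} (Lipschitz continuity, boundedness on $\mathrm{W}^{1,q}$, Hadamard directional differentiability), so the Hadamard directional differentiability of $G_\varepsilon$ with derivative given by the linearized system follows by the same argument as for $G$ in \cite[Theorem~4.7]{muench}; Gâteaux differentiability is then immediate because the directional derivatives $Z_\varepsilon'[v;\cdot]$ and $f_\varepsilon'[(y,z);(\cdot,\cdot)]$ are linear, and the membership $z_\varepsilon^{u,h}\in\mathrm{W}^{1,q}(J_T)$ is read off from Remark~\ref{Rem:Solution_op_regularized_hyst}. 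You instead reprove the differentiability from scratch: unique solvability of the linearized system via elimination of $z$ and Gronwall, then a difference-quotient argument using the compact embedding $Y_{q,0}\hhookrightarrow\mathrm{C}(\overline{J_T};X^\alpha)$, the $\varepsilon$-uniform local Lipschitz bound from $(A3)_\varepsilon$ to freeze the moving directions, directional differentiability of $f_\varepsilon$ for the frozen directions, dominated convergence, and maximal parabolic regularity to upgrade to strong $Y_{q,0}$-convergence. What you identify as ``the hard part'' --- passing to the limit in the difference quotient of $f_\varepsilon$ when the direction varies with $k$ --- is precisely the content of the Hadamard differentiability proof in \cite[Theorem~4.7]{muench} that the paper cites rather than repeats, so your proof is self-contained where the paper's is economical; both are valid, and your version has the minor additional benefit of making explicit where the hypothesis $q>\frac{1}{1-\alpha}$ and the compactness of $\mathrm{dom}(A_p)\hookrightarrow X$ enter.
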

	
	\begin{proof}
		$G_\varepsilon$ is Hadamard directionally differentiable because $Z_\varepsilon$ satisfies the properties of $\mathcal{W}$ in Theorem~\ref{Thm:state_equ_sol_op}.
		Gâteaux differentiability then follows from linearity of all the derivatives. To see that $z^{u,h}_\varepsilon\in \mathrm{W}^{1,q}(J_T)$, insert $Sy^{u,h}$ for $h$ in Remark~\ref{Rem:Solution_op_regularized_hyst} and note that the right side is contained in $\mathrm{W}^{1,q}(J_T)$.
	\end{proof}

\subsection{Adjoint system for the regularized problem}\label{Subsec:Adj_reg_problem}

In this section, we derive adjoint systems for the regularized problems \eqref{state_equ_regular_y},\eqref{state_equ_regular_z},\eqref{opt_control_problem_regular} with $\varepsilon\in (0,\varepsilon_*]$, see Theorem~\ref{Thm:opt_syst_reg} below. We proceed in a similar way as in \cite[Sections 3.3 and 3.5]{brokate2013optimal} and \cite[Section 4]{meyeroptimal}.
The following estimates are needed.
\begin{lemma}\label{Lem:derivative_reg_function}
	Let Assumption~\ref{Ass:general_ass_and_short_notation_1} and Assumption~\ref{Ass:regularized_problem} hold.
	With a little abuse of notation we use the same symbol for the Nemitskii operator of $f_\varepsilon$, i.e. we write
	$
	f_\varepsilon: (y,z) \mapsto f_\varepsilon(y(\cdot),z(\cdot)).
	$
	Then
	$f_\varepsilon$ is locally Lipschitz continuous and Gâteaux differentiable from $\mathrm{C}(\overline{J_T};X^\alpha)\times \mathrm{L}^q(J_T)$ to $\mathrm{L}^q(J_T;X)$ for all $\varepsilon\in (0,\varepsilon_*]$ and $q\in (\frac{1}{1-\alpha},\infty)$.
	
	Moreover, the derivative $f_\varepsilon'[(y,z);(\cdot,\cdot)]$ at $(y,z)\in\mathrm{C}(\overline{J_T};X^\alpha)\times \mathrm{L}^q(J_T)$ is Lipschitz continuous with a modulus of the form $K(y)=L(y) (1+T^{1/q})$, where $L(y)>0$ only depends on $y \in \mathrm{C}(\overline{J_T};X^\alpha)$.
	$K(y)$ and $L(y)$ are independent of $\varepsilon$ and remain the same in a sufficiently small neighbourhood of $y$.
	For $(v,h)\in \mathrm{C}(\overline{J_T};X^\alpha)\times \mathrm{L}^q(J_T)$ we can estimate
	\begin{equation}
	\left\| \frac{\partial}{\partial y} f_\varepsilon(y,z)v\right\|_{\mathrm{L}^q(J_T;X)} + \left\| \frac{\partial}{\partial z} f_\varepsilon(y,z) h\right\|_{\mathrm{L}^q(J_T;X)} \leq K(y) (\|v\|_{\mathrm{C}(\overline{J_T};X^\alpha)} + \|h\|_{\mathrm{L}^q(J_T)})\label{Est:deriv_f_eps1}
	\end{equation}
	For a.e. $t\in J_T$, there also holds the pointwise estimate
	\begin{equation}
	\left\| \frac{\partial}{\partial y} f_\varepsilon(y(t),z(t))v(t)\right\|_{X} + \left\| \frac{\partial}{\partial z} f_\varepsilon(y(t),z(t)) \right\|_X h(t) \leq K(y) (\|v(t)\|_{X^\alpha} + |h(t)|).\label{Est:deriv_f_eps2}
	\end{equation}
	
	Furthermore, $\frac{\partial}{\partial y} f_\varepsilon(y,z)=\frac{\partial}{\partial y} f_\varepsilon(y(\cdot),z(\cdot))$ is bounded by $K(y)$ in $\mathrm{L}^\infty(J_T;\mathcal{L}(X^\alpha,X))$.
	Moreover, $\frac{\partial}{\partial z} f_\varepsilon(y,z) = \frac{\partial}{\partial z} f_\varepsilon(y(\cdot),z(\cdot))$ is bounded by $K(y)$ in $\mathrm{L}^\infty(J_T;X)$.

\end{lemma}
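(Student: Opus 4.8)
The statement collects several regularity properties of the Nemitskii operator of $f_\varepsilon$; all of them are routine consequences of the pointwise hypotheses $(A1)_\varepsilon$ and $(A3)_\varepsilon$ in Assumption~\ref{Ass:regularized_problem} together with Assumption~\ref{Ass:general_ass_and_short_notation_1}(A2), transported from pointwise estimates in $X$ to Bochner norms over $J_T$. The plan is to establish the four assertions in order: (i) local Lipschitz continuity and Gâteaux differentiability of the Nemitskii operator; (ii) the Lipschitz bound on the derivative with modulus $K(y)=L(y)(1+T^{1/q})$; (iii) the integrated bound \eqref{Est:deriv_f_eps1} and the pointwise bound \eqref{Est:deriv_f_eps2}; (iv) the $\mathrm{L}^\infty$-bounds on the partial derivatives.

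\textbf{Step 1 (pointwise differential).} Since $f_\varepsilon:X^\alpha\times\mathbb{R}\to X$ is Gâteaux differentiable by $(A1)_\varepsilon$, at each point its derivative decomposes as $f_\varepsilon'[(y,z);(v,h)] = \tfrac{\partial}{\partial y}f_\varepsilon(y,z)v + \tfrac{\partial}{\partial z}f_\varepsilon(y,z)h$, with $\tfrac{\partial}{\partial y}f_\varepsilon(y,z)\in\mathcal{L}(X^\alpha,X)$ and $\tfrac{\partial}{\partial z}f_\varepsilon(y,z)\in X$. The local Lipschitz hypothesis $(A3)_\varepsilon$ gives, on the neighbourhood $V(y_0)$, the operator-norm bound $\|\tfrac{\partial}{\partial y}f_\varepsilon(y,z)\|_{\mathcal{L}(X^\alpha,X)} + \|\tfrac{\partial}{\partial z}f_\varepsilon(y,z)\|_X \le L(y_0)$, uniformly in $\varepsilon$, $z$, and $y\in V(y_0)$: this is the standard fact that a locally Lipschitz map has derivative bounded by the local Lipschitz constant wherever it is differentiable. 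This immediately yields the pointwise bound \eqref{Est:deriv_f_eps2} with $K(y):=L(y)$ there, and the two final $\mathrm{L}^\infty$-bounds on $t\mapsto\tfrac{\partial}{\partial y}f_\varepsilon(y(t),z(t))$ and $t\mapsto\tfrac{\partial}{\partial z}f_\varepsilon(y(t),z(t))$, using that $y\in\mathrm{C}(\overline{J_T};X^\alpha)$ stays (for $t$ in $\overline{J_T}$) in a fixed bounded set, which can be covered by finitely many of the neighbourhoods $V(y_0)$, so one gets a single constant $L(y)$ valid along the whole trajectory.

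\textbf{Step 2 (lift to Bochner norms).} Integrating \eqref{Est:deriv_f_eps2} in the $q$-th power over $J_T$ and taking $q$-th roots gives \eqref{Est:deriv_f_eps1}; the factor $T^{1/q}$ is absorbed into $K(y)=L(y)(1+T^{1/q})$ because constant-in-$t$ contributions (e.g. from bounding $\|v(t)\|_{X^\alpha}\le\|v\|_{\mathrm{C}(\overline{J_T};X^\alpha)}$) pick up a factor $T^{1/q}$ while genuinely $t$-dependent contributions such as $\|h\|_{\mathrm{L}^q(J_T)}$ do not. The Lipschitz continuity of $(y,z)\mapsto f_\varepsilon'[(y,z);(\cdot,\cdot)]$ with modulus $K(y)$ follows the same pattern: the difference of derivatives at two points is controlled pointwise by the local Lipschitz constant of $y\mapsto\nabla f_\varepsilon$ — which is exactly the content of $(A3)_\varepsilon$ giving the same neighbourhoods and constants as for $f$ — and then integrated over $J_T$. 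Gâteaux differentiability of the Nemitskii operator from $\mathrm{C}(\overline{J_T};X^\alpha)\times\mathrm{L}^q(J_T)$ to $\mathrm{L}^q(J_T;X)$ is obtained by writing the difference quotient $\tfrac{1}{\lambda}\bigl(f_\varepsilon(y+\lambda v,z+\lambda h)-f_\varepsilon(y,z)\bigr)(t)$, subtracting the candidate derivative, and passing to the limit $\lambda\to 0$ inside the $\mathrm{L}^q(J_T;X)$-norm via dominated convergence: the integrand tends to $0$ for a.e.\ $t$ by pointwise Gâteaux differentiability, and it is dominated by a constant multiple of $(\|v\|_{\mathrm{C}(\overline{J_T};X^\alpha)}+\|h\|_{\mathrm{L}^q(J_T)})$-type quantities coming from the uniform derivative bound of Step~1. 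Local Lipschitz continuity of the Nemitskii operator itself is an analogous, simpler integration of the pointwise Lipschitz estimate in $(A3)_\varepsilon$ plus a mean-value argument.

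\textbf{Main obstacle.} There is no deep difficulty; the only point requiring care is uniformity — over $\varepsilon\in(0,\varepsilon_*]$, and along the whole trajectory $\{y(t):t\in\overline{J_T}\}$ rather than at a single base point. This is handled precisely by the hypothesis in $(A3)_\varepsilon$ that the neighbourhoods and Lipschitz constants coincide with those of $f$ in Assumption~\ref{Ass:general_ass_and_short_notation_1}(A2), independently of $\varepsilon$, together with the compactness in $X^\alpha$ of the image of the continuous curve $y\in\mathrm{C}(\overline{J_T};X^\alpha)$, which allows a finite subcover and hence a single constant $L(y)$. The dominated-convergence step in the proof of Gâteaux differentiability is the one place to be slightly careful about the domination, but the uniform bound from Step~1 supplies it.
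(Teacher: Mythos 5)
Your overall strategy — pointwise bounds from $(A3)_\varepsilon$, a lift to the Bochner norms via integration/Minkowski with the split $L(y)(1+T^{1/q})$, and dominated convergence for the differentiability of the Nemitskii operator — is the same as the paper's, and Steps 1 and the integrated/pointwise bounds \eqref{Est:deriv_f_eps1}--\eqref{Est:deriv_f_eps2} together with the $\mathrm{L}^\infty$-bounds are argued correctly (the paper obtains the $\mathrm{L}^\infty$-bounds by plugging constant directions into \eqref{Est:deriv_f_eps2}, you obtain them directly from the ``derivative bounded by the Lipschitz constant'' fact; these are equivalent).

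There is one misstep in your Step 2. You justify the Lipschitz continuity of the derivative by asserting that ``the difference of derivatives at two points is controlled pointwise by the local Lipschitz constant of $y\mapsto\nabla f_\varepsilon$ --- which is exactly the content of $(A3)_\varepsilon$.'' It is not: $(A3)_\varepsilon$ asserts local Lipschitz continuity of $f_\varepsilon$ itself (with the same neighbourhoods and constants as $f$), and says nothing about Lipschitz continuity of $\nabla f_\varepsilon$ in the base point; no such hypothesis is available, and indeed $f$ itself is only assumed locally Lipschitz and directionally differentiable. The claim in the lemma is the weaker (and intended) one: for \emph{fixed} $(y,z)$, the map $(v,h)\mapsto f_\varepsilon'[(y,z);(v,h)]$ is Lipschitz in the direction variables with modulus $K(y)$, and this constant is uniform over a small neighbourhood of $y$ and over $\varepsilon$. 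Since the Gâteaux derivative is linear in $(v,h)$, this is nothing but the bound \eqref{Est:deriv_f_eps1}, which you have already established; the uniformity over a neighbourhood of $y$ comes from the fact that the covering constant $L(y)$ in your Step 1 can be taken valid on a sup-norm ball around $y$ in $\mathrm{C}(\overline{J_T};X^\alpha)$. So the sub-claim is salvageable without the phantom hypothesis, but as written your justification invokes a regularity of $\nabla f_\varepsilon$ that the assumptions do not provide.
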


\begin{proof}
	First of all, $f_\varepsilon$ is locally Lipschitz continuous and Gâteaux differentiable from the space $\mathrm{C}(\overline{J_T};X^\alpha)\times \mathrm{L}^q(J_T)$ to $\mathrm{L}^q(J_T;X)$ for all $\varepsilon\in (0,\varepsilon_*]$ and $q\in (\frac{1}{1-\alpha},\infty)$.
	This follows from Step 3 in the proof of \cite[Theorem 3.1]{muench} and Step 1 in the proof of \cite[Theorem 4.7]{muench}. We give a sketch of the proof:
	
	One first makes use of $(A3)_\varepsilon$ in Assumption~\ref{Ass:regularized_problem} to show that $(y(\cdot),v)\mapsto f_\varepsilon(y(\cdot),v)$ is locally Lipschitz continuous from $\mathrm{C}(\overline{J_T};X^\alpha)\times \mathbb{R}$ to $\mathrm{C}(\overline{J_T};X)$ with respect to the $\mathrm{C}(\overline{J_T};X^\alpha)$-norm.
	The proof contains a pointwise estimate of the following form:
	For $y\in \mathrm{C}(\overline{J_T};X^\alpha)$ and some neighbourhood $\overline{B_{\mathrm{C}(\overline{J_T};X^\alpha)}(y,\delta)}$ of $y$ there holds
	\begin{equation*}
	\|f_\varepsilon(y_1(t),z_1) - f(y_2(t),z_2)\|_X\leq L(y) (\|y_1(t) - y_2(t)\|_{X^\alpha} + |z_1-z_2|)
	\end{equation*} 
	for all $y_1,y_2\in \overline{B_{\mathrm{C}(\overline{J_T};X^\alpha)}(y,\delta)}$, $z_1,z_2\in \mathbb{R}$ and $t\in \overline{J_T}$ and for some $L(y)>0$.
	This local estimate leads to a pointwise estimate of the form
	\begin{equation*}
	\begin{split}
	&\Vert f_\varepsilon(y_1,z_1)(s) - f_\varepsilon(y_2,z_2)(s) \Vert_{X} 
	\leq  L(y)\left[  \Vert y_1(s) - y_2(s) \Vert_{X^\alpha} + \vert z_1(s) - z_2(s) \vert \right]
	\end{split}
	\end{equation*}
	for a.e. $s\in J_T$, for any $y_1,y_2\in \overline{B_{\mathrm{C}(\overline{J_T};X^\alpha)}(y,\delta)}$ and $z_1,z_2\in \mathrm{L}^q(J_T)$.
	By Minkowski's inequality, $f_\varepsilon$ is locally Lipschitz continuous from $\mathrm{C}(\overline{J_T};X^\alpha)\times \mathrm{L}^q(J_T)$ to $\mathrm{L}^q(J_T;X)$ with Lipschitz constants of the form $K(y)=L(y) (1+T^{1/q})$.
	
	In a second step one shows that $f_\varepsilon$ is directionally differentiable. 
	Convergence of the difference quotients
	\begin{equation*}
	\lim_{\lambda \rightarrow 0}\frac{f_\varepsilon(y(s)+\lambda v(s),z(s) + \lambda h(s)) }{\lambda} = f_\varepsilon'[(y(s),z(s)); (v(s),h(s)) ]\in X
	\end{equation*}
	for a.e. $s\in J_T$ and $(y,z),(v,h)\in \mathrm{C}(\overline{J_T};X^\alpha)\times \mathrm{L}^q(J_T)$ follows from $(A3)_\varepsilon$ in Assumption~\ref{Ass:regularized_problem}.
	Lebesgue's dominated convergence theorem yields directional differentiability of $f_\varepsilon$ from the space $\mathrm{C}(\overline{J_T};X^\alpha)\times \mathrm{L}^q(J_T)$ to $\mathrm{L}^q(J_T;X)$ and the bounds \eqref{Est:deriv_f_eps1} and \eqref{Est:deriv_f_eps2} for $f_\varepsilon'[(y,z);(\cdot,\cdot)]$.
	Linearity of the derivative and local Lipschitz continuity then already imply Gâteaux differentiability of $f_\varepsilon$.
	
	Now for arbitrary $y\in X^\alpha$ with $\|y\|_{X^\alpha}=1$, we choose the constant function $v\in \mathrm{C}(\overline{J}_T;X^\alpha)$, $v(t)= y$ for $t\in \overline{J_T}$  and set $h= 0\in \mathrm{L}^q(J_T)$ in \eqref{Est:deriv_f_eps2}.
	This implies that $\frac{\partial}{\partial y} f_\varepsilon(y,z)=\frac{\partial}{\partial y} f_\varepsilon(y(\cdot),z(\cdot))$ is bounded by $K(y)$ in $\mathrm{L}^\infty(J_T;\mathcal{L}(X^\alpha,X))$.
	Then we choose $v= 0\in \mathrm{C}(\overline{J}_T;X^\alpha)$, $h\in \mathrm{L}^q(J_T)$, $h(t)= c>0$ for $t\in \overline{J_T}$ in \eqref{Est:deriv_f_eps2} and divide by $c$ on both sides to prove that $\frac{\partial}{\partial z} f_\varepsilon(y,z) = \frac{\partial}{\partial z} f_\varepsilon(y(\cdot),z(\cdot))$ is bounded by $K(y)$ in $\mathrm{L}^\infty(J_T;X)$.
\end{proof}

The following lemma provides the main tool to derive adjoint systems for the regularized problems \eqref{state_equ_regular_y},\eqref{state_equ_regular_z},\eqref{opt_control_problem_regular}.
The hardest part in the proof is to find an explicit expression of the adjoint operator $[G_\varepsilon'[u;\cdot]]^*:Y_{q,0}^*\rightarrow \mathrm{L}^{q'}(J_T;X^*)$ of $G_\varepsilon'[u;\cdot]$ from Lemma~\ref{Lem:Gateau_reg_state_equ}. This comes from the fact that $G_\varepsilon'[u;\cdot]$ is defined as the mapping which assigns to each $h\in \mathrm{L}^q(J_T;X)$ the solution $y_\varepsilon^{u,h}\in Y_{q,0}$ of \eqref{state_equ_regular_derivative_y}, which contains the solution $z_\varepsilon^{u,h}$ of \eqref{state_equ_regular_derivative_z} only implicitly.

\begin{lemma}\label{Lem:adjoint_equation_reg}
	Let Assumption~\ref{Ass:general_ass_and_short_notation_1} and Assumption~\ref{Ass:regularized_problem} hold and adopt the notation from Lemma~\ref{Lem:Gateau_reg_state_equ}. For $\varepsilon\in (0,\varepsilon_*]$ and any $q\in (\frac{1}{1-\alpha},\infty)$, $h\in \mathrm{L}^q(J_T;X)$ and $\nu \in \mathrm{L}^{q'}(J_T;[\mathrm{dom}(A_p)]^*)$ there holds
	\begin{equation*}
	\langle \nu, y_\varepsilon^{u,h} \rangle_{\mathrm{L}^{q}(J_T;\mathrm{dom}(A_p))} = \langle p^\nu_\varepsilon + Sq^\nu_\varepsilon, h \rangle_{\mathrm{L}^q(J_T;X)},
	\end{equation*}
	where  
	$p^\nu_\varepsilon \in Y^*_{q',T}$
	and	
	$q^\nu_\varepsilon \in \mathrm{L}^{q'}(J_T)$ are the unique solution of
	\begin{alignat*}{2}
		-\dot{p} + A^{*}_p p &= \left[\frac{\partial}{\partial y}f_\varepsilon(y_\varepsilon^u,z_\varepsilon^u)\right]^{*}p + S\left[- A_p + \frac{\partial}{\partial y}f_{\varepsilon}(y_\varepsilon^u,z_\varepsilon^u)  \right]q + \nu \ && \text{for } t\in J_T,\\
		p(T)&=0,\\
		-\dot{q} &=  \langle p, \frac{\partial}{\partial z}f_\varepsilon(y_\varepsilon^u,z_\varepsilon^u)\rangle_{X} + S\frac{\partial}{\partial z}f_{\varepsilon}(y_\varepsilon^u,z_\varepsilon^u)q  -\frac{1}{\varepsilon}\Psi''(z_\varepsilon^u)q \ && \text{for } t\in J_T,\\
		q(T)&=0,
	\end{alignat*}
	and where $y_\varepsilon^{u,h}\in Y_{q,0}$ and $z_\varepsilon^{u,h}\in \mathrm{W}^{1,q}(J_T)$ are
	the unique solution of \eqref{state_equ_regular_derivative_y}-\eqref{state_equ_regular_derivative_z}.
	Moreover,
	\begin{equation}\label{estim_reg_derivative}
		\| y^{u,h}_\varepsilon \|_{Y_{q,0}}\leq C(y_\varepsilon^u) \|h\|_{\mathrm{L}^q(J_T;X)} \text{ and } \|z^{u,h}_\varepsilon\|_{\mathrm{C}(\overline{J_T})} \leq C(y_\varepsilon^u)\|h\|_{\mathrm{L}^q(J_T;X)}
	\end{equation}
	for some constant $C(y_\varepsilon^u)>0$. $C(y_\varepsilon^u)$ remains the same in a sufficiently small neighbourhood of $y_\varepsilon^u$. 
\end{lemma}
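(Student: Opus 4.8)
The plan is to construct the adjoint pair $(p^\nu_\varepsilon,q^\nu_\varepsilon)$ by solving the backward system directly, to obtain the estimates \eqref{estim_reg_derivative} from the linearised state equations, and finally to verify the duality identity by testing the two adjoint equations against $y_\varepsilon^{u,h}$ and $z_\varepsilon^{u,h}$ and integrating by parts in time.

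\emph{Step 1 (well-posedness of the backward system).} First I would note that, for a given $p$, the $q$-equation is a linear scalar backward ODE; since $\frac{\partial}{\partial z}f_\varepsilon(y^u_\varepsilon,z^u_\varepsilon)$ and $\Psi''(z^u_\varepsilon)$ are bounded in time (Lemma~\ref{Lem:derivative_reg_function} and $(A4)_\varepsilon$ of Assumption~\ref{Ass:regularized_problem}), the variation-of-constants formula yields a unique $q=\mathcal{Q}[p]\in\mathrm{W}^{1,q'}(J_T)$ depending linearly and boundedly on $p\in\mathrm{L}^{q'}(J_T;X^*)$. Substituting $q=\mathcal{Q}[p]$ into the $p$-equation turns it into a single backward equation for $p$. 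The delicate point here is that, for each $t$, the coupling term $S\bigl[-A_p+\frac{\partial}{\partial y}f_\varepsilon\bigr]q$ only makes sense as an element of $[\operatorname{dom}(A_p)]^*$ (indeed of $[X^\alpha]^*$, since $|SA_py|\le c_1\|y\|_{X^\alpha}$ by the estimate used in the proof of Lemma~\ref{Lem:estimates_regularized_solution}), so the $p$-equation must be read in $[\operatorname{dom}(A_p)]^*$; this is exactly what the space $Y^*_{q',T}$ is tailored for. Invoking the dual (time-reversed) version of maximal parabolic regularity for $A_p$ on $X$ (cf.\ Remark~\ref{Rem:maximal parabolic regularity}), together with a Duhamel representation for the backward semigroup generated by $-A_p^*$ and the fractional-power estimate \eqref{frac_pow_estimate1} to absorb the lower-order and coupling terms via a fixed-point/Gronwall argument, produces a unique $p=p^\nu_\varepsilon\in Y^*_{q',T}$; then $q^\nu_\varepsilon:=\mathcal{Q}[p^\nu_\varepsilon]\in\mathrm{W}^{1,q'}(J_T)\hookrightarrow\mathrm{L}^{q'}(J_T)$, and uniqueness of $(p^\nu_\varepsilon,q^\nu_\varepsilon)$ is part of this construction.

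\emph{Step 2 (estimate for the linearised state).} For \eqref{estim_reg_derivative} I would repeat the Gronwall scheme from the proof of Lemma~\ref{Lem:estimates_regularized_solution}, but now for the \emph{homogeneous} linearised equations \eqref{state_equ_regular_derivative_y}--\eqref{state_equ_regular_derivative_z}. Writing $y^{u,h}_\varepsilon$ as a mild solution, applying \eqref{frac_pow_estimate1} and the bounds $\|\frac{\partial}{\partial y}f_\varepsilon(y^u_\varepsilon,z^u_\varepsilon)v\|_X\le K(y^u_\varepsilon)\|v\|_{X^\alpha}$ and $\|\frac{\partial}{\partial z}f_\varepsilon(y^u_\varepsilon,z^u_\varepsilon)\|_X\le K(y^u_\varepsilon)$ from Lemma~\ref{Lem:derivative_reg_function}, and using the representation $z^{u,h}_\varepsilon(t)=\int_0^t\exp\bigl(-\frac1\varepsilon\int_s^t\Psi''(z^u_\varepsilon(r))\,dr\bigr)\,S\dot{y}^{u,h}_\varepsilon(s)\,ds$ in which $\Psi''\ge0$ makes the exponential factor $\le1$, so that $|z^{u,h}_\varepsilon(t)|\le\int_0^t|S\dot{y}^{u,h}_\varepsilon|\le\int_0^t\bigl(c_1\|y^{u,h}_\varepsilon\|_{X^\alpha}+\|S\|K(y^u_\varepsilon)(\|y^{u,h}_\varepsilon\|_{X^\alpha}+|z^{u,h}_\varepsilon|)+\|S\|\|h\|_X\bigr)$, a singular Gronwall argument gives $\|y^{u,h}_\varepsilon\|_{\mathrm{C}(\overline{J_T};X^\alpha)}+\|z^{u,h}_\varepsilon\|_{\mathrm{C}(\overline{J_T})}\le C(y^u_\varepsilon)\|h\|_{\mathrm{L}^q(J_T;X)}$, and maximal parabolic regularity upgrades the $y$-bound to $Y_{q,0}$. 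The constant depends only on $K(y^u_\varepsilon)$, $c_1$, $T$ and $q$ (in particular it is uniform in $\varepsilon$) and stays the same in a small neighbourhood of $y^u_\varepsilon$ because so does $K(y^u_\varepsilon)$.

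\emph{Step 3 (duality identity) and the main obstacle.} I would pair the $p$-equation with $y^{u,h}_\varepsilon\in Y_{q,0}$ in the $([\operatorname{dom}(A_p)]^*,\operatorname{dom}(A_p))$-duality, integrate over $J_T$, integrate by parts in time using $p^\nu_\varepsilon(T)=0$ and $y^{u,h}_\varepsilon(0)=0$, and insert \eqref{state_equ_regular_derivative_y} into $\langle\dot{y}^{u,h}_\varepsilon+A_py^{u,h}_\varepsilon,p^\nu_\varepsilon\rangle$; the $\frac{\partial}{\partial y}f_\varepsilon$-terms cancel against $\langle[\frac{\partial}{\partial y}f_\varepsilon]^*p^\nu_\varepsilon,y^{u,h}_\varepsilon\rangle$, leaving an identity connecting $\int_{J_T}z^{u,h}_\varepsilon\langle p^\nu_\varepsilon,\frac{\partial}{\partial z}f_\varepsilon\rangle_X$, $\int_{J_T}\langle h,p^\nu_\varepsilon\rangle_X$, $\int_{J_T}\langle\nu,y^{u,h}_\varepsilon\rangle$ and $\int_{J_T}q^\nu_\varepsilon\,S[-A_p+\frac{\partial}{\partial y}f_\varepsilon]y^{u,h}_\varepsilon$. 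In parallel I would multiply the $q$-equation by $z^{u,h}_\varepsilon$, integrate over $J_T$, integrate by parts using $q^\nu_\varepsilon(T)=0$ and $z^{u,h}_\varepsilon(0)=0$, and substitute $\dot{z}^{u,h}_\varepsilon=S\dot{y}^{u,h}_\varepsilon-\frac1\varepsilon\Psi''(z^u_\varepsilon)z^{u,h}_\varepsilon$ followed by $S\dot{y}^{u,h}_\varepsilon$ taken from \eqref{state_equ_regular_derivative_y}; here the $S\frac{\partial}{\partial z}f_\varepsilon$- and $\frac1\varepsilon\Psi''$-terms cancel between the two sides, and what remains is exactly the combination of $\int_{J_T}q^\nu_\varepsilon\,S[-A_p+\frac{\partial}{\partial y}f_\varepsilon]y^{u,h}_\varepsilon$, $\int_{J_T}z^{u,h}_\varepsilon\langle p^\nu_\varepsilon,\frac{\partial}{\partial z}f_\varepsilon\rangle_X$ and $\int_{J_T}(Sh)\,q^\nu_\varepsilon$ needed to eliminate the remaining terms of the first identity. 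Adding the two identities, everything cancels except $\int_{J_T}\langle\nu,y^{u,h}_\varepsilon\rangle=\int_{J_T}\langle h,p^\nu_\varepsilon\rangle_X+\int_{J_T}(Sh)\,q^\nu_\varepsilon$, and since $\langle h,Sq^\nu_\varepsilon\rangle_X=(Sh)q^\nu_\varepsilon$ this is precisely $\langle p^\nu_\varepsilon+Sq^\nu_\varepsilon,h\rangle_{\mathrm{L}^q(J_T;X)}$. The principal difficulty lies in Step~1 and in making these integrations by parts rigorous: the term $S[-A_p+\frac{\partial}{\partial y}f_\varepsilon]q$ forces $p$ to live in the negative-order space $[\operatorname{dom}(A_p)]^*$ rather than in $X$, so one must work with the dual ("backward") maximal parabolic regularity on $Y^*_{q',T}$, keep careful track of which space each term of the two equations occupies, and close the mutual feedback between the $p$- and $q$-equations before the duality computation can even be written down.
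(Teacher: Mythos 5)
Your proposal is correct and reaches the same duality identity by the same final computation (test the backward equations against $y_\varepsilon^{u,h}$ and $z_\varepsilon^{u,h}$, integrate by parts, use the vanishing terminal/initial conditions), and your Step~2 reproduces the paper's Gronwall scheme for \eqref{estim_reg_derivative} essentially verbatim. Where you genuinely diverge is in how the adjoint pair is obtained. The paper never solves the backward coupled system directly: it first eliminates $z$ from the \emph{forward} linearised system by introducing the scalar solution operator $T_{z,\varepsilon}^u$ and the collapsed operator
$T_{y,\varepsilon}^u = A_p - \tfrac{\partial}{\partial y}f_{\varepsilon} - \tfrac{\partial}{\partial z}f_{\varepsilon}\,T_{z,\varepsilon}^u S(-A_p + \tfrac{\partial}{\partial y}f_{\varepsilon})$,
proves maximal parabolic $\mathrm{L}^q(J_T;X)$-regularity of $T_{y,\varepsilon}^u$ (this is where the Gronwall estimates live), and then invokes a duality lemma (\cite[Lemma 4.10]{meyeroptimal}) to get maximal parabolic $\mathrm{L}^{q'}(J_T;[\mathrm{dom}(A_p)]^*)$-regularity of $[T_{y,\varepsilon}^u]^*$ for free; the displayed backward system is then read off by computing $[T_{y,\varepsilon}^u]^*$ as a composition of adjoints, with $[T_{z,\varepsilon}^u]^*$ identified as the backward scalar ODE defining $q^\nu_\varepsilon$. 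You instead solve the backward system head-on: eliminate $q=\mathcal{Q}[p]$ and run a fixed-point/Gronwall argument on the resulting perturbed backward equation for $p$, using only backward maximal regularity of $A_p^*$ itself. Both routes are sound. The paper's buys existence, uniqueness and the explicit form of the adjoint system in one stroke from the already-established forward well-posedness, and keeps all hard estimates on the forward side; yours is more self-contained (no appeal to the duality lemma, no computation of $[T_{y,\varepsilon}^u]^*$ as a product of component adjoints) but must re-establish well-posedness of the perturbed backward problem from scratch, including checking that the map $p\mapsto(-\tfrac{d}{dt}+A_p^*)^{-1}\bigl([\tfrac{\partial}{\partial y}f_\varepsilon]^*p + S[-A_p+\tfrac{\partial}{\partial y}f_\varepsilon]\mathcal{Q}[p]+\nu\bigr)$ contracts on short intervals — a step you correctly flag as the delicate one and which would need to be written out in full.
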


\begin{proof}
	
Let $q\in (\frac{1}{1-\alpha},\infty)$ be arbitrary. Consider the solution operator of 
\begin{equation*}
\begin{aligned}
\dot{z}(t) = v(t) + \left(S\frac{\partial}{\partial z}f_{\varepsilon}(y_\varepsilon^u(t),z_\varepsilon^u(t)) - \frac{1}{\varepsilon}\Psi''(z_\varepsilon^u(t))\right) z(t)&& \text{for } t\in J_T,\ z(0)=0,
\end{aligned}
\end{equation*}
which maps any $v\in \mathrm{L}^q(J_T)$ to $z\in\mathrm{W}^{1,q}(J_T)$.
We denote by $T_{z,\varepsilon}^u:\mathrm{L}^q(J_T)\rightarrow \mathrm{L}^q(J_T),\ v\mapsto T_{z,\varepsilon}^u v$ the corresponding operator
on $\mathrm{L}^q(J_T)$.

Consider then the operator
$
T_{y,\varepsilon}^u := A_p - \frac{\partial}{\partial y}f_{\varepsilon}(y_\varepsilon^u,z_\varepsilon^u) - \frac{\partial}{\partial z}f_{\varepsilon}(y_\varepsilon^u,z_\varepsilon^u)T_{z,\varepsilon}^u S \left(-A_p + \frac{\partial}{\partial y}f_{\varepsilon}(y_\varepsilon^u,z_\varepsilon^u) \right)
$
from $Y_{q,0}$ to $\mathrm{L}^q(J_T;X)$.
It follows as for the system \eqref{state_equ_regular_derivative_y}-\eqref{state_equ_regular_derivative_z} that for each $h\in \mathrm{L}^q(J_T;X)$ there exists a unique couple of solutions $(\tilde{y}^{u,h}_\varepsilon,\tilde{z}^{u,h}_\varepsilon)$ in $Y_{q,0}\times \mathrm{L}^q(J_T)$ of the system
	\begin{alignat}{2}
	\dot{y}(t) +(T_{y,\varepsilon}^u y)(t)&= h(t)\ \text{for } t\in J_T, \
	y(0)=0,\label{state_equ_regular_derivative_y_without_h}\\
	z&= T_{z,\varepsilon}^u S\left(-A_p + \frac{\partial}{\partial y}f_{\varepsilon}(y_\varepsilon^u,z_\varepsilon^u) \right)y.\label{state_equ_regular_derivative_z_without_h}
	\end{alignat}

This implies that $\left(\frac{d}{dt} + T_{y,\varepsilon}^u\right)^{-1}$ is bijective from $\mathrm{L}^q(J_T;X)$ to $Y_{q,0}$. Note the difference between \eqref{state_equ_regular_derivative_y_without_h}-\eqref{state_equ_regular_derivative_z_without_h} and \eqref{state_equ_regular_derivative_y}-\eqref{state_equ_regular_derivative_z}.
We identify $\tilde{z}^{u,h}_\varepsilon\in \mathrm{L}^q(J_T)$ with the corresponding function in $\mathrm{W}^{1,q}(J_T)$ and estimate the norms of $(\tilde{y}^{u,h}_\varepsilon,\tilde{z}^{u,h}_\varepsilon)$.
For $t\in \overline{J_T}$ we have
\begin{equation*}
| \tilde{z}^{u,h}_\varepsilon(t)| = \int_{0}^{t} \frac{\dot{\tilde{z}}^{u,h}_\varepsilon(s)\tilde{z}^{u,h}_\varepsilon(s)}{|\tilde{z}^{u,h}_\varepsilon(s)|} ds = \int_{0}^{t} \frac{-S [(T_{y,\varepsilon}^u \tilde{y}^{u,h}_\varepsilon)(s)]\tilde{z}^{u,h}_\varepsilon(s)}{|\tilde{z}^{u,h}_\varepsilon(s)|} ds -\frac{1}{\varepsilon} \int_{0}^{t} \Psi''(z_\varepsilon^u(s))|\tilde{z}^{u,h}_\varepsilon(s)|\, ds.
\end{equation*}
With \eqref{Est:deriv_f_eps2} in Lemma~\ref{Lem:derivative_reg_function} and (A3) in Assumption~\ref{Ass:general_ass_and_short_notation_1} it follows
\begin{equation*}
\begin{split}
0&\leq |\tilde{z}^{u,h}_\varepsilon(t)| + \frac{1}{\varepsilon} \int_{0}^{t} \Psi''(z_\varepsilon^u(s))|\tilde{z}^{u,h}_\varepsilon(s)| ds \\
&\leq \int_{0}^{t} | S A_p \tilde{y}^{u,h}_\varepsilon(s)| + \left|S\frac{\partial}{\partial y}f_{\varepsilon}(y_\varepsilon^u(s),z_\varepsilon^u(s))\tilde{y}^{u,h}_\varepsilon(s)\right| + \left|S\frac{\partial}{\partial z}f_{\varepsilon}(y_\varepsilon^u(s),z_\varepsilon^u(s))\tilde{z}^{u,h}_\varepsilon(s)\right|ds\\
&\leq (c+\|S\|_{X^*} K(y_\varepsilon^u)) \int_{0}^{t} \| \tilde{y}^{u,h}_\varepsilon(s) \|_{X^\alpha}  + |\tilde{z}^{u,h}_\varepsilon(s)|ds
\end{split}
\end{equation*}
for a constant $c>0$ which is independent of $\varepsilon$.
Note that $\Psi''(z_\varepsilon^u(s))\geq 0$ because $\Psi$ is convex.
Moreover, with \eqref{frac_pow_estimate1} and again \eqref{Est:deriv_f_eps2} in Lemma~\ref{Lem:derivative_reg_function} we obtain
\begin{equation*}
\begin{split}
&\| \tilde{y}^{u,h}_\varepsilon(t) \|_{X^\alpha} \\
&= \left\| \int_{0}^{t} e^{-A_p(t-s)}\left[\frac{\partial}{\partial y}f_{\varepsilon}(y_\varepsilon^u(s),z_\varepsilon^u(s))\tilde{y}^{u,h}_\varepsilon(s) + \frac{\partial}{\partial z}f_{\varepsilon}(y_\varepsilon^u(s),z_\varepsilon^u(t))\tilde{z}^{u,h}_\varepsilon(s) + h(s)\right] ds \right\|_{X^\alpha}\\
&\leq C_\alpha (1 + K(y_\varepsilon^u))e^{(1-\gamma)T} \int_{0}^{t} (t-s)^{-\alpha}[\| \tilde{y}^{u,h}_\varepsilon(s)\|_{X^\alpha} + |\tilde{z}^{u,h}_\varepsilon(s)| + \|h(s)\|_X ] ds.
\end{split}
\end{equation*}

Gronwall's Lemma yields a constant $C_1(y_\varepsilon^u)>0$ which depends only on $y_\varepsilon^u \in \mathrm{C}(\overline{J_T};X^\alpha)$ such that
$
\| \tilde{y}^{u,h}_\varepsilon \|_{\mathrm{C}(\overline{J_T};X^\alpha)}\leq C_1(y_\varepsilon^u) \|h\|_{\mathrm{L}^q(J_T;X)}$ and $\|\tilde{z}^{u,h}_\varepsilon\|_{\mathrm{C}(\overline{J_T})} \leq C_1(y_\varepsilon^u)\|h\|_{\mathrm{L}^q(J_T;X)}
$
for $q\in (\frac{1}{1-\alpha},\infty)$.
Moreover, there holds $C_1(y_\varepsilon^u)=C_1(y)$ for $\varepsilon$ small enough if $\{y_\varepsilon^u\}$ converges to $y$ with $\varepsilon\rightarrow 0$. This is the case for the states $\overline{y}_\varepsilon$ in Theorem~\ref{Thm:conv_minimizers_reg_to_min_original}.
As several times before we use maximal parabolic regularity of $A_p$ to obtain
$
\| \tilde{y}^{u,h}_\varepsilon \|_{Y_{q,0}}\leq C_2(y_\varepsilon^u) \|h\|_{\mathrm{L}^q(J_T;X)}
$
where $C_2(y_\varepsilon^u) >0$ has the same dependence on $y_\varepsilon^u$ as $C_1(y_\varepsilon^u)$.
The inequalities in \eqref{estim_reg_derivative} are shown analogously to the estimates which we derived for $(\tilde{y}^{u,h}_\varepsilon,\tilde{z}^{u,h}_\varepsilon)$.
We also conclude that there exists a constant $C(y_\varepsilon^u)>0$ with
$
\left\|\left( \frac{d}{dt} + T_{y,\varepsilon}^u \right)^{-1}\right\|_{\mathcal{L}(\mathrm{L}^q(J_T;X), Y_{q,0})} \leq C(y_\varepsilon^u).
$
This proves maximal parabolic $\mathrm{L}^q(J_T;X)$-regularity of $T_{y,\varepsilon}^u$ for $q\in (\frac{1}{1-\alpha},\infty)$.
For $\varepsilon$ small enough, also the values $C(y_\varepsilon^u)$ can be chosen independently of $\varepsilon$ if $\{y_\varepsilon^u\}$ converges to some $y$ with $\varepsilon\rightarrow 0$ as it is the case for the sequence $\{\overline{y}_\varepsilon\}$ in Theorem~\ref{Thm:conv_minimizers_reg_to_min_original}.
Maximal parabolic $\mathrm{L}^q(J_T;X)$-regularity of $T_{y,\varepsilon}^u$ for $q\in (\frac{1}{1-\alpha},\infty)$ implies maximal parabolic $\mathrm{L}^{q'}(J_T;[\mathrm{dom}(A_p)]^*)$-regularity of $[T_{y,\varepsilon}^u]^*$
\cite[Lemma 4.10]{meyeroptimal}. To derive a representation of $[T_{y,\varepsilon}^u]^*$, we collect some information about the adjoint mappings of the single components which define $T_{y,\varepsilon}^u$.
Lemma~\ref{Lem:derivative_reg_function} yields that multiplication with $\frac{\partial}{\partial z}f_{\varepsilon}(y_\varepsilon^u,z_\varepsilon^u)$ is well-defined as a mapping from $\mathrm{L}^{q}(J_T)$ into $\mathrm{L}^q(J_T;X)$ and
$
\left[
	\frac{\partial}{\partial z}f_{\varepsilon}(y_\varepsilon^u,z_\varepsilon^u)
\right] ^* = 
\langle
	\cdot, \frac{\partial}{\partial z}f_{\varepsilon}(y_\varepsilon^u,z_\varepsilon^u)
\rangle_{X}.
$
Similarly, $\frac{\partial}{\partial y}f_{\varepsilon}(y_\varepsilon^u,z_\varepsilon^u)$ is a linear continuous mapping from $\mathrm{L}^q(J_T;X^\alpha)$ into $\mathrm{L}^q(J_T;X)$. 
Moreover,
$\left[
S\frac{\partial}{\partial y}f_{\varepsilon}(y_\varepsilon^u,z_\varepsilon^u)
\right] ^*$ is given by multiplication with $S\frac{\partial}{\partial y}f_{\varepsilon}(y_\varepsilon^u,z_\varepsilon^u)$.
The adjoint of $T_{z,\varepsilon}^u$ maps any $v\in \mathrm{L}^{q'}(J_T)$ to the function $q\in \mathrm{L}^{q'}(J_T)$ which may be identified with the unique solution of 
\begin{equation*}
\begin{aligned}
-\dot{q}(t) = v(t) + S\frac{\partial}{\partial z}f_{\varepsilon}(y_\varepsilon^u(t),z_\varepsilon^u(t))q(t) -\frac{1}{\varepsilon}\Psi''(z_\varepsilon^u(t))q(t)&& \text{for } t\in J_T,\
q(T)=0.
\end{aligned}
\end{equation*}

$S^*$ and $[SA_p]^*$ are given
by multiplication with $S$ and $SA_p$.
Furthermore, $SA_p\in [X^\alpha]^*$ by the assumptions on $w$ in (A3) in Assumption~\ref{Ass:general_ass_and_short_notation_1}.
All bounds are independent of $\varepsilon$ if $\overline{y}_\varepsilon$ and $\overline{z}_\varepsilon$ in Theorem~\ref{Thm:conv_minimizers_reg_to_min_original} are considered and if  $\varepsilon$ is small enough.
We obtain
\begin{equation*}
\begin{split}
[T_{y,\varepsilon}^u]^*
&= A_p^* - \left[\frac{\partial}{\partial y}f_{\varepsilon}(y_\varepsilon^u,z_\varepsilon^u)\right]^* - \left[\frac{\partial}{\partial z}f_{\varepsilon}(y_\varepsilon^u,z_\varepsilon^u)T_{z,\varepsilon}^u S \left(-A_p + \frac{\partial}{\partial y}f_{\varepsilon}(y_\varepsilon^u,z_\varepsilon^u) \right) \right]^* \\
&= A_p^* - 
\left[
\frac{\partial}{\partial y}f_{\varepsilon}(y_\varepsilon^u,z_\varepsilon^u)
\right]^* + 
\left[
\left[SA_p\right]^* - 
\left[
S\frac{\partial}{\partial y}f_{\varepsilon}(y_\varepsilon^u,z_\varepsilon^u)
\right] ^*
\right]
[T_{z,\varepsilon}^u]^*
\left[
\frac{\partial}{\partial z}f_{\varepsilon}(y_\varepsilon^u,z_\varepsilon^u)
\right]^*\\
&= A_p^* - 
\left[
\frac{\partial}{\partial y}f_{\varepsilon}(y_\varepsilon^u,z_\varepsilon^u)
\right]^* + 
S\left[
	A_p - \frac{\partial}{\partial y}f_{\varepsilon}(y_\varepsilon^u,z_\varepsilon^u)
\right]
[T_{z,\varepsilon}^u]^*
\langle
., \frac{\partial}{\partial z}f_{\varepsilon}(y_\varepsilon^u,z_\varepsilon^u)
\rangle_{X}.
\end{split}
\end{equation*}


Maximal parabolic $\mathrm{L}^{q'}(J_T;[\mathrm{dom}(A_p)]^*)$-regularity of $[T_{y,\varepsilon}^u]^*$
implies that for each

$\nu \in \mathrm{L}^{q'}(J_T;[\mathrm{dom}(A_p)]^*)$ there exists a unique $p^\nu_\varepsilon\in Y_{q',T}^*$ with
$
	\left(-\frac{d}{dt} + [T_{\varepsilon,y}^u]^*\right)p = \nu.
$


For given $\nu \in \mathrm{L}^{q'}(J_T;[\mathrm{dom}(A_p)]^*)$ let $q^\nu_\varepsilon$ be the representative in $\mathrm{L}^{q'}(J_T)$ of the solution of
\begin{alignat*}{2}
-\dot{q}(t) &=  \langle p^\nu_\varepsilon(t), \frac{\partial}{\partial z}f_\varepsilon(y_\varepsilon^u(t),z_\varepsilon^u(t))\rangle_{X} + S\frac{\partial}{\partial z}f_{\varepsilon}(y_\varepsilon^u(t),z_\varepsilon^u(t))q(t)  -\frac{1}{\varepsilon}\Psi''(z_\varepsilon^u(t))q(t)&&\ \text{for } t\in J_T,\\
q(T)&=0.
\end{alignat*}
Let also $(y_\varepsilon^{u,h},z_\varepsilon^{u,h})$ be the solutions of \eqref{state_equ_regular_derivative_y}-\eqref{state_equ_regular_derivative_z} for some given $h\in \mathrm{L}^q(J_T;X)$.
Then we obtain with \eqref{state_equ_regular_derivative_y} and partial integration:
\begin{equation*}
\begin{split}
&\int_{0}^{T} \langle p^\nu_\varepsilon +Sq^\nu_\varepsilon, h \rangle_{X} dt\\
&=\int_{0}^{T} \langle p^\nu_\varepsilon , \dot{y}_\varepsilon^{u,h} + A_p y_\varepsilon^{u,h} - \frac{\partial}{\partial y}f_\varepsilon(y_\varepsilon^u,z_\varepsilon^u)y_\varepsilon^{u,h} - \frac{\partial}{\partial z}f_\varepsilon(y_\varepsilon^u,z_\varepsilon^u)z_\varepsilon^{u,h} \rangle_{X} + \langle Sq^\nu_\varepsilon, h \rangle_{X}dt\\
&=\int_{0}^{T} \langle -\dot{p}^\nu_\varepsilon + A^{*}_p p^\nu_\varepsilon - \left[\frac{\partial}{\partial y}f_\varepsilon(y_\varepsilon^u,z_\varepsilon^u)\right]^{*}p^\nu_\varepsilon , y_\varepsilon^{u,h}  \rangle_{\mathrm{dom}(A_p)} + \langle Sq^\nu_\varepsilon, h \rangle_{X}\\
& - \langle p^\nu_\varepsilon, \frac{\partial}{\partial z}f_\varepsilon(y_\varepsilon^u,z_\varepsilon^u) \rangle_{X}z_\varepsilon^{u,h} dt.
\end{split}
\end{equation*}
By definition of $q^\nu_\varepsilon$ the last term on the right side is equal to
\begin{equation*}
\int_{0}^{T} \left( \dot{q^\nu_\varepsilon} + S\frac{\partial}{\partial z}f_{\varepsilon}(y_\varepsilon^u,z_\varepsilon^u)q^\nu_\varepsilon   -\frac{1}{\varepsilon}\Psi''(z_\varepsilon^u)q^\nu_\varepsilon \right) \overline{z}_\varepsilon^{u,h} dt.
\end{equation*}
Another partial integration together with \eqref{state_equ_regular_derivative_z} and canceling out some terms yields
\begin{equation*}
\begin{split}
\int_{0}^{T} \langle p^\nu_\varepsilon +Sq^\nu_\varepsilon, h \rangle_{X} dt
=&\int_{0}^{T} \langle -\dot{p}^\nu_\varepsilon + A^{*}_p p^\nu_\varepsilon - \left[\frac{\partial}{\partial y}f_\varepsilon(y_\varepsilon^u,z_\varepsilon^u)\right]^{*}p^\nu_\varepsilon , y_\varepsilon^{u,h}  \rangle_{\mathrm{dom}(A_p)} + \langle Sq^\nu_\varepsilon, h \rangle_{X}\\
&-q^\nu_\varepsilon S \left[\left(-A_p + \frac{\partial}{\partial y}f_{\varepsilon}(y_\varepsilon^u,z_\varepsilon^u)\right)y_\varepsilon^{u,h} + h \right]dt.
\end{split}
\end{equation*}
By definition of $p^\nu_\varepsilon$ we finally arrive at
\begin{equation*}
\int_{0}^{T} \langle p^\nu_\varepsilon +Sq^\nu_\varepsilon, h \rangle_{X} dt
=\int_{0}^{T} 
\langle
-\dot{p}^\nu_\varepsilon + [T_{y,\varepsilon}^u]^* p^\nu_\varepsilon , y_\varepsilon^{u,h}
\rangle_{\mathrm{dom}(A_p)} dt 
= \int_{0}^{T} \langle \nu , y_\varepsilon^{u,h} \rangle_{\mathrm{dom}(A_p)}dt.
\end{equation*}

\end{proof}


We can directly write down an adjoint system for a solution $\overline{u}_\varepsilon$ of problem \eqref{state_equ_regular_y},\eqref{state_equ_regular_z},\eqref{opt_control_problem_regular}.

\begin{theorem}[Adjoint system regularized problem]\label{Thm:opt_syst_reg}
	Adopt the assumptions of Theorem~\ref{Thm:conv_minimizers_reg_to_min_original} and the notation from Lemma~\ref{Lem:adjoint_equation_reg}. For $i\in \{1,2\}$ and $\varepsilon \in (0,\varepsilon_*]$ let $\overline{u}_\varepsilon \in U_i$ be an optimal control for problem \eqref{state_equ_regular_y},\eqref{state_equ_regular_z},\eqref{opt_control_problem_regular}.	
	Then the adjoint variables for $\overline{y}_\varepsilon\in Y_{2,0}$ and $\overline{z}_\varepsilon\in \mathrm{H}^{1}(J_T)$ are given by 
	$
	p_\varepsilon:= p_\varepsilon^{\overline{y}_\varepsilon - y_d}\in Y^*_{2,T}
	$ and 
	$
	q_\varepsilon:= q_\varepsilon^{\overline{y}_\varepsilon - y_d}
	\in
	\mathrm{H}^{1}(J_T).
	$
	There holds
	$
		B_i^*(p_\varepsilon + Sq_\varepsilon)  = -(\kappa+1)\overline{u}_\varepsilon + \overline{u}
	$ in $\mathrm{L}^2(J_T;U_i)$
	and the following system of evolution equations is satisfied by $p_\varepsilon$ and $q_\varepsilon$:
	\begin{alignat}{2}
	-\dot{p}_\varepsilon + A^{*}_p p_\varepsilon &= \left[\frac{\partial}{\partial y}f_\varepsilon(\overline{y}_\varepsilon,\overline{y}_\varepsilon)\right]^{*}p_\varepsilon + S\left[- A_p + \frac{\partial}{\partial y}f_{\varepsilon}(\overline{y}_\varepsilon,\overline{z}_\varepsilon) 
	 \right]
	q_\varepsilon + \overline{y}_\varepsilon - y_d&&\ \text{for } t\in J_T,\label{equ_p_epsilon}\\
	p_\varepsilon(T)&=0,	\notag\\
	-\dot{q}_\varepsilon &=  \langle p_\varepsilon, \frac{\partial}{\partial z}f_\varepsilon(\overline{y}_\varepsilon,\overline{z}_\varepsilon)\rangle_{X} + S\frac{\partial}{\partial z}f_{\varepsilon}(\overline{y}_\varepsilon,\overline{z}_\varepsilon)q_\varepsilon -\frac{1}{\varepsilon}\Psi''(\overline{z}_\varepsilon)q_\varepsilon&&\ \text{for } t\in J_T,	\label{equ_q_epsilon}\\
	q_\varepsilon(T)&=0\notag.	
	\end{alignat}
\end{theorem}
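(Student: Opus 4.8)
The plan is to combine the Gâteaux differentiability of the regularized control‑to‑state operator (Lemma~\ref{Lem:Gateau_reg_state_equ}) with the adjoint representation of its derivative (Lemma~\ref{Lem:adjoint_equation_reg}), evaluated at the optimal control $\overline u_\varepsilon$ and the source $\nu=\overline y_\varepsilon-y_d$.

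\textbf{First‑order condition.} For $i\in\{1,2\}$ consider the reduced functional $j_\varepsilon(u):=J_{\mathrm{reg}}(G_\varepsilon(B_iu),u;\overline u)$ on $U_i$. Since $B_i:U_i\to\mathrm{L}^2(J_T;X)$ is linear and continuous, $G_\varepsilon:\mathrm{L}^2(J_T;X)\to Y_{2,0}$ is Gâteaux differentiable by Lemma~\ref{Lem:Gateau_reg_state_equ} (note $2\in(\tfrac1{1-\alpha},\infty)$ because $\alpha<\tfrac12$), the embedding $Y_{2,0}\hookrightarrow U_1$ is continuous (as in the proof of Theorem~\ref{Thm:conv_minimizers_reg_to_min_original}), and the quadratic terms are Fréchet differentiable, the chain rule shows $j_\varepsilon$ is Gâteaux differentiable with
\[
j_\varepsilon'[\overline u_\varepsilon;h]=\langle \overline y_\varepsilon-y_d,\,y_\varepsilon^{B_i\overline u_\varepsilon,B_ih}\rangle_{U_1}+\kappa\langle\overline u_\varepsilon,h\rangle_{U_i}+\langle\overline u_\varepsilon-\overline u,h\rangle_{U_i},
\]
where $y_\varepsilon^{B_i\overline u_\varepsilon,B_ih}$ solves \eqref{state_equ_regular_derivative_y}-\eqref{state_equ_regular_derivative_z} with $u=\overline u_\varepsilon$ and $h$ replaced by $B_ih$. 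Since $\overline u_\varepsilon$ minimizes $j_\varepsilon$ over the whole space $U_i$ and $h\mapsto j_\varepsilon'[\overline u_\varepsilon;h]$ is linear, the necessary condition is $j_\varepsilon'[\overline u_\varepsilon;h]=0$ for all $h\in U_i$.

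\textbf{Applying the adjoint lemma.} Because $p\ge2$ one has the dense, continuous embeddings $\mathrm{dom}(A_p)\simeq\mathbb{W}_{\Gamma_D}^{1,p}(\Omega)\hookrightarrow[\mathrm{L}^2(\Omega)]^m\hookrightarrow[\mathrm{dom}(A_p)]^*$, so $\nu:=\overline y_\varepsilon-y_d\in U_1$ is also an element of $\mathrm{L}^2(J_T;[\mathrm{dom}(A_p)]^*)$, and the $[\mathrm{dom}(A_p)]^*$-$\mathrm{dom}(A_p)$ duality restricted to $U_1\times\mathrm{L}^2(J_T;\mathrm{dom}(A_p))$ coincides with the inner product of $U_1$. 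Apply Lemma~\ref{Lem:adjoint_equation_reg} with $q=2$, $u=B_i\overline u_\varepsilon$, this $\nu$, and $h$ replaced by $B_ih$: it produces $p_\varepsilon:=p_\varepsilon^{\overline y_\varepsilon-y_d}\in Y^*_{2,T}$ and $q_\varepsilon:=q_\varepsilon^{\overline y_\varepsilon-y_d}\in\mathrm{L}^2(J_T)$ solving \eqref{equ_p_epsilon}-\eqref{equ_q_epsilon}, together with
\[
\langle\overline y_\varepsilon-y_d,\,y_\varepsilon^{B_i\overline u_\varepsilon,B_ih}\rangle_{U_1}=\langle p_\varepsilon+Sq_\varepsilon,\,B_ih\rangle_{\mathrm{L}^2(J_T;X)}=\langle B_i^*(p_\varepsilon+Sq_\varepsilon),\,h\rangle_{U_i}.
\]
Inserting this into the condition of the previous step yields $0=\langle B_i^*(p_\varepsilon+Sq_\varepsilon)+(\kappa+1)\overline u_\varepsilon-\overline u,\,h\rangle_{U_i}$ for all $h\in U_i$, hence $B_i^*(p_\varepsilon+Sq_\varepsilon)=-(\kappa+1)\overline u_\varepsilon+\overline u$. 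Finally, to upgrade $q_\varepsilon$ from $\mathrm{L}^2(J_T)$ to $\mathrm{H}^1(J_T)$ I bootstrap in \eqref{equ_q_epsilon}: $p_\varepsilon\in\mathrm{L}^2(J_T;X^*)$ pairs with $\frac{\partial}{\partial z}f_\varepsilon(\overline y_\varepsilon,\overline z_\varepsilon)\in\mathrm{L}^\infty(J_T;X)$ (Lemma~\ref{Lem:derivative_reg_function}) to give an $\mathrm{L}^2(J_T)$ term, $S\frac{\partial}{\partial z}f_\varepsilon(\overline y_\varepsilon,\overline z_\varepsilon)q_\varepsilon$ lies in $\mathrm{L}^2(J_T)$ since $q_\varepsilon\in\mathrm{L}^2(J_T)$ and the coefficient is bounded, and $\frac1\varepsilon\Psi''(\overline z_\varepsilon)q_\varepsilon\in\mathrm{L}^2(J_T)$ because $\Psi''$ is bounded; thus $\dot q_\varepsilon\in\mathrm{L}^2(J_T)$.

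\textbf{Main obstacle.} The genuinely difficult work, constructing the explicit representation of $[G_\varepsilon'[u;\cdot]]^*$ and of the pair $(p_\varepsilon^\nu,q_\varepsilon^\nu)$, has already been carried out in Lemma~\ref{Lem:adjoint_equation_reg}; here the only delicate points are the bookkeeping of the duality pairings (identifying the $U_1$‑inner product with the $[\mathrm{dom}(A_p)]^*$‑duality through the Gelfand triple, and $B_i^*$ with the Hilbert‑space adjoint of $B_i$) and the final regularity bootstrap for $q_\varepsilon$.
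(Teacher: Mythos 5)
Your proposal is correct and follows essentially the same route as the paper: set the Gâteaux derivative of the reduced regularized functional to zero, identify $\overline y_\varepsilon-y_d$ as an admissible $\nu$ in Lemma~\ref{Lem:adjoint_equation_reg} via the Gelfand-triple identification of $\mathrm{dom}(A_p)\simeq\mathbb{W}_{\Gamma_D}^{1,p}(\Omega)$ inside $U_1$ (the paper phrases this through $I_p^{-1}$ and the isomorphism $(\mathcal{A}_p+I_p)^{-1}$, but it is the same bookkeeping), and read off the adjoint equation. Your explicit $\mathrm{L}^2$-bootstrap for $\dot q_\varepsilon$ is a small addition the paper leaves implicit in the ODE characterization of $q_\varepsilon^\nu$, and it is valid.
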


\begin{proof}
	Note first that we can choose $q=q'=2$ in Lemma~\ref{Lem:adjoint_equation_reg} since $2>\frac{1}{1-\alpha} \Leftrightarrow \alpha <\frac{1}{2}$ which is the case by (A2) in Assumption~\ref{Ass:general_ass_and_short_notation_1}.
	Moreover, the expression $
	\langle \overline{y}_\varepsilon - y_d , y_\varepsilon^{B_i\overline{u}_\varepsilon,B_ih}\rangle_{\mathrm{L}^{2}(J_T;\mathrm{dom}(A_p))}
		= \int_{0}^{T}\int_{\Omega} (\overline{y}_\varepsilon - y_d) \cdot y_\varepsilon^{B_i\overline{u}_\varepsilon,B_ih} dxdt
	$
	is well-defined:
	With $I_p$ as in Definition~\ref{Def:vector_Sobolev_space},
	$y_\varepsilon^{B_i\overline{u}_\varepsilon,B_ih}\in\mathrm{dom}(A_p)=\mathrm{ran}(I_p)$ may be identified with the embedding of $I_p^{-1}y_\varepsilon^{B_i\overline{u}_\varepsilon,B_ih}$ from $\mathbb{W}_{\Gamma_D}^{1,p}(\Omega)$ into $\mathbb{W}_{\Gamma_D}^{1,p'}(\Omega)\simeq X^*$. Note that $p'\leq2\leq p$.
	Since $\left(\mathcal{A}_p+I_p\right)^{-1}
	\in \mathcal{L}\bigl(X,\mathbb{W}_{\Gamma_D}^{1,p}(\Omega)\bigr)$, see Remark~\ref{Rem:maximal parabolic regularity}, we can first estimate 
	\begin{equation*}
		\begin{split}
		&\left\| I_p^{-1}y_\varepsilon^{B_i\overline{u}_\varepsilon,B_ih}(t) \right\|_{\mathbb{W}_{\Gamma_D}^{1,p'}(\Omega)}
		 \leq \left\| I_p^{-1}y_\varepsilon^{B_i\overline{u}_\varepsilon,B_ih}(t) \right\|_{\mathbb{W}_{\Gamma_D}^{1,p}(\Omega)}\\
		&\leq 
		\left\|
		\left(\mathcal{A}_p+I_p\right)^{-1}
		\right\|_{\mathcal{L}\bigl( X,\mathbb{W}_{\Gamma_D}^{1,p}(\Omega)\bigr)} 
		\left\| y_\varepsilon^{B_i\overline{u}_\varepsilon,B_ih}(t) \right\|_{\mathrm{dom}(A_p)}
		\end{split}
	\end{equation*}
	for a.e. $t\in J_T$ and then with the identification of $\mathbb{W}_{\Gamma_D}^{1,p'}(\Omega)$ with $X^*$
	\begin{equation*}
	\begin{split}
	&\left| \int_{0}^{T}\int_{\Omega} (\overline{y}_\varepsilon - y_d)\cdot y_\varepsilon^{B_i\overline{u}_\varepsilon,B_ih} dxdt \right|  
	= \left| \int_{0}^{T} \langle I_p^{-1}y_\varepsilon^{B_i\overline{u}_\varepsilon,B_ih} , (\overline{y}_\varepsilon - B_1 y_d) \rangle_{X} dt \right|\\
	&\leq \|\overline{y}_\varepsilon - B_1 y_d\|_{\mathrm{L}^2(J_T;X)} \left\|
	\left(\mathcal{A}_p+I_p\right)^{-1}
	\right\|_{\mathcal{L}\bigl(X,\mathbb{W}_{\Gamma_D}^{1,p}(\Omega)\bigr)}\|y_\varepsilon^{B_i\overline{u}_\varepsilon,B_ih}\|_{\mathrm{L}^2(J_T;\mathrm{dom}(A_p))}.
	\end{split}
	\end{equation*}

	The Gâteaux-derivative of 	
	$
	\mathcal{J}_\mathrm{reg}(u):= J_{\mathrm{reg}}(G_\varepsilon(B_iu),u;\overline{u})= J(G_\varepsilon(B_iu),u) + \frac{1}{2}\|u-\overline{u}\|_{U_i}^2
	$
	with respect to $u$ has to be zero at $\overline{u}_\varepsilon$ by optimality.
	Applying Lemma \ref{Lem:adjoint_equation_reg} we compute for $h\in U_i$:
	\begin{equation*}
	\begin{split}
	0&=\mathcal{J}'_\mathrm{reg}[\overline{u}_\varepsilon;h] = \langle \overline{y}_\varepsilon - y_d , y_\varepsilon^{B_i\overline{u}_\varepsilon,B_ih} \rangle_{\mathrm{L}^{2}(J_T;\mathrm{dom}(A_p))} 
	+ \kappa\langle \overline{u}_\varepsilon , h \rangle_{U_i} + \langle \overline{u}_\varepsilon - \overline{u} , h \rangle_{U_i}\\
	&= \langle (p_\varepsilon + S^*q_\varepsilon) , B_ih\rangle_{\mathrm{L}^{2}(J_T;X)} 
	+ \langle (\kappa+1) u_\varepsilon - \overline{u} , h \rangle_{U_i}
	= \langle B_i^*(p_\varepsilon + Sq_\varepsilon) + (\kappa+1) u_\varepsilon - \overline{u} , h \rangle_{U_i}.
	\end{split}
	\end{equation*}
\end{proof}

\subsection{Estimates for the adjoints of the regularized problem}\label{Subsec:estimates_adjoints}
Similar to \cite[Section 3.5]{brokate2013optimal} and \cite[Lemma 4.14]{meyeroptimal} we estimate the norms of the adjoint states $p_\varepsilon$ and $q_\varepsilon$ from Theorem~\ref{Thm:opt_syst_reg} independently of $\varepsilon$ and of the norms of the optimal controls $\overline{u}_\varepsilon$. In Section~\ref{Sec:Limit}, we take a sequence $\{\varepsilon\}$ with $\varepsilon\rightarrow 0$ and use those bounds to extract (weakly) converging subsequences of $p_\varepsilon$ and $q_\varepsilon$. Those finally yield an adjoint system for problem \eqref{opt_control_ control_problem}-\eqref{state_equ_z}, see Theorem~\ref{Thm:opt_control_boundary_opt_cond_limit_adjoint} below.

\begin{lemma}[Uniform bounds]\label{Lem:estim_adjoints_regularized_problem}
	Adopt the assumptions and the notation of Theorem~\ref{Thm:opt_syst_reg}.
	There exists a constant $c>0$ which is independent of $\varepsilon$ and some $\varepsilon_0\in (0,\varepsilon_*]$ such that the following holds true. If $\varepsilon \in (0,\varepsilon_0)$, then 
	\begin{align}
	0\leq\|q_\varepsilon\|_{\mathrm{C}(\overline{J_T})} + \frac{1}{\varepsilon}\int_{0}^{T} \Psi''(\overline{z}_\varepsilon(s)) |q_\varepsilon(s)| ds &\leq c,\label{estim_q_epsilon}\\
	\int_{0}^{T}|\dot{q}_\varepsilon(s)| ds 
	&\leq c,\label{estim_dot(q)_epsilon}\\
	\|p_\varepsilon\|_{Y^*_{2,T}}
	&\leq c,\label{estim_p_epsilon}\\
	\left\| \left[\frac{\partial}{\partial y}f_\varepsilon(\overline{y}_\varepsilon,\overline{z}_\varepsilon)\right]^{*}p_\varepsilon \right\|_{\mathrm{L}^{2}(J_T; [X^\alpha]^*)}\label{estim_f_y,epsilon_p_epsilon}
	&\leq c, \\
	\left\| S\frac{\partial}{\partial y}f_\varepsilon(\overline{y}_\varepsilon,\overline{z}_\varepsilon) q_\varepsilon \right\|_{\mathrm{L}^{2}(J_T; [X^\alpha]^*)}
	&\leq c,
	\text{ as well as }\label{estim_S_f_y,epsilon_q_epsilon}\\
	\left\| SA_p q_\varepsilon \right\|_{\mathrm{C}(\overline{J_T}; [X^\alpha]^*)}
	&\leq c.\label{estim_SA_p_q_epsilon}
	\end{align}	
\end{lemma}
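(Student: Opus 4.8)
The strategy is to test the evolution equations \eqref{equ_p_epsilon}–\eqref{equ_q_epsilon} against suitable quantities, mimicking the energy estimates already used for the states in Lemma~\ref{Lem:estimates_regularized_solution} and exploiting the uniform bounds on $\overline{y}_\varepsilon,\overline{z}_\varepsilon$ from Theorem~\ref{Thm:conv_minimizers_reg_to_min_original}. The crucial observation is that the sign structure of the regularization term $-\frac1\varepsilon\Psi''(\overline{z}_\varepsilon)q_\varepsilon$ in \eqref{equ_q_epsilon} is favourable, exactly as $\Psi'(z)(z-z_0)\ge 0$ was favourable for the states: since $\Psi$ is convex, $\Psi''\ge 0$, so testing $-\dot q_\varepsilon$ by $\operatorname{sgn}(q_\varepsilon)$ (i.e. differentiating $|q_\varepsilon|$) produces the nonnegative term $\frac1\varepsilon\Psi''(\overline{z}_\varepsilon)|q_\varepsilon|$ on the good side. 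This is where the Brokate–Krejčí-type trick of \cite[Section 3.5]{brokate2013optimal} enters.

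\textbf{Step 1 (bound on $q_\varepsilon$, \eqref{estim_q_epsilon}).} I would integrate $-\dot q_\varepsilon$ from $t$ to $T$ against $\operatorname{sgn}(q_\varepsilon)$, using $q_\varepsilon(T)=0$, to get
$|q_\varepsilon(t)| + \frac1\varepsilon\int_t^T \Psi''(\overline{z}_\varepsilon)|q_\varepsilon|\,ds \le \int_t^T |\langle p_\varepsilon,\tfrac{\partial}{\partial z}f_\varepsilon(\overline{y}_\varepsilon,\overline{z}_\varepsilon)\rangle_X| + |S\tfrac{\partial}{\partial z}f_\varepsilon(\overline{y}_\varepsilon,\overline{z}_\varepsilon)|\,|q_\varepsilon|\,ds$. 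By Lemma~\ref{Lem:derivative_reg_function}, $\tfrac{\partial}{\partial z}f_\varepsilon(\overline{y}_\varepsilon,\overline{z}_\varepsilon)$ is bounded in $\mathrm{L}^\infty(J_T;X)$ by $K(\overline{y}_\varepsilon)$, which is $\varepsilon$-uniform for $\varepsilon$ small by Theorem~\ref{Thm:conv_minimizers_reg_to_min_original}; so the first term is $\le \|p_\varepsilon\|_{\mathrm{L}^1(J_T;X)}\cdot cK$ and the $q_\varepsilon$-term feeds a Grönwall argument. This reduces \eqref{estim_q_epsilon} to controlling $\|p_\varepsilon\|_{\mathrm{L}^1(J_T;X)}$, i.e. to a weaker version of \eqref{estim_p_epsilon}.

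\textbf{Step 2 (bound on $p_\varepsilon$, \eqref{estim_p_epsilon}).} The equation for $p_\varepsilon$ is $-\dot p_\varepsilon + A_p^* p_\varepsilon = [\tfrac{\partial}{\partial y}f_\varepsilon]^* p_\varepsilon + S[-A_p+\tfrac{\partial}{\partial y}f_\varepsilon]q_\varepsilon + (\overline{y}_\varepsilon-y_d)$, backward in time with $p_\varepsilon(T)=0$. Since $A_p^*$ has maximal parabolic regularity on the dual scale (used already in Lemma~\ref{Lem:adjoint_equation_reg}), and since $(A_p+1)^{1-\alpha}]^* w$ exists by (A3), the term $SA_p q_\varepsilon$ lies in $[X^\alpha]^*$ with norm $\le c_1\|q_\varepsilon\|_{\mathrm{C}(\overline{J_T})}$ (exactly the computation $|SA_py|\le c_1\|y\|_{X^\alpha}$ from the proof of Lemma~\ref{Lem:estimates_regularized_solution}, transposed); $[\tfrac{\partial}{\partial y}f_\varepsilon]^* p_\varepsilon$ maps into $\mathrm{L}^2(J_T;[X^\alpha]^*)$ with modulus $K(\overline{y}_\varepsilon)$; $(\overline{y}_\varepsilon-y_d)$ is bounded in $\mathrm{L}^2(J_T;X)$, hence in $\mathrm{L}^2(J_T;[X^\alpha]^*)$. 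The term $[\tfrac{\partial}{\partial y}f_\varepsilon]^*p_\varepsilon$ on the right is absorbed via maximal parabolic regularity on short time intervals (or a Grönwall-type argument on the mild formulation using \eqref{frac_pow_estimate1}), giving $\|p_\varepsilon\|_{Y^*_{2,T}} \le c(1+\|q_\varepsilon\|_{\mathrm{C}(\overline{J_T})})$. Combined with Step~1 — where $\|q_\varepsilon\|_{\mathrm{C}}$ was bounded by $c(1+\|p_\varepsilon\|_{\mathrm{L}^1(J_T;X)})$ and $\|p_\varepsilon\|_{\mathrm{L}^1(J_T;X)}\le c\|p_\varepsilon\|_{Y^*_{2,T}}$ — this is a coupled pair of inequalities that I would close by the standard bootstrap on a short interval $[T-\tau,T]$ where the absorbing constants are $<1$, then iterate over $[0,T]$ (the length $\tau$ depending only on the $\varepsilon$-uniform constants). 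This simultaneously establishes \eqref{estim_q_epsilon} and \eqref{estim_p_epsilon}.

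\textbf{Step 3 (the remaining bounds).} Once $\|q_\varepsilon\|_{\mathrm{C}(\overline{J_T})}\le c$ and $\|p_\varepsilon\|_{Y^*_{2,T}}\le c$ are known, the estimates \eqref{estim_f_y,epsilon_p_epsilon}, \eqref{estim_S_f_y,epsilon_q_epsilon}, \eqref{estim_SA_p_q_epsilon} follow immediately: \eqref{estim_f_y,epsilon_p_epsilon} from $\|[\tfrac{\partial}{\partial y}f_\varepsilon(\overline{y}_\varepsilon,\overline{z}_\varepsilon)]^* p_\varepsilon\|_{\mathrm{L}^2(J_T;[X^\alpha]^*)}\le K(\overline{y}_\varepsilon)\|p_\varepsilon\|_{\mathrm{L}^2(J_T;X)}$; \eqref{estim_S_f_y,epsilon_q_epsilon} the same way with $q_\varepsilon$ in place of $p_\varepsilon$ and $\|S\|_{X^*}$ out front; \eqref{estim_SA_p_q_epsilon} from the estimate $|SA_p\cdot|\le c_1\|\cdot\|_{X^\alpha}$ applied pointwise, giving $\|SA_p q_\varepsilon\|_{\mathrm{C}(\overline{J_T};[X^\alpha]^*)}\le c_1\|q_\varepsilon\|_{\mathrm{C}(\overline{J_T})}$. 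Finally \eqref{estim_dot(q)_epsilon} comes by integrating the absolute value of the right-hand side of \eqref{equ_q_epsilon}: $\int_0^T|\dot q_\varepsilon| \le \int_0^T |\langle p_\varepsilon,\tfrac{\partial}{\partial z}f_\varepsilon\rangle_X| + |S\tfrac{\partial}{\partial z}f_\varepsilon|\,|q_\varepsilon| + \tfrac1\varepsilon\Psi''(\overline{z}_\varepsilon)|q_\varepsilon|\,ds$; the first two pieces are controlled by $c(\|p_\varepsilon\|_{\mathrm{L}^1(J_T;X)}+\|q_\varepsilon\|_{\mathrm{C}(\overline{J_T})})\le c$, and the last piece is precisely the nonnegative term bounded by $c$ in \eqref{estim_q_epsilon}.

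\textbf{Main obstacle.} The delicate point is Step~2: making the maximal-parabolic-regularity / Grönwall argument for $p_\varepsilon$ \emph{uniform in $\varepsilon$} while the right-hand side contains $SA_p q_\varepsilon$, a term which is only in $[X^\alpha]^*$ and couples back to $q_\varepsilon$, whose own bound depends on $p_\varepsilon$. Resolving the circularity requires care that the constants produced by \eqref{frac_pow_estimate1}, by Lemma~\ref{Lem:derivative_reg_function}, and by maximal parabolic regularity of $A_p^*$ are all $\varepsilon$-independent for $\varepsilon<\varepsilon_0$ — which is exactly what Theorem~\ref{Thm:conv_minimizers_reg_to_min_original} guarantees via $\overline{y}_\varepsilon\to\overline{y}$ in $\mathrm{C}(\overline{J_T};X^\alpha)$ — and then running the short-interval bootstrap so that the coupling constant is a genuine contraction factor uniformly in $\varepsilon$. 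The sign of $\Psi''$ does all the work on the $q$-side; the $p$-side is where the real estimate lives.
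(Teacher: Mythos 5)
Your outline reaches the right conclusions and your Step~3 coincides with the paper's, but the core of your argument (Steps~1--2) takes a genuinely different and substantially more laborious route than the paper. You set up a circular coupling --- $\|q_\varepsilon\|_{\mathrm{C}(\overline{J_T})}$ controlled by a norm of $p_\varepsilon$ via Gr\"onwall, $\|p_\varepsilon\|_{Y^*_{2,T}}$ controlled by $\|q_\varepsilon\|_{\mathrm{C}(\overline{J_T})}$ via maximal regularity --- and propose to close it by a short-interval contraction. The paper never faces this circularity: its first and decisive step is to bound the \emph{sum} $p_\varepsilon+Sq_\varepsilon$ in $\mathrm{L}^2(J_T;X^*)$ directly by duality, using the identity $\langle p_\varepsilon+Sq_\varepsilon,\xi\rangle_{\mathrm{L}^2(J_T;X)}=\langle \overline{y}_\varepsilon-y_d, y_\varepsilon^{B_i\overline{u}_\varepsilon,\xi}\rangle_{\mathrm{L}^2(J_T;\mathrm{dom}(A_p))}$ from Lemma~\ref{Lem:adjoint_equation_reg} together with the $\varepsilon$-uniform primal bound \eqref{estim_reg_derivative} on the linearized states; reflexivity then gives $\|p_\varepsilon+Sq_\varepsilon\|_{\mathrm{L}^2(J_T;X^*)}\leq c_1$. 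Since the right-hand side of \eqref{equ_q_epsilon} is exactly $\langle p_\varepsilon+Sq_\varepsilon,\frac{\partial}{\partial z}f_\varepsilon(\overline{y}_\varepsilon,\overline{z}_\varepsilon)\rangle_X-\frac{1}{\varepsilon}\Psi''(\overline{z}_\varepsilon)q_\varepsilon$, a single integration against $q_\varepsilon/|q_\varepsilon|$ plus Cauchy--Schwarz gives \eqref{estim_q_epsilon} with no Gr\"onwall at all, and one application of maximal parabolic regularity of $A_p^*$ then gives \eqref{estim_p_epsilon} with no absorption. What the paper's trick buys is precisely the avoidance of your self-identified ``main obstacle'': in your bootstrap you still need to verify that the lower-order norm $\|p_\varepsilon\|_{\mathrm{L}^2((T-\tau,T);X^*)}$ is small relative to $\|p_\varepsilon\|_{Y^*_{2,T}}$ as $\tau\to 0$, which is not immediate because $Y^*_{2,T}$ embeds continuously only into $\mathrm{C}(\overline{J_T};[X^{1/2}]^*)$ and not into $\mathrm{C}(\overline{J_T};X^*)$, so an extra interpolation step is required to make the contraction factor genuinely small uniformly in $\varepsilon$; your sketch treats this as standard, and while I believe it can be carried out, it is exactly the delicate part that the paper's duality argument renders unnecessary. (Also a minor slip: $p_\varepsilon$ lives in $X^*$-valued spaces, so the quantity in your Step~1 should be $\|p_\varepsilon\|_{\mathrm{L}^1(J_T;X^*)}$, not $\|p_\varepsilon\|_{\mathrm{L}^1(J_T;X)}$.)
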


\begin{proof}
Firstly, Theorem \ref{Thm:conv_minimizers_reg_to_min_original} yields $\overline{u}_\varepsilon \rightarrow \overline{u}$ in $U_i$, $\overline{y}_\varepsilon \rightarrow \overline{y}$ in $Y_{2,0}$ and in $\mathrm{C}(\overline{J_T};X^\alpha)$ and $\overline{z}_\varepsilon \rightarrow \overline{z}$ weakly in $\mathrm{H}^1(J_T)$ and strongly in $\mathrm{C}(\overline{J_T})$. 
As in the proof of Theorem~\ref{Thm:opt_syst_reg} we obtain that $\overline{y}_\varepsilon - y_d$ is bounded in $\mathrm{L}^{2}(J_T;[\mathrm{dom}(A_p)]^*)$ by
$\|\overline{y}_\varepsilon - B_1 y_d\|_{\mathrm{L}^2(J_T;X)} 
\left\|
\left(\mathcal{A}_p+I_p\right)^{-1}
\right\|_{\mathcal{L}\bigl(X;\mathbb{W}_{\Gamma_D}^{1,p}(\Omega)\bigr)} =:c_0 $.
This constant can be estimated independently of $\varepsilon$ because $\{\overline{y}_\varepsilon\}$ is uniformly bounded in $\mathrm{C}(\overline{J_T};X)$.
For any $\xi\in \mathrm{L}^{2}(J_T;X)$, Lemma~\ref{Lem:adjoint_equation_reg} yields
\begin{equation*}
\langle p_\varepsilon + S q_\varepsilon, \xi \rangle_{\mathrm{L}^{2}(J_T;X)}= \langle \overline{y}_\varepsilon - y_d , y_\varepsilon^{B_i \overline{u}_\varepsilon, \xi}\rangle_{\mathrm{L}^{2}(J_T;\mathrm{dom}(A_p))}\leq c_0
 C(\overline{y}_\varepsilon)\|\xi\|_{\mathrm{L}^2(J_T;X)}.
\end{equation*}
Because $\overline{y}_\varepsilon \rightarrow \overline{y}$ in $\mathrm{C}(\overline{J_T};X^\alpha)$ we can find some $\varepsilon_0>0$ such that $C(\overline{y}_\varepsilon)=C(\overline{y})$ for all $\varepsilon\in (0,\varepsilon_0)$.
From reflexivity of $\mathrm{L}^{2}(J_T;X)$ we conclude
\begin{equation}
\| p_\varepsilon + Sq_\varepsilon \|_{\mathrm{L}^{2}(J_T;X^*)} \leq  c_0C(\overline{y}) =: c_1\label{estim_p_epsilon+S_q_epsilon}
\end{equation}
for all $\varepsilon\in (0,\varepsilon_0)$.
We continue with estimates for $q_\varepsilon$.
We test \eqref{equ_q_epsilon} with $q_\varepsilon/|q_\varepsilon|$, integrate from any $t\in J_T$ to $T$ and apply \eqref{Est:deriv_f_eps1} from Lemma~\ref{Lem:derivative_reg_function} and \eqref{estim_p_epsilon+S_q_epsilon} to get
\begin{equation}\label{proof_derivation_estim_dot(q)_epsilon}
\begin{split}
|q_\varepsilon(t)| + \frac{1}{\varepsilon}\int_{t}^{T} \Psi''(\overline{z}_\varepsilon(s)) |q_\varepsilon(s)| ds &= \int_{t}^{T} \langle p_\varepsilon(s) + Sq_\varepsilon(s) , \frac{\partial}{\partial z}f_\varepsilon(\overline{y}_\varepsilon(s),\overline{z}_\varepsilon(s))\rangle_{X}\frac{q_\varepsilon(s)}{|q_\varepsilon(s)|}\, ds\\
&\leq c_1 \left\|\frac{\partial}{\partial z}f_\varepsilon(y_\varepsilon,\overline{z}_\varepsilon)\right\|_{\mathrm{L}^2(J_T;X)}
\leq c_1K(\overline{y}_\varepsilon).
\end{split}
\end{equation}
W.l.o.g. for the same $\varepsilon_0$ as before there holds $c_1K(\overline{y}_\varepsilon)= c_1K(\overline{y}) =: c_2$ for all $\varepsilon\in (0,\varepsilon_0)$.
Note that $\Psi''(\overline{z}_\varepsilon)\geq 0$ by convexity of $\Psi$.
This yields
\begin{equation}
0\leq\|q_\varepsilon\|_{\mathrm{C}(\overline{J_T})} + \frac{1}{\varepsilon}\int_{0}^{T} \Psi''(\overline{z}_\varepsilon(s)) |q_\varepsilon(s)| ds \leq c_2 \label{proof_estim_dot(q)_epsilon}
\end{equation}
for all $\varepsilon\in (0,\varepsilon_0)$.
We conclude $Sq_\varepsilon \in \mathrm{L}^{2}(J_T;X^*)$ and then by \eqref{estim_p_epsilon+S_q_epsilon} also $p_\varepsilon \in \mathrm{L}^{2}(J_T;X^*)$, both with a norm which is independent of $\varepsilon\in (0,\varepsilon_0)$.
We continue by estimating
\begin{equation*}
	\int_{0}^{T}|\dot{q}_\varepsilon(s)| ds \leq \int_{0}^{T} |\langle p_\varepsilon(s) + Sq_\varepsilon(s), \frac{\partial}{\partial z}f_\varepsilon(\overline{y}_\varepsilon(s),\overline{z}_\varepsilon(s))\rangle_{X}| ds+ \frac{1}{\varepsilon}\int_{0}^{T} \Psi''(\overline{z}_\varepsilon(s)) |q_\varepsilon(s)| ds.
\end{equation*}
Because of \eqref{proof_derivation_estim_dot(q)_epsilon} the right side is bounded by $2c_2$ so that
$
\int_{0}^{T}|\dot{q}_\varepsilon(s)| ds 
\leq 2c_2=:c_3
$
for $\varepsilon\in (0,\varepsilon_0)$.

To proceed, we
use maximal parabolic $\mathrm{L}^{2}(J_T;[\mathrm{dom}(A_p)]^*)$-regularity of $A_p^*$ and \eqref{equ_p_epsilon} to obtain
\begin{equation*}
\begin{split}
\|p_\varepsilon\|_{Y^*_{2,T}}\notag &\leq 
\left\|\left(-\frac{d}{dt} + A_p^*\right)^{-1}
\right\|_{\mathcal{L}(\mathrm{L}^{2}(J_T;[\mathrm{dom}(A_p)]^*),Y^*_{2,T})}\notag\\
& \left\| \left[\frac{\partial}{\partial y}f_\varepsilon(\overline{y}_\varepsilon,\overline{z}_\varepsilon)\right]^{*}p_\varepsilon + S\left[- A_p + \frac{\partial}{\partial y}f_{\varepsilon}(\overline{y}_\varepsilon,\overline{z}_\varepsilon) \right]q_\varepsilon + \overline{y}_\varepsilon - y_d \right\|_{\mathrm{L}^{2}(J_T;[\mathrm{dom}(A_p)]^*)}\notag\\
&\leq 
\left\|\left(-\frac{d}{dt} + A_p^*\right)^{-1}
\right\|_{\mathcal{L}(\mathrm{L}^{2}(J_T;[\mathrm{dom}(A_p)]^*),Y^*_{2,T})}\notag\\
& \left( \left\| \frac{\partial}{\partial y}f_\varepsilon(\overline{y}_\varepsilon,\overline{z}_\varepsilon) \right\|_{\mathcal{L}(\mathrm{L}^{2}(J_T;X^\alpha),\mathrm{L}^{2}(J_T;X))} \|p_\varepsilon + Sq_\varepsilon\|_{\mathrm{L}^{2}(J_T;X^*)}\right.\notag\\
&\left. +\|SA_p\|_{[X^\alpha]^*}\|q_\varepsilon\|_{\mathrm{C}(\overline{J_T})} + \left\| \overline{y}_\varepsilon - y_d \right\|_{\mathrm{L}^{2}(J_T;[\mathrm{dom}(A_p)]^*)} \right).
\end{split}
\end{equation*}
\eqref{Est:deriv_f_eps1} from Lemma~\ref{Lem:derivative_reg_function}, \eqref{estim_p_epsilon+S_q_epsilon}, \eqref{proof_estim_dot(q)_epsilon} and the bound $\left\| \overline{y}_\varepsilon - y_d \right\|_{\mathrm{L}^{2}(J_T;[\mathrm{dom}(A_p)]^*)}\leq c_0 $ yield 
\begin{equation*}
\|p_\varepsilon\|_{Y^*_{2,T}}\leq
\left\|\left(-\frac{d}{dt} + A_p^*\right)^{-1}
\right\|_{\mathcal{L}(\mathrm{L}^{2}(J_T;[\mathrm{dom}(A_p)]^*),Y^*_{2,T})}\left(
c_1K(\overline{y}) + \|SA_p\|_{[X^\alpha]^*}c_2 + c_0
\right)=:
c_4
\end{equation*}
for $\varepsilon\in (0,\varepsilon_0)$.
In a similar way one obtains \eqref{estim_f_y,epsilon_p_epsilon}-\eqref{estim_SA_p_q_epsilon} from the estimates
\begin{align*}
\left\| \left[\frac{\partial}{\partial y}f_\varepsilon(\overline{y}_\varepsilon,\overline{z}_\varepsilon)\right]^{*}p_\varepsilon \right\|_{\mathrm{L}^{2}(J_T; [X^\alpha]^*)}
&\leq \left\| \frac{\partial}{\partial y}f_\varepsilon(\overline{y}_\varepsilon,\overline{z}_\varepsilon) \right\|_{\mathcal{L}(\mathrm{L}^{2}(J_T;X^\alpha),\mathrm{L}^{2}(J_T;X))} \|p_\varepsilon\|_{\mathrm{L}^{2}(J_T;X^*)},\\
\left\| S\frac{\partial}{\partial y}f_\varepsilon(\overline{y}_\varepsilon,\overline{z}_\varepsilon) q_\varepsilon \right\|_{\mathrm{L}^{2}(J_T; [X^\alpha]^*)}
&\leq \left\| \frac{\partial}{\partial y}f_\varepsilon(\overline{y}_\varepsilon,\overline{z}_\varepsilon) \right\|_{\mathcal{L}(\mathrm{L}^{2}(J_T;X^\alpha),\mathrm{L}^{2}(J_T;X))} \|Sq_\varepsilon\|_{\mathrm{L}^{2}(J_T;X^*)}\intertext{ and }
\left\| SA_p q_\varepsilon \right\|_{\mathrm{C}(\overline{J_T}; [X^\alpha]^*)} 
&\leq \|SA_p\|_{[X^\alpha]^*}\|q_\varepsilon\|_{\mathrm{C}(\overline{J_T})}.
\end{align*}	
\end{proof}

\section{Adjoint system and optimality conditions for the optimal control problem}\label{Sec:Limit}
As in \cite[Section 4]{brokate2013optimal} and \cite[Theorem 4.15]{meyeroptimal} we are interested in taking the limit $\varepsilon \rightarrow 0$ in Theorem~\ref{Thm:opt_syst_reg} to obtain an adjoint system for problem \eqref{opt_control_ control_problem}-\eqref{state_equ_z}.
In Subsection~\ref{Subsec:Adjoint_system_distributed_or_boundary}--Subsection~\ref{Subsec:Summary_Adjoint_and_opt_cond_distributed_or_boundary} we study the general case with spatially distributed or boundary controls, i.e. $i\in\{1,2\}$.
Particularly, in Subsection~\ref{Subsec:Adjoint_system_distributed_or_boundary} we derive an adjoint system $(p,q)$ for problem \eqref{opt_control_ control_problem}-\eqref{state_equ_z} for the optimal control $\overline{u}$ from Theorem~\ref{Thm:conv_minimizers_reg_to_min_original}, see Lemma~\ref{Lem:limits_q_and_p_and_props_of_p}. 
Moreover, we gather information about the continuity properties of $q$.
Subsection~\ref{Subsec:opt_cond_distributed_or_boundary} contains the optimality conditions for problem \eqref{opt_control_ control_problem}-\eqref{state_equ_z} for the optimal control $\overline{u}$ in terms of the pair $p$ and $q$, see Lemma~\ref{Lem:opt_cond_general} below. 
In Subsection~\ref{Subsec:Summary_Adjoint_and_opt_cond_distributed_or_boundary} we summarize the results from Subsection~\ref{Subsec:Adjoint_system_distributed_or_boundary}--Subsection~\ref{Subsec:opt_cond_distributed_or_boundary} in Theorem~\ref{Thm:opt_control_boundary_opt_cond_limit_adjoint}. 
Afterwards, we consider the particular case when $f$ is continuously differentiable. In Corollary~\ref{Cor:opt_cond_general} we improve the optimality condition \eqref{opt_control_opt_cond_general} from Theorem~\ref{Thm:opt_control_boundary_opt_cond_limit_adjoint} for this instance.
Both optimality conditions \eqref{opt_control_opt_cond_general} and \eqref{opt_control_opt_cond_regular_f} are restricted to test functions $Sy^{B_i\overline{u},B_ih}$ with $h\in U_i$, $i\in\{1,2\}$.

In Subsection~\ref{Subsec:Improvement_distributed} we focus on the setting when the controls act inside of $\Omega$, i.e. on $i=1$. 
In Corollary~\ref{Cor:opt_control_distrib_opt_cond} we improve the optimality conditions from
Theorem~\ref{Thm:opt_control_boundary_opt_cond_limit_adjoint} as well as those from Corollary~\ref{Cor:opt_cond_general} by extending inequalities \eqref{opt_control_opt_cond_general} and \eqref{opt_control_opt_cond_regular_f} to any test function of the form $(Sv)\varphi$ with $v\in \mathrm{dom}(A_p),$ $Sv>0$ and $\varphi \in \mathrm{C}_0^\infty(J_T)$. Dividing the corresponding inequality by $Sv$ yields, at least in \eqref{opt_control_opt_cond_regular_f}, an optimality condition with arbitrary test functions  $\varphi \in \mathrm{C}_0^\infty(J_T)$.
For $i=1$ we also prove uniqueness of $p$ and $q$ if $f$ is continuously differentiable, see Corollary~\ref{Cor:Uniqueness_dist_contr}.

\subsection{Adjoint system for distributed or boundary controls}\label{Subsec:Adjoint_system_distributed_or_boundary}
In this subsection, we derive an adjoint system $(p,q)$ for problem \eqref{opt_control_ control_problem}-\eqref{state_equ_z} and collect regularity properties of $p$ and $q$.
The evolution equation of $p$ can be derived pretty much straight forward as the limit equation of \eqref{equ_p_epsilon} for $\varepsilon \rightarrow 0$, see Lemma~\ref{Lem:limits_q_and_p_and_props_of_p} below. This is not possible for $q$. The reason is that in Lemma~\ref{Lem:estim_adjoints_regularized_problem} we could bound the norm of $\dot{q_\varepsilon}$ independently of $\varepsilon$ only in $\mathrm{L}^1(J_T)$. As a remedy we split the interval $J_T$ into the set $I_0$ of times $t$ where the limit $\overline{z}(t)$ is contained in the open interval $(a,b)$ and the rest $I_\partial$ where $\overline{z}(t)\in \{a,b\}$. It turns out, that the evolution of $q$ in $I_0$ can be described in form of an evolution equation, see Lemma~\ref{Lem:q_in_H1} below. 
As for $I_\partial$, we have to pass to weak-$*$ convergence of $q_\varepsilon$ and consider the limit $d\mu$ of $\frac{1}{\varepsilon} \Psi''(\overline{z}_\varepsilon) q_\varepsilon$ in $\mathrm{C}(\overline{J_T})^*$. Driving $\varepsilon$ to zero then yields an equality for $dq$ in the sense of measures on $I_\partial$, see Lemma~\ref{Lem:overall_evol_q}. The abstract measure $d\mu$, having support in $I_\partial$, remains part of this evolution equation. It also appears in the optimality conditions for problem \eqref{opt_control_ control_problem}-\eqref{state_equ_z} in \eqref{opt_control_opt_cond_general}.
In order to complete the description of $q$ by analyzing the measure $d\mu$, we will introduce a regularity Assumption~\ref{Ass:regularity_assumption} on $S\overline{y}(t)$ for $t\in I_\partial$. With this assumption, we can characterize $d\mu$ in a subset of $I_\partial$. This allows us to characterize $q$ in open subintervals of $I_\partial$ and we can prove continuity of $q$ at so-called $(0,\partial)$-switching times, see Lemma~\ref{Lem:delta_0_switching}.
In Remark~\ref{Rem:delta_0_switching} we generalize Lemma~\ref{Lem:delta_0_switching} for when Assumption~\ref{Ass:regularity_assumption} is not satisfied.

\begin{lemma}[Adjoint system in the limit]\label{Lem:limits_q_and_p_and_props_of_p}
	Adopt the assumptions and the notation of Theorem~\ref{Thm:opt_syst_reg}. 
	For $i\in\{1,2\}$ let $\overline{u}\in U_i,$ $\overline{y}=G(\overline{u})$ and $\overline{z}=\mathcal{W}[S\overline{y}]$ be defined as in Theorem~\ref{Thm:conv_minimizers_reg_to_min_original}. Then
	every sequence $\{\varepsilon\}$ with $\varepsilon\rightarrow 0$ has a subsequence $\{\varepsilon_k\}$ such that the following holds true.
	There exist functions functions $p\in Y^*_{2,T}$ and $\lambda_1,\lambda_2 \in \mathrm{L}^{2}(J_T; [X^\alpha]^*)$ such that as $k\rightarrow \infty$,
	$p_{\varepsilon_k} \rightharpoonup p \text{ in } Y^*_{2,T}$ and
	\begin{equation*}
	\begin{aligned}
	\left[\frac{\partial}{\partial y}f_{\varepsilon_k}(\overline{y}_{\varepsilon_k},\overline{z}_{\varepsilon_k})\right]^{*}p_{\varepsilon_k} &\rightharpoonup \lambda_1 &&\text{in } \mathrm{L}^{2}(J_T; [X^\alpha]^*),\\
	S\frac{\partial}{\partial y}f_{\varepsilon_k}(\overline{y}_{\varepsilon_k},\overline{z}_{\varepsilon_k})q_{\varepsilon_k} &\rightharpoonup \lambda_2 
	&&\text{in } \mathrm{L}^{2}(J_T; [X^\alpha]^*).
	\end{aligned}
	\end{equation*}
	
	Moreover, there exists a function $q$ which has bounded variation, i.e. $q\in \mathrm{BV}(J_T)$, such that $
	q_{\varepsilon_k}$ converges pointwise to $q$ with $k\rightarrow \infty$. There holds $\mathrm{Var}(q) \leq \liminf_{\varepsilon_k\rightarrow 0} \mathrm{Var}(q_{\varepsilon_k})$.
	Alternatively,
	$
	\dot{q}_{\varepsilon_k} \rightarrow dq$ weak-* in $\mathrm{C}(\overline{J_T})^*$ with $k\rightarrow \infty$
	for some signed regular Borel measure $dq\in \mathrm{C}(\overline{J_T})^*$.
	The relation between $q$ and $dq$ is given by
	$
	q(t-)-q(s+) = dq((s,t))
	$ and 
	$
	q(t+)-q(s-) = dq([s,t])
	$
	for $[s,t]\subset \overline{J_T}$.
	
	The function $p$ solves the evolution equation
	\begin{equation}
	\begin{aligned}
	-\dot{p} + A^{*}_p = \lambda_1 + \lambda_2 -SA_pq + \overline{y} - y_d&& \text{for } t\in J_T,\ p(T)=0.\label{eq:evolution_p}
	\end{aligned}
	\end{equation}
	If $f$ is continuously differentiable from $X^\alpha\times \mathbb{R}$ into $X$ then
	$
	\lambda_1= \left[\frac{\partial}{\partial y}f(\overline{y},\overline{z})\right]^{*}p \text{ and } \lambda_2= S\frac{\partial}{\partial y}f(\overline{y},\overline{z})q.
	$
	Furthermore,
	\begin{equation}
	\begin{aligned}
	B_i^*(p+Sq) = -\kappa \overline{u}&& \text{in } U_i.\label{opt_control_adj_eq_control}
	\end{aligned}
	\end{equation}
\end{lemma}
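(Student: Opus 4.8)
The plan is to pass to the limit $\varepsilon=\varepsilon_k\to 0$ in the adjoint system of Theorem~\ref{Thm:opt_syst_reg}, extracting subsequences from the uniform estimates of Lemma~\ref{Lem:estim_adjoints_regularized_problem} by means of reflexivity, Helly's selection theorem and weak-$*$ sequential compactness in $\mathrm{C}(\overline{J_T})^*$.

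\textbf{Extraction of subsequences.} By \eqref{estim_p_epsilon}, \eqref{estim_f_y,epsilon_p_epsilon} and \eqref{estim_S_f_y,epsilon_q_epsilon} the families $\{p_\varepsilon\}$, $\bigl\{[\tfrac{\partial}{\partial y}f_\varepsilon(\overline y_\varepsilon,\overline z_\varepsilon)]^{*}p_\varepsilon\bigr\}$ and $\bigl\{S\tfrac{\partial}{\partial y}f_\varepsilon(\overline y_\varepsilon,\overline z_\varepsilon)q_\varepsilon\bigr\}$ are bounded in the reflexive spaces $Y^*_{2,T}$ and $\mathrm{L}^2(J_T;[X^\alpha]^*)$, so along a subsequence they converge weakly to some $p$, $\lambda_1$, $\lambda_2$; since $Y^*_{2,T}$ is weakly closed we get $p\in Y^*_{2,T}$, hence $p(T)=0$, and the continuous embedding $Y^*_{2,T}\hookrightarrow\mathrm{L}^2(J_T;X^*)$ gives $p_{\varepsilon_k}\rightharpoonup p$ there as well. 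By \eqref{estim_q_epsilon} and \eqref{estim_dot(q)_epsilon} the sequence $\{q_{\varepsilon_k}\}\subset\mathrm{H}^1(J_T)$ is bounded in $\mathrm{C}(\overline{J_T})$ with uniformly bounded variation $\int_0^T|\dot q_{\varepsilon_k}|$; Helly's theorem yields a further subsequence converging pointwise on $\overline{J_T}$ to some $q$, and evaluating the variation on finite partitions gives $q\in\mathrm{BV}(J_T)$ with $\mathrm{Var}(q)\le\liminf_k\mathrm{Var}(q_{\varepsilon_k})$. Finally $\{\dot q_{\varepsilon_k}\,dt\}$ is bounded in $\mathrm{C}(\overline{J_T})^*$, and since $\mathrm{C}(\overline{J_T})$ is separable a last extraction produces $\dot q_{\varepsilon_k}\rightharpoonup^{*}dq$. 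These extractions are carried out successively along one common subsequence, still written $\{\varepsilon_k\}$; the convergences $\overline u_{\varepsilon_k}\to\overline u$, $\overline y_{\varepsilon_k}\to\overline y$, $\overline z_{\varepsilon_k}\to\overline z$ hold already for the full sequence by Theorem~\ref{Thm:conv_minimizers_reg_to_min_original}.

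\textbf{Linking $q$ and $dq$.} Integrating by parts in $\int_0^T\psi\,\dot q_{\varepsilon_k}$ and passing to the limit (using $q_{\varepsilon_k}(t)\to q(t)$ pointwise and dominated convergence) gives, for every Lipschitz $\psi$ on $\overline{J_T}$, the identity $\int_{\overline{J_T}}\psi\,dq=\psi(T)q(T)-\psi(0)q(0)-\int_0^T\psi' q\,dt$. Applying this to continuous functions $\psi_\delta$ approximating $\chi_{(s,t)}$ from inside, resp.\ $\chi_{[s,t]}$ from outside, and letting $\delta\to0$ (with the obvious modifications at the endpoints $0$ and $T$) yields the stated relations $q(t-)-q(s+)=dq((s,t))$ and $q(t+)-q(s-)=dq([s,t])$, exactly as in \cite[Section~4]{brokate2013optimal}.

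\textbf{Limit of the equations.} Since $q_{\varepsilon_k}$ is bounded in $\mathrm{C}(\overline{J_T})$ and converges pointwise to $q$, dominated convergence gives $q_{\varepsilon_k}\to q$ strongly in $\mathrm{L}^2(J_T)$, hence $SA_pq_{\varepsilon_k}\to SA_pq$ strongly in $\mathrm{L}^2(J_T;[X^\alpha]^*)$ because $SA_p\in[X^\alpha]^*$ by (A3) in Assumption~\ref{Ass:general_ass_and_short_notation_1}; moreover $\overline y_{\varepsilon_k}-y_d\to\overline y-y_d$ strongly in $\mathrm{L}^2(J_T;[\mathrm{dom}(A_p)]^*)$ with the identification used in the proof of Theorem~\ref{Thm:opt_syst_reg}. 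As $-\tfrac{d}{dt}+A_p^*$ is bounded linear, hence weakly continuous, from $Y^*_{2,T}$ into $\mathrm{L}^2(J_T;[\mathrm{dom}(A_p)]^*)$, the left-hand side of \eqref{equ_p_epsilon} converges weakly to $-\dot p+A_p^*p$; on the right-hand side the two nonlinear terms converge weakly to $\lambda_1$ and $\lambda_2$ (in $\mathrm{L}^2(J_T;[X^\alpha]^*)\hookrightarrow\mathrm{L}^2(J_T;[\mathrm{dom}(A_p)]^*)$) and the remaining terms strongly, so \eqref{eq:evolution_p} follows from uniqueness of weak limits. If $f$ is continuously differentiable one takes $f_\varepsilon\equiv f$ (admissible under Assumption~\ref{Ass:regularized_problem}); then $\tfrac{\partial}{\partial y}f(\overline y_{\varepsilon_k}(\cdot),\overline z_{\varepsilon_k}(\cdot))\to\tfrac{\partial}{\partial y}f(\overline y(\cdot),\overline z(\cdot))$ in $\mathrm{L}^\infty(J_T;\mathcal{L}(X^\alpha,X))$ by uniform continuity of $\tfrac{\partial}{\partial y}f$ on a neighbourhood of the compact set $\{(\overline y(t),\overline z(t)):t\in\overline{J_T}\}$, and combining this norm convergence with $p_{\varepsilon_k}\rightharpoonup p$ in $\mathrm{L}^2(J_T;X^*)$, resp.\ $q_{\varepsilon_k}\to q$ in $\mathrm{L}^2(J_T)$, identifies $\lambda_1=[\tfrac{\partial}{\partial y}f(\overline y,\overline z)]^{*}p$ and $\lambda_2=S\tfrac{\partial}{\partial y}f(\overline y,\overline z)q$. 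Finally, passing to the limit in $B_i^*(p_\varepsilon+Sq_\varepsilon)=-(\kappa+1)\overline u_\varepsilon+\overline u$ with $\overline u_{\varepsilon_k}\to\overline u$ in $U_i$, $p_{\varepsilon_k}+Sq_{\varepsilon_k}\rightharpoonup p+Sq$ in $\mathrm{L}^2(J_T;X^*)$ and weak continuity of the bounded operator $B_i^*$ gives \eqref{opt_control_adj_eq_control}.

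\textbf{Main obstacle.} The genuinely delicate point is the simultaneous bookkeeping of all these convergences along one common subsequence within the dual scale $[X^\alpha]^*\hookrightarrow[\mathrm{dom}(A_p)]^*$, and especially the transition from the pointwise/$\mathrm{BV}$ limit $q$ to the measure $dq$, where one must keep careful track of one-sided limits at jumps of $q$ and of the boundary behaviour on $J_T$; the surrounding compactness and weak-limit arguments are otherwise routine.
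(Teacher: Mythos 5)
Your proof is correct and follows essentially the same route as the paper's: subsequence extraction from the uniform bounds of Lemma~\ref{Lem:estim_adjoints_regularized_problem} via reflexivity, Helly-type selection for $q$, Alaoglu for $dq$, and weak continuity of $-\frac{d}{dt}+A_p^*$ and of $B_i^*$ to pass to the limit in \eqref{equ_p_epsilon} and in the adjoint equation for the control. The only differences are that you spell out details the paper delegates to \cite[Section 4]{brokate2013optimal} (the $q$--$dq$ relation) or leaves implicit (the strong-times-weak argument identifying $\lambda_1,\lambda_2$ when $f$ is $\mathrm{C}^1$), which is harmless.
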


\begin{proof}
Theorem~\ref{Thm:conv_minimizers_reg_to_min_original} implies $u_\varepsilon\rightarrow \overline{u}$ in $U_i$, $\overline{y}_\varepsilon\rightarrow \overline{y}$ in $Y_{2,0}$ and in $\mathrm{C}(\overline{J_T};X^\alpha)$ and $\overline{z}_\varepsilon\rightarrow \overline{z}$ uniformly and weakly in $\mathrm{H}^1(J_T)$ with $\varepsilon\rightarrow 0$.
By \eqref{estim_p_epsilon}, \eqref{estim_f_y,epsilon_p_epsilon} and \eqref{estim_S_f_y,epsilon_q_epsilon} in Lemma~\ref{Lem:estim_adjoints_regularized_problem},
reflexivity of all spaces yields 
a subsequence $\{\varepsilon_k\}$ and some functions $p$, $\lambda_1$ and $\lambda_2$ such that
$
p_{\varepsilon_k} \rightharpoonup p \text{ in } Y^*_{2,T}$,
$
\left[\frac{\partial}{\partial y}f_{\varepsilon_k}(\overline{y}_{\varepsilon_k},\overline{z}_{\varepsilon_k})\right]^{*}p_{\varepsilon_k} \rightharpoonup \lambda_1 \text{ in } \mathrm{L}^{2}(J_T; [X^\alpha]^*)$ and
$S\frac{\partial}{\partial y}f_{\varepsilon_k}(\overline{y}_{\varepsilon_k},\overline{z}_{\varepsilon_k})q_{\varepsilon_k} \rightharpoonup \lambda_2 \text{ in } \mathrm{L}^{2}(J_T; [X^\alpha]^*)
$
with $k\rightarrow\infty$.
The condition $p(T)=0$ is included in the definition of the space $Y^*_{2,T}$.
From \eqref{estim_dot(q)_epsilon} we conclude that $q_\varepsilon$ has bounded variation, i.e. $q_\varepsilon \in \mathrm{BV}(J_T)$, with a norm which is bounded independently of $\varepsilon$. This implies that (w.l.o.g. the same) subsequence
$
q_{\varepsilon_k}$ converges pointwise to some $q\in \mathrm{BV}(J_T)$ with $k\rightarrow\infty$ and $\mathrm{Var}(q) \leq \liminf_{\varepsilon_k\rightarrow 0} \mathrm{Var}(q_{\varepsilon_k})$.
Alternatively, by Alaoglu's compactness theorem,
$
\dot{q}_{\varepsilon_k} \rightarrow dq$ weak-* in $\mathrm{C}(\overline{J_T})^*$ with $k\rightarrow\infty$
for some signed regular Borel measure $dq\in \mathrm{C}(\overline{J_T})^*$ and the relation between $q$ and $dq$ is given by
$
q(t-)-q(s+) = dq((s,t))
$ and 
$
q(t+)-q(s-) = dq([s,t])
$
for $[s,t]\subset \overline{J_T}$ \cite[Section 4]{brokate2013optimal}.
We exploit weak continuity of $-\frac{d}{dt} + A_p^*$ from $Y^*_{2,T}$ to $\mathrm{L}^{2}(J_T; [\mathrm{dom}(A_p)]^*)$
to see that 
\begin{equation*}
\begin{split}
0&= -\dot{p}_{\varepsilon_k} + A^{*}_p p_{\varepsilon_k} - \left[\frac{\partial}{\partial y}f_{\varepsilon_k}(\overline{y}_{\varepsilon_k},\overline{z}_{\varepsilon_k})\right]^{*}p_{\varepsilon_k} + SA_p q_{\varepsilon_k} - S\frac{\partial}{\partial y}f_{\varepsilon}(\overline{y}_{\varepsilon_k},\overline{z}_{\varepsilon_k}) q_{\varepsilon_k} - (\overline{y}_{\varepsilon_k} - y_d)\\
& \rightharpoonup -\dot{p} + A^{*}_p - \lambda_1 - \lambda_2 +SA_p q - (\overline{y} - y_d)
\end{split}
\end{equation*}
in $\mathrm{L}^{2}(J_T; [\mathrm{dom}(A_p)]^*)$ with $k\rightarrow\infty$.
Consequently, $p\in Y^*_{2,T}$ solves equation \eqref{eq:evolution_p}.
Note that we can set $f_\varepsilon \equiv f$ if $f$ is continuously differentiable from $X^\alpha\times \mathbb{R}$ into $X$ and in this case
$
\lambda_1= \left[\frac{\partial}{\partial y}f(\overline{y},\overline{z})\right]^{*}p \text{ and } \lambda_2= S\frac{\partial}{\partial y}f(\overline{y},\overline{z})q.
$
Moreover,
\begin{equation*}
0= B_i^*(p_{\varepsilon_k} +Sq_{\varepsilon_k}) +(\kappa+1)u_{\varepsilon_k} - \overline{u} \rightharpoonup B_i^*(p + Sq) + \kappa \overline{u}
\end{equation*}
in $U_i$ with $k\rightarrow\infty$ since $B_i^*$ is weakly continuous .
This implies \eqref{opt_control_adj_eq_control}.
\end{proof}

To gather information about $q$ from Lemma~\ref{Lem:limits_q_and_p_and_props_of_p} we continue similar as in \cite[Section 4]{brokate2013optimal}.
\begin{definition}[Partition of $J_T$]\label{Def:partition_[0,T]}
	Let $\overline{z}$ be as in Theorem~\ref{Thm:conv_minimizers_reg_to_min_original}.
	We split $\overline{J_T}$ into
	$
	I_0:=\{t\in \overline{J_T}: \overline{z}(t)\in (a,b)\}
	$
	and
	$
	I_\partial := \overline{J_T}\backslash I_0 = \{t \in \overline{J_T}: \overline{z}(t) \in\{a,b\}\}.
	$ 
	We further introduce
	$
	I_\partial^a := \{t \in \overline{J_T}: \overline{z}(t)=a \}
	$
	and
	$
	I_\partial^b := \{t \in \overline{J_T}: \overline{z}(t)=b\}.
	$
\end{definition}
Note that $I_0$ is open because $\overline{z}$ is continuous.

\begin{lemma}[$q$ in $I_0$]\label{Lem:q_in_H1}
Adopt the assumptions and the notation of Lemma~\ref{Lem:limits_q_and_p_and_props_of_p} and consider the subdivision of $\overline{J_T}$ from Definition~\ref{Def:partition_[0,T]}.
For any interval $(c,d)\subset I_0$ the limit $q$ in Lemma~\ref{Lem:limits_q_and_p_and_props_of_p} belongs to $\mathrm{H}^{1}(c,d)$ and there exist $\nu_1,\nu_2\in \mathrm{L}^2(J_T)$ such that
$
- \dot{q}= \nu_1 + \nu_2
$
in $\mathrm{L}^2(c,d)$.
If $f$ is continuously differentiable from $X^\alpha\times \mathbb{R}$ into $X$ then
$
\nu_1= \langle p, \frac{\partial}{\partial z}f(\overline{y},\overline{z})\rangle_{X}
$
and
$
\nu_2= \langle Sq, \frac{\partial}{\partial z}f(\overline{y},\overline{z})\rangle_{X}.
$
\end{lemma}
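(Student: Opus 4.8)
The plan is to exploit the flatness of $\Psi$ on $[a,b]$: on every compact subinterval of $I_0$ the singular term $\tfrac{1}{\varepsilon}\Psi''(\overline{z}_\varepsilon)q_\varepsilon$ in \eqref{equ_q_epsilon} vanishes identically for $\varepsilon$ small, so there the $q_\varepsilon$-equation reduces to a genuine ODE whose right-hand side is bounded in $\mathrm{L}^2$ uniformly in $\varepsilon$; from this a uniform $\mathrm{H}^1$-bound on $q_\varepsilon$ follows, and taking limits gives the claimed evolution equation.

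First I would abbreviate $\nu_1^\varepsilon:=\langle p_\varepsilon,\frac{\partial}{\partial z}f_\varepsilon(\overline{y}_\varepsilon,\overline{z}_\varepsilon)\rangle_X$ and $\nu_2^\varepsilon:=S\frac{\partial}{\partial z}f_\varepsilon(\overline{y}_\varepsilon,\overline{z}_\varepsilon)q_\varepsilon$. Lemma~\ref{Lem:derivative_reg_function} bounds $\frac{\partial}{\partial z}f_\varepsilon(\overline{y}_\varepsilon,\overline{z}_\varepsilon)$ in $\mathrm{L}^\infty(J_T;X)$ by $K(\overline{y}_\varepsilon)$, which equals $K(\overline{y})$ for $\varepsilon$ small since $\overline{y}_\varepsilon\to\overline{y}$ in $\mathrm{C}(\overline{J_T};X^\alpha)$; together with $\|p_\varepsilon\|_{\mathrm{L}^2(J_T;X^*)}\leq c$ (from \eqref{estim_p_epsilon} and \eqref{estim_p_epsilon+S_q_epsilon}, \eqref{proof_estim_dot(q)_epsilon}) and $\|q_\varepsilon\|_{\mathrm{C}(\overline{J_T})}\leq c$ (from \eqref{estim_q_epsilon}) this shows $\{\nu_1^\varepsilon\}$ and $\{\nu_2^\varepsilon\}$ are bounded in $\mathrm{L}^2(J_T)$. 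Passing to a further subsequence of the $\{\varepsilon_k\}$ fixed in Lemma~\ref{Lem:limits_q_and_p_and_props_of_p}, I obtain $\nu_1^{\varepsilon_k}\rightharpoonup\nu_1$ and $\nu_2^{\varepsilon_k}\rightharpoonup\nu_2$ in $\mathrm{L}^2(J_T)$; since the limit $q$ is already determined, this refinement is harmless.

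The key geometric step is next. Given $(c,d)\subset I_0$ and a compact $[c',d']\subset(c,d)$, the image $\overline{z}([c',d'])$ is a compact subset of the open interval $(a,b)$, hence lies in $[a+\delta,b-\delta]$ for some $\delta>0$. Because $\overline{z}_\varepsilon\to\overline{z}$ uniformly on $\overline{J_T}$, for $\varepsilon$ small enough $\overline{z}_\varepsilon(t)\in[a,b]$ for all $t\in[c',d']$, and there $\Psi''\equiv 0$ (as $\Psi\equiv 0$ on $[a,b]$ and $\Psi$ is twice continuously differentiable). Thus \eqref{equ_q_epsilon} reduces, for a.e.\ $t\in[c',d']$, to $-\dot{q}_\varepsilon=\nu_1^\varepsilon+\nu_2^\varepsilon$. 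Combined with the uniform $\mathrm{C}(\overline{J_T})$-bound on $q_\varepsilon$ and the $\mathrm{L}^2(J_T)$-bounds on $\nu_1^\varepsilon,\nu_2^\varepsilon$, this yields a uniform $\mathrm{H}^1(c',d')$-bound on $\{q_{\varepsilon_k}\}$; weak compactness together with the pointwise convergence $q_{\varepsilon_k}\to q$ identifies $q\in\mathrm{H}^1(c',d')$ with $\dot{q}_{\varepsilon_k}\rightharpoonup\dot{q}$ in $\mathrm{L}^2(c',d')$, and passing to the limit in $-\dot{q}_{\varepsilon_k}=\nu_1^{\varepsilon_k}+\nu_2^{\varepsilon_k}$ gives $-\dot{q}=\nu_1+\nu_2$ on $(c',d')$. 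Since $[c',d']\subset(c,d)$ was arbitrary and distributional derivatives are local, the distributional derivative of $q$ on all of $(c,d)$ equals $-(\nu_1+\nu_2)|_{(c,d)}\in\mathrm{L}^2(c,d)$, while $q\in\mathrm{L}^\infty(c,d)$ because $q\in\mathrm{BV}(J_T)$; hence $q\in\mathrm{H}^1(c,d)$ and $-\dot{q}=\nu_1+\nu_2$ in $\mathrm{L}^2(c,d)$.

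It remains to identify $\nu_1,\nu_2$ when $f$ is continuously differentiable from $X^\alpha\times\mathbb{R}$ into $X$. Then I would take $f_\varepsilon\equiv f$ as in Lemma~\ref{Lem:limits_q_and_p_and_props_of_p}. Uniform convergence $(\overline{y}_\varepsilon,\overline{z}_\varepsilon)\to(\overline{y},\overline{z})$ in $\mathrm{C}(\overline{J_T};X^\alpha\times\mathbb{R})$ and continuity of $\frac{\partial}{\partial z}f$ (uniform continuity on the compact set swept out by the states) give $\frac{\partial}{\partial z}f(\overline{y}_\varepsilon,\overline{z}_\varepsilon)\to\frac{\partial}{\partial z}f(\overline{y},\overline{z})$ in $\mathrm{C}(\overline{J_T};X)$; combined with $p_{\varepsilon_k}\rightharpoonup p$ in $\mathrm{L}^2(J_T;X^*)$ (a weak–strong product) this yields $\nu_1=\langle p,\frac{\partial}{\partial z}f(\overline{y},\overline{z})\rangle_X$, and combined with $q_{\varepsilon_k}\to q$ in $\mathrm{L}^2(J_T)$ (dominated convergence) it yields $\nu_2=S\frac{\partial}{\partial z}f(\overline{y},\overline{z})\,q$, i.e.\ the formulas in the statement. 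I expect the genuine obstacle to be the localisation step — realising that $\Psi''(\overline{z}_\varepsilon)$ is identically zero on compact subintervals of $I_0$ for small $\varepsilon$, which forces one to combine uniform convergence of $\overline{z}_\varepsilon$ with the continuity of $\overline{z}$ and the flatness of $\Psi$ — and, more technically, the passage from the local $\mathrm{H}^1$-estimates to a statement on the open interval $(c,d)$ without any uniform control up to its endpoints, which is handled by the locality of distributional derivatives plus the global $\mathrm{BV}$-bound on $q$.
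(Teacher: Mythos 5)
Your proposal is correct and follows essentially the same route as the paper: both exploit the uniform convergence $\overline{z}_\varepsilon\to\overline{z}$ together with $\Psi''\equiv 0$ on $[a,b]$ to kill the singular term on compact subintervals of $I_0$, extract weak $\mathrm{L}^2(J_T)$-limits $\nu_1,\nu_2$ of the remaining right-hand side terms, and pass to the limit in the $q_\varepsilon$-equation. The only cosmetic difference is that the paper integrates \eqref{equ_q_epsilon} over $[s,t]$ and passes to the limit in the resulting integral identity using the pointwise convergence of $q_{\varepsilon_k}$, whereas you obtain a uniform local $\mathrm{H}^1$-bound and invoke weak compactness; the two are equivalent, and your explicit tracking of how the smallness threshold for $\varepsilon$ depends on the compact subinterval is if anything slightly more careful than the paper's phrasing.
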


\begin{proof}
	By Theorem~\ref{Thm:conv_minimizers_reg_to_min_original}$, \overline{z}_\varepsilon \rightarrow \overline{z}$ uniformly in $\overline{J_T}$. Let $(c,d)\subset I_0$ and $[s,t]\subset (c,d)$ be arbitrary. $(A4)_\varepsilon$ in Assumption~\ref{Ass:regularized_problem} implies that (w.l.o.g for $\varepsilon_0>0$ from Lemma~\ref{Lem:estim_adjoints_regularized_problem}) $\Psi''(\overline{z}_\varepsilon)\equiv 0$ on $[s,t]$ for all $\varepsilon\in (0,\varepsilon_0)$. 
	For $\varepsilon\in (0,\varepsilon_0)$ we integrate from $s$ to $t$ in \eqref{equ_q_epsilon} in Theorem~\ref{Thm:opt_syst_reg} and obtain
	\begin{equation*}
	q_\varepsilon(t) - q_\varepsilon(s) = \int_s^t - \langle p_\varepsilon(s) + Sq_\varepsilon(s), \frac{\partial}{\partial z}f_\varepsilon(\overline{y}_\varepsilon(s),\overline{z}_\varepsilon(s))\rangle_{X} ds.
	\end{equation*}
	Consider $\{\varepsilon_k\}$ from Lemma~\ref{Lem:limits_q_and_p_and_props_of_p}.
	Lemma~\ref{Lem:estim_adjoints_regularized_problem} together with Lemma~\ref{Lem:derivative_reg_function} implies uniform boundedness of $\langle p_\varepsilon, \frac{\partial}{\partial z}f_\varepsilon(\overline{y}_\varepsilon,\overline{z}_\varepsilon)\rangle_{X}$ and $\langle Sq_\varepsilon, \frac{\partial}{\partial z}f_\varepsilon(\overline{y}_\varepsilon,\overline{z}_\varepsilon)\rangle_{X}$ in $\mathrm{L}^2(J_T)$ if $\varepsilon\in(0,\varepsilon_0)$. Hence, we obtain a subsequence of $\{\varepsilon_k\}$ (still denoted by $\{\varepsilon_k\}$) and functions $\nu_1,\nu_2\in\mathrm{L}^2(J_T)$, such that
	$
	\langle p_{\varepsilon_k}, \frac{\partial}{\partial z}f_{\varepsilon_k}(\overline{y}_{\varepsilon_k},\overline{z}_{\varepsilon_k})\rangle_{X} \rightharpoonup \nu_1 
	$
	and
	$
	\langle Sq_{\varepsilon_k}, \frac{\partial}{\partial z}f_{\varepsilon_k}(\overline{y}_{\varepsilon_k},\overline{z}_{\varepsilon_k})\rangle_{X} \rightharpoonup \nu_2
	$
	in $\mathrm{L}^2(J_T)$ with $k\rightarrow\infty$. 
	If $f$ is continuously differentiable from $X^\alpha\times \mathbb{R}$ into $X$ we can set $f_\varepsilon \equiv f$ and get
	$
	\nu_1= \langle p, \frac{\partial}{\partial z}f(\overline{y},\overline{z})\rangle_{X}
	$
	and
	$
	\nu_2= \langle Sq, \frac{\partial}{\partial z}f(\overline{y},\overline{z})\rangle_{X}.
	$
	In the general case we obtain
	\begin{equation*}
	q_{\varepsilon_k}(t) - q_{\varepsilon_k}(s) = \int_0^T -\langle p_{\varepsilon_k} + Sq_{\varepsilon_k}, \frac{\partial}{\partial z}f_{\varepsilon_k}(\overline{y}_{\varepsilon_k},\overline{z}_{\varepsilon_k})\rangle_{X} \chi_{[s,t]} ds \rightarrow \int_s^t -\nu_1-\nu_2 ds
	\end{equation*}
	with $k\rightarrow\infty$.
	So the weak derivative of $q$ exists in $\mathrm{L}^2(c,d)$ and is given by $-\nu_1-\nu_2$.
\end{proof}

Our next goal is to understand the behaviour of $q$ in $I_\partial$.

\begin{lemma}[$q$ in $I_\partial$: Relation to $\mathcal{P}(S\overline{y})$]\label{Lem:play_times_q_equ_zero}
	Adopt the assumptions and the notation of Lemma~\ref{Lem:limits_q_and_p_and_props_of_p} and consider the subdivision of $\overline{J_T}$ from Definition~\ref{Def:partition_[0,T]}. With $\mathcal{P}=\mathrm{Id} - \mathcal{W}$, cf. Lemma~\ref{Lem:hyst_props}, there holds
	$
	\left[\frac{d}{dt}\mathcal{P}[S\overline{y}](t)\right]q(t)=0 \text{ for a.e. } t\in I_\partial.
	$
\end{lemma}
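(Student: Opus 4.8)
The plan is to pass to the limit $\varepsilon_k\to 0$ in the identity $\frac{d}{dt}\bigl(S\overline y_{\varepsilon_k}-\overline z_{\varepsilon_k}\bigr)=\frac1{\varepsilon_k}\Psi'(\overline z_{\varepsilon_k})$ after multiplication by $q_{\varepsilon_k}$, using on one hand a weak-strong convergence argument to identify the limit as $\bigl[\frac{d}{dt}\mathcal P[S\overline y]\bigr]q$, and on the other hand the $L^1$-bound $\frac1{\varepsilon_k}\int_0^T\Psi''(\overline z_{\varepsilon_k})|q_{\varepsilon_k}|\,ds\le c$ from Lemma~\ref{Lem:estim_adjoints_regularized_problem} together with the near-flatness of $\Psi$ at $a,b$ to show that this same sequence tends to $0$ in $L^1(J_T)$.

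\textbf{Step 1 (passage to the limit in the products).} Along the subsequence $\{\varepsilon_k\}$ of Lemma~\ref{Lem:limits_q_and_p_and_props_of_p}, Theorem~\ref{Thm:conv_minimizers_reg_to_min_original} gives $\overline y_{\varepsilon_k}\rightharpoonup\overline y$ in $Y_{2,0}$ and $\overline z_{\varepsilon_k}\to\overline z=\mathcal W[S\overline y]$ strongly in $\mathrm C(\overline{J_T})$ and weakly in $\mathrm H^1(J_T)$. Since $S\in X^*$, this yields $S\overline y_{\varepsilon_k}-\overline z_{\varepsilon_k}\rightharpoonup S\overline y-\overline z=\mathcal P[S\overline y]$ weakly in $\mathrm H^1(J_T)$, and therefore, by Definition~\ref{Def:Solution_op_regularized_hyst},
\[
\tfrac1{\varepsilon_k}\Psi'(\overline z_{\varepsilon_k})=S\dot{\overline y}_{\varepsilon_k}-\dot{\overline z}_{\varepsilon_k}=\tfrac{d}{dt}\bigl(S\overline y_{\varepsilon_k}-\overline z_{\varepsilon_k}\bigr)\ \rightharpoonup\ \tfrac{d}{dt}\mathcal P[S\overline y]\quad\text{weakly in }\mathrm L^2(J_T).
\]
Moreover $\|q_{\varepsilon_k}\|_{\mathrm C(\overline{J_T})}\le c$ by \eqref{estim_q_epsilon} and $q_{\varepsilon_k}\to q$ pointwise by Lemma~\ref{Lem:limits_q_and_p_and_props_of_p}, so Lebesgue's dominated convergence theorem gives $q_{\varepsilon_k}\to q$ strongly in $\mathrm L^2(J_T)$. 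Combining weak $\mathrm L^2$ with strong $\mathrm L^2$ convergence, one obtains
\[
\tfrac1{\varepsilon_k}\Psi'(\overline z_{\varepsilon_k})\,q_{\varepsilon_k}\ \rightharpoonup\ \Bigl[\tfrac{d}{dt}\mathcal P[S\overline y]\Bigr]q\quad\text{weakly in }\mathrm L^1(J_T).
\]

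\textbf{Step 2 ($\mathrm L^1$-smallness) and conclusion.} By the explicit construction in Remark~\ref{Rem:construction_Psi}, $\Psi$ vanishes together with $\Psi'$ and $\Psi''$ to third order at $a$ and at $b$, hence there are $\eta>0$ and $C>0$ such that $|\Psi'(x)|\le C\,\mathrm{dist}\bigl(x,[a,b]\bigr)\,\Psi''(x)$ for all $x$ with $\mathrm{dist}(x,[a,b])\le\eta$. Since $\overline z$ takes values in $[a,b]$ by \eqref{stop2}, we have $\mathrm{dist}\bigl(\overline z_{\varepsilon_k}(t),[a,b]\bigr)\le|\overline z_{\varepsilon_k}(t)-\overline z(t)|\le\rho_k:=\|\overline z_{\varepsilon_k}-\overline z\|_{\mathrm C(\overline{J_T})}\to 0$; thus for $k$ large enough (so that $\rho_k\le\eta$) and all $t$, $|\Psi'(\overline z_{\varepsilon_k}(t))|\le C\rho_k\,\Psi''(\overline z_{\varepsilon_k}(t))$, and \eqref{estim_q_epsilon} gives
\[
\tfrac1{\varepsilon_k}\bigl\|\Psi'(\overline z_{\varepsilon_k})\,q_{\varepsilon_k}\bigr\|_{\mathrm L^1(J_T)}\le C\rho_k\cdot\tfrac1{\varepsilon_k}\int_0^T\Psi''(\overline z_{\varepsilon_k}(s))\,|q_{\varepsilon_k}(s)|\,ds\le Cc\,\rho_k\ \xrightarrow{k\to\infty}\ 0.
\]
Hence the sequence from Step~1 also converges to $0$ strongly in $\mathrm L^1(J_T)$; by uniqueness of the weak limit, $\bigl[\frac{d}{dt}\mathcal P[S\overline y]\bigr]q=0$ a.e. on $J_T$, in particular a.e. on $I_\partial$ (on $I_0$ the statement is anyway immediate, $\mathcal P[S\overline y]$ being locally constant there).

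\textbf{Main obstacle.} Everything except Step~2 is routine weak-strong convergence of products; the genuine point is the pointwise comparison $|\Psi'|\le C\,\mathrm{dist}(\cdot,[a,b])\,\Psi''$ near the endpoints, i.e. verifying that the regularizing potential $\Psi$ is chosen flat enough at $a$ and $b$ for the $\mathrm L^1$-bound on $\frac1\varepsilon\Psi''(\overline z_\varepsilon)q_\varepsilon$ to dominate $\frac1\varepsilon\Psi'(\overline z_\varepsilon)q_\varepsilon$ after multiplying by the vanishing factor $\rho_k$.
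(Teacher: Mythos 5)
Your proof is correct and follows essentially the same route as the paper's: both identify the weak $\mathrm{L}^2$-limit of $\frac{1}{\varepsilon_k}\Psi'(\overline z_{\varepsilon_k})$ as $\frac{d}{dt}\mathcal P[S\overline y]$, pair it with the strong $\mathrm{L}^2$-convergence of $q_{\varepsilon_k}$, and then kill the limit using the uniform bound on $\frac{1}{\varepsilon}\int_0^T\Psi''(\overline z_\varepsilon)|q_\varepsilon|\,ds$ together with the fact that $\Psi'$ is dominated by $\mathrm{dist}(\cdot,[a,b])\cdot\Psi''$ near $[a,b]$ — which is exactly what the paper's explicit computation with $\Psi_1'(x)=4(x-b)^2(3-(x-b))$ versus $\Psi_1''(x)=12(x-b)(2-(x-b))$ establishes, only written there separately on $I_\partial^a$ and $I_\partial^b$ rather than as an abstract comparison inequality on all of $J_T$.
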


\begin{proof}
Consider the concrete choice for $\Psi$ from Remark~\ref{Rem:construction_Psi} and $c$ and $\varepsilon_0$ from Lemma~\ref{Lem:estim_adjoints_regularized_problem}.
By Theorem~\ref{Thm:conv_minimizers_reg_to_min_original},  $\overline{z}_\varepsilon\rightarrow z$ uniformly so that 
$
\overline{z}_\varepsilon(t) \rightarrow b
\text{ for } t\in I_\partial^b \text{ and }
\overline{z}_\varepsilon(t) \rightarrow a \text{ for } t\in I_\partial^a
$
with $\varepsilon\rightarrow 0$.
Hence,
there exists some $\varepsilon_1\in (0,\varepsilon_0]$ such that
\begin{equation}
\begin{aligned}
a < \overline{z}_\varepsilon(t) < b+1&& \text{for }t\in I_\partial^b \label{bound_of_z_on_I_b}
\end{aligned}
\end{equation}
for all $\varepsilon\in (0,\varepsilon_1)$.
Remember that $\Psi_1(x)=(x-b)^3(4+b-x)$ and $\Psi\equiv 0$ on $[a,b]$. For $\varepsilon\in (0,\varepsilon_1)$ and $t\in I_\partial^b$ we obtain 
\begin{align}
\Psi'(\overline{z}_\varepsilon(t)) &=  \Psi_1'(\overline{z}_\varepsilon(t)) \chi_{\{ b < \overline{z}_\varepsilon\leq b+2\}}(t)
= 4(3-(\overline{z}_\varepsilon(t)-b))(\overline{z}_\varepsilon(t)-b)^2 \chi_{\{ b < \overline{z}_\varepsilon\leq b+2\}}(t) ,\label{form_of_Psi'} \\
\Psi''(\overline{z}_\varepsilon(t)) 
&= 12(\overline{z}_\varepsilon(t)-b)[2 - (\overline{z}_\varepsilon(t)-b)] \chi_{\{ b < \overline{z}_\varepsilon\leq b+2\}}(t).\label{form_of_Psi''}
\end{align}
We apply estimate \eqref{estim_q_epsilon} from Lemma~\ref{Lem:estim_adjoints_regularized_problem} together with \eqref{bound_of_z_on_I_b} and \eqref{form_of_Psi''} to see that
\begin{equation}
	\begin{split}
	c&\geq \frac{1}{\varepsilon}\int_{0}^{T} \Psi''(\overline{z}_\varepsilon(s)) |q_\varepsilon(s)| ds
	\geq \frac{1}{\varepsilon}\int_{I_\partial^b} \Psi''(\overline{z}_\varepsilon(s)) |q_\varepsilon(s)| ds\\
	&=\frac{1}{\varepsilon}\int_{I_\partial^b}  12(\overline{z}_\varepsilon(t)-b)[2 - (\overline{z}_\varepsilon(t)-b)] \chi_{\{ b < \overline{z}_\varepsilon\leq b+2\}} |q_\varepsilon(s)| ds\\
	&\geq \frac{1}{\varepsilon}\int_{I_\partial^b}  12(\overline{z}_\varepsilon(t)-b) \chi_{\{ b < \overline{z}_\varepsilon\leq b+2\}} |q_\varepsilon(s)| ds
	\end{split}\label{est_proof_Lem:play_times_q_equ_zero}
\end{equation}
for all $\varepsilon\in(0,\varepsilon_1)$.
We apply the convergence results from Theorem~\ref{Thm:conv_minimizers_reg_to_min_original} in \eqref{state_equ_regular_z} and use the representation $\mathcal{W} + \mathcal{P} = \mathrm{Id}$ from Lemma~\ref{Lem:hyst_props} to obtain the weak convergence
\begin{equation*}
\frac{1}{\varepsilon}\Psi'(\overline{z}_\varepsilon) = S\dot{\overline{y}}_\varepsilon - \dot{\overline{z}}_\varepsilon \rightharpoonup S\dot{\overline{y}} - \dot{\overline{z}} =
\frac{d}{dt}(S\overline{y} - \mathcal{W}[S\overline{y}])
= \frac{d}{dt}\mathcal{P}[S\overline{y}]
\end{equation*}
in $\mathrm{L}^2(J_T)$ with $\varepsilon\rightarrow 0$.
Furthermore, by Lemma~\ref{Lem:limits_q_and_p_and_props_of_p}, $|q_{\varepsilon_k}|\rightarrow |q|$ strongly in $\mathrm{L}^2(J_T)$ with $k\rightarrow \infty$ and
$\frac{d}{dt}\mathcal{P}[S\overline{y}]=\left| \frac{d}{dt}\mathcal{P}[S\overline{y}] \right|$ a.e. in $I_\partial^b$ by definition of $I_\partial^b$.
This together with 
\eqref{form_of_Psi'} and \eqref{est_proof_Lem:play_times_q_equ_zero} yields
\begin{equation*}
\begin{split}
0&\leq \int_{I_\partial^b} \left| \frac{d}{dt}\mathcal{P}[S\overline{y}] \right| |q(s)| ds
= \lim\limits_{k\rightarrow\infty} \frac{1}{\varepsilon_k}\int_{I_\partial^b} \Psi'(\overline{z}_{\varepsilon_k}(s)) |q_{\varepsilon_k}(s)| ds\\
&=\lim\limits_{k\rightarrow\infty} \frac{1}{\varepsilon_k}\int_{I_\partial^b}  4(3-(\overline{z}_\varepsilon(t)-b))(\overline{z}_\varepsilon(t)-b)^2 \chi_{\{ b < \overline{z}_{\varepsilon_k}\leq b+2\}} |q_{\varepsilon_k}(s)| ds\\
&\leq \lim\limits_{k\rightarrow\infty}\frac{12}{\varepsilon_k}\int_{I_\partial^b}  (\overline{z}_{\varepsilon_k}(s)-b)^2 \chi_{\{ b < \overline{z}_{\varepsilon_k}\leq b+2\}} |q_{\varepsilon_k}(s)| ds
\leq c \lim\limits_{k\rightarrow\infty} \sup_{s\in I_\partial^b}(\overline{z}_{\varepsilon_k}(s)-b) = 0.
\end{split}
\end{equation*}

Similar estimates for $I_\partial^a$ and the fact that $I_\partial=I_\partial^a\cup I_\partial^b$ prove the statement.
\end{proof}

Next, we pass to the limit in \eqref{equ_q_epsilon} to get the following result:
\begin{lemma}[$q$ in $I_\partial$: Relation to $d\mu$]\label{Lem:overall_evol_q}
	Adopt the assumptions and the notation of Lemma~\ref{Lem:limits_q_and_p_and_props_of_p} and let $\nu_1$ and $\nu_2$ be as in Lemma~\ref{Lem:q_in_H1}. Consider the subdivision of $\overline{J_T}$ from Definition~\ref{Def:partition_[0,T]}.
	We denote $
	d\mu_\varepsilon := \frac{1}{\varepsilon} \Psi''(\overline{z}_\varepsilon) q_\varepsilon
	$.
	There exists a measure $d\mu \in \mathrm{C}(\overline{J_T})^*$, such that a subsequence $\{d\mu_{\varepsilon_k}\}$ (w.l.o.g we may consider $\{\varepsilon_k\}$ from Lemma~\ref{Lem:limits_q_and_p_and_props_of_p}) converges weak-* to $d\mu$ in $\mathrm{C}(\overline{J_T})^*$ with $k\rightarrow\infty$.
	The support of $d\mu$ is contained in $I_\partial$.
	For any $\varphi\in \mathrm{C}(\overline{J_T})$ there holds
	\begin{equation*}
	\int_{0}^{T} -\varphi(t) dq(t) + \int_{I_\partial} \varphi(t) d\mu(t) = \int_{0}^{T} \varphi(t)(\nu_1(t)+\nu_2(t)) dt.
	\end{equation*}
	This implies
	$d\mu = dq + (\nu_1+\nu_2)dt$ as measures on $I_\partial$.
	\end{lemma}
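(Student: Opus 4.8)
The plan is to produce $d\mu$ by weak-$*$ compactness, then localize its support to $I_\partial$, and finally pass to the limit in \eqref{equ_q_epsilon} tested against continuous functions. First I would note that, since $\Psi$ is convex and hence $\Psi''\geq 0$, the total variation of $d\mu_\varepsilon$ equals $\tfrac1\varepsilon\int_0^T\Psi''(\overline z_\varepsilon)|q_\varepsilon|\,dt$, which is bounded uniformly in $\varepsilon\in(0,\varepsilon_0)$ by \eqref{estim_q_epsilon} in Lemma~\ref{Lem:estim_adjoints_regularized_problem}. Thus $\{d\mu_\varepsilon\}$ is bounded in $\mathrm{C}(\overline{J_T})^*$, and by Alaoglu's theorem a subsequence converges weak-$*$ to some $d\mu\in\mathrm{C}(\overline{J_T})^*$; after a diagonal argument I may take this to be the sequence $\{\varepsilon_k\}$ already produced in Lemma~\ref{Lem:limits_q_and_p_and_props_of_p} and further refined in Lemma~\ref{Lem:q_in_H1}, so that simultaneously $p_{\varepsilon_k}\rightharpoonup p$, $q_{\varepsilon_k}\to q$ pointwise, $\dot q_{\varepsilon_k}\,dt\rightharpoonup dq$ weak-$*$, and the weak $\mathrm L^2$-limits $\nu_1,\nu_2$ hold.

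To see $\mathrm{supp}(d\mu)\subset I_\partial$, take an arbitrary compact interval $[s,t]\subset I_0$. Since $\overline z$ is continuous, $\overline z([s,t])$ is a compact subset of the open interval $(a,b)$; by the uniform convergence $\overline z_\varepsilon\to\overline z$ from Theorem~\ref{Thm:conv_minimizers_reg_to_min_original} one has $\overline z_\varepsilon([s,t])\subset[a,b]$ for all sufficiently small $\varepsilon$, hence $\Psi''(\overline z_\varepsilon)\equiv 0$ on $[s,t]$ by $(A4)_\varepsilon$ in Assumption~\ref{Ass:regularized_problem}, so $d\mu_{\varepsilon_k}$ restricted to $(s,t)$ vanishes for large $k$ and therefore $d\mu$ restricted to $(s,t)$ vanishes. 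Exhausting the open set $I_0$ by such intervals yields $\mathrm{supp}(d\mu)\subset\overline{J_T}\setminus I_0=I_\partial$.

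Next I would test \eqref{equ_q_epsilon} against an arbitrary $\varphi\in\mathrm{C}(\overline{J_T})$, integrate over $J_T$, and use $dq_\varepsilon=\dot q_\varepsilon\,dt$ together with linearity of $S$ to obtain
\begin{equation*}
-\int_0^T\varphi\,dq_\varepsilon+\int_0^T\varphi\,d\mu_\varepsilon=\int_0^T\varphi\,\langle p_\varepsilon+Sq_\varepsilon,\tfrac{\partial}{\partial z}f_\varepsilon(\overline y_\varepsilon,\overline z_\varepsilon)\rangle_X\,dt.
\end{equation*}
Along $\{\varepsilon_k\}$ the first term tends to $-\int_0^T\varphi\,dq$ because $\dot q_{\varepsilon_k}\,dt\rightharpoonup dq$ weak-$*$ in $\mathrm{C}(\overline{J_T})^*$ (Lemma~\ref{Lem:limits_q_and_p_and_props_of_p}); the second tends to $\int_{\overline{J_T}}\varphi\,d\mu=\int_{I_\partial}\varphi\,d\mu$ by the support property just established; and the right-hand side tends to $\int_0^T\varphi(\nu_1+\nu_2)\,dt$ since $\varphi\in\mathrm L^2(J_T)$ while $\langle p_{\varepsilon_k},\tfrac{\partial}{\partial z}f_{\varepsilon_k}(\overline y_{\varepsilon_k},\overline z_{\varepsilon_k})\rangle_X\rightharpoonup\nu_1$ and $\langle Sq_{\varepsilon_k},\tfrac{\partial}{\partial z}f_{\varepsilon_k}(\overline y_{\varepsilon_k},\overline z_{\varepsilon_k})\rangle_X\rightharpoonup\nu_2$ weakly in $\mathrm L^2(J_T)$ by Lemma~\ref{Lem:q_in_H1}. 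This is exactly the displayed identity of the lemma.

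Finally, since this identity holds for every $\varphi\in\mathrm{C}(\overline{J_T})$, the uniqueness part of the Riesz representation theorem shows that the two finite signed Borel measures $dq$ and $d\mu-(\nu_1+\nu_2)\,dt$ on $\overline{J_T}$ coincide; restricting to the Borel subsets of $I_\partial$ gives $d\mu=dq+(\nu_1+\nu_2)\,dt$ as measures on $I_\partial$ (while on $I_0$ one recovers $-\dot q=\nu_1+\nu_2$, consistent with Lemma~\ref{Lem:q_in_H1}). I expect the only genuinely delicate points to be the bookkeeping of the nested subsequences so that all weak and weak-$*$ limits are extracted along one common sequence, and the localization argument for $\mathrm{supp}(d\mu)$, where one must be slightly careful to guarantee that $\overline z_\varepsilon$ stays inside $[a,b]$ on compact subintervals of $I_0$; the remaining passages to the limit are routine.
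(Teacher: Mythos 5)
Your proposal is correct and follows essentially the same route as the paper: the uniform $\mathrm{L}^1$ bound from \eqref{estim_q_epsilon} gives weak-$*$ compactness of $\{d\mu_\varepsilon\}$, the support localization uses exactly the paper's observation that $\Psi''(\overline z_\varepsilon)$ vanishes on compact subsets of $I_0$ for small $\varepsilon$, and the paper simply outsources the remaining limit passage in \eqref{equ_q_epsilon} and the resulting measure identity to \cite[Lemmas 4.6 and 4.7]{brokate2013optimal}, which is precisely the argument you carry out in detail.
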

\begin{proof}
	By \eqref{estim_q_epsilon} in Lemma~\ref{Lem:estim_adjoints_regularized_problem} the functions
	$
	d\mu_\varepsilon
	$
	are bounded in $\mathrm{L}^1(J_T)$ independently of $\varepsilon$ for all $\varepsilon\in(0,\varepsilon_0)$.
	Consequently, a subsequence of $\{d\mu_\varepsilon\}$ converges weak-* in $\mathrm{C}(\overline{J_T})^*$ to some measure $d\mu$. 
	By $(A4)_\varepsilon$ in Assumption~\ref{Ass:regularized_problem} and the uniform convergence of $\overline{z}_\varepsilon$ to $\overline{z}$ there holds $\varphi\, \frac{1}{\varepsilon} \Psi''(\overline{z}_\varepsilon) q_\varepsilon \equiv 0$ as soon as $\varepsilon$ is small enough, if $\varphi\in\mathrm{C}(\overline{J_T})$ has compact support in $I_0$. Therefore, the support of $d\mu$ is contained in $I_\partial$ \cite[p.343]{brokate2013optimal}.
	The other statements are shown similar as \cite[Lemma 4.6]{brokate2013optimal} and \cite[Lemma 4.7]{brokate2013optimal}.
\end{proof}

It also follows:
\begin{lemma}[Discontinuity properties of $q$]\label{Lem:q_jumps_down_in_rev_time}
	Adopt the assumptions and notation of Lemma~\ref{Lem:limits_q_and_p_and_props_of_p}.
	The absolute value of $q$ can only jump downwards in reverse time.
	Consequently, for any $t\in \overline{J_T}$ there holds $|q(t-)|\leq |q(t+)|$ and $q(T-)=q(T)=0$.
	Moreover, $q$ is right continuous in $[0,T)$ and left continuous at $T$.
\end{lemma}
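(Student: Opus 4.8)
The key is to combine the relation $d\mu = dq + (\nu_1+\nu_2)\,dt$ on $I_\partial$ from Lemma~\ref{Lem:overall_evol_q}, the identity $\big[\tfrac{d}{dt}\mathcal{P}[S\overline{y}]\big]q = 0$ a.e.\ on $I_\partial$ from Lemma~\ref{Lem:play_times_q_equ_zero}, and the sign of the approximating measures $d\mu_\varepsilon = \tfrac{1}{\varepsilon}\Psi''(\overline{z}_\varepsilon)q_\varepsilon$. The plan is first to show that $d\mu$ inherits a one-sided sign from $q$: namely on $I_\partial^b$, where $\tfrac{d}{dt}\mathcal{P}[S\overline{y}] \ge 0$ and $\tfrac1\varepsilon\Psi'(\overline{z}_\varepsilon) \rightharpoonup \tfrac{d}{dt}\mathcal{P}[S\overline{y}]$ with $\tfrac1\varepsilon\Psi'(\overline{z}_\varepsilon)\ge 0$ (by $(A4)_\varepsilon$ and \eqref{bound_of_z_on_I_b}), one argues that $d\mu \ge 0$ there — more precisely, testing $d\mu_\varepsilon$ against $\varphi\,\mathrm{sgn}(q_\varepsilon) \ge 0$ with $\varphi \ge 0$ supported near $I_\partial^b$, using $\Psi''(\overline{z}_\varepsilon)|q_\varepsilon|\ge 0$ and passing to the limit, gives $\int \varphi\, \mathrm{sgn}(q)\, d\mu \ge 0$; symmetrically $d\mu \le 0$ on $I_\partial^a$ in the sense that $\int \varphi\,\mathrm{sgn}(q)\, d\mu \le 0$ when $\varphi\ge 0$ is supported near $I_\partial^a$. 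Combining, $d\mu$ has the same sign as $q$ wherever $q \ne 0$ on $I_\partial$.

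Next I would feed this back into the measure identity. Since $\nu_1 + \nu_2 \in \mathrm{L}^2(J_T)$ contributes no atoms, $d\mu$ and $dq$ have the same atomic part on $I_\partial$; on $I_0$ the limit $q \in \mathrm{H}^1_{\mathrm{loc}}$ by Lemma~\ref{Lem:q_in_H1} so $q$ is continuous there and has no jumps. Hence every jump of $q$ is located in $I_\partial$ and equals the corresponding atom of $d\mu$. Using the relations $q(t-)-q(s+) = dq((s,t))$ and $q(t+)-q(s-) = dq([s,t])$ from Lemma~\ref{Lem:limits_q_and_p_and_props_of_p}, an atom of size $dq(\{t\}) = q(t+)-q(t-)$ at $t\in I_\partial$ has the same sign as $d\mu(\{t\})$, which has the same sign as $q(t)$; but since $q$ is right-continuous at typical points one checks $q(t) = q(t+)$, so the jump $q(t+)-q(t-)$ has the sign of $q(t+)$, which means $|q|$ cannot increase as $t$ decreases through the jump — i.e.\ $|q(t-)| \le |q(t+)|$. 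This is the statement ``$|q|$ can only jump downwards in reverse time.''

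For the terminal conditions: $q_\varepsilon(T) = 0$ for every $\varepsilon$ by Theorem~\ref{Thm:opt_syst_reg}, and pointwise convergence $q_{\varepsilon_k}\to q$ gives $q(T) = 0$; since $T$ is the right endpoint, $q(T-) = q(T)$ follows from the BV structure together with $|q(T-)|\le |q(T+)|$ interpreted at the endpoint, forcing $q(T-)=0$ as well. Right-continuity on $[0,T)$ and left-continuity at $T$ follow from choosing the precise representative of $q$ compatible with weak-* convergence of $\dot q_{\varepsilon_k}$: one defines $q$ via $q(t) := q(T) - dq([t,T])$ for the right-continuous version and checks consistency with the relations above.

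I expect the main obstacle to be the first step — rigorously propagating the sign of $q_\varepsilon$ through the weak-* limit to obtain that $d\mu$ and $q$ have matching sign on $I_\partial$. The difficulty is that $\mathrm{sgn}(q_\varepsilon)$ is not continuous and need not converge nicely, so one must argue locally: on an open subinterval of $I_\partial^b$ on which $q$ has constant sign $\sigma$ (away from its zero set, which is relatively closed), $q_{\varepsilon_k}$ is eventually close to $q$ uniformly, hence has sign $\sigma$ there, and then $\sigma\, d\mu_{\varepsilon_k} = \tfrac1\varepsilon\Psi''(\overline{z}_{\varepsilon_k})|q_{\varepsilon_k}| \ge 0$ passes to the limit. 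One must also handle the zero set of $q$ carefully and argue that an atom of $d\mu$ cannot sit exactly where $q$ vanishes (using that $q$ is BV so its value at a jump point is genuinely nonzero on one side), which is where the precise representative and the interplay with Lemma~\ref{Lem:play_times_q_equ_zero} enter.
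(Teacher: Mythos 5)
Your plan takes a genuinely different route from the paper, and unfortunately it has a gap that I do not think can be closed along the lines you sketch. The paper itself disposes of this lemma by citation: pointwise convergence $q_{\varepsilon_k}\to q$ plus weak-$*$ convergence of $\dot q_{\varepsilon_k}dt$ give the BV structure and the one-sided continuity via \cite[Chapter XII.7]{visintin2013differential}, and the jump-direction statement is taken from \cite[Lemma 4.4]{brokate2013optimal}. The mechanism behind that cited lemma is a differential inequality for $|q_\varepsilon|$ \emph{before} passing to the limit: testing \eqref{equ_q_epsilon} with $\mathrm{sgn}(q_\varepsilon)$ gives $\frac{d}{dt}|q_\varepsilon| = -\mathrm{sgn}(q_\varepsilon)\langle p_\varepsilon+Sq_\varepsilon,\frac{\partial}{\partial z}f_\varepsilon\rangle_X + \frac1\varepsilon\Psi''(\overline z_\varepsilon)|q_\varepsilon| \ge -g_\varepsilon$ with $g_\varepsilon$ bounded and weakly convergent in $\mathrm{L}^2(J_T)$, hence $|q(s)|\le |q(t)| + \int_s^t g$ for all $s<t$ by pointwise convergence, and $|q(t-)|\le|q(t+)|$ follows by letting $s\uparrow t$, $t'\downarrow t$. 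This works with $|q_\varepsilon|$ directly and never needs to know the sign of $q$ or of $d\mu$ at the jump.

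Your route — propagating the sign of $q_\varepsilon$ into the atoms of $d\mu$ and reading off the jump of $q$ from the atom of $dq$ — breaks down at exactly the points that matter. First, you invoke that $q_{\varepsilon_k}$ is ``eventually close to $q$ uniformly'' on an interval where $q$ has constant sign; only \emph{pointwise} convergence is established in Lemma~\ref{Lem:limits_q_and_p_and_props_of_p}, and near a jump of $q$ uniform convergence of the continuous functions $q_{\varepsilon_k}$ is impossible, so the sign of $q_{\varepsilon_k}$ near a jump point is genuinely uncontrolled. Second, and more seriously, even if you grant that the atom $dq(\{t\})=q(t+)-q(t-)$ has the sign of $q(t+)$, this does \emph{not} imply $|q(t-)|\le|q(t+)|$: take $q(t-)=-5$ and $q(t+)=1$; the jump $+6$ has the sign of $q(t+)$, yet $|q|$ jumps up in reverse time. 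Sign information about the atom alone is insufficient; one needs the quantitative lower bound on the increment of $|q_\varepsilon|$ that the differential-inequality argument supplies. I would therefore replace the first two paragraphs of your plan by the $|q_\varepsilon|$ estimate above; your treatment of the terminal condition ($q_\varepsilon(T)=0$ plus pointwise convergence, then $|q(T-)|\le|q(T)|=0$) is fine once the jump inequality is in hand.
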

\begin{proof}
	From Lemma~\ref{Lem:limits_q_and_p_and_props_of_p} we conclude that
	$q_{\varepsilon_k}$ converges to $q$ in $\mathrm{L}^1(J_T)$ and that $dq_{\varepsilon_k} = \dot{q}_{\varepsilon_k}dt$ converges to $dq$ weak-* in $\mathrm{C}(\overline{J_T})^*$. 
	From \cite[][ Chapter XII.7]{visintin2013differential} it follows that $q$ has bounded variation and that the limit is right continuous in $[0,T)$ and left continuous at $T$.
	The rest of the statements are shown just as \cite[Lemma 4.4]{brokate2013optimal}.
\end{proof}

The unknown measure $d\mu$ has support in $I_\partial$ so that we only know the behaviour of the sum $-dq + d\mu$ in $\mathrm{C}(\overline{J_T})^*$ but not that of $dq$ alone.
In order to analyze $q$ also in $I_\partial$ we make the following regularity assumption, cf. \cite[p.344]{brokate2013optimal}:
\begin{assumption}[Regularity assumption]\label{Ass:regularity_assumption}
	Let $\overline{y}$ be as in Theorem~\ref{Thm:conv_minimizers_reg_to_min_original} and consider the subdivision of $\overline{J_T}$ from Definition~\ref{Def:partition_[0,T]}. We suppose that the function $\mathcal{P}[S\overline{y}]$ satisfies 
	$
	\frac{d}{dt}\mathcal{P}[S\overline{y}]\neq 0\ \text{ a.e. in }I_\partial.
	$
	Equivalently,
	$
	S\dot{\overline{y}}> 0\ \text{ a.e. in }I_\partial^b$ and
	$S\dot{\overline{y}}< 0\ \text{ a.e. in }I_\partial^a.
	$
\end{assumption}
\begin{remark}
	This assumption is reasonable if $S\overline{y}$ is the size of interest.
	Consider for example the case when $w$ in (A3) in Assumption~\ref{Ass:general_ass_and_short_notation_1} has the form $w= \frac{1}{m|\Omega|}\varphi$ for some $\varphi \in \prod_{j=1}^{m} \mathrm{C}_{\Gamma_{D_j}}^{\infty}(\Omega)$, where the components $\varphi_j$, $j\in\{1,\ldots, m\}$, are constantly equal to $1$ within most of $\Omega$ and vanish only in a small neighbourhood of $\Gamma_{D_j}$.
	If we identify $\mathrm{ran}\left(I_p\right)$ with $\mathbb{W}_{\Gamma_D}^{1,p}(\Omega)$, then $S$ acts on $y\in\mathrm{dom}(A_p)$ as
	$
	Sy = \frac{1}{m|\Omega|}\sum_{j=1}^{m}\int_\Omega y_j \varphi_j dx.
	$
	This means that $Sy$ is approximately the mean value of $y$ in $\Omega$.
	If this is the value of interest then nothing changes in the system if
	$S\dot{\overline{y}}= 0$ in a subset of $I_\partial$ with positive measure.
\end{remark}


In order to analyze the behaviour of $q$ and $dq$ in $\overline{I_0}\cap I_\partial$ we introduce the following categories of times as in \cite{brokate2013optimal}:
\begin{definition}[Switching times]\label{Def:switchin_times}
		Consider the subdivision of $\overline{J_T}$ from Definition~\ref{Def:partition_[0,T]}. 
		We call a time $t$ a $(0,\partial)$-switching time if $t\in \overline{I_0}\cap I_\partial$ and if there is some $\varepsilon > 0 $ such that $(t-\varepsilon,t)\subset I_0$ and $[t,t+\varepsilon)\subset I_\partial$.
		We say that $t$ is a $(\partial,0)$-switching time if $t\in \overline{I_0}\cap I_\partial$ and if for some $\varepsilon > 0 $ we have $(t-\varepsilon,t]\subset I_\partial$ and $(t,t+\varepsilon)\subset I_0$.
\end{definition}

\begin{lemma}[$q$ at switching times]\label{Lem:delta_0_switching}
	Adopt the assumptions and the notation of Lemma~\ref{Lem:limits_q_and_p_and_props_of_p}.
	If $t$ is a $(0,\partial)$-switching time in the sense of Definition~\ref{Def:switchin_times} and if Assumption~\ref{Ass:regularity_assumption} holds then there exits some $\varepsilon>0$ such that
	$
	q\equiv 0
	$
	on $[t,t+\varepsilon)$. Moreover, $q$ is continuous at $t$ with $t=0$.
	Furthermore, for every open interval $(c,d)\subset I_\partial$ there holds that $q\equiv 0$ in $[c,d)$.
\end{lemma}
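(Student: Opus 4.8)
The plan is to reduce everything to the a.e.\ identity for $q$ on $I_\partial$ together with the one-sided continuity and jump properties of $q$ already collected in Lemma~\ref{Lem:q_jumps_down_in_rev_time}. First I would combine Lemma~\ref{Lem:play_times_q_equ_zero}, which gives $\bigl[\frac{d}{dt}\mathcal{P}[S\overline{y}](t)\bigr]q(t)=0$ for a.e.\ $t\in I_\partial$, with Assumption~\ref{Ass:regularity_assumption}, which states $\frac{d}{dt}\mathcal{P}[S\overline{y}]\neq 0$ a.e.\ in $I_\partial$. Intersecting the two corresponding full-measure subsets of $I_\partial$ yields $q(t)=0$ for a.e.\ $t\in I_\partial$.

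Next I would treat a $(0,\partial)$-switching time $t$: Definition~\ref{Def:switchin_times} provides $\varepsilon>0$ with $[t,t+\varepsilon)\subset I_\partial$, so $q=0$ a.e.\ on $[t,t+\varepsilon)$. Since $q\in\mathrm{BV}(J_T)$ has one-sided limits at every point and, by Lemma~\ref{Lem:q_jumps_down_in_rev_time}, the chosen representative of $q$ is right continuous on $[0,T)$, for each $s\in[t,t+\varepsilon)$ the existing right limit $q(s+)$ can be evaluated along a sequence $r_n\downarrow s$ on which $q$ vanishes, whence $q(s)=q(s+)=0$; this gives $q\equiv 0$ on $[t,t+\varepsilon)$, in particular $q(t)=q(t+)=0$. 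Because $|q|$ can only jump downwards in reverse time (Lemma~\ref{Lem:q_jumps_down_in_rev_time}), $|q(t-)|\le|q(t+)|=0$, so $q(t-)=0$ as well and $q$ is continuous at $t$ with $q(t)=0$. For the final assertion I would run the same argument on an arbitrary open interval $(c,d)\subset I_\partial$: $q=0$ a.e.\ on $(c,d)$, hence $q\equiv 0$ on $(c,d)$ by right continuity, and then $q(c)=q(c+)=0$, so $q\equiv 0$ on $[c,d)$.

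I do not expect a genuine obstacle, since the substantive input is already isolated in Lemma~\ref{Lem:play_times_q_equ_zero} (vanishing of the product) and Lemma~\ref{Lem:q_jumps_down_in_rev_time} (the regularity and jump structure of $q$). The one point that must be handled carefully is the passage from the a.e.\ identity $q=0$ to genuine pointwise statements on the half-open intervals $[t,t+\varepsilon)$ and $[c,d)$ and on the one-sided limits at the switching time: this has to be routed through the right-continuous representative of $q$ and the bounded-variation bound ensuring existence of one-sided limits, rather than asserted directly from the a.e.\ equality.
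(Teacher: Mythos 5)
Your proposal is correct and follows essentially the same route as the paper: Lemma~\ref{Lem:play_times_q_equ_zero} plus Assumption~\ref{Ass:regularity_assumption} give $q=0$ a.e.\ on $I_\partial$, right continuity from Lemma~\ref{Lem:q_jumps_down_in_rev_time} upgrades this to $q\equiv 0$ on $[c,d)$, and the no-upward-jump inequality $|q(t-)|\leq|q(t+)|=0$ yields continuity at the switching time. The only cosmetic difference is that the paper packages the last step as absolute continuity of $q$ on $[t-\varepsilon,t+\varepsilon)$ by also invoking $q\in\mathrm{H}^1$ on the preceding $I_0$-interval (Lemma~\ref{Lem:q_in_H1}), whereas you apply the jump inequality at $t$ directly; both hinge on the same fact.
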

\begin{proof}
	Let $(c,d)\subset I_\partial$ be arbitrary and suppose that Assumption~\ref{Ass:regularity_assumption} holds. 
	Then Lemma~\ref{Lem:play_times_q_equ_zero} implies $q(t)=0$ for a.e. $t\in(c,d)$.
	By Lemma~\ref{Lem:q_jumps_down_in_rev_time}, $q$ is right continuous in $[0,T)$ so that $q\equiv 0$ in $[c,d)$.
	Consequently, for every subinterval $[\beta,\gamma]\subset (c,d)$ we have
	$
	0= q(\gamma -)-q(\beta +) = dq((\beta,\gamma) )
	$
	so that $dq=0$ as a measure on $(c,d)$.
	Again by Lemma~\ref{Lem:q_jumps_down_in_rev_time} the absolute value of $q$ can only jump downwards in reverse time. By Lemma~\ref{Lem:q_in_H1}, $q\in\mathrm{H}^1(e,c)$ for any interval $(e,c)\subset I_0$. Consequently, whenever an interval $(e,c)\subset I_0$ is followed by an interval $[c,d]\subset I_\partial$, then $q$ is absolutely continuous on $[e,d)$.
		
	Now let $t$ be a $(0,\partial)$-switching and consider $\varepsilon>0$ such that
	$(t-\varepsilon,t)\subset I_0$ and $[t,t+\varepsilon)\subset I_\partial$.
	Then setting $e=t-\varepsilon$, $c=t$ and $d=t+\varepsilon$ proves the rest of the lemma.
	
\end{proof}

\begin{remark}\label{Rem:delta_0_switching}
	In the setting of Lemma~\ref{Lem:delta_0_switching} one can prove even more about the continuity properties of $q$ if $f$ is continuously differentiablem, even in absence of Assumption~\ref{Ass:regularity_assumption}:
	\begin{itemize}
	\item	Note first that when $t\in I_\partial$ is a $(\partial,0)$-switching time then $q$ might jump at $t$ no matter if Assumption~\ref{Ass:regularity_assumption} holds or not. If it does not jump then under Assumption~\ref{Ass:regularity_assumption} then necessarily $q(t)=0$.
	It is also possible to prove that $q$ may only jump up at $t$ if $\int_{t}^{t^-} \langle p+Sq, \frac{\partial}{\partial z}f(\overline{y},\overline{z}) \rangle_{X}\,ds > 0$, where either $t^-=t^-(t)\in (t,T]\cap I_\partial^a$ is (essentially) the first time in $(t,T)$ for which there exists some $\varepsilon>0$ such that $S\dot{\overline{y}}<0$ a.e. in $(t^-,t^-+\varepsilon)$, or $t^-=T$.    
	It can further be shown that the height of the jump is bounded by $\int_{t}^{t^-} \langle p+Sq, \frac{\partial}{\partial z}f(\overline{y},\overline{z}) \rangle_{X}\,ds$.
	Analogously, one can prove that $q$ may only jump down at $t$ if $\int_{t}^{t^+} \langle p+Sq, \frac{\partial}{\partial z}f(\overline{y},\overline{z}) \rangle_{X}\,ds < 0$, where either $t^+=t^+(t)\in (t,T]\cap I_\partial^b$ is (essentially) the first time in $(t,T)$ for which there exists some $\varepsilon>0$ such that $S\dot{\overline{y}}>0$ a.e. in $(t^+,t^+\varepsilon)$, or $t^+=T$. In this case the height of the jump is bounded by $-\int_{t}^{t^+} \langle p+Sq, \frac{\partial}{\partial z}f(\overline{y},\overline{z}) \rangle_{X}\,ds$.
	
	\item	Other categories of times can be considered. Those include isolated times in $I_0$ or subintervals of $I_\partial$ in which $S\dot{\overline{y}}=0$ a.e.
	The latter can only occur if Assumption~\ref{Ass:regularity_assumption} does not hold true.
	Also for those categories one can show sign conditions for $dq$ and $d\mu$ and upper bounds for jumps.
	\end{itemize}
	
	The proof of these continuity properties is very technical and exceeds the scope of this work.
	The results will be published in the dissertation of the project in which this paper originated.
	
\end{remark}



\subsection{Optimality conditions for distributed or boundary controls}\label{Subsec:opt_cond_distributed_or_boundary}
We derive optimality conditions for problem \eqref{opt_control_ control_problem}-\eqref{state_equ_z} for the optimal control $\overline{u}$ from Theorem~\ref{Thm:conv_minimizers_reg_to_min_original} in terms of the pair $p$ and $q$ from Lemma~\ref{Lem:limits_q_and_p_and_props_of_p}. 
We can not expect a pointwise condition as in \cite[Section 5]{meyeroptimal} since the hysteresis and its derivative, and then also $F'[\overline{y},\cdot]$ in Theorem~\ref{Thm:state_equ_sol_op}, act non-local in time. This implies that if for some direction $\zeta\in \mathrm{C}(\overline{J_T};X^\alpha)$ and some set $I\subset J_T$ of positive measure the derivative $F'[y;\zeta](\tau)=f'[(y(\tau),\mathcal{W}[Sy](\tau));(y(\tau),\mathcal{W}'[Sy;S\zeta](\tau))]$ is not zero for $\tau\in I$, then the values of the derivative in $I$ might have an influence on its value at any $t$ with $\max\{\tau\in I\}<t\leq T$. That is, we can only expect an optimality condition for problem \eqref{opt_control_ control_problem}-\eqref{state_equ_z} which includes integration at least over a part of the time interval $J_T$.
Nevertheless, we follow the steps in \cite[Section 5]{meyeroptimal} as long as possible.
The optimality condition for $i\in\{1,2\}$ is derived in Lemma~\ref{Lem:opt_cond_general} and improved in Corollary~\ref{Cor:opt_cond_general} for the case when $f$ is continuously differentiable.
We can even further improve this condition for the case when the controls act inside of $\Omega$, i.e. for $i=1$. Also in this case we can not expect to obtain an inequality without integration in time. But since the range of $B_1$ is dense in $X$, we are able to derive a condition without variation in space. The results can be found in Corollary~\ref{Cor:opt_control_distrib_opt_cond} in Subsection~\ref{Subsubsec:Improved optimality conditions}. For $i=1$ we are also able to prove uniqueness of $p,q$ and $d\mu$ if $f$ is continuously differentiable, see Corollary~\ref{Cor:Uniqueness_dist_contr} in Subsection~\ref{Subsubsec:uniqueness}.

Because the range of $B_2$ is not dense in $X$, we treat the general case $i\in\{1,2\}$ first.

\begin{lemma}[Optimality condition]\label{Lem:opt_cond_general}
	Adopt the assumptions and the notation of Lemma~\ref{Lem:limits_q_and_p_and_props_of_p} and let $\nu_1$ and $\nu_2$ be as in Lemma~\ref{Lem:q_in_H1}.
	For any $h\in U_i$, $y^{B_i\overline{u},B_ih}=G'[B_i\overline{u};B_ih]$ and
	
	$F'[\overline{y};y^{B_i\overline{u},B_ih}](t)=f'[(\overline{y}(t),\mathcal{W}[S\overline{y}](t));(\overline{y}(t),\mathcal{W}'[S\overline{y};Sy^{B_i\overline{u},B_ih}](t))]$ (see Theorem~\ref{Thm:state_equ_sol_op}), there holds the optimality condition
	\begin{equation}
	\begin{split}
	&\int_{0}^{T} \langle \lambda_1 + \lambda_2 + S(\nu_1 + \nu_2), y^{B_i\overline{u},B_ih} \rangle_{\mathrm{dom}(A_p)}  dt\\
	& \leq \int_{I_\partial} Sy^{B_i\overline{u},B_ih} d\mu + \int_{0}^{T} \langle p+Sq , F'[\overline{y};y^{B_i\overline{u},B_ih}] \rangle_{X}  dt.
	\end{split}\label{opt_control_opt_cond_general}
	\end{equation}
\end{lemma}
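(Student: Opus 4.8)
The plan is to start from the optimality condition for the regularized problems, namely the relation $B_i^*(p_\varepsilon + Sq_\varepsilon) = -(\kappa+1)\overline{u}_\varepsilon + \overline{u}$ together with the adjoint equations \eqref{equ_p_epsilon}-\eqref{equ_q_epsilon} from Theorem~\ref{Thm:opt_syst_reg}, and to pass to the limit $\varepsilon_k \to 0$ along the subsequence fixed in Lemma~\ref{Lem:limits_q_and_p_and_props_of_p}. First I would reconstruct the variational inequality at level $\varepsilon$: from Gâteaux-differentiability of $\mathcal{J}_{\mathrm{reg}}$ (Lemma~\ref{Lem:Gateau_reg_state_equ}) and Lemma~\ref{Lem:adjoint_equation_reg}, the first-order condition $\mathcal{J}'_{\mathrm{reg}}[\overline{u}_\varepsilon;h]=0$ rewrites, after testing the adjoint system with the linearized state $y_\varepsilon^{B_i\overline{u}_\varepsilon,B_ih}$ and performing the two partial integrations already carried out in the proof of Lemma~\ref{Lem:adjoint_equation_reg}, as an identity of the form
\begin{equation*}
\int_0^T \langle \tfrac{\partial}{\partial y}f_\varepsilon(\overline{y}_\varepsilon,\overline{z}_\varepsilon)^* p_\varepsilon + S\tfrac{\partial}{\partial y}f_\varepsilon(\overline{y}_\varepsilon,\overline{z}_\varepsilon) q_\varepsilon - SA_p q_\varepsilon + S(\nu_1^\varepsilon+\nu_2^\varepsilon) + (\overline{y}_\varepsilon - y_d), y_\varepsilon^{B_i\overline{u}_\varepsilon,B_ih}\rangle\, dt
= \int_0^T \langle p_\varepsilon + Sq_\varepsilon, F_\varepsilon'[\overline{y}_\varepsilon;y_\varepsilon^{B_i\overline{u}_\varepsilon,B_ih}]\rangle_X\, dt + \langle d\mu_\varepsilon, Sy_\varepsilon^{B_i\overline{u}_\varepsilon,B_ih}\rangle,
\end{equation*}
where I have absorbed the $\tfrac{1}{\varepsilon}\Psi''(\overline{z}_\varepsilon)q_\varepsilon$ term from \eqref{equ_q_epsilon} into $d\mu_\varepsilon$ as in Lemma~\ref{Lem:overall_evol_q}. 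At finite $\varepsilon$ this is actually an equality; the inequality in \eqref{opt_control_opt_cond_general} will come out of the limit because $d\mu_\varepsilon$ converges only weak-$*$ and the pairing $\langle d\mu_\varepsilon, Sy_\varepsilon\rangle$ need not converge to $\langle d\mu, Sy^{B_i\overline{u},B_ih}\rangle$ — one only retains a sign after using the structure of $d\mu$ on $I_\partial$.

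Next I would take limits term by term. The linearized states $y_\varepsilon^{B_i\overline{u}_\varepsilon,B_ih}$ converge: one shows $y_\varepsilon^{B_i\overline{u}_\varepsilon,B_ih} \to y^{B_i\overline{u},B_ih}$ strongly in $\mathrm{C}(\overline{J_T};X^\alpha)$ and weakly in $Y_{2,0}$, which follows from the uniform estimates \eqref{estim_reg_derivative} and an argument analogous to Lemma~\ref{Lem:convergence_reg_to_nonreg} applied to the linearized equations \eqref{state_equ_regular_derivative_y}-\eqref{state_equ_regular_derivative_z} (using $\overline{u}_\varepsilon\to\overline{u}$ strongly in $U_i$ from Theorem~\ref{Thm:conv_minimizers_reg_to_min_original}, and the convergence of $\tfrac{\partial}{\partial y}f_\varepsilon$, $\tfrac{\partial}{\partial z}f_\varepsilon$, $\tfrac{1}{\varepsilon}\Psi''(\overline{z}_\varepsilon)\chi_{I_0^{\mathrm{loc}}}$, plus Hadamard-differentiability of $\mathcal{W}$ to identify the limit of $z_\varepsilon^{B_i\overline{u}_\varepsilon,B_ih}$ with $\mathcal{W}'[S\overline{y};Sy^{B_i\overline{u},B_ih}]$). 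With that strong convergence in hand, the products $\langle \lambda_1^\varepsilon, y_\varepsilon\rangle$, $\langle \lambda_2^\varepsilon, y_\varepsilon\rangle$, $\langle S(\nu_1^\varepsilon+\nu_2^\varepsilon), y_\varepsilon\rangle$ pass to the limit as weak$\times$strong pairings (using the weak limits $\lambda_1,\lambda_2$ from Lemma~\ref{Lem:limits_q_and_p_and_props_of_p} and $\nu_1,\nu_2$ from Lemma~\ref{Lem:q_in_H1}); the term $\langle SA_p q_\varepsilon, y_\varepsilon\rangle$ passes to $\langle SA_p q, y^{B_i\overline{u},B_ih}\rangle$ using \eqref{estim_SA_p_q_epsilon} and pointwise convergence $q_{\varepsilon_k}\to q$ plus dominated convergence; $(\overline{y}_\varepsilon - y_d)$ converges strongly in $\mathrm{L}^2$; and $\langle p_\varepsilon + Sq_\varepsilon, F_\varepsilon'[\overline{y}_\varepsilon;y_\varepsilon]\rangle_X$ passes to $\langle p + Sq, F'[\overline{y};y^{B_i\overline{u},B_ih}]\rangle_X$ since $p_\varepsilon\rightharpoonup p$, $Sq_\varepsilon \to Sq$ and $F_\varepsilon'[\overline{y}_\varepsilon;y_\varepsilon]\to F'[\overline{y};y^{B_i\overline{u},B_ih}]$ strongly in $\mathrm{L}^2(J_T;X)$ by the Lipschitz bounds of Lemma~\ref{Lem:derivative_reg_function} together with the convergence of all arguments. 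Here the terms $\lambda_1+\lambda_2-SA_pq+\overline{y}-y_d$ can be recombined via \eqref{eq:evolution_p} into $-\dot p + A_p^* p$, but since \eqref{opt_control_opt_cond_general} is stated in the $\lambda$-notation I would simply keep the identified limits.

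The main obstacle is precisely the measure term $\langle d\mu_\varepsilon, Sy_\varepsilon^{B_i\overline{u}_\varepsilon,B_ih}\rangle$. Since $d\mu_\varepsilon \rightharpoonup^* d\mu$ in $\mathrm{C}(\overline{J_T})^*$ and $Sy_\varepsilon^{B_i\overline{u}_\varepsilon,B_ih}\to Sy^{B_i\overline{u},B_ih}$ in $\mathrm{C}(\overline{J_T})$ (from the $\mathrm{C}(\overline{J_T};X^\alpha)$-convergence and continuity of $S$ on $X^\alpha$), this pairing actually does converge, to $\int_{I_\partial} Sy^{B_i\overline{u},B_ih}\, d\mu$ — so at this stage one still has an \emph{equality}. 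The inequality in \eqref{opt_control_opt_cond_general} must therefore arise from a one-sided estimate at the regularized level before passing to the limit, or from discarding a nonnegative (resp. nonpositive) contribution; the natural source is the sign of $\tfrac{1}{\varepsilon}\Psi'(\overline{z}_\varepsilon)$ hidden in the relation $\dot{\overline{z}}_\varepsilon - S\dot{\overline{y}}_\varepsilon = -\tfrac1\varepsilon\Psi'(\overline{z}_\varepsilon)$ and the convexity of $\Psi$, in the spirit of \cite[Section 4 and Lemma 4.6]{brokate2013optimal}: one tests with $h$ in such a way that the sign of the extra term is controlled, or one uses that the part of $\tfrac1\varepsilon\Psi''(\overline{z}_\varepsilon)q_\varepsilon$ not captured by $d\mu$ has a definite sign on $I_\partial^a$ and $I_\partial^b$. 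I would follow the argument of \cite[Section 5]{meyeroptimal} adapted as in \cite[Section 4]{brokate2013optimal}: write the exact $\varepsilon$-identity, isolate the term $\tfrac1\varepsilon\int\Psi''(\overline{z}_\varepsilon)q_\varepsilon\, Sy_\varepsilon\, dt$, split it over $I_0$ and $I_\partial$, show the $I_0$-part vanishes in the limit, and on $I_\partial$ use Lemma~\ref{Lem:play_times_q_equ_zero} together with the sign structure of $\Psi'$ versus $\Psi''$ to pass from the equality to the inequality \eqref{opt_control_opt_cond_general}. Checking that no sign is lost in the wrong direction, and that the limiting measure $d\mu$ is exactly the one from Lemma~\ref{Lem:overall_evol_q}, is the delicate bookkeeping step.
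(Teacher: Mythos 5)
Your route is genuinely different from the paper's, and it has a real gap. The paper does not pass to the limit in the regularized first-order system at this stage at all: every limit object it needs ($p$, $q$, $\lambda_1,\lambda_2$, $\nu_1,\nu_2$, $d\mu$, the adjoint equation \eqref{eq:evolution_p}, the relation $B_i^*(p+Sq)=-\kappa\overline{u}$, and $d\mu=dq+(\nu_1+\nu_2)dt$ on $I_\partial$) was already established in Lemmas~\ref{Lem:limits_q_and_p_and_props_of_p}, \ref{Lem:q_in_H1} and \ref{Lem:overall_evol_q}. The proof is then a purely algebraic computation at the limit level: one starts from the first-order necessary condition $0\leq\mathcal{J}'[\overline{u};h]$ for the \emph{original} non-smooth problem, tests the linearized state equation \eqref{eq:Thm:state_equ_sol_op} satisfied by $y^{B_i\overline{u},B_ih}$ with $p+Sq$, integrates by parts, and substitutes \eqref{eq:evolution_p} and the measure identity. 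The inequality in \eqref{opt_control_opt_cond_general} is nothing but the inequality $\mathcal{J}'[\overline{u};h]\geq 0$: since $G$ is only Hadamard directionally differentiable and $h\mapsto G'[B_i\overline{u};B_ih]$ is merely positively homogeneous (not linear, because of $\mathcal{W}'$ and $f'$), one cannot replace $h$ by $-h$ to upgrade this to an equality. This is the key idea your proposal is missing; you explicitly note that your limit passage would produce an equality and then speculate that a sign must be extracted from $\Psi'$ or $\Psi''$, which is not where the inequality comes from.

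The second, more technical problem is the convergence you assert for the linearized states: $y_\varepsilon^{B_i\overline{u}_\varepsilon,B_ih}\to y^{B_i\overline{u},B_ih}$ in $\mathrm{C}(\overline{J_T};X^\alpha)$ with identification of the limit of $z_\varepsilon^{B_i\overline{u}_\varepsilon,B_ih}=Z_\varepsilon'[S\overline{y}_\varepsilon;\cdot]$ with $\mathcal{W}'[S\overline{y};\cdot]$. This cannot be obtained by "an argument analogous to Lemma~\ref{Lem:convergence_reg_to_nonreg}": that lemma concerns the states themselves, not their derivatives, and the derivative of the regularized stop does not in general converge to the Hadamard derivative of the stop (the troublesome term $\tfrac1\varepsilon\Psi''(\overline{z}_\varepsilon)z$ in \eqref{state_equ_regular_derivative_z} has no controllable limit; its residue is exactly what produces the abstract measure $d\mu$). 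If such a convergence held, the control-to-state map would effectively be Gâteaux differentiable in the limit and the optimality system would be an equality, contradicting the structure of the result. The paper avoids this entirely by never linearizing the regularized problem in the limit; it linearizes the original problem directly via Theorem~\ref{Thm:state_equ_sol_op}.
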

\begin{proof}
	Since $\overline{u}$ is an optimal control, the directional derivative of the reduced cost functional $\mathcal{J}$ has to be greater or equal than zero in each direction.
	With $y^{B_i\overline{u},B_ih}=G'[B_i\overline{u};B_ih]$ this means that for any $h\in U_i$ there holds
\begin{equation}
	0\leq \mathcal{J}'[\overline{u};h] = \langle \overline{y} - y_d , y^{B_i\overline{u},B_ih} \rangle_{\mathrm{L}^{2}(J_T;\mathrm{dom}(A_p))} 
	+ \kappa\langle \overline{u} , h \rangle_{U_i}.\label{proof:Lem:opt_cond_general_computation_1}
	\end{equation}

The function $y^{B_i\overline{u},B_ih}$ solves the evolution equation
\eqref{eq:Thm:state_equ_sol_op} in Theorem~\ref{Thm:state_equ_sol_op} with $y$ replaced by $\overline{y}$ and $h$ replaced by $B_ih$. We test this equation with $p+Sq$, integrate over time and apply \eqref{opt_control_adj_eq_control} to compute
\begin{equation}
	\begin{split}
	&\int_{0}^{T} \langle p+Sq , \dot{y}^{B_i\overline{u},B_ih} + A_p y^{B_i\overline{u},B_ih} \rangle_{X}  dt - \int_{0}^{T} \langle p+Sq , F'[\overline{y};y^{B_i\overline{u},B_ih}]\rangle_{X}  dt\\
	&= \int_{0}^{T} \langle p+Sq , B_i h \rangle_{X}  dt
	= -\kappa\langle\overline{u}, h \rangle_{U_i}.
	\end{split}\label{proof:Lem:opt_cond_general_computation_2}
\end{equation}

We integrate the first term on the left side of \eqref{proof:Lem:opt_cond_general_computation_2} by parts, insert \eqref{eq:evolution_p} from Lemma~\ref{Lem:limits_q_and_p_and_props_of_p} and use the representation of $dq$ from Lemma~\ref{Lem:overall_evol_q} to observe
\begin{equation}
\begin{split}
&\int_{0}^{T} \langle p+Sq , \dot{y}^{B_i\overline{u},B_ih} + A_p y^{B_i\overline{u},B_ih} \rangle_{X}  dt 
\\
&= \int_{0}^{T} \langle \lambda_1 + \lambda_2 - SA_p q + \overline{y} - y_d, y^{B_i\overline{u},B_ih} \rangle_{\mathrm{dom}(A_p)}  dt\\
&-\int_{0}^{T} Sy^{B_i\overline{u},B_ih} dq + \int_{0}^{T} \langle SA_p q, y^{B_i\overline{u},B_ih} \rangle_{\mathrm{dom}(A_p)} dt
\\
&= \int_{0}^{T} \langle \lambda_1 + \lambda_2 + \overline{y} - y_d, y^{B_i\overline{u},B_ih} \rangle_{\mathrm{dom}(A_p)}  dt -\int_{0}^{T} Sy^{B_i\overline{u},B_ih} dq \\
&= \int_{0}^{T} \langle \lambda_1 + \lambda_2 + \overline{y} - y_d, y^{B_i\overline{u},B_ih} \rangle_{\mathrm{dom}(A_p)}  dt -\int_{I_\partial} Sy^{B_i\overline{u},B_ih} d\mu + \int_{0}^T (\nu_1+\nu_2) Sy^{B_i\overline{u},B_ih} dt.
\end{split}\label{proof:Lem:opt_cond_general_computation_3}
\end{equation}

We insert \eqref{proof:Lem:opt_cond_general_computation_2} into \eqref{proof:Lem:opt_cond_general_computation_1} and use \eqref{proof:Lem:opt_cond_general_computation_3} to obtain
\begin{equation*}
	\begin{split}
	0 & \leq \int_{0}^{T} \langle \overline{y} - y_d , y^{B_i\overline{u},B_ih} \rangle_{\mathrm{dom}(A_p)}  dt + \kappa\langle\overline{u}, h \rangle_{U_i}\\
	& =  - \int_{0}^{T} \langle \lambda_1 + \lambda_2, y^{B_i\overline{u},B_ih} \rangle_{\mathrm{dom}(A_p)}  dt + \int_{I_\partial} Sy^{B_i\overline{u},B_ih} d\mu - \int_{0}^T (\nu_1+\nu_2) Sy^{B_i\overline{u},B_ih} dt \\
	&+ \int_{0}^{T} \langle p+Sq , F'[\overline{y};y^{B_i\overline{u},B_ih}] \rangle_{X}  dt.
	\end{split}
\end{equation*}
\end{proof}

\subsection{Summary: Adjoint system and optimality conditions for distributed or boundary controls}\label{Subsec:Summary_Adjoint_and_opt_cond_distributed_or_boundary}

We summarize our results for the general control problem with $i\in\{1,2\}$.
\begin{theorem}[Adjoint system and optimality condition]\label{Thm:opt_control_boundary_opt_cond_limit_adjoint}
Let Assumption~\ref{Ass:general_ass_and_short_notation_1} and Assumption~\ref{Ass:regularized_problem} hold. For $i\in\{1,2\}$ suppose that $\overline{u}\in U_i$ is an optimal control for problem \eqref{opt_control_ control_problem}-\eqref{state_equ_z} together with the optimal state $\overline{y}\in Y_{2,0}$ and $\overline{z}=\mathcal{W}[S\overline{y}]\in \mathrm{H}^1(J_T)$. Consider the subdivision of $\overline{J_T}$ from Definition~\ref{Def:partition_[0,T]}. 
Then there exist adjoint states $p\in Y_{2,T}^*$ and $q\in \mathrm{BV}(J_T)$ of the following kind:
There holds
$
B_i^*(p+Sq) = -\kappa \overline{u} \text{ in } U_i.
$
%
%
For some functions $\lambda_1,\lambda_2\in \mathrm{L}^{2}(J_T; [X^\alpha]^*)$ we have
\begin{equation*}
\begin{aligned}
	-\dot{p} + A^{*}_p p= \lambda_1 + \lambda_2 -SA_p q + \overline{y} - y_d&& \text{for } t\in J_T,\ p(T)=0.
\end{aligned}
\end{equation*}
$q$ is left continuous in $J_T$, right continuous at $T$ and absolutely continuous in $I_0$. There exist $\nu_1,\nu_2\in \mathrm{L}^2(J_T)$ such that $q$ solves
$
-\dot{q}= \nu_1 + \nu_2
$
in every open subinterval of $I_0$.
$
\frac{d}{dt}\mathcal{P}[S\overline{y}](t)q(t) = 0\text{ for a.e. }t\in I_\partial
$
and there is a measure $d\mu \in \mathrm{C}(\overline{J_T})^*$ with support in $I_\partial$ such that
$d\mu = dq + (\nu_1 + \nu_2)dt$ as measures on $I_\partial$.
For all $h\in U_i$ and with $y^{B_i\overline{u},B_ih}=G'[B_i\overline{u};B_ih]$ (see Theorem~\ref{Thm:state_equ_sol_op}) there holds the optimality condition
\begin{equation}
\begin{split}
&\int_{0}^{T} \langle \lambda_1 + \lambda_2 + S(\nu_1 + \nu_2), y^{B_i\overline{u},B_ih} \rangle_{\mathrm{dom}(A_p)}  dt\\
&\leq \int_{I_\partial} Sy^{B_i\overline{u},B_ih} d\mu + \int_{0}^{T} \langle p+Sq , F'[\overline{y};y^{B_i\overline{u},B_ih}] \rangle_{X}  dt,
\end{split}\label{opt_control_opt_cond_general_THM}
\end{equation}
where $F'[\overline{y};y^{B_i\overline{u},B_ih}](t)=f'[(\overline{y}(t),\mathcal{W}[S\overline{y}](t));(\overline{y}(t),\mathcal{W}'[S\overline{y};Sy^{B_i\overline{u},B_ih}](t))]$.
The absolute value of $q$ can only jump downwards in reverse time so that $q(T-)=q(T)=0$ and $|q(t-)|\leq |q(t+)|$ for all $t\in \overline{J_T}$.
If the regularity Assumption \ref{Ass:regularity_assumption} is valid then
$q$ is continuous at every $(0,\partial)$-switching time $t$ (see Definition~\ref{Def:switchin_times}) with $q(t)=0$.
In this case, for every open interval $(c,d)\subset I_\partial$ it follows $q\equiv 0$ on $[c,d)$.
\end{theorem}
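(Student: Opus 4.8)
The plan is to assemble the statement almost entirely from the lemmas already proved in Subsection~\ref{Subsec:Adjoint_system_distributed_or_boundary} and Subsection~\ref{Subsec:opt_cond_distributed_or_boundary}, so that the only real work left is to verify that the various subsequences can be chosen consistently. First I would invoke Theorem~\ref{Thm:opt_control_existence} to get the optimal control $\overline{u}\in U_i$ together with $\overline{y}=G(\overline{u})$ and $\overline{z}=\mathcal{W}[S\overline{y}]$, and Theorem~\ref{Thm:conv_minimizers_reg_to_min_original} to produce the regularized optimal triples $(\overline{u}_\varepsilon,\overline{y}_\varepsilon,\overline{z}_\varepsilon)$ together with their convergence to $(\overline{u},\overline{y},\overline{z})$. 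Then apply Lemma~\ref{Lem:limits_q_and_p_and_props_of_p} to extract a subsequence $\{\varepsilon_k\}$ along which $p_{\varepsilon_k}\rightharpoonup p$ in $Y_{2,T}^*$, $[\partial_y f_{\varepsilon_k}(\overline{y}_{\varepsilon_k},\overline{z}_{\varepsilon_k})]^* p_{\varepsilon_k}\rightharpoonup\lambda_1$ and $S\partial_y f_{\varepsilon_k}(\overline{y}_{\varepsilon_k},\overline{z}_{\varepsilon_k})q_{\varepsilon_k}\rightharpoonup\lambda_2$ in $\mathrm{L}^2(J_T;[X^\alpha]^*)$, with $q_{\varepsilon_k}\to q$ pointwise, $q\in\mathrm{BV}(J_T)$, and $\dot q_{\varepsilon_k}\to dq$ weak-$*$ in $\mathrm{C}(\overline{J_T})^*$. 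From the same lemma one gets the evolution equation \eqref{eq:evolution_p} for $p$ and the adjoint relation $B_i^*(p+Sq)=-\kappa\overline{u}$.

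Next I would pass to further (nested, hence w.l.o.g.\ the same) subsequences to invoke Lemma~\ref{Lem:q_in_H1} for the $\nu_1,\nu_2\in\mathrm{L}^2(J_T)$ and the identity $-\dot q=\nu_1+\nu_2$ in every open subinterval of $I_0$ (together with absolute continuity of $q$ there), Lemma~\ref{Lem:overall_evol_q} for the measure $d\mu\in\mathrm{C}(\overline{J_T})^*$ with support in $I_\partial$ and the relation $d\mu=dq+(\nu_1+\nu_2)dt$ on $I_\partial$, Lemma~\ref{Lem:play_times_q_equ_zero} for $\frac{d}{dt}\mathcal{P}[S\overline{y}](t)\,q(t)=0$ a.e.\ on $I_\partial$, and Lemma~\ref{Lem:q_jumps_down_in_rev_time} for left continuity of $q$ in $J_T$, right continuity at $T$, $q(T-)=q(T)=0$ and $|q(t-)|\le|q(t+)|$. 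The optimality condition \eqref{opt_control_opt_cond_general_THM} is exactly the conclusion of Lemma~\ref{Lem:opt_cond_general}, which holds for the same $p,q,\lambda_1,\lambda_2,\nu_1,\nu_2,d\mu$. Finally, under Assumption~\ref{Ass:regularity_assumption}, Lemma~\ref{Lem:delta_0_switching} gives continuity of $q$ with value $0$ at every $(0,\partial)$-switching time and $q\equiv0$ on $[c,d)$ for every open $(c,d)\subset I_\partial$.

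The one point that needs genuine (if routine) care is \emph{coherence of the subsequences}: Lemma~\ref{Lem:limits_q_and_p_and_props_of_p}, Lemma~\ref{Lem:q_in_H1}, and Lemma~\ref{Lem:overall_evol_q} each assert ``there is a subsequence such that\dots'', and in principle the identifications of $\lambda_1,\lambda_2$ (and of $\nu_1,\nu_2$, and of $dq$ via its relation to $q$) could depend on which subsequence one selects. The remedy is the standard diagonal argument: start with the subsequence $\{\varepsilon_k\}$ from Lemma~\ref{Lem:limits_q_and_p_and_props_of_p}, then thin it successively so that all the weak and weak-$*$ limits required by Lemma~\ref{Lem:q_in_H1} and Lemma~\ref{Lem:overall_evol_q} exist along the \emph{same} final subsequence; since $q$ (as the pointwise limit of $q_{\varepsilon_k}$) and $dq$ (as the weak-$*$ limit of $\dot q_{\varepsilon_k}$) are then fixed, and since $q$ already determines $\overline{z}$, hence $I_0$ and $I_\partial$, all the lemma conclusions are simultaneously valid for one and the same family of objects $(p,q,\lambda_1,\lambda_2,\nu_1,\nu_2,d\mu)$. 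Once this bookkeeping is in place there is nothing further to prove: every displayed assertion of the theorem is a verbatim restatement of a conclusion from one of the cited lemmas.
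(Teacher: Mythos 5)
Your proposal is correct and matches the paper's approach: the theorem is presented purely as a summary of Lemma~\ref{Lem:limits_q_and_p_and_props_of_p}, Lemma~\ref{Lem:q_in_H1}, Lemma~\ref{Lem:play_times_q_equ_zero}, Lemma~\ref{Lem:overall_evol_q}, Lemma~\ref{Lem:q_jumps_down_in_rev_time}, Lemma~\ref{Lem:opt_cond_general} and Lemma~\ref{Lem:delta_0_switching}, with no separate proof given, and the subsequence bookkeeping you describe is exactly what the paper handles implicitly via its repeated ``(w.l.o.g.\ the same) subsequence'' remarks.
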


%
We can improve the results of Theorem~\ref{Thm:opt_control_boundary_opt_cond_limit_adjoint} if $f$ is continuously differentiable:
\begin{corollary}[Adjoint system and optimality condition for regular $f$]\label{Cor:opt_cond_general}
	Let Assumption~\ref{Ass:general_ass_and_short_notation_1} and Assumption~\ref{Ass:regularized_problem} hold. Moreover, suppose that $f$ is continuously differentiable from $X^\alpha\times \mathbb{R}$ into $X$. For $i\in\{1,2\}$ assume that $\overline{u}\in U_i$ is an optimal control for problem \eqref{opt_control_ control_problem}-\eqref{state_equ_z} together with the optimal state $\overline{y}\in Y_{2,0}$ and $\overline{z}=\mathcal{W}[S\overline{y}]\in \mathrm{H}^1(J_T)$. Consider the subdivision of $\overline{J_T}$ from Definition~\ref{Def:partition_[0,T]}.
	Then there exist adjoint states $p\in Y_{2,T}^*$ and $q\in \mathrm{BV}(J_T)$ of the following kind:
	There holds
	$
	B_i^*(p+Sq) = -\kappa \overline{u} \text{ in } U_i.
	$
	%
	%
	We have
	\begin{equation}
	\begin{aligned}
	-\dot{p} + A^{*}_p p= \left[\frac{\partial}{\partial y}f(\overline{y},\overline{z})\right]^{*}(p+ Sq) -SA_p q + \overline{y} - y_d &&\text{for } t\in J_T,\ p(T)=0.\label{eq:evolution_p_COR}
	\end{aligned}
	\end{equation}
	$q$ is left continuous in $J_T$, right continuous at $T$ and absolutely continuous in $I_0$. $q$ solves the evolution equation
	$
	-\dot{q}= \langle p+Sq, \frac{\partial}{\partial z}f(\overline{y},\overline{z})\rangle_{X}
	$
	in every open subinterval of $I_0$.
	$
	\frac{d}{dt}\mathcal{P}[S\overline{y}](t)q(t) = 0\text{ for a.e. }t\in I_\partial
	$
	and there is a measure $d\mu \in \mathrm{C}(\overline{J_T})^*$ with support in $I_\partial$ such that
	$d\mu = dq + \langle p +Sq, \frac{\partial}{\partial z}f(\overline{y},\overline{z})\rangle_{X}dt$ as measures on $I_\partial$.
	For all $h\in U_i$ and with $y^{B_i\overline{u},B_ih}=G'[B_i\overline{u};B_ih]$ (see Theorem~\ref{Thm:state_equ_sol_op}) and $\mathcal{P}=\mathrm{Id} - \mathcal{W}$ (see Lemma~\ref{Lem:hyst_props}) there holds the optimality condition
	\begin{equation}
	\int_{0}^{T} \langle p+Sq, \frac{\partial}{\partial z}f(\overline{y},\overline{z}) \rangle_{X}\mathcal{P}'[S\overline{y};Sy^{B_i\overline{u},B_ih}] dt \leq \int_{I_\partial} Sy^{B_i\overline{u},B_ih} d\mu\label{opt_control_opt_cond_regular_f}.
	\end{equation}
	The absolute value of $q$ can only jump downwards in reverse time so that $q(T-)=q(T)=0$ and $|q(t-)|\leq |q(t+)|$ for all $t\in \overline{J_T}$.
	If the regularity Assumption \ref{Ass:regularity_assumption} is valid then
	$q$ is continuous at every $(0,\partial)$-switching time $t$ (see Definition~\ref{Def:switchin_times}) with $q(t)=0$.
	In this case, for every open interval $(c,d)\subset I_\partial$ it follows $q\equiv 0$ on $[c,d)$.
\end{corollary}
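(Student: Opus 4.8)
The plan is to obtain Corollary~\ref{Cor:opt_cond_general} as a specialization of the general results of Subsection~\ref{Subsec:Adjoint_system_distributed_or_boundary} and Subsection~\ref{Subsec:opt_cond_distributed_or_boundary} to the case that $f$ is continuously differentiable. First I would invoke Lemma~\ref{Lem:limits_q_and_p_and_props_of_p} with $f_\varepsilon\equiv f$, which is admissible precisely because $f$ is $\mathrm{C}^{1}$; this yields the adjoint states $p\in Y_{2,T}^*$ and $q\in\mathrm{BV}(J_T)$, the relation $B_i^*(p+Sq)=-\kappa\overline{u}$, and the identifications $\lambda_1=[\partial_y f(\overline{y},\overline{z})]^*p$ and $\lambda_2=S\,\partial_y f(\overline{y},\overline{z})\,q$. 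Since $q(t)$ is scalar-valued and $Sq$ stands for the element $q(t)S^*\in X^*$, these combine to $\lambda_1+\lambda_2=[\partial_y f(\overline{y},\overline{z})]^*(p+Sq)$, so the evolution equation \eqref{eq:evolution_p} turns into \eqref{eq:evolution_p_COR}. The continuity and jump properties of $q$ (left continuity on $J_T$, right continuity at $T$, $q(T-)=q(T)=0$, $|q(t-)|\le|q(t+)|$, jump-down-in-reverse-time) are quoted verbatim from Lemma~\ref{Lem:q_jumps_down_in_rev_time}; the absolute continuity of $q$ on $I_0$ and the identity $-\dot q=\nu_1+\nu_2=\langle p+Sq,\partial_z f(\overline{y},\overline{z})\rangle_X$ on every open subinterval of $I_0$ come from Lemma~\ref{Lem:q_in_H1} (again with $f_\varepsilon\equiv f$). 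The relation $\tfrac{d}{dt}\mathcal{P}[S\overline{y}]\,q=0$ a.e.\ on $I_\partial$ is Lemma~\ref{Lem:play_times_q_equ_zero}; the measure $d\mu\in\mathrm{C}(\overline{J_T})^*$ with support in $I_\partial$ and the identity $d\mu=dq+(\nu_1+\nu_2)\,dt=dq+\langle p+Sq,\partial_z f(\overline{y},\overline{z})\rangle_X\,dt$ on $I_\partial$ are Lemma~\ref{Lem:overall_evol_q}; and the statements under Assumption~\ref{Ass:regularity_assumption} about $(0,\partial)$-switching times and $q\equiv0$ on $[c,d)$ are Lemma~\ref{Lem:delta_0_switching}, whose hypotheses are unaffected by the differentiability of $f$.

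The only genuine computation is the passage from the optimality condition \eqref{opt_control_opt_cond_general} of Lemma~\ref{Lem:opt_cond_general} to the sharper form \eqref{opt_control_opt_cond_regular_f}. Fix $h\in U_i$ and set $\zeta:=y^{B_i\overline{u},B_ih}$. Because $f$ is continuously differentiable, its directional derivative is linear in the direction, so the chain rule (Definition~\ref{Def:Hadam_differentiability}) gives $F'[\overline{y};\zeta]=\partial_y f(\overline{y},\overline{z})\zeta+\partial_z f(\overline{y},\overline{z})\,\mathcal{W}'[S\overline{y};S\zeta]$. Inserting this on the right-hand side of \eqref{opt_control_opt_cond_general} and using $\langle p+Sq,\partial_y f(\overline{y},\overline{z})\zeta\rangle_X=\langle[\partial_y f(\overline{y},\overline{z})]^*(p+Sq),\zeta\rangle=\langle\lambda_1+\lambda_2,\zeta\rangle$ together with $\langle S(\nu_1+\nu_2),\zeta\rangle_{\mathrm{dom}(A_p)}=(\nu_1+\nu_2)\,S\zeta$, the two occurrences of $\int_0^T\langle\lambda_1+\lambda_2,\zeta\rangle$ cancel and one is left with $\int_0^T(\nu_1+\nu_2)\,S\zeta\,dt\le\int_{I_\partial}S\zeta\,d\mu+\int_0^T\langle p+Sq,\partial_z f(\overline{y},\overline{z})\rangle_X\,\mathcal{W}'[S\overline{y};S\zeta]\,dt$. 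Since $\nu_1+\nu_2=\langle p+Sq,\partial_z f(\overline{y},\overline{z})\rangle_X$ and $\mathcal{P}=\mathrm{Id}-\mathcal{W}$ gives $\mathcal{P}'[S\overline{y};S\zeta]=S\zeta-\mathcal{W}'[S\overline{y};S\zeta]$, rearranging produces exactly \eqref{opt_control_opt_cond_regular_f}.

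The proof is therefore largely bookkeeping: everything analytic (uniform bounds on $(p_\varepsilon,q_\varepsilon)$, weak and weak-$*$ limits, the structure of $d\mu$, the switching-time analysis) has already been done in the general case. The one point demanding care, and the only place an error could creep in, is keeping the scalar-valued objects $q$, $\nu_1+\nu_2$, $S\zeta$ and the $X^*$-valued objects $Sq$, $S(\nu_1+\nu_2)$ straight when moving between the pairings $\langle\cdot,\cdot\rangle_X$, $\langle\cdot,\cdot\rangle_{\mathrm{dom}(A_p)}$ and the integration against $d\mu$; once these identifications are made consistently, the terms involving $\partial_y f$ drop out and the condition collapses onto $d\mu$ as claimed. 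I do not foresee any obstacle beyond this.
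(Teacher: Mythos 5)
Your proposal is correct and follows essentially the same route as the paper: substitute $\lambda_1=[\frac{\partial}{\partial y}f(\overline{y},\overline{z})]^{*}p$, $\lambda_2=S\frac{\partial}{\partial y}f(\overline{y},\overline{z})q$, $\nu_1=\langle p,\frac{\partial}{\partial z}f(\overline{y},\overline{z})\rangle_{X}$, $\nu_2=\langle Sq,\frac{\partial}{\partial z}f(\overline{y},\overline{z})\rangle_{X}$ (justified by setting $f_\varepsilon\equiv f$ in Lemma~\ref{Lem:limits_q_and_p_and_props_of_p} and Lemma~\ref{Lem:q_in_H1}) into Theorem~\ref{Thm:opt_control_boundary_opt_cond_limit_adjoint}, then cancel the $\frac{\partial}{\partial y}f$ terms in \eqref{opt_control_opt_cond_general} via the adjoint pairing and use $\mathcal{P}'[S\overline{y};S\zeta]=S\zeta-\mathcal{W}'[S\overline{y};S\zeta]$ to arrive at \eqref{opt_control_opt_cond_regular_f}. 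This is exactly the paper's argument, including the bookkeeping between scalar-valued and $X^*$-valued quantities.
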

\begin{proof}
	If $f$ is continuously differentiable then by Lemma~\ref{Lem:limits_q_and_p_and_props_of_p} and Lemma~\ref{Lem:q_in_H1} we can replace $\lambda_1= \left[\frac{\partial}{\partial y}f(\overline{y},\overline{z})\right]^{*}p$,
	$\lambda_2= S\frac{\partial}{\partial y}f(\overline{y},\overline{z})q,\ 
	\nu_1= \langle p, \frac{\partial}{\partial z}f(\overline{y},\overline{z})\rangle_{X}$,
	$\nu_2= \langle Sq, \frac{\partial}{\partial z}f(\overline{y},\overline{z})\rangle_{X}
	$ in Theorem~\ref{Thm:opt_control_boundary_opt_cond_limit_adjoint}.
	This yields all statements except for the optimality condition.
	\eqref{opt_control_opt_cond_general} takes the form
\begin{equation*}
\begin{split}
&\int_{0}^{T} \langle \left[\frac{\partial}{\partial y}f(\overline{y},\overline{z})\right]^{*}(p+Sq) , y^{B_i\overline{u},B_ih} \rangle_{\mathrm{dom}(A_p)} + \langle p+Sq, \frac{\partial}{\partial z}f(\overline{y},\overline{z}) \rangle_{X}Sy^{B_i\overline{u},B_ih} dt\\
&\leq \int_{I_\partial} Sy^{B_i\overline{u},B_ih} d\mu + \int_{0}^{T} \langle p+Sq , \frac{\partial}{\partial y}f(\overline{y},\overline{z})y^{B_i\overline{u},B_ih} + \frac{\partial}{\partial z}f(\overline{y},\overline{z}) \mathcal{W}'[S\overline{y};Sy^{B_i\overline{u},B_ih}] \rangle_{X}  dt.
\end{split}
\end{equation*}

Because $\mathcal{P}=\mathrm{Id} - \mathcal{W}$ (see Lemma~\ref{Lem:hyst_props})
we have
$
Sy^{B_i\overline{u},B_ih} - \mathcal{W}'[S\overline{y};Sy^{B_i\overline{u},B_ih}] = \mathcal{P}'[S\overline{y};Sy^{B_i\overline{u},B_ih}].
$

This yields the optimality condition \eqref{opt_control_opt_cond_regular_f}.
\end{proof}

\subsection{Improved optimality conditions and uniqueness for distributed controls}\label{Subsec:Improvement_distributed}

We want to replace $y^{B_i\overline{u},B_ih}$ in \eqref{opt_control_opt_cond_general_THM} and \eqref{opt_control_opt_cond_regular_f} by an arbitrary function of an appropriate space. This would certainly improve the optimality conditions in Theorem~\ref{Thm:opt_control_boundary_opt_cond_limit_adjoint} and Corollary~\ref{Cor:opt_cond_general}.
It is not possible in the general case $i\in \{1,2\}$ without density of the ranges of $B_i$. Therefore, we restrict ourselves to problem \eqref{opt_control_ control_problem}-\eqref{state_equ_z} with distributed controls $u\in U_1$ in this subsection.
Suppose that $p$ in (A1) in Assumption~\ref{Ass:general_ass_and_short_notation_1} is chosen close to two such that $\frac{1}{2}<1-\frac{1}{p}-\frac{1}{d}$. Then $2<\frac{dp'}{d-p'}$ and by \cite[Remark 2.7]{muench} we have the compact embedding
$\mathbb{W}_{\Gamma_D}^{1,p'}(\Omega)\hhookrightarrow [\mathrm{L}^2(\Omega)]^m$ which is also one-to-one.
That is, in this case $B_1$ has dense range.
%
In Corollary~\ref{Cor:opt_control_distrib_opt_cond} in Subsection~\ref{Subsubsec:Improved optimality conditions} we improve the optimality conditions from Theorem~\ref{Thm:opt_control_boundary_opt_cond_limit_adjoint} and Corollary~\ref{Cor:opt_cond_general} for this case. 
For $i=1$ we also prove uniqueness of $p,q$ and $d\mu$ if $f$ is continuously differentiable, see Corollary~\ref{Cor:Uniqueness_dist_contr} in Subsection~\ref{Subsubsec:uniqueness}.

\subsubsection{Improved optimality conditions}\label{Subsubsec:Improved optimality conditions}

We improve the optimality conditions \eqref{opt_control_opt_cond_general_THM} and \eqref{opt_control_opt_cond_regular_f}.
\begin{corollary}[Optimality condition for distributed controls]\label{Cor:opt_control_distrib_opt_cond}
	 Let Assumption~\ref{Ass:general_ass_and_short_notation_1} and Assumption~\ref{Ass:regularized_problem} hold and let $\frac{1}{2}<1-\frac{1}{p}-\frac{1}{d}$.	 
	 Assume that $\overline{u}\in U_1$ is a solution of problem \eqref{opt_control_ control_problem}-\eqref{state_equ_z} with $i=1$, together with the state $\overline{y}\in Y_{2,0}$ and $\overline{z}=\mathcal{W}[S\overline{y}]\in \mathrm{H}^1(J_T)$.	 
	 Let $v\in \mathrm{dom}(A_p)$ with $Sv> 0$ and $\varphi \in \mathrm{C}^\infty_0(J_T)$ be arbitrary.
	 Then in addition to \eqref{opt_control_opt_cond_general_THM} in Theorem~\ref{Thm:opt_control_boundary_opt_cond_limit_adjoint} there holds
	\begin{equation*}
	\begin{split}
		&\int_{0}^{T} \langle \lambda_1 + \lambda_2 , \frac{v}{Sv}\varphi \rangle_{\mathrm{dom}(A_p)} + (\nu_1 + \nu_2)\varphi  dt\\
		&\leq \int_{I_\partial} \varphi d\mu + \int_{0}^{T} \langle p+Sq , f'[(\overline{y},\overline{z});((v/Sv)\varphi, \mathcal{W}'[S\overline{y};\varphi])] \rangle_{X}  dt.
	\end{split}
	\end{equation*}
	If $f$ is continuously differentiable then in addition to \eqref{opt_control_opt_cond_regular_f} in Corollary~\ref{Cor:opt_cond_general} there holds
	\begin{equation*}
		\begin{aligned}
		\int_{0}^{T} \langle p+Sq , \frac{\partial}{\partial z}f(\overline{y},\overline{z})  \rangle_{X}  \mathcal{P}'[S\overline{y};\varphi]dt\leq \int_{I_\partial} \varphi \,d\mu &&\text{for all } \varphi \in \mathrm{C}^\infty_0(J_T).
		\end{aligned}
	\end{equation*}
\end{corollary}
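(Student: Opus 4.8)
The plan is to obtain the two new inequalities from the optimality conditions \eqref{opt_control_opt_cond_general_THM} in Theorem~\ref{Thm:opt_control_boundary_opt_cond_limit_adjoint} and \eqref{opt_control_opt_cond_regular_f} in Corollary~\ref{Cor:opt_cond_general}, which are already available, by constructing a sequence $\{h_n\}\subset U_1$ whose linearized trajectories $y^{B_1\overline{u},B_1h_n}$ converge to the prescribed test function $\tfrac{v}{Sv}\varphi$; this is exactly where the density of $\mathrm{ran}(B_1)$ in $X$ (valid under $\tfrac12<1-\tfrac1p-\tfrac1d$, as recorded before the statement) is used. First I would fix $v\in\mathrm{dom}(A_p)$ with $Sv>0$ and $\varphi\in\mathrm{C}_0^\infty(J_T)$ and set $\zeta:=\tfrac{v}{Sv}\varphi$, so that $\zeta\in\mathrm{C}_0^\infty(J_T;\mathrm{dom}(A_p))\subset Y_{2,0}$, $\zeta(0)=0$ and $S\zeta=\varphi$. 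Then I would define $g:=\dot{\zeta}+A_p\zeta-F'[\overline{y};\zeta]$; since $\dot{\zeta}$ and $A_p\zeta$ are smooth with values in $X$ and $\|F'[\overline{y};\zeta]\|_{\mathrm{C}(\overline{J_T};X)}\le C\|\zeta\|_{\mathrm{C}(\overline{J_T};X^\alpha)}$ (pointwise Lipschitz bound on $f'$ from Assumption~\ref{Ass:general_ass_and_short_notation_1} together with Lemma~\ref{Lem:hyst_props}), one gets $g\in\mathrm{L}^2(J_T;X)$; by construction $\zeta$ is the unique solution of \eqref{eq:Thm:state_equ_sol_op} (with $y=\overline{y}$) for the forcing $g$, hence $\zeta=G'[B_1\overline{u};g]$.

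Next I would pick $h_n\in U_1$ with $B_1h_n\to g$ in $\mathrm{L}^2(J_T;X)$, which is possible because $\{B_1h:h\in U_1\}$ is dense in $\mathrm{L}^2(J_T;X)$, and set $\zeta_n:=y^{B_1\overline{u},B_1h_n}=G'[B_1\overline{u};B_1h_n]$. The Lipschitz continuity of $h\mapsto G'[B_1\overline{u};h]$ into $Y_{2,0}$ and into $\mathrm{C}(\overline{J_T};X^\alpha)$ (Theorem~\ref{Thm:state_equ_sol_op}) gives $\zeta_n\to\zeta$ in both spaces; in particular $S\zeta_n\to S\zeta=\varphi$ uniformly, and, by Lipschitz continuity of $\zeta\mapsto F'[\overline{y};\zeta]$ from $\mathrm{C}(\overline{J_T};X^\alpha)$ to $\mathrm{C}(\overline{J_T};X)$ (a consequence of the local Lipschitz property of $f$ and the Lipschitz estimate for the stop operator in Lemma~\ref{Lem:hyst_props}), also $F'[\overline{y};\zeta_n]\to F'[\overline{y};\zeta]$ in $\mathrm{C}(\overline{J_T};X)$, hence in $\mathrm{L}^2(J_T;X)$.

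It then remains to insert $h=h_n$ into \eqref{opt_control_opt_cond_general_THM} and to pass to the limit $n\to\infty$. On the left, $\langle\lambda_1+\lambda_2,\zeta_n\rangle_{\mathrm{dom}(A_p)}\to\langle\lambda_1+\lambda_2,\tfrac{v}{Sv}\varphi\rangle_{\mathrm{dom}(A_p)}$ in $\mathrm{L}^1(J_T)$ because $\lambda_1,\lambda_2\in\mathrm{L}^2(J_T;[X^\alpha]^*)\hookrightarrow\mathrm{L}^2(J_T;[\mathrm{dom}(A_p)]^*)$ and $\zeta_n\to\zeta$ in $Y_{2,0}\hookrightarrow\mathrm{L}^2(J_T;\mathrm{dom}(A_p))$, while $\langle S(\nu_1+\nu_2),\zeta_n\rangle_{\mathrm{dom}(A_p)}=(\nu_1+\nu_2)S\zeta_n\to(\nu_1+\nu_2)\varphi$ in $\mathrm{L}^1(J_T)$ since $\nu_1,\nu_2\in\mathrm{L}^2(J_T)$ and $S\zeta_n\to\varphi$ uniformly. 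On the right, $\int_{I_\partial}S\zeta_n\,d\mu\to\int_{I_\partial}\varphi\,d\mu$ because $d\mu\in\mathrm{C}(\overline{J_T})^*$, and $\int_0^T\langle p+Sq,F'[\overline{y};\zeta_n]\rangle_X\,dt\to\int_0^T\langle p+Sq,F'[\overline{y};\zeta]\rangle_X\,dt$ since $p+Sq\in\mathrm{L}^2(J_T;X^*)$ (from $p\in Y^*_{2,T}$ and $q\in\mathrm{BV}(J_T)$). Recalling $S\zeta=\varphi$ and $F'[\overline{y};\zeta](t)=f'[(\overline{y}(t),\overline{z}(t));(\tfrac{v}{Sv}\varphi(t),\mathcal{W}'[S\overline{y};\varphi](t))]$ yields the first inequality. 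For continuously differentiable $f$ I would argue identically starting from \eqref{opt_control_opt_cond_regular_f}: the only extra points are that $\mathcal{P}'[S\overline{y};S\zeta_n]\to\mathcal{P}'[S\overline{y};\varphi]$ uniformly (Lipschitz continuity of $\mathcal{P}'[S\overline{y};\cdot]$, Lemma~\ref{Lem:hyst_props}, and $S\zeta_n\to\varphi$) and that $\langle p+Sq,\tfrac{\partial}{\partial z}f(\overline{y},\overline{z})\rangle_X\in\mathrm{L}^2(J_T)$, whence the limit gives the second inequality.

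The main obstacle is the passage to the limit in the nonlinearity and hysteresis terms. Because $G'[B_1\overline{u};\cdot]$, $F'[\overline{y};\cdot]$ and $\mathcal{P}'[S\overline{y};\cdot]$ are only positively homogeneous and Hadamard directionally differentiable, not linear, one cannot argue by linearity of the linearized solution operator; the trick is to observe that the target $\zeta$ is itself the exact linearized trajectory for the (in general non-admissible) forcing $g$, and then to invoke the global Lipschitz estimates for $G'$, $F'$ and $\mathcal{P}'$ to transport the convergence $B_1h_n\to g$ to $\zeta_n\to\zeta$ and onward to every term. The remaining ingredients — that $g\in\mathrm{L}^2(J_T;X)$, that $\mathrm{ran}(B_1)$ is dense in $X$ (hence $\{B_1h:h\in U_1\}$ dense in $\mathrm{L}^2(J_T;X)$) under the hypothesis $\tfrac12<1-\tfrac1p-\tfrac1d$, and that the dual pairings above are well defined — are routine.
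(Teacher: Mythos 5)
Your proposal is correct and follows essentially the same route as the paper: the paper first invokes the density of $\{y^{B_1\overline{u},B_1h}:h\in U_1\}$ in $Y_{2,0}$ (citing the analogue of \cite[Lemma 5.2]{meyeroptimal}, whose proof is exactly your construction of $g=\dot{\zeta}+A_p\zeta-F'[\overline{y};\zeta]$ and approximation $B_1h_n\to g$), passes to the limit in \eqref{opt_control_opt_cond_general_THM} for a general $\eta\in Y_{2,0}$, and then sets $\eta=v\varphi$, using positive homogeneity $\mathcal{W}'[S\overline{y};Sv\,\varphi]=Sv\,\mathcal{W}'[S\overline{y};\varphi]$ and dividing by $Sv$ at the end. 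You merely inline the density argument for the single target $\tfrac{v}{Sv}\varphi$ and normalize by $Sv$ up front so that $S\zeta=\varphi$ exactly, which is a cosmetic rearrangement of the same proof.
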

\begin{proof}
	Since $B_1$ has dense range one proves just as in \cite[Lemma 5.2]{meyeroptimal} that the set
	$
	\{y^{B_1\overline{u},B_1h}: h\in  U_1\}
	$
	is dense in $Y_{2,0}$.
	Unfortunately, we can not continue as in \cite[Theorem 5.3]{meyeroptimal} to derive a pointwise optimality condition.
	The reason is that for any $\zeta\in \mathrm{C}(\overline{J_T};X^\alpha)$ the function $\mathcal{W}'[S\overline{y};S\zeta]$ is non-local in time.
	Nevertheless, we can still make use of the fact that $\mathcal{W}'[S\overline{y};\cdot]$ and $f'$ are positive homogeneous.
	First of all, as in \cite[Theorem 5.3]{meyeroptimal}, for arbitrary given $\eta \in Y_{2,0}$, we choose a sequence in $
	\{y^{B_1\overline{u},B_1h}: h\in  U_1\}
	$ which converges to $\eta$. We pass to the limit in \eqref{opt_control_opt_cond_general_THM} and obtain
	\begin{equation*}
	\int_{0}^{T} \langle \lambda_1 + \lambda_2 + S(\nu_1 + \nu_2),\eta \rangle_{\mathrm{dom}(A_p)}  dt  \leq \int_{I_\partial} S\eta d\mu + \int_{0}^{T} \langle p+Sq , F'[\overline{y};\eta] \rangle_{X}  dt,
	\end{equation*}
	where $F'[\overline{y};\eta](t)=f'[(\overline{y}(t),\mathcal{W}[S\overline{y}](t));(\overline{y}(t),\mathcal{W}'[S\overline{y};S\eta](t))]$.
	Let $v\in \mathrm{dom}(A_p)$ with $Sv> 0$ be given.
	Furthermore, let $\varphi \in \mathrm{C}^\infty_0(J_T)$ be arbitrary.
	Then $v\varphi\in Y_{2,0}$ and
	$
	\mathcal{W}'[S\overline{y};S(v\varphi)]= Sv \mathcal{W}'[S\overline{y};\varphi].
	$
	Setting $\eta = \varphi v$ and rearranging yields
	\begin{equation*}
	\begin{split}
	&\int_{0}^{T} \langle \lambda_1 + \lambda_2, \varphi v \rangle_{\mathrm{dom}(A_p)} + \varphi(\nu_1 + \nu_2) Sv  dt  \\
	&\leq \int_{I_\partial} \varphi Sv d\mu + \int_{0}^{T} \langle p+Sq , f'[(\overline{y},\overline{z});(v\varphi,  Sv\mathcal{W}'[S\overline{y};\varphi])] \rangle_{X}  dt.
	\end{split}
	\end{equation*}
	Dividing both sides by $Sv$ proves the first statement. The second inequality is shown analogously.
\end{proof}

\subsubsection{Uniqueness of the adjoint variables}\label{Subsubsec:uniqueness}

If $f$ is continuously differentiable we can also show uniqueness of the adjoint couple.

\begin{corollary}[Unique adjoint system for distributed controls]\label{Cor:Uniqueness_dist_contr}
	Let Assumption~\ref{Ass:general_ass_and_short_notation_1} and Assumption~\ref{Ass:regularized_problem} hold and let $\frac{1}{2}<1-\frac{1}{p}-\frac{1}{d}$. Moreover, suppose that $f$ is continuously differentiable from $X^\alpha\times \mathbb{R}$ into $X$.	 
	Assume that $\overline{u}\in U_1$ is a solution of problem \eqref{opt_control_ control_problem}-\eqref{state_equ_z} with $i=1$, together with the state $\overline{y}\in Y_{2,0}$ and $\overline{z}=\mathcal{W}[S\overline{y}]\in \mathrm{H}^1(J_T)$.	 
	Then in the setting of Corollary~\ref{Cor:opt_cond_general} the adjoint couple $p\in Y_{2,T}^*$ and $q\in \mathrm{BV}(J_T)$ together with the measure $d\mu$ in $\mathrm{C}(\overline{J_T})^*$ is unique.
\end{corollary}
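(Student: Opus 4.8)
The plan is to establish uniqueness by the usual linearity trick: take two adjoint data $(p_1,q_1,d\mu_1)$ and $(p_2,q_2,d\mu_2)$ satisfying the conclusions of Corollary~\ref{Cor:opt_cond_general} and show that their differences $p:=p_1-p_2\in Y^*_{2,T}$, $q:=q_1-q_2\in\mathrm{BV}(J_T)$ and $d\mu:=d\mu_1-d\mu_2$ all vanish. The main inputs are the injectivity of $B_1^*$ (coming from the density of the range of $B_1$ under the hypothesis $\tfrac12<1-\tfrac1p-\tfrac1d$), the continuous differentiability of $f$ (which identifies $\lambda_1,\lambda_2,\nu_1,\nu_2$ with explicit linear expressions in $p$ and $q$, so that the adjoint system is genuinely linear in $(p,q)$), and the terminal condition $p(T)=0$.

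First I would use that $B_1$ has dense range, hence $B_1^*$ is injective on $X^*$ and, acting pointwise in time, on $\mathrm L^2(J_T;X^*)$. Subtracting the two relations $B_1^*(p_j+Sq_j)=-\kappa\overline u$ gives $B_1^*(p+Sq)=0$, so
\[
p=-Sq=-S^*q \qquad\text{in }\mathrm L^2(J_T;X^*),
\]
where $S^*q$ is the $X^*$-valued function $t\mapsto q(t)w$, $w$ being the Riesz representation of $S$ from (A3). Next I would subtract the two copies of \eqref{eq:evolution_p_COR}: since $p_1+Sq_1=p_2+Sq_2$ the term $\left[\frac{\partial}{\partial y}f(\overline y,\overline z)\right]^*(p_j+Sq_j)$ cancels, as does $\overline y-y_d$, leaving
\[
-\dot p+A_p^*p=-SA_p q \qquad\text{in }\mathrm L^2(J_T;[\mathrm{dom}(A_p)]^*).
\]
Here I would invoke (A3) again, namely $SA_p\in[X^\alpha]^*$, together with the defining relation $\langle A_p^*w,\phi\rangle=\langle w,A_p\phi\rangle=SA_p\,\phi$, which yields $A_p^*(S^*q)=q\,SA_p$ as $[X^\alpha]^*$-valued (hence $[\mathrm{dom}(A_p)]^*$-valued) functions. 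Substituting $p=-S^*q$ makes $A_p^*p=-SA_pq$ cancel the right-hand side exactly, so $\dot p=0$; since $p\in Y^*_{2,T}$ is continuous in time with $p(T)=0$, this forces $p\equiv0$, and then $Sq=-p=0$ with $w\neq0$ gives $q\equiv0$.

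Finally, with $p\equiv q\equiv0$ one has $dq=0$, $\nu_1=\langle p,\frac{\partial}{\partial z}f(\overline y,\overline z)\rangle_X=0$ and $\nu_2=\langle Sq,\frac{\partial}{\partial z}f(\overline y,\overline z)\rangle_X=0$, so the identity $d\mu=dq+(\nu_1+\nu_2)\,dt$ on $I_\partial$ combined with $\operatorname{supp}d\mu\subset I_\partial$ gives $d\mu=0$. I expect the only delicate point to be the careful bookkeeping of the three dual spaces $X^*$, $[X^\alpha]^*$ and $[\mathrm{dom}(A_p)]^*$, and in particular justifying the exact cancellation $A_p^*(S^*q)=SA_pq$ that makes $\dot p$ vanish; once $f$ is assumed $C^1$ and $B_1^*$ is injective, the rest is a direct consequence of linearity and the terminal condition.
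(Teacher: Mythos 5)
Your proposal is correct and follows essentially the same route as the paper: injectivity of $B_1^*$ (from density of $\mathrm{ran}(B_1)$ under $\frac12<1-\frac1p-\frac1d$) forces $p_1+Sq_1=p_2+Sq_2$, the difference of the $p$-equations then reduces to $\dot p_1=\dot p_2$ because $A_p^*(p_1-p_2)+SA_p(q_1-q_2)$ is exactly the pairing of $p_1-p_2+S(q_1-q_2)=0$ with $A_p(\cdot)$, the terminal condition gives $p_1=p_2$, and $S(q_1-q_2)=0$ with $w\neq 0$ gives $q_1=q_2$ a.e., hence $dq_1=dq_2$ and $d\mu_1=d\mu_2$. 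The only cosmetic difference is that you substitute $p=-S^*q$ directly and verify the cancellation $A_p^*(S^*q)=SA_pq$, whereas the paper tests the difference of the equations against $\zeta\in\mathrm{L}^2(J_T;\mathrm{dom}(A_p))$ and observes the same cancellation in weak form.
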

	
	\begin{proof}
		Because $B_1$ has dense range we have 
		$
		\mathrm{ker}(B_1^*) = \overline{\mathrm{ran}(B_1)}^\perp = \{0\}.
		$
		Therefore by Corollary~\ref{Cor:opt_cond_general} we obtain
		\begin{equation}\label{Lem:opt_control_distrib_uniqueness_equ}
		\begin{aligned}
		p+Sq = -\kappa(B_1^*)^{-1}\overline{u}&& \text{in } X^* \text{ a.e. in } J_T,
		\end{aligned}
		\end{equation}
		cf. \cite[Theorem 4.15]{meyeroptimal}.
		Suppose there are two adjoint couples $(p_1,q_1), (p_2,q_2)$ which satisfy the conditions of Corollary~\ref{Cor:opt_cond_general}.
		Let $\zeta\in \mathrm{L}^2(J_T;\mathrm{dom}(A_p))$ be arbitrary.
		Then by \eqref{eq:evolution_p_COR} and \eqref{Lem:opt_control_distrib_uniqueness_equ} there holds
		\begin{equation*}
		\begin{split}
		&\langle \dot{p}_2 - \dot{p}_1 , \zeta \rangle_{\mathrm{L}^2(J_T;\mathrm{dom}(A_p))}\\
		&= \langle \left[\frac{\partial}{\partial y}f(\overline{y},\overline{z})\right]^{*}(p_1 + Sq_1 
		- (p_2 + Sq_2)) -A_p^*(p_2-p_1)-SA_p (q_2-q_1), \zeta \rangle_{\mathrm{L}^2(J_T;\mathrm{dom}(A_p))}\\
		&= \langle p_1 + Sq_1 - (p_2 + Sq_2), \frac{\partial}{\partial y}f(\overline{y},\overline{z}) \zeta \rangle_{\mathrm{L}^2(J_T;X)}
		 - \langle p_2 + Sq_2 - (p_1 + Sq_1), A_p \zeta \rangle_{\mathrm{L}^{2}(J_T;X)} = 0.
		\end{split}
		\end{equation*}
		This implies $\dot{p}_2 = \dot{p}_1 $ in $\mathrm{L}^{2}(J_T; [\mathrm{dom}(A_p)]^*)$.
		Together with 
		$p_1(T) = p_2(T)=0 \in [\mathrm{dom}(A_p)]^*$ we obtain $p_1=p_2$ in $\mathrm{L}^{2}(J_T; [\mathrm{dom}(A_p)]^*)$.
		Since the embedding $\mathrm{dom}(A_p)\hookrightarrow X$ is dense, the embedding of $X^*$ into $[\mathrm{dom}(A_p)]^*$ is one-to-one and $p_1=p_2$ also in $\mathrm{L}^{2}(J_T; X^*)$ and then in $Y_{2,T}^*$.
		Let $v\in \mathrm{dom}(A_p)$ be given with $Sv>0$.
		We already know $p_1=p_2$ so that
		$
		S(q_1-q_2) = 0 \text{ in } X^* \text{ a.e. in } J_T
		$
		because of \eqref{Lem:opt_control_distrib_uniqueness_equ}.
		But then
		\begin{equation*}
		\begin{aligned}
		q_1-q_2 = \frac{(q_1-q_2)Sv}{Sv} = \frac{\langle S(q_1-q_2),v\rangle_{X}}{Sv} = 0&& \text{ in } X^* \text{ a.e. in } J_T,
		\end{aligned}
		\end{equation*}
		so that $q_1=q_2$ in $\mathrm{L}^1(J_T)$. 
		This way we obtain
		\begin{equation*}
			\int_{[0,T]} |dq_1-dq_2| = \sup \left\{ \int_{0}^{T}(q_1-q_2) \dot{\varphi} dt : \varphi\in \mathrm{C}_0^1(J_T),\ |\varphi|\leq 1 \right\} =0,
		\end{equation*}
		which implies
		$
			dq_1 - dq_2 = 0
		$
		as measures on $\overline{J_T}$ according to \cite[XII.7]{visintin2013differential}.	
		This yields $q_1=q_2\in \mathrm{BV}(0,T)$.
		From Corollary~\ref{Cor:opt_cond_general} we conclude
		$d\mu_1 = d\mu_2$
		and the proof is complete.
		\end{proof}

\section{Higher regularity of the solutions of the optimal control problem}
In this section we improve the regularity of the optimal control $\overline{u}\in U_i$, $i\in\{1,2\}$, and then also of the optimal state $\overline{y}=G(B_i\overline{u})$ and $\overline{z}=\mathcal{W}[S\overline{y}]$.
We denote 
$\tilde{U}_1:=[\mathrm{L}^2(\Omega)]^m$ and $\tilde{U}_2:=\prod_{j=1}^{m}\mathrm{L}^2(\Gamma_{N_j},\mathcal{H}_{d-1})$.
We want to exploit the equation
$
B_i^*(p+Sq) = -\kappa \overline{u} \text{ in } [\tilde{U}_i]^* \text{ a.e. in } J_T
$
which follows from Theorem~\ref{Thm:opt_control_boundary_opt_cond_limit_adjoint}.
In order to make use of the time-regularity of $p+Sq$ we need to enforce the conditions on $B_i$.

\begin{assumption}\label{Ass:regularity_of_B}
	For $i\in\{1,2\}$, the operator $B_i:\tilde{U}_i\rightarrow X$ in N(5) 
	is also continuous as a mapping into $X^\gamma$ for some $\gamma\in (0,1]$.
	We denote by $I_{(\gamma)}$ the canonical embedding from $X^\gamma$ into $X$.
	Then the assumption is equivalent to the fact that
	$
	B_i= I_{(\gamma)}  \tilde{B}_i
	$
	for a linear and continuous function $\tilde{B}_i:\tilde{U}_i\rightarrow X^\gamma$.
\end{assumption}

\begin{theorem}[Higher regularity]\label{Thm:HigherRegularity}
	In the setting of Theorem~\ref{Thm:opt_control_boundary_opt_cond_limit_adjoint} let Assumption~\ref{Ass:regularity_of_B} hold for some $\gamma \in (0,1]$. 
	
	If $\gamma > \frac{1}{2}$, then $\overline{u}\in \mathrm{L}^\infty(J_T;\tilde{U}_i)$, $\overline{y}\in Y_{s,0}$ and $\overline{z}\in \mathrm{W}^{1,s}(J_T)$ for arbitrary $s\in (1,\infty)$.
	If $\frac{1}{2}(1+\frac{d}{p})<1$, which is the case when $d=2$ and $p>2$ in (A1) in Assumption~\ref{Ass:general_ass_and_short_notation_1}, this implies $\overline{y}\in \mathrm{C}(\overline{J_T};[\mathrm{L}^\infty(\Omega)]^m)$.
	If in addition $\Omega$ is a Lipschitz domain then $\overline{y}$ is H{\"o}lder continuous in time and space.
	
	If $\gamma \leq \frac{1}{2}$, then $\overline{u}\in \mathrm{L}^{\frac{2}{1-2s}}(J_T;\tilde{U}_i)$, $\overline{y}\in Y_{2/(1-2s),0}$ and $\overline{z}\in \mathrm{W}^{1,\frac{2}{1-2s}}(J_T)$ for arbitrary $s\in(0,\gamma)$.
	This implies $\overline{y}\in \mathrm{C}(\overline{J_T};X^\theta)$ for any $\theta \in (0,\frac{1}{2}+\gamma)$.
	If $\gamma >\frac{d}{2p}$, with $d$ and $p$ in (A1) in Assumption~\ref{Ass:general_ass_and_short_notation_1}, this implies $\overline{y}\in \mathrm{C}(\overline{J_T};[\mathrm{L}^\infty(\Omega)]^m)$.
	If in addition $\Omega$ is a Lipschitz domain then $\overline{y}$ is H{\"o}lder continuous in time and space.
\end{theorem}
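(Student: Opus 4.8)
The plan is to derive everything from the adjoint relation $B_i^*(p+Sq)=-\kappa\overline{u}$ of Theorem~\ref{Thm:opt_control_boundary_opt_cond_limit_adjoint}, i.e.\ $\overline{u}=-\tfrac{1}{\kappa}B_i^*(p+Sq)$, and then to bootstrap through Theorem~\ref{Thm:state_equ_sol_op}. The role of Assumption~\ref{Ass:regularity_of_B} is that $B_i=I_{(\gamma)}\tilde{B}_i$, so $B_i^*=\tilde{B}_i^*I_{(\gamma)}^*$ extends to a bounded operator $[X^\gamma]^*\to[\tilde{U}_i]^*\simeq\tilde{U}_i$; hence it is enough to control $p+Sq$ as a time-dependent function with values in the larger space $[X^\gamma]^*$. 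Since $q\in\mathrm{BV}(J_T)$ and the Riesz representative $w$ of $S$ lies in $X^*$, the summand $Sq$ always belongs to $L^\infty(J_T;X^*)\hookrightarrow L^\infty(J_T;[X^\gamma]^*)$ and contributes nothing beyond $p\in Y^*_{2,T}$, so everything hinges on the space-time regularity of $p$.

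The crux is to record two regularity statements for $p$. Using the identification $\mathbb{W}^{1,p'}_{\Gamma_D}(\Omega)\simeq X^*$ one has $Y^*_{2,T}\subset W^{1,2}((0,T);[\mathrm{dom}(A_p)]^*)\cap L^2((0,T);X^*)$, and by duality of real interpolation together with Remark~\ref{Rem:maximal parabolic regularity} the intermediate spaces between $[\mathrm{dom}(A_p)]^*$ and $X^*$ are the dual fractional power spaces, $([\mathrm{dom}(A_p)]^*,X^*)_{\eta}\simeq[X^{1-\eta}]^*$. The embedding analogous to Remark~\ref{Rem:embeddings} for the (backward) adjoint space $Y^*_{2,T}$ then gives $p\in C(\overline{J_T};[X^{\theta}]^*)$ for every $\theta\in(\tfrac{1}{2},1)$; and the mixed-derivative embedding $W^{1,2}((0,T);V_0)\cap L^2((0,T);V_1)\hookrightarrow H^{s}((0,T);[V_0,V_1]_{1-s})$ combined with the one-dimensional Sobolev embedding $H^{s}(0,T)\hookrightarrow L^{2/(1-2s)}(0,T)$ gives $p\in L^{2/(1-2s)}((0,T);[X^{s}]^*)$ for every $s\in(0,\tfrac{1}{2})$.

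From this the regularity of $\overline{u}$ follows. If $\gamma>\tfrac{1}{2}$, choose $\theta\in(\tfrac{1}{2},\min\{\gamma,1\})$, so that $[X^{\theta}]^*\hookrightarrow[X^{\gamma}]^*$; then $p+Sq\in L^\infty(J_T;[X^{\gamma}]^*)$ and hence $\overline{u}\in L^\infty(J_T;\tilde{U}_i)$. If $\gamma\le\tfrac{1}{2}$, fix $s\in(0,\gamma)$; then $[X^{s}]^*\hookrightarrow[X^{\gamma}]^*$ and $p+Sq\in L^{2/(1-2s)}(J_T;[X^{\gamma}]^*)$, whence $\overline{u}\in L^{2/(1-2s)}(J_T;\tilde{U}_i)$. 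In either case $\overline{u}\in L^{r}(J_T;\tilde{U}_i)$ with $r\ge 2>\tfrac{1}{1-\alpha}$ (since $\alpha<\tfrac{1}{2}$), so $B_i\overline{u}\in L^{r}(J_T;X)$ and Theorem~\ref{Thm:state_equ_sol_op} yields $\overline{y}=G(B_i\overline{u})\in Y_{r,0}$ (with $r$ arbitrary finite if $r=\infty$); since $S$ is a bounded functional $S\overline{y}\in W^{1,r}(J_T)$, and $\overline{z}=\mathcal{W}[S\overline{y}]\in W^{1,r}(J_T)$ by Lemma~\ref{Lem:hyst_props}. The embedding $Y_{r,0}\hookrightarrow C(\overline{J_T};X^{\theta})$ holds for $\theta<1-1/r$ (Remark~\ref{Rem:embeddings}); letting $r\to\infty$ in the first case and $s\nearrow\gamma$ in the second (where $r=2/(1-2s)$ and $1-1/r=\tfrac{1}{2}+s$) gives $\overline{y}\in C(\overline{J_T};X^{\theta})$ for all $\theta<1$ resp.\ all $\theta<\tfrac{1}{2}+\gamma$. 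Finally, by Remark~\ref{Rem:maximal parabolic regularity} $X^{\theta}\simeq[\mathbb{W}^{-1,p}_{\Gamma_D}(\Omega),\mathbb{W}^{1,p}_{\Gamma_D}(\Omega)]_{\theta}$, which embeds into $[\mathrm{L}^{\infty}(\Omega)]^m$ whenever $(2\theta-1)p>d$; this is attainable within the admissible range of $\theta$ precisely when $\tfrac{1}{2}(1+\tfrac{d}{p})<1$ resp.\ $\gamma>\tfrac{d}{2p}$. If in addition $\Omega$ is Lipschitz, the same Sobolev scale gives $\mathbb{W}^{2\theta-1,p}(\Omega)\hookrightarrow C^{\sigma}(\overline{\Omega})$ with $\sigma=2\theta-1-\tfrac{d}{p}>0$, and combining with the Hölder-in-time embedding $Y_{r,0}\hookrightarrow C^{\beta}(\overline{J_T};X^{\theta})$ from the first line of Remark~\ref{Rem:embeddings} produces joint Hölder continuity of $\overline{y}$ on $\overline{J_T}\times\overline{\Omega}$.

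I expect the main obstacle to be the sharp time-integrability exponent $2/(1-2s)$ for $p$: the naive interpolation between $L^{\infty}((0,T);[\mathrm{dom}(A_p)]^*)$ and $L^{2}((0,T);X^*)$ only yields $L^{2/(1-s)}$, which is too weak and would not reproduce the stated exponents, so one has to exploit the full $W^{1,2}$-in-time regularity of $p$ via the mixed-derivative theorem for maximal parabolic regularity spaces and carefully identify the arising interpolation spaces with the dual fractional power spaces $[X^{s}]^*$. The remaining work — tracking interpolation and Sobolev-embedding thresholds such as $(2\theta-1)p>d$ — is routine.
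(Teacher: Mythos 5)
Your proposal is correct and follows essentially the same route as the paper: it extracts the improved space-time regularity of $p$ from the maximal-regularity space $Y^*_{2,T}$ (continuous embedding into $\mathrm{C}(\overline{J_T};[X^\gamma]^*)$ for $\gamma>\tfrac12$, and the mixed-derivative/Amann embedding into $\mathrm{L}^{2/(1-2s)}(J_T;[X^s]^*)$ for $\gamma\le\tfrac12$ — exactly the two ingredients the paper invokes), observes that $Sq$ is harmless since $q\in\mathrm{BV}$ and $S\in X^*$, reads off the regularity of $\overline{u}$ from the adjoint relation, and bootstraps through Theorem~\ref{Thm:state_equ_sol_op}. The only cosmetic differences are that the paper phrases the interpolation identification via complex interpolation and cites external results for the $\mathrm{L}^\infty$- and Hölder-embeddings where you sketch the Sobolev-scale computation directly; your closing remark about the failure of naive Lebesgue interpolation correctly identifies why the mixed-derivative theorem is the essential tool.
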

\begin{proof}
	
	First note that for $0\leq \beta\leq \gamma \leq 1$ we have the compact and dense embeddings $X^\gamma\hhookrightarrow X^\beta \hhookrightarrow X$ \cite[Theorem 1.4.8]{henry}. This implies $X^*\hookrightarrow [X^\gamma]^* \hookrightarrow [X^\beta]^*$.
	By the properties of complex interpolation and with Remark~\ref{Rem:maximal parabolic regularity} there holds
	\begin{equation}
		[[\mathrm{dom}(A)]^*,X^*]_{1-\gamma}=[X^*,[\mathrm{dom}(A_p)]^*]_{\gamma}=[X,\mathrm{dom}(A_p)]_{\gamma}^*\simeq [X^\gamma]^*.\label{eq:interpolation}
	\end{equation}
	\begin{itemize}[leftmargin=0.4cm]
		\item We prove the case when Assumption~\ref{Ass:regularity_of_B} is fulfilled with $\gamma > \frac{1}{2}$:
		
		Since $1-\gamma < \frac{1}{2}$ we obtain (as in Remark~\ref{Rem:embeddings}) an embedding 
		\begin{equation*}
		Y_{2,T}^*\subset \mathrm{H}^1(J_T;[\mathrm{dom}(A)]^*)\cap\mathrm{L}^2(J_T;X^*)\hookrightarrow \mathrm{C}(\overline{J_T};[[\mathrm{dom}(A)]^*,X^*]_{1-\gamma}).
		\end{equation*}
		Therefore, by \eqref{eq:interpolation} the regularity
		$p\in Y_{2,T}^*$ in Theorem~\ref{Thm:opt_control_boundary_opt_cond_limit_adjoint} implies that we can identify the function $p\in \mathrm{L}^2(J_T;X^*)$ and the representative $\tilde{p}$ of $p$ in $\mathrm{C}(\overline{J_T};[X^\gamma]^*)$.
		This allows us to identify the function $B_i^* p \in \mathrm{L}^2(J_T;[\tilde{U}_i]^*) $ and $\tilde{B}_i^* \tilde{p} \in \mathrm{C}(\overline{J_T};[\tilde{U}_i]^*)$.
		We also have $Sq\in \mathrm{L}^\infty(J_T;X^*)$ since by Theorem~\ref{Thm:opt_control_boundary_opt_cond_limit_adjoint} there holds $q\in \mathrm{BV}(J_T)$ and because $S\in X^*$ by (A3) in Assumption~\ref{Ass:general_ass_and_short_notation_1}. 
		That is, $B_i^*Sq \in \mathrm{L}^\infty(J_T;[\tilde{U}_i]^*)$.
		Again with Theorem~\ref{Thm:opt_control_boundary_opt_cond_limit_adjoint} and the identification of $B_i^* p$ and $\tilde{B}_i^* \tilde{p}$ we arrive at
		\begin{equation*}
		\begin{aligned}
		\tilde{B}_i^* \tilde{p} + B_i^*Sq = B_i^*(p+Sq)= -\kappa \overline{u} &&\text{in } [\tilde{U}_i]^*, \text{ a.e. in } J_T.
		\end{aligned}
		\end{equation*}
		The functions on the left side are contained in $\mathrm{L}^\infty(J_T;[\tilde{U}_i]^*)$. We identify $[\tilde{U}_i]^*$ with $\tilde{U}_i$, so that $\overline{u}\in \mathrm{L}^\infty(J_T;\tilde{U}_i)$.
		Now we use the higher regularity of $\overline{u}$ to prove a better regularity also for $\overline{y}$.
		Since $\overline{u}\in \mathrm{L}^\infty(J_T;\tilde{U}_i)$, Theorem~\ref{Thm:state_equ_sol_op} yields $\overline{y}\in Y_{s,0}$ for arbitrary $s\in (1,\infty)$.
		From Remark~\ref{Rem:maximal parabolic regularity} and Remark~\ref{Rem:embeddings} we obtain $\overline{y}\in \mathrm{C}(\overline{J_T};X^\theta)$ for arbitrary $\theta \in [0,1)$.
		In \cite[Theorem 3.3]{L_infinity_estimates} it is shown that $X^\theta$ is a subset of $[\mathrm{L}^\infty(\Omega)]^m$ if $\theta>\frac{1}{2}(1+\frac{d}{p})$. By Remark~\ref{Rem:maximal parabolic regularity} we are guaranteed that we can choose $p>2$. So at least if $d=2$ there is some $\theta\in (0,1)$ with $\theta>\frac{1}{2}(1+\frac{d}{p})$ and therefore $\overline{y}\in \mathrm{C}(\overline{J_T};[\mathrm{L}^\infty(\Omega)]^m)$. 
		If $d=2$ and $p>2$ and if $\Omega$ is regular enough, for example a Lipschitz domain, then by \cite[Theorem 4.5]{disser2015h} the state $\overline{y}$ is even H{\"o}lder continuous in time and space.
	
		\item
		We prove the statement for the case when Assumption~\ref{Ass:regularity_of_B} is fulfilled with $\gamma \leq \frac{1}{2}$:

		From \cite[Theorem 3 and (22)]{amann2005nonautonomous} it follows
		\begin{equation*}
		Y_{2,T}^*\subset\mathrm{H}^1(J_T;[\mathrm{dom}(A)]^*)\cap\mathrm{L}^2(J_T;X^*)\hookrightarrow \mathrm{L}^{\frac{2}{1-2s}}(J_T;[[\mathrm{dom}(A)]^*,X^*]_{1-\gamma})
		\end{equation*}
		for arbitrary $s\in(0,\gamma)$.
		So the regularity
		$p\in Y_{2,T}^*$ in Theorem~\ref{Thm:opt_control_boundary_opt_cond_limit_adjoint} together with \eqref{eq:interpolation} implies that we can identify $p\in \mathrm{L}^2(J_T;X^*)$ and the representative $\tilde{p}$ of $p$ in $\mathrm{L}^{\frac{2}{1-2s}}(J_T;[X^\gamma]^*)$ and then  $B_i^* p \in \mathrm{L}^2(J_T;[\tilde{U}_i]^*) $ and $\tilde{B}_i^* \tilde{p} \in \mathrm{L}^{\frac{2}{1-2s}}(J_T;[\tilde{U}_i]^*)$.
		We proceed as for the case $\gamma > \frac{1}{2}$ to prove $\overline{u}\in \mathrm{L}^{\frac{2}{1-2s}}(J_T;\tilde{U}_i)$ for arbitrary $s\in(0,\gamma)$. Theorem~\ref{Thm:state_equ_sol_op} yields $\overline{y}\in Y_{2/(1-2s),0}$ for arbitrary $s\in(0,\gamma)$ and from Remark~\ref{Rem:maximal parabolic regularity} and Remark~\ref{Rem:embeddings} it follows $\overline{y}\in \mathrm{C}(\overline{J_T};X^\theta)$ for arbitrary $\theta \in [0,1-\left(\frac{2}{1-2s}\right)^{-1})=[0,\frac{1}{2}+s)$. Because $s\in(0,\gamma)$ is arbitrary, this holds for all $\theta \in [0,\frac{1}{2}+\gamma)$. The remaining statements are shown just as for $\gamma >\frac{1}{2}$.
	\end{itemize}
\end{proof}

\begin{remark}\label{Rem:Example_higher_regularity}
	For example, take $d=2$ and $p>2$ in (A1) in Assumption~\ref{Ass:general_ass_and_short_notation_1} and adopt the assumptions and the notation in Theorem~\ref{Thm:opt_control_boundary_opt_cond_limit_adjoint}. By \cite[Remark 2.7]{muench} we have the compact embedding
	\begin{equation*}
	\mathbb{W}_{\Gamma_D}^{1,p'}(\Omega)\hhookrightarrow [\mathrm{L}^2(\Omega)]^m = [\tilde{U}_1]^*
	\end{equation*}
	because $p'>1$ and then $2<\frac{dp'}{d-p'}=\frac{2p'}{2-p'}$.
	Therefore we can embed functions $u\in\tilde{U}_1$ into $\mathbb{W}_{\Gamma_D}^{-1,p}(\Omega)$ by the assignment
	$
	\int_\Omega u\cdot v\, dx,
	$
	$\forall v\in \mathbb{W}_{\Gamma_D}^{1,p'}(\Omega)$.
	We slightly reinforce Assumption~\ref{Ass:general_ass_and_short_notation_1}.
	Suppose that \cite[Assumption~2.2]{griepentrog2002interpolation} holds for $\Omega$ and for all $\Gamma_{D_j}$, $j\in \{1,\ldots m\}$. This essentially means that Assumption~\ref{Ass:domain} holds for all $x\in \partial\Omega$ and that the functional determinant of each bi-Lipschitz transformation $\phi_x$ is constant a.e.
	For example, this is the case if $\Omega$ is a Lipschitz domain \cite[Remark~2.3]{griepentrog2002interpolation}. The rest in Assumption~\ref{Ass:general_ass_and_short_notation_1} remains the same.
	With this assumption one has
	\begin{equation*}
		[\mathbb{W}_{\Gamma_D}^{-1,p}(\Omega),[\mathrm{L}^2(\Omega)]^m]_{\theta} = \mathbb{W}_{\Gamma_D}^{-\theta,p}(\Omega)
	\end{equation*}
	for $\theta\in (0,1)$ \cite[Theorem 3.1]{griepentrog2002interpolation}.
	This way we obtain an embedding $\tilde{U}_1\hookrightarrow \mathbb{W}_{\Gamma_D}^{-\theta,p}(\Omega)$.
	Furthermore, we have 
	\begin{equation*}
	\mathbb{W}_{\Gamma_D}^{-\theta,p}(\Omega) = [\mathbb{W}_{\Gamma_D}^{-1,p}(\Omega),\mathbb{W}_{\Gamma_D}^{1,p}(\Omega)]_{\gamma} \simeq X^\gamma
	\end{equation*}
	for $-\theta = -1+2\gamma$ by \cite[Theorem 3.5]{griepentrog2002interpolation} and Remark~\ref{Rem:maximal parabolic regularity}.
	For any $\gamma \in (0,\frac{1}{2})$ there holds $\theta\in (0,1)$ for $\theta=1-2\gamma$ so that we obtain an embedding $\tilde{U}_1\hookrightarrow X^\gamma$.
	Therefore, Assumption~\ref{Ass:regularity_of_B} is fulfilled for $B_1$ with any $\gamma\in (0,\frac{1}{2})$.
	By Theorem~\ref{Thm:HigherRegularity} it follows $\overline{u}\in \mathrm{L}^{\frac{2}{1-2s}}(J_T;\tilde{U}_1)$, $\overline{y}\in Y_{1/(1-2s),0}$ and $\overline{z}\in \mathrm{W}^{1,\frac{2}{1-2s}}(J_T)$ for arbitrary $s\in(0,\gamma)$.
	Since $d=2$ and $p>2$ we can choose $\gamma\in (0,\frac{1}{2})$ such that $\gamma>\frac{d}{2p}$. Theorem~\ref{Thm:HigherRegularity} yields $\overline{y}\in \mathrm{C}(\overline{J_T};[\mathrm{L}^\infty(\Omega)]^m)$. If $\Omega$ is a Lipschitz domain, then $\overline{y}$ is H{\"o}lder continuous in time and space.
\end{remark}

\section{The value function of a perturbed control problem}
In this section we analyze stability properties of the minimal value function of a perturbed problem which is similar to \eqref{opt_control_ control_problem}-\eqref{state_equ_z}.
This analysis is only relevant if the set of controls is restricted.
That is, for $i\in\{1,2\}$ we consider a convex closed subset set $C\subset U_i$ as our set of feasible controls and minimize the cost function over this set.
For given $r\in U_i$ we analyze the perturbed problem
\begin{equation}
\min_{u\in C}J(G(B_i(u+r)),u+r)=\|G(B_i(u+r))-y_d\|^2_{U_1} +\frac{\kappa}{2} \|u+r\|_{U_i}^2.\label{opt_control_prob_perturbed}
\end{equation}
We define the corresponding minimal value function
\begin{equation}
v:U_i\rightarrow \mathbb{R},\ r\mapsto v(r):=\min_{u \in C} J(G(B_i(u+r)),u+r)\label{def:val_function}
\end{equation}
and the multifunction
\begin{equation}
V: r\in U_i \mapsto V(r):=\{u\in C:\ J(G(B_i(u+r)),u+r)= v(r)\}.\label{def:set_function}
\end{equation}
We analyze the continuity properties of $v$ and $V$. The proof is quite similar to the one of \cite[Proposition 4.4]{bonnans}.

\begin{theorem}[Value function]\label{Thm:ValueFunction}
	Let Assumption~\ref{Ass:general_ass_and_short_notation_1} hold. For $i\in\{1,2\}$, let $C\subset U_i$ be convex and closed. Consider the optimal control problem \eqref{opt_control_prob_perturbed} for $r\in U_i$ together with the corresponding minimal value function $v$, defined by \eqref{def:val_function}, and the multifunction $V$ from \eqref{def:set_function}.
	Then $v$ is weakly lower semicontiuous.
	If $C$ is compact in $U_i$ then $v$ is also upper semicontiuous and therefore continuous.
	In this case, also the multifunction $V$ is upper semicontinuous, i.e. for each $r_0\in U_i$ and for any neighborhood $U_{V(r_0)}$ of $V(r_0)$ there exists a neighborhood
	$U_{r_0}$ of $r_0$ such that $V(r) \subset U_{V(r_0)}$ for all $r\in U_{r_0}$, cf. \cite[Chapter 4.1]{bonnans}.
\end{theorem}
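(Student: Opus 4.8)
The plan is to prove the three assertions in sequence, using only the continuity of the control-to-state operator $G$ from Theorem~\ref{Thm:state_equ_sol_op}, the continuity of the composition $u\mapsto G(B_i u)$, and the weak lower semicontinuity of $J$.

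\textbf{Weak lower semicontinuity of $v$.} First I would fix $r_0\in U_i$ and a sequence $r_n\rightharpoonup r_0$ in $U_i$; the goal is $\liminf_n v(r_n)\geq v(r_0)$. Pass to a subsequence (not relabeled) realizing the $\liminf$, and for each $n$ pick $u_n\in V(r_n)$, so $v(r_n)=J(G(B_i(u_n+r_n)),u_n+r_n)$. Since $J$ is coercive in $u$ (the term $\frac{\kappa}{2}\|u+r_n\|_{U_i}^2$) and $v(r_n)$ is bounded along the chosen subsequence (bound $v(r_n)$ above by evaluating the functional at a fixed feasible $u^*\in C$, using that $G\circ B_i$ maps bounded sets to bounded sets by the linear growth in Theorem~\ref{Thm:state_equ_sol_op} and weak convergence of $r_n$ implies boundedness), the sequence $\{u_n\}$ is bounded in $U_i$. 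Extract $u_n\rightharpoonup \tilde u$ in $U_i$; since $C$ is convex and closed it is weakly closed, so $\tilde u\in C$. Then $B_i(u_n+r_n)\rightharpoonup B_i(\tilde u+r_0)$ in $\mathrm{L}^2(J_T;X)$ (continuity and linearity of $B_i$), and by the weak continuity of the solution operator — precisely Lemma~\ref{Lem:continuity_G_epsilon}/Lemma~\ref{Lem:convergence_reg_to_nonreg} applied to $G$ itself, or equivalently \cite[Lemma 5.3]{muench} — we get $G(B_i(u_n+r_n))\rightharpoonup G(B_i(\tilde u+r_0))$ in $Y_{2,0}$ and hence in $U_1$. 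Weak lower semicontinuity of the convex continuous functional $J$ then gives $\liminf_n v(r_n)=\liminf_n J(G(B_i(u_n+r_n)),u_n+r_n)\geq J(G(B_i(\tilde u+r_0)),\tilde u+r_0)\geq v(r_0)$, the last step because $\tilde u\in C$ is feasible for the problem with parameter $r_0$.

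\textbf{Upper semicontinuity of $v$ when $C$ is compact.} Here I would fix $r_0$ and a sequence $r_n\to r_0$ strongly in $U_i$, and show $\limsup_n v(r_n)\leq v(r_0)$. Pick an optimal $\bar u\in V(r_0)$; it is feasible for every perturbed problem, so $v(r_n)\leq J(G(B_i(\bar u+r_n)),\bar u+r_n)$. Now $\bar u+r_n\to \bar u+r_0$ strongly, hence $B_i(\bar u+r_n)\to B_i(\bar u+r_0)$ strongly in $\mathrm{L}^2(J_T;X)$, and by local Lipschitz (indeed norm) continuity of $G$ from Theorem~\ref{Thm:state_equ_sol_op}, $G(B_i(\bar u+r_n))\to G(B_i(\bar u+r_0))$ strongly in $Y_{2,0}$ and so in $U_1$. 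Continuity of $J$ with respect to strong convergence yields $\limsup_n v(r_n)\leq J(G(B_i(\bar u+r_0)),\bar u+r_0)=v(r_0)$. Combined with the weak lower semicontinuity (which implies ordinary lower semicontinuity), $v$ is continuous at $r_0$. Note that compactness of $C$ is not even strictly needed for continuity, only for the existence of minimizers in the perturbed problems so that $v$ is well-defined and finite; I would remark on this.

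\textbf{Upper semicontinuity of $V$.} This is the part I expect to be the real obstacle, because $V$ is set-valued. I would argue by contradiction following \cite[Proposition 4.4]{bonnans}: suppose $V$ is not upper semicontinuous at some $r_0$, so there is an open neighborhood $U_{V(r_0)}\supset V(r_0)$, a sequence $r_n\to r_0$, and points $u_n\in V(r_n)\setminus U_{V(r_0)}$. Since $C$ is compact, pass to a subsequence with $u_n\to \tilde u\in C$ (strongly). By the same strong-continuity chain as above, $G(B_i(u_n+r_n))\to G(B_i(\tilde u+r_0))$ in $U_1$, so $J(G(B_i(u_n+r_n)),u_n+r_n)\to J(G(B_i(\tilde u+r_0)),\tilde u+r_0)$; the left side equals $v(r_n)\to v(r_0)$ by continuity of $v$, hence $J(G(B_i(\tilde u+r_0)),\tilde u+r_0)=v(r_0)$, i.e. $\tilde u\in V(r_0)\subset U_{V(r_0)}$. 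But $u_n\notin U_{V(r_0)}$ for all $n$ and $U_{V(r_0)}$ is open, so its complement is closed and contains the limit $\tilde u$, contradicting $\tilde u\in U_{V(r_0)}$. This closes the argument. The only subtlety to be careful about is making sure the convergence $v(r_n)\to v(r_0)$ used here is exactly the continuity just established, and that the notion of neighborhood of a set (rather than a point) is handled as in \cite[Chapter 4.1]{bonnans}; everything else is routine once the weak/strong continuity properties of $G\circ B_i$ are in place.
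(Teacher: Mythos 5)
Your proof is correct, and the first and third parts (weak lower semicontinuity of $v$, upper semicontinuity of $V$) follow essentially the same route as the paper: bound the minimizers via coercivity, extract a weakly (resp.\ strongly) convergent subsequence in the weakly closed (resp.\ compact) set $C$, and use the weak continuity of $u\mapsto G(B_iu)$ from \cite[Lemma 5.3]{muench} together with weak lower semicontinuity of $J$; the paper merely phrases the lsc step as a contradiction with an explicit $\varepsilon$ and defers the usc of $V$ to \cite[Proposition 4.4]{bonnans}, which you instead write out (correctly, including the sequential reformulation, which is legitimate since $U_i$ is a Hilbert space). Where you genuinely diverge is the upper semicontinuity of $v$: the paper proves a uniform estimate $J(G(B_i(u+r)),u+r)\leq v(r_0)+\varepsilon$ over a product of neighborhoods $U_{V(r_0)}\times U_{r_0}$ by a compactness/contradiction argument on $V(r_0)$, whereas you simply compare with a single fixed minimizer $\bar u\in V(r_0)$, which is feasible for every perturbed problem, and invoke the strong (locally Lipschitz) continuity of $G$ from Theorem~\ref{Thm:state_equ_sol_op}. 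Your version is shorter, and it correctly exposes that compactness of $C$ plays no role in the continuity of $v$ itself (only existence of minimizers is needed there, which holds for convex closed $C$ by coercivity); compactness is genuinely used only for the upper semicontinuity of the multifunction $V$, where one needs the optimal controls $u_n\in V(r_n)$ to converge in norm so that they eventually enter the norm-open neighborhood $U_{V(r_0)}$. The paper's more elaborate neighborhood argument buys a slightly stronger uniform statement, but for the theorem as stated your route suffices.
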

\begin{proof}
	Note first that problem \eqref{opt_control_prob_perturbed} is well-posed. This follows essentially as Theorem~\ref{Thm:opt_control_existence} for the unperturbed problem \eqref{opt_control_ control_problem}-\eqref{state_equ_z}.
	We claim that $v$ is weakly lower semicontiuous (and then also lower semicontiuous).	
	Let $r_0\in U_i$ be given.
	We have to prove that for any sequence $\lbrace r_n \rbrace$ with $r_n\rightharpoonup r_0$, $n\rightarrow \infty$, it holds
	$
	v(r_0) \leq \liminf_{n\rightarrow \infty} v(r_n).
	$
	Let $\lbrace r_n \rbrace$ be such a sequence. Then $\lbrace r_n \rbrace$ is bounded in $U_i$.
	Let $\varepsilon>0$ be arbitrary.
	We show that for $n_0$ large enough
	$
	v(r_0)-\varepsilon \leq v(r_n)
	$
	for all $n\geq n_0$.
	Since $\lbrace r_n \rbrace$ is bounded, by definition of $J$ we can find some $R>0$ with
	$
	\mathop\cup_{n\in\mathbb{N}} V(r_n) \subset \mathrm{B}_{U_i}(0,R).
	$
	Suppose there exists a subsequence $\{r_{n_k}\} $ of $\{r_{n}\}$ such that for each $n_k$ there is some $u_{n_k}\in V(r_{n_k})$ with 	
	$
	v(r_0)-\varepsilon > J(G(B_i(u_{n_k}+r_{n_k})),u_{n_k}+r_{n_k}).
	$
	Note that $\lbrace u_{n_k} \rbrace$ is a bounded subset of $C$ and that $U_i$ is reflexive.
	Being convex and closed, $C$ is weakly compact.
	Hence, there is another subsequence (w.l.o.g. we consider the whole sequence $\lbrace u_{n_k} \rbrace$) and some $\overline{u}\in C$ such that $u_{n_k}\rightharpoonup \overline{u}$ with $k\rightarrow \infty$.
	$J(G(B_i(\cdot+r_0)),\cdot)$ is weakly lower semicontinuous.
	This follows by weak lower semicontinuity of the norm in $U_i$ and of the solution mapping $G$
	\cite[Lemma 5.3]{muench}.
	This implies
	\begin{equation*}
	v(r_0) \leq J(G(B_i(\overline{u}+r_0)),\overline{u}+r_0) \leq \liminf_{k\rightarrow\infty} J(G(B_i(u_{n_k}+r_{n_k})),u_{n_k}+r_{n_k}) \leq v(r_0)-\varepsilon
	\end{equation*}
	which is a contradiction.
	Therefore,
	$
	v(r_0) \leq \liminf_{n\rightarrow \infty} v(r_n).
	$	
	Now suppose that $C$ is compact.
	We have to show that for any $\varepsilon>0$ there is a neighbourhood $U_{r_0}$ of $r_0$ such that
	$
	v(r) \leq v(r_0)+\varepsilon
	$
	for all $r\in U_{r_0}$.	
	We prove that we can choose neighbourhoods $U_{V(r_0)}$ of $V(r_0)$ and $U_{r_0}$ of $r_0$ such that 
	$
	J(G(B_i(u+r)),u+r) \leq v(r_0)+\varepsilon
	$
	for all $(u,r) \in U_{V(r_0)}\times U_{r_0}$.
	Suppose that such neighbourhoods do not exist.
	Then there is a sequence $\lbrace r_n \rbrace$ with $r_n\rightarrow r_0$, $n\rightarrow\infty$, and a sequence $\lbrace u_n \rbrace \subset V(r_0)\subset C$ such that
	$
	J(G(B_i(u_n+r_n)),u_n+r_n) > v(r_0)+\varepsilon
	$
	for all $n>0$.
	Because $J$ is continuous the set $V(r_0)$ is closed and therefore compact as a closed subset of a compact set.
	Hence, there exists a subsequence $\lbrace u_{n_k} \rbrace$ and some $\overline{u}\in V(r_0)$ with $u_{n_k}\rightarrow\overline{u}$ as $k\rightarrow\infty$.
	This yields
	\begin{equation*}
	v(r_0) = J(G(B_i(\overline{u}+r_0)),\overline{u}+r_0) = \lim_{k\rightarrow\infty} J(G(B_i(u_{n_k}+r_{n_k})),u_{n_k}+r_{n_k}) > v(r_0)+\varepsilon
	\end{equation*}
	which is a contradiction.
	So the neighbourhoods $U_{V(r_0)}$ and $U_{r_0}$ do exist and for any $r\in U_{r_0}$ we obtain
	$
	v(r) \leq \inf_{u\in U_{V(r_0)}} J(G(B_i(u+r)),u+r) \leq v(r_0)+\varepsilon
	$
	which implies that $v$ is upper semicontinuous.	
	The last statement follows just as in \cite[Proposition 4.4]{bonnans}.
\end{proof}

\section*{Acknowledgement}
The author is supported by the DFG through the International Research Training Group IGDK 1754 „Optimization and Numerical Analysis for Partial Differential Equations with Nonsmooth Structures".
The author would like to thank Prof. Brokate from the Technical University of Munich and Prof. Fellner from the Karl-Franzens University of Graz for thoroughly proofreading the manuscript.

\newpage
\printbibliography
\end{document}